\newcommand{\HH}{\mathbb{H}}
\newcommand{\KK}{\mathbb{K}}
\newcommand{\NN}{\mathbb{N}}
\newcommand{\RR}{\mathbb{R}}
\newcommand{\CC}{\mathbb{C}}
\newcommand{\QQ}{\mathbb{Q}}
\newcommand{\ZZ}{\mathbb{Z}}
\newcommand{\norm}[1]{\lVert#1\rVert}
\newcommand{\abs}[1]{\lvert#1\rvert}
\newtheorem{theorem}{Theorem}[section]
\newtheorem{corollary}[theorem]{Corollary}
\newtheorem{lemma}[theorem]{Lemma}
\newtheorem{proposition}[theorem]{Proposition}
\theoremstyle{definition}
\newtheorem{definition}[theorem]{Definition}
\newtheorem{remark}[theorem]{Remark}
\newtheorem{sect}{}
\newtheorem{sect2}{}
\newtheorem{sect15}{Example}
\newtheorem{sect6}{Case}
\newtheorem{sect3}{Case}
\numberwithin{equation}{section}
\begin{document}
\title[Uniform pointwise bounds]
{Uniform pointwise bounds for Matrix coefficients of unitary representations on semidirect products}

%\subjclass[2003]{}

\thanks{{\em 2010 Mathematics Subject Classification.} Primary 22E46, 22E50, Secondary 22D10, 22E30, 22E35.}

\author[]{ Zhenqi Jenny Wang }
\address{Department of Mathematics\\
        Yale University\\
        New Haven, CT 06520,   USA}
\email{Zhenqi.Wang@yale.edu
}

\begin{abstract} Let $k$ be a local field of characteristic $0$, and let $G$ be a connected semisimple almost $k$-algebraic group. Suppose rank$_kG\geq 1$ and $\rho$ is an excellent representation of $G$ on a finite dimensional $k$-vector space $V$. We construct uniform pointwise bounds for the $K$-finite matrix coefficients restricted on $G$ of all unitary representations of the semi-direct product $G\ltimes_\rho V$ without non-trivial $V$-fixed vectors. These bounds turn out to be sharper than the bounds obtained from $G$
itself for some cases. As an application, we discuss a simple method of calculating Kazhdan constants for various compact subsets of the pair $(G\ltimes_\rho V,V)$.\\
\emph{Keywords: Matrix coefficient, unitary representation, Fourier transform, projection-valued measure, Mackey machine, Kazhdan constant.}
\end{abstract}

%\date{today}

\maketitle

\section{Introduction and main results}\label{sec:9}
Let $k$ be a local field of char $k=0$. We say that $G$ is a (connected)\emph{ almost $k$-algebraic group} if $G$ is a (connected) $k$-Lie group with finite center for $k$ isomorphic to $\RR$ or $G$ is the group of $k$-rational points of a (connected) linear algebraic group $\tilde{G}$ over $k$ for $k$ non-archimedean or isomorphic to $\CC$. Unless stated otherwise, $G$ denotes a connected semisimple almost $k$-algebraic group with rank$_k(G)\geq 1$ and $\tilde{G}$ denotes its underlying algebraic group; that is, $G=\tilde{G}(k)$ for $k$ non-archimedean or isomorphic to $\CC$.

\subsection{Finite-dimensional representations of $G$} For a finite dimensional vector
space $V$ over $k$, a representation $\rho: G\rightarrow GL(V)$ is called \emph{normal} if $\rho$ is continuous for $k$ isomorphic to $\RR$; or if $\rho$ is a $k$-\emph{rational} map for $k$ non-archimedean or isomorphic to $\CC$.

There is a decomposition $G=G_cG_s$ (resp. $\tilde{G}=\tilde{G}_c\tilde{G}_s$) where $G_c$ (resp. $\tilde{G}_c$) is the product of compact factors (resp. $k$-anisotropic factors) of $G$ (resp. $\tilde{G}$) and $G_s$ (resp. $\tilde{G}_s$) is the product of non-compact factors (resp. $k$-isotropic factors) of $G$ (resp. $\tilde{G}$) when $k$ is isomorphic to $\RR$ (resp. non-archimedean or isomorphic to $\CC$).

Denote by $G_i$ (resp. $\tilde{G}_i$), $1\leq i\leq j$ the non-compact factors (resp. $k$-isotropic factors) of $G$ (resp. $\tilde{G}$). Also set $G_s=\tilde{G}_s(k)$ and $G_i=\tilde{G}_i(k)$ for $k$ non-archimedean or isomorphic to $\CC$. We call these $G_i$ the \emph{non-compact almost $k$-simple factors} of $G$.
\begin{definition} \label{def:2} A normal representation $\rho$ of $G$ on $V$ is called \emph{good} if the $\rho(G_s)$-fixed points in $V$ are $\{0\}$; and $\rho$ is called \emph{excellent} if $\rho(G_i)$-fixed points in $V$ are $\{0\}$ for each non-compact almost $k$-simple factor $G_i$ of $G$.
\end{definition}

In this paper we present an ``upper bound function" for $G$-matrix coefficients
for all unitary representations of $G\ltimes_\rho V$ without non-trivial $V$-fixed vectors if $\rho$ is an excellent representation of $G$ on $V$. Special cases of $SL(2,\KK)\ltimes \KK^2$
and $SL(2,\KK)\ltimes \KK^3$ are considered in \cite{oh} and \cite{tan} for a local field $\KK$. For these cases the following
conditions are satisfied: every orbit is locally closed (intersection of an open and a closed set) in the dual group $\widehat{V}$; and  for each $\chi\in\widehat{V}\backslash \{0 \}$ the stabilizer $S_\chi=\{g\in G\ltimes_\rho V:g\cdot\chi=\chi\}$ is amenable. The first one allows us to use the ``Mackey machine" and the latter one
implies that the $G$-matrix coefficients are bounded by the Harish-Chandra functions. Margulis also used this criterion in \cite[Theorem 2]{Margulis3} to prove Kazhdan's property $(T)$ of the pair $(O_3(\QQ_5)\ltimes \QQ_3^5, \QQ_3^5)$. In fact, if $G$ is a connected almost $k$-algebraic group and $\rho$ is a normal representation, then ``Mackey machine" applies to the semidirect product $G\ltimes_\rho V$ and hence we have complete descriptions of the
dual groups of $G\ltimes_\rho V$ (see \cite[Theorem 7.3.1]{Zimmer} or \cite[Chapter 5.4]{warner}). Therefore any irreducible representation $\pi$ of $G\ltimes_\rho V$
without non-trivial $V$-fixed vectors is induced from the ones on the stabilizers $S_\chi$, $\chi\in\widehat{V}\backslash \{0 \}$. However, for general cases, the  complexity of these stabilizers $S_\chi$ would require heavy analysis calculations.

Our work is an extension of the ideas of R. Howe and E. C. Tan \cite[Chap V, Theorem 3.3.1]{tan}. For $SL(2,\RR)\ltimes \RR^2$, they considered the system of imprimitivity based on $(SL(2,\RR),\widehat{\RR^2})$ instead of ``Mackey machine" to calculate upper bounds of $SO(2)$-finite matrix coefficients restricted on $SL(2,\RR)$. More precisely, the deformation of $SO(2)$-orbits under the natural dual action of the Cartan subgroup on $\widehat{\RR^2}$ gives enough information to get upper bounds of  $SO(2)$-finite matrix coefficients on $SL(2,\RR)$. In their proof the commutativity of $K$, a maximal compact subgroup in $G$, is essential. However, for general examples, the complexity of $G$-orbits and non-commutativity of $K$ do seem to require some new method for handling it; the same is true in an attempt at extending the results to non-archimedean fields.
\subsection{Main results} Let $G$ be a connected semisimple almost $k$-algebraic group with rank$_k(G)\geq 1$, $D$ a maximal $k$-split torus, $B$ a minimal parabolic subgroup containing $D$, $D^+$ the closed positive Weyl chamber of $D$ given by the choice of $B$, $K$
a good maximal compact subgroup such
that the Cartan decomposition $G=KD^+FK$ holds where $F$ is a finite subset of the centralizer of $D$ (Section \ref{sec:2}).

Denote by $\Phi$ be the set of
non-zero roots of $G$ relative to $D$ and by $\Phi^+$ the set of positive roots.
Let $V$ be a finite dimensional vector
space over $k$. Fix a normal representation $\rho$ of $G$ on $V$. Since char $k=0$, then by full reducibility of semisimple groups there is a natural decomposition of $V$ under $\rho$: $V=\oplus_{i=1}^N V_i$ such that for each $i$, $V_i$ is an irreducible representation of $G$. Denote by $\Phi_i$ the set of weights of $\rho$ on $V_i$ relative to $D$ and by $\lambda_i$ and $\varrho_i$ the highest weight and lowest weight respectively, compatible with the ordering of $\Phi$. Set $\Lambda(\Phi_i)=\frac{\mathfrak{q_i}}{2}(\lambda_i-\varrho_i)$, where $\mathfrak{q_i}=(\frac{1}{3})^{(\sharp\Phi_i-1)}$. Moreover, If $\dim V_{\lambda_i}=1$, $\mathfrak{q_i}=(\frac{1}{3})^{(\sharp\Phi_i-2)}$.
\begin{definition}\label{def:3} Set
\begin{align*}
p_{(G,V_i,\Phi_i)}&=\max_{1\leq j\leq n}\Big\{\frac{\text{the coefficient of $\omega_j$ in $\delta_B$}}{\text{the coefficient of $\omega_j$ in $\Lambda(\Phi_i)$}}\Big\}\qquad\text{ and }\notag\\
p_{(G,V,\Phi)}&=\max_{1\leq i\leq N}p_{(G,V_i,\Phi_i)}.
\end{align*}
where $\{\omega_1,\cdots,\omega_n\}$ is the set of simple roots of $\Phi^+$ and $\delta_B$ is the modular function of
$B$.
\end{definition}
For a unitary representation $\pi$ of $G\ltimes_\rho V$, a vector
$v$ in $\pi$ is called $K$-finite if the subspace spanned by $\pi(K)v$ is finite-dimensional. We use the term $K$-finite matrix coefficients of $\pi$ (on $G$) to refer
to its matrix coefficients with respect to $K$-finite vectors (restricted on $G$).
\begin{definition}
Let $\Psi$ be a positive function on $G$, invariant under left and right translations by $K$, and such that $\Psi(g)=\Psi(g^{-1})$, for any $g\in G$.  A unitary representation $\pi$ of $G$ is said to be \emph{$\bigl(K,\,r\Psi\bigl)$ bounded on $G$} where $r>0$, if for any $K$-finite
unit vectors $v$ and $w$, one has the estimate
\begin{align*}
\abs{\langle \pi(g)v,w\rangle}\leq r\dim \langle Kv\rangle^{1/2} \dim \langle Kw\rangle^{1/2}
\Psi(g)\qquad\text{for any }g\in G.
\end{align*}
 Here $\langle Kv\rangle$ denotes the subspace spanned by $Kv$ and similarly for $\langle Kw\rangle$.
\end{definition}
Note that $p_{(G,V,\Phi)}<\infty$ if $\rho$ is excellent. By the following theorem, $p_{(G,V,\Phi)}$ determines a uniform pointwise bound for all $K$-finite matrix coefficients of $G\ltimes_\rho V$ restricted on $G$.

\begin{theorem}\label{th:11}
Fix an excellent representation $\rho$ of $G$ on $V$. Let $m$ be an integer with $2m\geq p_{(G,V,\Phi)}$. Then for any  unitary representation $\Pi$ of $G\ltimes_\rho V$ without non-trivial $V$-fixed vectors, $\Pi$ is $\bigl(K,\,\Xi_G^{1/m}\bigl)$ bounded on $G$. Here $\Xi_G$ is the Harish-Chandra function of $G$ (see Section \ref{sec:12}).
\end{theorem}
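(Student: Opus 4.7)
The plan is to prove the bound via the following strategy. First, by the bi-$K$-invariance of the Harish-Chandra function $\Xi_G$ and the Cartan decomposition $G = K D^+ F K$, it suffices to establish the estimate when $g = d \in D^+$; the finite subset $F$ contributes only a multiplicative constant that is absorbed without loss of generality. Second, since $V$ is abelian, the SNAG/Stone spectral theorem applied to the restriction $\Pi|_V$ yields a projection-valued measure $P$ on $\widehat{V}$ with $\Pi(t) = \int_{\widehat{V}} \chi(t)\, dP(\chi)$; the hypothesis that $\Pi$ has no non-trivial $V$-fixed vectors then forces $P(\{0\}) = 0$. Third, the covariance relation $\Pi(g)\, P(E)\, \Pi(g)^{-1} = P(g \cdot E)$ realizes $(\Pi, P)$ as a system of imprimitivity based on $(G, \widehat{V})$, turning the problem into one of controlling how the $D^+$-action on $\widehat{V}$ deforms the complex spectral measures $\mu_{v,w}(\cdot) = \langle P(\cdot) v, w\rangle$ attached to $K$-finite vectors $v, w$.

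The main analytic step is to prove, for each irreducible summand $V_i$ of $V$ and each $d \in D^+$, a single-summand decay estimate of the schematic form
\begin{align*}
\abs{\langle \Pi(d) v, w\rangle} \leq \dim \langle Kv\rangle^{1/2}\, \dim \langle Kw\rangle^{1/2}\, \abs{\Lambda(\Phi_i)(d)}^{-1}
\end{align*}
when the spectral mass of $v$ and $w$ is supported in $\widehat{V_i}\setminus\{0\}$. The character $\Lambda(\Phi_i) = \frac{\mathfrak{q_i}}{2}(\lambda_i - \varrho_i)$ emerges naturally because the dual action of $d$ on $\widehat{V_i}$ rescales characters by factors pinched between $\abs{\lambda_i(d)}$ and $\abs{\varrho_i(d)}$, and a partition of $\widehat{V_i}\setminus\{0\}$ into weight-cones localizes the oscillatory integral controlling the matrix coefficient. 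Raising this bound to the $1/m$ power and invoking the integral description of $\Xi_G$ on $D^+$ (which agrees with $\delta_B(d)^{-1/2}$ up to polynomial factors), the numerical hypothesis $2m \geq p_{(G, V, \Phi)}$ is precisely what guarantees that $2m\,\Lambda(\Phi_i)$ dominates $\delta_B$ in every simple-root coefficient, yielding the promised pointwise bound by $\Xi_G^{1/m}(d)$. Summing over the finitely many summands $V_i$ is then routine.

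The main obstacle is the decay estimate above. In the $SL(2,\RR) \ltimes \RR^2$ case of Howe--Tan and the $SL(2,\KK) \ltimes \KK^3$ case of Oh and Tan, commutativity of $K = SO(2)$ makes the $K$-orbits in $\widehat V$ into circles that are analyzed by explicit Fourier series, and the Cartan action on them is transparent. In the general semisimple setting, both the $K$-orbit geometry in $\widehat{V_i}$ and the non-abelian interaction between $K$ and the Cartan present serious obstructions. I would attack this by induction on $\sharp \Phi_i$: at each step, peel off either the highest-weight or the lowest-weight cone from $\widehat{V_i}$ and reduce to a strictly smaller weight set, paying a factor of $\frac{1}{3}$ per inductive step --- this is exactly the origin of $\mathfrak{q_i} = (1/3)^{\sharp \Phi_i - 1}$, with the saving $(1/3)^{\sharp \Phi_i - 2}$ available when $\dim V_{\lambda_i} = 1$ so the initial peeling is free. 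The base case reduces to representations of $SL(2, k)$-subgroups attached to individual roots of $G$, where the required pointwise decay is classical. Gluing the local estimates via a covering of $D^+$ tailored to the root decomposition, and using finite-dimensionality of $\langle Kv\rangle$ and $\langle Kw\rangle$ to absorb the $K$-averaging, then completes the proof.
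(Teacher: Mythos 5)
Your high-level outline --- spectral measure on $\widehat V$ via SNAG, covariance relation $\Pi(g)P_E\Pi(g)^{-1}=P_{g\cdot E}$, reduction to $D^+$, and comparison of the exponent of decay against the modular function $\delta_B$ --- matches the paper's strategy, and your identification of $\Lambda(\Phi_i)=\frac{\mathfrak{q_i}}{2}(\lambda_i-\varrho_i)$ as the character controlling the cone deformation is exactly right. But you have left the central analytic difficulty unresolved, and the mechanism you sketch for it would not close.

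The gap is in what you call the ``main obstacle.'' You propose an induction on $\sharp\Phi_i$ that ``peels off'' the highest- or lowest-weight cone from $\widehat{V_i}$, reducing to a strictly smaller weight set, with a base case on $SL(2,k)$-subgroups attached to individual roots. There are two problems with this. First, $K$ does not preserve the individual weight spaces $V_\phi$, so there is no $K$-invariant ``cone'' to peel off that carries a representation of $K$ on a strictly smaller weight set; the $K$-orbits in $\widehat{V_i}$ mix all the weights. What the paper actually does is partition a fixed annular slice $\mathcal{W}$ of $\widehat{V_i}$ (near the hyperplane $\pi_\lambda=0$) into pieces $\mathcal{W}_{c_0}^{j,i}$ using a nested system of scales $\theta^l_{\phi_m}=c_0^{1/3^m}n_l$, and for each piece finds a specific one-parameter subgroup $X'_t\in K$ (built from a root vector $\mathfrak{c}_i$ that moves $V_{\phi_\ell}$ into $V_{\phi_\ell+\alpha_i}$) for which the translates $X'_t\cdot\mathcal{W}_{c_0}^{j,i}$ are pairwise disjoint for $\gtrsim c_0^{-1/3^\ell}$ values of $t$ (Lemmas \ref{le:13} and \ref{le:10}, then Proposition \ref{po:7}). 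This is a direct counting argument, not an induction, and the geometric factor $(1/3)^{\sharp\Phi_i-1}$ arises from the cube-root scale separation needed at each weight level, not from a layer-stripping recursion.

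Second, and more fundamentally, a count of disjoint $K$-translates of a piece of $\widehat{V_i}$ only converts into a matrix-coefficient bound after a further and essential reduction that your proposal never mentions: since $K\ltimes_\rho V$ is amenable, $\Pi|_{K\ltimes V}$ is weakly contained in the regular representation $\Lambda$ of $K\ltimes V$, and so (via Lemma \ref{le:1}) the projected diagonal matrix coefficient $\|\widehat\Pi(F_s^{1/2})\eta'\|^2$ is approximated by sums $\sum_i\|\widehat\Lambda(F_s^{1/2})\eta_i\|_2^2$ with $\sum\|\eta_i\|_2^2\le\|\eta'\|^2$. Only in the regular representation, after a further passage to left-$K$-invariant ``majorants'' $\widetilde{\eta_i}$ (using the central idempotents $e_{\eta'}$ of Proposition \ref{po:9}, which is where the $\dim\langle K\eta\rangle$ factor enters), do the $N$ disjoint translates yield the crucial inequality $\|\widehat\Lambda(w_j)\widetilde{\eta_i}\|_2^2\le\|\widetilde{\eta_i}\|_2^2/N$. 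Without the weak-containment step the orthogonality of disjoint supports does not propagate to a norm bound on an arbitrary $\Pi$. Finally, your ``raise to the $1/m$ power'' passage from the exponent comparison $2m\Lambda(\Phi_i)\ge\delta_B$ to $(K,\Xi_G^{1/m})$-boundedness papers over the fact that $\Xi_G$ is only comparable to $\delta_B^{-1/2+\epsilon}$, not equal to $\delta_B^{-1/2}$; the paper instead deduces strong $L^{2m+\epsilon}$-integrability of the matrix coefficients and then invokes the Cowling--Haghverdi--Howe theorem (Theorem \ref{th:10}) for the uniform pointwise bound with dimension factors.
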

Theorem \ref{th:11} does not provide the sharpest uniform pointwise bounds. The novelty of the above theorem lies in the simplicity of our method giving an upper
bound for $K$-finite matrix coefficients.

Proposition \ref{po:3} shows that if each almost $k$-simple factor of $G$ has Kazhdan's property $(T)$ then the upper bounds of $K$-finite matrix coefficients on $G$ can be obtained from semisimple part $G$ itself by using property $(T)$ or higher rank trick. Theorem \ref{th:11} provides upper bounds of $K$-finite matrix coefficients on $G$ even if the above condition fails on $G$. Moreover, using the $\rho(K)$-orbit-deformation method (in proving Theorem \ref{th:11}) one may obtain sharper upper bounds for $K$-finite matrix coefficients on $G$ than those offered by semisimple part itself
(see various examples in Section \ref{sec:19}). For instance, for $G=Sp(2n,\CC)$, $V=\CC^{2n}$ and $\rho$ the standard representation of $G$ on $V$, the best possible decay rate from $G$ itself for $K$-finite matrix coefficients is at most one half of the decay rate (provided by the combination of $\rho(K)$-orbit-deformation method and higher rank trick) (see Remark \ref{re:3}).

We also have the following interesting result, in which the group $G$ is defined by the symmetric or Hermitian form $Q$ on $L^{n+1}$: $\left( \begin{array}{ccc}
0 & 1 & 0\\
1 & 0 &0 \\
0 & 0 &-I_{n-1} \\
\end{array} \right)$ where $L=\RR$, $\CC$, or $\HH$ and $n\geq 2$. $G$ is $SO_0(1,n)$, $SU(1,n)$ or $Sp(1,n)$ respectively  for $L=\RR$, $\CC$ or $\HH$.
\begin{proposition}
Let $V=L^{n+1}$, $n\geq 2$ and $\rho$ be the standard representation of $G$ on $V$. Then for any  unitary representation $\pi$ of $G\ltimes_\rho V$ without non-trivial $V$-fixed vectors is $\bigl(SO(n),\,\Xi_{G}^{1/(n-1)}\bigl)$ bounded on $G$ for $L=\RR$, is $\bigl(S(U_1\times U_n),\,\Xi_G^{1/n}\bigl)$ bounded on $G$ for $L=\CC$ and is $\bigl(Sp(1)\times Sp(n),\,\Xi_G^{1/([\frac{1+2n}{3}]+1)}\bigl)$ bounded on $G$ for $L=\HH$.
\end{proposition}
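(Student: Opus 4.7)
The plan is to derive the proposition as a direct specialization of Theorem \ref{th:11} applied, in turn, to each of the three rank-one cases $G\in\{SO_0(1,n), SU(1,n), Sp(1,n)\}$ acting on $V=L^{n+1}$ by the standard representation $\rho$. The preliminary observation is that $\rho$ is excellent in the sense of Definition \ref{def:2}: each $G$ is almost $k$-simple with $\mathrm{rank}_{k}G=1$, and its standard representation on $L^{n+1}$ is irreducible and non-trivial, so has no nonzero fixed vector.

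The remainder is a direct computation of $p_{(G,V,\Phi)}$ from Definition \ref{def:3}. I would work in the basis in which the form $Q$ has the stated shape, with the maximal $k$-split torus $D$ generated by $a_t=\diag(e^t,e^{-t},1,\ldots,1)$ and the positive simple restricted root $\alpha$ satisfying $\alpha(a_t)=t$. In the real case the restricted root system is $A_1$ with root multiplicity $m_\alpha=n-1$; in the complex case it is $BC_1$ with $(m_\alpha,m_{2\alpha})=(2(n-1),1)$; in the quaternionic case it is $BC_1$ with $(m_\alpha,m_{2\alpha})=(4(n-1),3)$. These multiplicities determine the coefficient of the unique simple root $\alpha$ in $\delta_B$. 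In all three cases the set of $D$-weights on $V$ is $\{\alpha,0,-\alpha\}$ with $\dim V_\alpha=1$, so the exceptional clause of Definition \ref{def:3} fixes $\mathfrak{q}_1$, and hence $\Lambda(\Phi_1)=\mathfrak{q}_1\alpha$.

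The value of $p_{(G,V,\Phi)}$ is then the ratio of the coefficient of $\alpha$ in $\delta_B$ to the coefficient of $\alpha$ in $\Lambda(\Phi_1)$. I would verify that the claimed exponent in each case satisfies $2m\geq p_{(G,V,\Phi)}$: $m=n-1$ for $L=\RR$, $m=n$ for $L=\CC$, and $m=[(1+2n)/3]+1$ for $L=\HH$. An application of Theorem \ref{th:11} then yields the asserted bound, with the compact group $K$ in the theorem coinciding with the appropriate maximal compact subgroup of $G$, namely $SO(n)$, $S(U_1\times U_n)$, or $Sp(1)\times Sp(n)$ respectively.

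The main obstacle is the quaternionic case: the large coefficient $m_\alpha+2m_{2\alpha}=4n+2$ of $\alpha$ in $\delta_B$ has to be balanced against $\mathfrak{q}_1$ so as to produce the slightly unusual integer-part expression $[(1+2n)/3]+1$. The real and complex cases amount to smaller arithmetic and follow the same template, but in each the correct bookkeeping of $\sharp\Phi_1$ (including the zero weight) and the $\dim V_{\lambda_1}=1$ clause is essential for matching the claimed exponents.
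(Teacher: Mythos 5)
The proposal fails at the central step: a direct application of Theorem~\ref{th:11} via Definition~\ref{def:3} does not produce the exponents claimed in the proposition. Computing $p_{(G,V,\Phi)}$ as you describe gives $\mathfrak{q}_1 = (1/3)^{\sharp\Phi_1 - 2} = 1/3$ for $L=\RR$ (where $\sharp\Phi_1 = 3$ and $\dim V_\lambda = 1$), and $\mathfrak{q}_1 = (1/3)^{\sharp\Phi_1 - 1} = 1/9$ for $L=\CC,\HH$ (here you slip: as a real weight space $V_\lambda$ has dimension $2$ or $4$ respectively, so the $\dim V_\lambda = 1$ clause does not apply). Dividing into the $\alpha_1$-coefficient of $\delta_B$ ($n-1$, $2n$, $4n+2$) gives $p_{(G,V,\Phi)} = 3(n-1)$, $18n$, and $9(4n+2)$ respectively, so Theorem~\ref{th:11} only delivers $\Xi_G^{1/m}$ for $m$ roughly $\tfrac{3}{2}(n-1)$, $9n$, and $\tfrac{9}{2}(4n+2)$ — far weaker than the claimed $n-1$, $n$, and $[\tfrac{1+2n}{3}]+1$. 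Even for $n=2$, $L=\RR$, you would need $2m\geq 3$, which forces $m\geq 2$, not $m = n-1 = 1$.

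What the proposition actually uses is the \emph{improved} $K$-orbit analysis in Example~\ref{ex:1} of Section~\ref{sec:19}. There, instead of the generic decomposition from Proposition~\ref{po:7} (which produces the small exponent $\mathfrak{q}$), the cone $\mathcal{W}_{c_0}$ is covered by the pieces $\mathcal{W}_{c_0}^i$ and one exploits the embeddings $\mathcal{P}_{i_1,i_2,i_3}\colon G_2 \hookrightarrow G_n$ of a fixed rank-one group $G_2$ (handled by explicit $2\times 2$ rotation/unitary blocks as in Examples~\ref{for:21} and~\ref{for:69}) to obtain $c_0^{1/2}$-, $c_0$-, and $c_0^{3}$-disjointness for $L=\RR$, $\CC$, $\HH$ respectively. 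Feeding these improved disjointness exponents $\gamma = -1/2, -1, -3$ into the scheme of Proposition~\ref{po:6} and \eqref{for:119} gives $p\bigl((G\ltimes_\rho V)_0\bigr)\leq 2(n-1)$, $2n$, $(2+4n)/3$, whence the smallest admissible $m$ is $n-1$, $n$, $[\tfrac{1+2n}{3}]+1$. Your plan never engages this improvement step, and without it the claimed bounds are simply unreachable.
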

It turns out that the uniform pointwise bounds for $K$-finite matrix coefficients in above proposition are in fact much sharper than the bounds obtained from $G$ itself: $SO_0(1,n)$ and $SU(1,n)$ don't have property $(T)$; for $L=\HH$ $[\frac{1+2n}{3}]+1<\frac{1}{2}(1+2n)$,  while  there exists an irreducible representation of $G$ such that it is not $\bigl(Sp(1)\times Sp(n),\,\Xi_G^{1/m}\bigl)$ bounded on $G$ for any $m<\frac{1}{2}(1+2n)$ (see Remark \ref{re:3}).

%Moreover, the method in proving Theorem \ref{th:11} shows that one can obtain sharper estimates than those from property $(T)$ for different cases by more detailed description of $\rho(K)$ orbits (see various examples in Section \ref{sec:19}). For instance, for $G=SL(3,\CC)$, $V=L^3$ and $\rho$ is the standard representation of $SL(3,\CC)$ on $\CC^3$; or $G=Sp(4,\CC)$, $V=\CC^4$ and $\rho$ is the standard representation of $Sp(4,\CC)$ on $\CC^4$, then any  unitary representation $\pi$ of $G\ltimes_\rho V$ without non-trivial $V$-fixed vectors is $\bigl(SU(3),\,\Xi_{G}\bigl)$ bounded on $G$ for $G=SL(3,\CC)$; or is $\bigl(Sp(2),\,\Xi_G^{\frac{1}{2}}\bigl)$ bounded on $G$ for $G=Sp(4,\CC)$ (see Example \ref{for:21} and \ref{for:37} in Section \ref{sec:19}); while by using property $(T)$ $\pi$ is $\bigl(SU(3),\,\Xi_{G}^{\frac{1}{2}}\bigl)$ bounded and $\bigl(Sp(2),\,\Xi_G^{\frac{1}{4}}\bigl)$ bounded on $G$ respectively.

It is proved in \cite[Proposition 2.3]{Valette1} and \cite[Proposition p.22]{Valette2} that the pair $(G\ltimes_\rho V,V)$ has Kazhdan's property $(T)$ if $\rho$ is good. Then Theorem \ref{th:11} gives, in the case of local fields with character $0$, the quantitative results. The pointwise bound $\Xi_G^{1/m}$ provides us with a simple and general method of calculating
Kazhdan constants (see Section \ref{sec:13} for definition) for various compact subsets of semisimple
$G$, in particular for any compact subset properly containing $K$.

It is well known that any almost simple $k$-algebraic group $G$ with rank$_kG=1$ does not have property $(T)$ if $k$ is non-archimedean or if Lie algebra $\mathfrak{g}$ of $G$ is isomorphic to $su(n,1)$ or $so(n,1)$ for $k$ archimedean. We have the following example including the case of  $G$ not having property $(T)$:
\begin{proposition}
Suppose rank$_kG=1$ and $\rho$ is irreducible and good on $V$. Denote by $G_1$ is the non-compact simple factor of $G$ and by $K_1$ a good maximal compact subgroup in $G_1$. Let $m$ be the smallest integer such that $2m\geq p'$ where $p'$ is defined in \eqref{for:53}. For any $h\in G_{1}$ such that $h\notin K_{1}$, $\frac{\sqrt{2\bigl(1-\Xi^{1/m}_{G_{1}}(h)\bigl)}}{\sqrt{2\bigl(1-\Xi^{1/m}_{G_{1}}(h)\bigl)}+3}$ is a Kazhdan constant for $((G\ltimes_\rho V,V),\{K_1,h\})$.
\end{proposition}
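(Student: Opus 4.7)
The plan is to combine Theorem~\ref{th:11}, applied to the sub-semidirect product $G_1\ltimes_\rho V$, with the standard projection-onto-$K_1$-invariants device that turns pointwise matrix-coefficient decay into a quantitative Kazhdan constant.

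First I would reduce from $G$ to the unique non-compact factor $G_1$. Since rank$_k G=1$ and $G_1$ is the unique non-compact almost $k$-simple factor of $G$, we have rank$_k G_1\geq 1$. Goodness of $\rho$ forces $\rho(G_1)$ to have no non-zero fixed vectors in $V$; as $G_1$ is simple, \emph{good} and \emph{excellent} coincide for the induced $G_1$-representation, so Theorem~\ref{th:11} applies to $G_1\ltimes_\rho V$. Given a unitary representation $\pi$ of $G\ltimes_\rho V$ without non-trivial $V$-fixed vectors, the restriction $\pi|_{G_1\ltimes_\rho V}$ retains this property. Hence with $m$ the smallest integer satisfying $2m\geq p'$,
\[
\bigl|\langle \pi(g)v,w\rangle\bigr|\leq \dim\langle K_1v\rangle^{1/2}\dim\langle K_1w\rangle^{1/2}\,\Xi_{G_1}^{1/m}(g)
\]
for every $g\in G_1$ and every pair of $K_1$-finite vectors $v,w$.

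Next, fix a unit vector $v$ in the representation space and set $\varepsilon := \max\bigl(\sup_{k\in K_1}\|\pi(k)v-v\|,\,\|\pi(h)v-v\|\bigr)$. Let $v_0 := \int_{K_1}\pi(k)v\,dk$ be the orthogonal projection of $v$ onto the subspace of $K_1$-invariants. Convexity of the norm gives $\|v-v_0\|\leq\varepsilon$ and hence $\|v_0\|\geq 1-\varepsilon$. Since $v_0$ is $K_1$-invariant, $\dim\langle K_1v_0\rangle = 1$, and the bound above yields $|\langle \pi(h)v_0,v_0\rangle|\leq \Xi_{G_1}^{1/m}(h)\|v_0\|^2$. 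Expanding $\|\pi(h)v_0-v_0\|^2 = 2\|v_0\|^2-2\,\mathrm{Re}\,\langle \pi(h)v_0,v_0\rangle$ gives
\[
\|\pi(h)v_0 - v_0\|\geq (1-\varepsilon)\sqrt{2\bigl(1-\Xi_{G_1}^{1/m}(h)\bigr)}.
\]
On the other hand, $\|\pi(h)v_0-v_0\|\leq \|\pi(h)(v_0-v)\| + \|\pi(h)v - v\| + \|v-v_0\|\leq 3\varepsilon$. Combining the two inequalities and solving for $\varepsilon$ recovers
\[
\varepsilon \geq \frac{\sqrt{2\bigl(1-\Xi_{G_1}^{1/m}(h)\bigr)}}{3+\sqrt{2\bigl(1-\Xi_{G_1}^{1/m}(h)\bigr)}},
\]
which is precisely the constant claimed in the proposition.

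The main technical hurdle lies in the reduction step: verifying that the hypotheses of Theorem~\ref{th:11} transfer cleanly from $G$ to its non-compact simple factor $G_1$, i.e.\ that irreducibility together with goodness of $\rho$ makes $\rho|_{G_1}$ excellent as a $G_1$-representation, that the parameter $p'$ from \eqref{for:53} really is the one associated to $(G_1,V,\Phi_1)$, and that restricting $\pi$ preserves absence of non-trivial $V$-fixed vectors. Once these ingredients are in place, the remainder is a routine triangle-inequality and Cauchy--Schwarz calculation.
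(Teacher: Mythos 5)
Your proposal is correct and follows essentially the same route as the paper: average $v$ over $K_1$ to obtain a $K_1$-invariant vector $v_0$, invoke the pointwise bound $\bigl|\langle\pi(h)v_0,v_0\rangle\bigr|\leq\Xi_{G_1}^{1/m}(h)\|v_0\|^2$, and play the resulting lower bound on $\|\pi(h)v_0-v_0\|$ against the triangle-inequality upper bound $3\varepsilon$; the paper phrases this as a proof by contradiction in Proposition \ref{po:8} (with formula \eqref{for:54}) rather than solving for $\varepsilon$ directly, but the computation and constant are identical. The only small imprecision is attributing the exponent to Theorem \ref{th:11} alone: the improved value $p'$ comes from the rank-one example in Section \ref{sec:19} (via \eqref{for:53} and Corollary \ref{cor:4}), not from the exponent $p_{(G,V,\Phi)}$ of Theorem \ref{th:11} itself, though you correctly flag this as a point to verify.
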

Let $G$ be a real semisimple connected Lie group  of $\RR$-rank $\ge 2$ without compact factors and with finite center and $\Gamma$
a cocompact torsion free lattice in $G$, which admits a representation $\rho:\Gamma\rightarrow
SL(N,\ZZ)$ without non-trivial invariant subspace with eigenvalue $1$. By Margulis' superrigidity theorem
\cite{Margulis},  the
representation $\rho$ of $\Gamma$ extends to a homomorphism
$G\rightarrow SL(n,\RR)$ (otherwise we consider a finite cover of $G$).  Combined with some results of A. Katok and R. Spatzier in  \cite{Spatzier1} and \cite{Spatzier}, Theorem \ref{th:11} in the case of archimedean fields yields an application in obtaining
tame estimates of cocycle equations on $s$ order Sobolev space $H^s(G\ltimes \RR^N/\Gamma\ltimes\ZZ^N)$ \cite{Zhenqi}.

\noindent{\bf Acknowledgements.} I would like to thank  Professor Roger Howe for
valuable suggestions and for his encouragement.

\section{Preliminaries on almost $k$-algebraic groups}
We denote by $\mathfrak{g}$ the Lie algebra of $G$ (resp. $\tilde{G}$) when $k$ is archimedean (resp. non-archimedean). Let $\mathfrak{g}_k$ be the $k$-Lie algebra. Throughout the paper $V$ will denote a finite-dimensional vector space over $k$.
\subsection{Unipotent, nilpotent elements and exponential map}\label{sec:3} Let $S$ be a connected $k$-linear algebraic group. Denote by $S^u$ the set of unipotent elements of $S$
and $\mathfrak{s}^{(n)}$ the subspace of nilpotent elements of $\mathfrak{s}$ where $\mathfrak{s}$ is the Lie algebra of $S$. Then  $x\in \mathfrak{s}^{(n)}$
implies that
\begin{align}\label{for:50}
    \exp x:=\sum_{i\geq 0}(i!)^{-1}x^i
\end{align}
belongs to $S^u$. Conversely, if $g\in S^u$, then the logarithm
\begin{align}\label{for:62}
    \ln g:=\sum_{i>0}(-i)^{-1}(1-g)^i
\end{align}
belongs to $\mathfrak{s}^{(n)}$. The set $S^u$ and $\mathfrak{s}^{(n)}$ are $k$-subvarieties in $S$ and $\mathfrak{s}$ respectively. The maps
$\exp :\mathfrak{s}^{(n)}\rightarrow S^u$ and $\ln: S^u\rightarrow \mathfrak{s}^{(n)}$ are inverses of each other, biregular and defined over $k$ (see \cite[Chapter 0.20]{Margulis}).
\begin{remark}
The condition char $k=0$ is necessary in guaranteeing the existence of exponential map (resp. logarithm map) of nilpotent elements (resp. unipotent elements) in $\mathfrak{s}$ (resp. group $S$).
\end{remark}
Under an algebraic group morphism the image of any unipotent element is unipotent and every unipotent $k$-algebraic group over a field of
characteristic $0$ is connected. Moreover, we have:
\begin{proposition}(\text{see }\cite{Margulis})\label{po:4}
If $\alpha:S\rightarrow S'$ is a $k$-group morphism, then for each $u\in S^u$ we have $\alpha(u)=\exp\big(d\alpha(\ln u)\big)$,
where $d\alpha:\mathfrak{s}\rightarrow \mathfrak{s}'$ is the differential of $\alpha$ and $\mathfrak{s}'$ is Lie algebra
of $S'$.
\end{proposition}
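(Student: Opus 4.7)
The plan is to reduce the proposition to showing $\alpha(\exp x)=\exp(d\alpha(x))$ for every nilpotent $x\in\mathfrak{s}^{(n)}$; applying this to $x=\ln u$ and invoking the biregular bijection $\exp\colon\mathfrak{s}^{(n)}\to S^u$ recalled in Section \ref{sec:3} will yield the stated formula. The mechanism will be to identify both sides with the value at $t=1$ of a suitable one-parameter subgroup $\mathbb{G}_a\to S'$.

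First I would verify that $\varphi_x\colon\mathbb{G}_a\to S$, $t\mapsto\exp(tx)$, is a $k$-morphism of algebraic groups. Fixing a faithful $k$-rational embedding $S\hookrightarrow\GL(W)$, nilpotence of $x$ truncates the defining series \eqref{for:50} to a polynomial in $t$, so $\varphi_x$ is algebraic and defined over $k$; the homomorphism property comes from the formal identity $\exp(sx)\exp(tx)=\exp((s+t)x)$, which is valid in characteristic $0$ because $sx$ and $tx$ commute and the exponentials are finite sums.

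Next I would study the composition $\psi:=\alpha\circ\varphi_x\colon\mathbb{G}_a\to S'$. Because $\alpha$ carries unipotent elements to unipotent elements, $\psi(t)\in(S')^u$ for every $t$, so by \eqref{for:62} the element $\ln\psi(t)\in(\mathfrak{s}')^{(n)}$ is well defined and depends polynomially on $t$. Since $\mathbb{G}_a$ is abelian and connected, the image of $\psi$ lies in a connected commutative unipotent $k$-subgroup of $S'$; on such a subgroup $\ln$ is a $k$-group isomorphism onto the additive group of its Lie algebra, so the map $t\mapsto\ln\psi(t)$ is an algebraic homomorphism $\mathbb{G}_a\to(\mathfrak{s}')^{(n)}$ and hence is $k$-linear in $t$. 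Writing $\ln\psi(t)=ty$ and differentiating at $t=0$, the chain rule identifies $y=d\alpha(d\varphi_x(1))=d\alpha(x)$. Setting $t=1$ gives $\alpha(\exp x)=\exp(d\alpha(x))$, completing the reduction.

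The main obstacle is the linearization step: one must know that on a connected commutative unipotent $k$-group the logarithm is a $k$-group isomorphism onto its Lie algebra, which forces any algebraic one-parameter subgroup into it to be linear in $t$. This is standard but relies essentially on char $k=0$ (the truncation of the exponential and logarithm series to polynomials, and the Campbell--Hausdorff/commutative simplification) and on $\alpha$ preserving unipotency, which was noted just before the proposition. Once granted, everything else reduces to a routine computation of differentials at the identity.
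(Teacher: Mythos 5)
The paper does not actually supply a proof of this proposition: it is quoted directly from Margulis with only a citation. So there is no in-paper argument to compare against, and what matters is simply whether your argument is a valid standalone proof of the cited fact. It is.

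Your reduction to one-parameter subgroups is the standard route, and each step checks out. The map $\varphi_x(t)=\exp(tx)$ is a $k$-morphism $\mathbb{G}_a\to S$ because nilpotence of $x$ truncates \eqref{for:50} to a polynomial in $t$ and commutativity of $sx,tx$ gives the homomorphism identity; this is where char $k=0$ enters. Since $\alpha$ carries unipotents to unipotents (noted just before the proposition), $\psi=\alpha\circ\varphi_x$ takes values in $(S')^u$, and the Zariski closure of its image is a connected commutative $k$-subgroup consisting of unipotent elements, hence a unipotent $k$-group on which $\ln$ is a $k$-group isomorphism onto its Lie algebra. Composing, $t\mapsto\ln\psi(t)$ is an algebraic homomorphism $\mathbb{G}_a\to\mathfrak{s}'$, and in characteristic $0$ any such homomorphism is $k$-linear, so $\ln\psi(t)=ty$. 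The chain rule at $t=0$ identifies $y=d\alpha(x)$, and setting $t=1$ gives $\alpha(\exp x)=\exp(d\alpha(x))$, which applied to $x=\ln u$ is exactly the stated formula. This is essentially the argument one finds in the references the paper cites, so you have reconstructed the standard proof correctly.
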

\subsection{Cartan decomposition}\label{sec:2} Let $D$ (resp. $\tilde{D}$) be a maximal $k$-split Cartan subgroup (resp. $k$-split torus) in $G$ (resp. $\tilde{G}$) and
$B$ (resp. $\tilde{B}$) a minimal parabolic $k$-subgroup of $G$ (resp. $\tilde{G}$) containing $D$ (resp. $\tilde{D}$) when $k$ is archimedean (resp. non-archimedean). For the non-archimedean case, set $D=\tilde{D}(k)$ and $B=\tilde{B}(k)$. Let $X(D)$ (resp. $X(\tilde{D})$) denote the set of characters of $D$ (resp. characters of $\tilde{D}$ over $k$) whose ordering is induced from $B$ (resp. $\tilde{B}$). Denote by $ X^+$ the set of positive
characters in $X(D)$ (resp. $X(\tilde{D})$) with respect to that ordering.

When $k$ is archimedean, we set
\begin{align*}
    k^0=\{x\in\RR\mid x\geq 0\}\quad\text{ and }\quad\tilde{k}=\{x\in\RR\mid x\geq 1\}.
\end{align*}
When $k$ is non-archimedean, we fix a uniformizer $q$ of $k$ such that $\abs{q}^{-1}$ is the cardinality
of the residue field of $k$, and set
\begin{align*}
    k^0=\{q^n\mid n\in\ZZ\}\quad\text{ and }\quad\tilde{k}=\{q^{-n}\mid n\in\NN\}.
\end{align*}
We set
\begin{gather*}
  D^0=\{d\in D\mid \chi(d)\in k^0\text{ for each }\chi\in X(D)\,\,(\text{resp. }X(\tilde{D}))\};\text{ and }\\
   D^+=\{d\in D\mid \chi(d)\in \tilde{k}\text{ for each }\chi\in X^+\}.
\end{gather*}
Equivalently $D^+=\{d\in D^0\mid \abs{\chi(d)}\geq 1\}$ for each $\chi\in X^+$. We call
$D^+$ a positive Weyl
chamber of $G$.

Let $Z$ (resp. $\tilde{Z}$) denote the centralizer of $D$ (resp. $\tilde{D}$) in $G$ (resp. $\tilde{G}$) for $k$ archimedean (resp. $k$ non-archimedean) and set $Z=Z(k)$ for $k$ non-archimedean. Since $X(Z)$ (resp. $X(\tilde{Z})$) can be considered
as a subset of $X(D)$ (resp. $X(\tilde{D})$) in a natural way, it has an induced ordering from this inclusion. Define
\begin{gather*}
  Z_+=\{z\in Z\mid \abs{\chi(z)}\geq 1\text{ for each }\chi\in X(Z)^+\};\text{ and }\\
   Z_0=\{z\in Z\mid \abs{\chi(z)}=1\text{ for each }\chi\in X(Z)^+\}.
\end{gather*}

For any subgroup $S$ of $G$, $N_G(S)$ denotes the normalizer of $S$, $C_G(S)$ denotes the
centralizer of $S$ and $Z(S)$ denotes the center of $S$.
\begin{lemma} \label{le:5} There exists a maximal compact subgroup $K$ of $G$ such that
\begin{enumerate}
  \item $N_G(D)\subseteq KD$,
  \item the Cartan decomposition $G = K(Z_+/Z_0)K$ holds, in the sense that for any $g\in G$, there are elements
$z\in Z_+$ (unique up to mod $Z_0$) such that $g\in KzK$.
\end{enumerate}
\end{lemma}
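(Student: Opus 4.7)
The plan is to treat the archimedean and non-archimedean cases separately, though with a common architecture: construct a maximal compact $K$ simultaneously adapted to the split torus $D$, read off the Cartan decomposition for (2), and then deduce the normalizer inclusion (1) as a corollary.

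In the archimedean setting, I would select a Cartan involution $\theta$ of $G$ whose $(-1)$-eigenspace in $\mathfrak{g}$ contains the Lie algebra $\mathfrak{a}$ of $D$; such a $\theta$ exists by the $G$-conjugacy of maximal $\RR$-split Cartan subalgebras, and I would then take $K = G^\theta$, which is a maximal compact subgroup by Cartan's fixed-point theorem. The classical $KAK$ decomposition of Mostow--Helgason gives $G = K D^+ K$ after identifying the positive chamber of $\mathfrak{a}$ with (the identity component of) $D^+$. The centralizer $Z = C_G(D)$ decomposes as $Z = M \cdot D^\circ$, where $M$ is the compact part of $Z$ fixed by $\theta$; standard structure theory then shows $M = Z_0 \subseteq K$. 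Hence $Z_+/Z_0$ is represented canonically by $D^+$ and assertion~(2) reduces to the $KAK$ decomposition.

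For the non-archimedean case, I would invoke Bruhat--Tits theory: let $\mathcal{B}$ denote the Bruhat--Tits building of $\tilde G$ and choose a special vertex $x_0$ in the apartment attached to $\tilde D$. Take $K$ to be the $k$-points of the stabilizer of $x_0$; then $K$ is a good maximal compact subgroup, and by construction $K \cap Z$ coincides with the unique maximal bounded subgroup $Z_0$ of $Z$. The Cartan decomposition for $p$-adic reductive groups (Bruhat--Tits, as reproduced in Tits' Corvallis article or Macdonald) yields $G = K \cdot Z_+ \cdot K$ with $z \in Z_+$ unique modulo~$Z_0$, which is exactly assertion~(2). To obtain assertion~(1) in either case, I use that the relative Weyl group $N_G(D)/Z$ is realized inside~$K$: in the archimedean case by a standard lifting argument exploiting $\theta$-stability of $D$, and in the non-archimedean case because the spherical Weyl group acts on the apartment through~$x_0$ and hence lifts to elements of $K$. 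Any $n \in N_G(D)$ therefore factors as $n = kz$ with $k \in N_K(D) \subseteq K$ and $z \in Z$, and since $Z = Z_0 \cdot D \subseteq K \cdot D$, we conclude $n \in KD$.

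The main obstacle lies in the non-archimedean half: one must verify both $Z_0 \subseteq K$ and the decomposition $Z = Z_0 \cdot D$, where the subtlety is that $\tilde Z$ is generally strictly larger than $\tilde D$ (when $\tilde G$ is not $k$-split) and has its own anisotropic kernel. Both points rest on a judicious choice of special vertex --- one whose parahoric captures the anisotropic kernel of $Z$ within its bounded part --- and on the Bruhat--Tits structure theorems for the centralizer of a maximal split torus. Once these are in place, the remaining verifications are routine, and the lemma becomes a compilation of standard structural facts adapted uniformly across both types of local field.
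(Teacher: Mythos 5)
The paper offers no proof of this lemma; it simply cites Helgason for archimedean $k$ and Borel--Tits and Silberger for non-archimedean $k$, so your proposal is supplying an argument that the paper omits. The architecture you adopt --- a Cartan involution and the $KAK$ decomposition over $\RR$ or $\CC$, Bruhat--Tits theory and the $p$-adic Cartan decomposition over a non-archimedean field --- is exactly what those references develop, and the archimedean half together with assertion~(2) of the non-archimedean half is fine as sketched.

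The gap is in assertion~(1) over a non-archimedean field. You correctly reduce it to the identity $Z = Z_0 D$ and flag this as the crux, but then propose to secure it by choosing the special vertex so that its parahoric "captures the anisotropic kernel of $Z$." That misplaces the difficulty. The anisotropic kernel of $Z$ is automatically compact (anisotropic reductive groups over a non-archimedean local field have compact groups of rational points), so it already lies in $Z_0$ and hence in every good $K$ for free. What genuinely fails is surjectivity of the natural map $D/(D\cap Z_0)\to Z/Z_0$; this is in general only an inclusion of finite index, and no choice of special vertex affects it, because $Z_0$ is canonically the unique maximal compact subgroup of $Z$ (the kernel of the valuation map $Z\to \ZZ^{r}$) and so equals $K\cap Z$ for every good $K$. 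Concretely, for $G=SU_3$ over a ramified quadratic extension $k'/k$ one has $Z=T\cong (k')^{\times}$, $Z_0=\mathcal{O}_{k'}^{\times}$, and $Z_0 D$ is an index-$2$ subgroup of $Z$; since $Z\subseteq N_G(D)$ and $KD\cap Z = (K\cap Z)D = Z_0 D$, the inclusion $N_G(D)\subseteq KD$ fails for every maximal compact $K$. The same happens for $SL_n(D)$ with $D$ a central division algebra over $k$. So the step $Z=Z_0 D$ cannot be rescued by vertex selection; what Bruhat--Tits theory actually gives is that the relative Weyl group is represented inside $K$, i.e.\ $N_G(D)\subseteq K\cdot Z$, and the stronger inclusion $N_G(D)\subseteq KD$ as printed requires additional hypotheses (split, or quasi-split and unramified), paralleling the remark the paper makes immediately after the lemma about when $G=KD^{+}K$.
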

See \cite{Helgason} for archimedean case and see \cite{Borel} and \cite{Si} for non-archimedean
case. In general, the positive Weyl chamber $D^+$ has finite index in $Z_+/Z_0$. Hence
for some finite subset $F\subset C_{G}(D)$, $G=K(D^+F)K$, i.e., for any $g\in G$, there exist unique
elements $d\in D^+$ and $\omega\in F$ such that $g\in Kd\omega K$.

A maximal compact subgroup $K$ is
called a \emph{good} maximal compact subgroup of $G$ if it satisfies the properties listed in the
above lemma.
\begin{remark}
We have $G=KD^+K$ if $k$ is archimedean or; $G$ is split over $k$ or; if $G$ is quasi-split or split over an unramified extension over a non-archimedean
local field $k$.
\end{remark}

\subsection{Roots and weights relative to a $k$-split torus} \label{sec:8}
Let $\rho$ be a normal representation of $G$ on $V$. A character $\chi\in X(D)$ (resp. $\chi\in X(\tilde{D})$) is said to be a \emph{weight} of $D$ in the
representation $(\rho,V)$ for $k$ archimedean (resp. non-archimedean) if there exists a non-zero vector $v\in V$ such that
\begin{align}\label{for:101}
\rho(c)v=\chi(c)v\quad \text{ for all }c\in D;
\end{align}
and the corresponding \emph{weight space} $V_\chi$ of $\chi$ is described as all vectors in $V$ satisfying \eqref{for:101}.

Since char$(k)=0$, then by full reducibility of semisimple groups there is a decomposition of $V$ under $\rho$:
$V=\sum_{\mu\in\Phi_1} V_\mu$ where $V_\mu$ is the weight space of $\mu$ and $\Phi_1$ is the set of weights of $D$.
Notice that if char$(k)\neq 0$ then full reducibility fails.

Non-trivial characters of $X(D)$ (resp. $X(\tilde{D})$) in the adjoint representation of $G$  are said to be the \emph{roots} of $G$ for $k$ archimedean (resp. non-archimedean). Denote the set of roots by $\Phi$. For each $\omega\in\Phi$ let $\mathfrak{g}_\omega$ be the corresponding
root space, i.e.,
\begin{align*}
\mathfrak{g}_\omega=\{v\in\mathfrak{g}_k: \text{Ad}(d)(v)=\omega(d)v,\quad \forall d\in D \}.
\end{align*}

\begin{lemma}\label{le:4} Let $\rho$ be a normal representation of $G$, then the following hold:
\begin{enumerate}
  \item If $\ker(\rho)\bigcap G_s\subset Z(G)$ then every root of $G$ is a rational combination of weights of $G$ \label{for:35};
  \item the sum of all weights of $\rho$ is trivial\label{for:34}.
 \end{enumerate}
\end{lemma}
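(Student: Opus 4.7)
My plan is to handle the two claims separately, with part (2) being the quick computation and part (1) requiring a duality argument.

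For part (2), the key observation is that $\det\circ\rho: G\to k^*$ is a rational (continuous, in the archimedean case) character of $G$. Since $G$ is connected semisimple, any such character is trivial---equivalently, $\rho(G)$, as a semisimple subgroup of $\GL(V)$, already lies in $\SL(V)$. Restricting to $D$ and using the weight-space decomposition $V = \bigoplus_{\mu\in\Phi_1} V_\mu$, we have $\det\rho(d) = \prod_\mu \mu(d)^{\dim V_\mu}$ for every $d\in D$, so the identity $\det\rho\equiv 1$ on $D$ translates to $\sum_\mu (\dim V_\mu)\,\mu = 0$ in $X(D)$, which is the claim.

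For part (1), I would first note that since $G_c$ is $k$-anisotropic (compact in the archimedean case), it contains no nontrivial $k$-split torus, and hence $D\subset G_s$ (at least up to a central, irrelevant factor). The hypothesis $\ker(\rho)\cap G_s \subset Z(G)$ then yields $\ker(\rho)\cap D \subset Z(G)\cap D$. From the weight-space decomposition, $\ker(\rho)\cap D = \{d\in D:\mu(d)=1\text{ for all }\mu\in\Phi_1\}$, while $Z(G)\cap D = \ker(\Ad\restriction_D) = \{d\in D:\alpha(d)=1\text{ for all }\alpha\in\Phi\}$. So the common kernel of the weights is contained in the common kernel of the roots.

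The final step is to convert this inclusion of subgroups of $D$ into the desired inclusion of $\QQ$-spans of characters. In the archimedean case I would pass to the Lie algebra $\mathfrak{d}$ and the differentials $d\mu, d\alpha$: the common zero locus of $\{d\mu\}$ in $\mathfrak{d}$ sits inside that of $\{d\alpha\}$, and linear duality in $\mathfrak{d}^*$ then forces the $\QQ$-span of the roots into the $\QQ$-span of the weights. The main technical point, which I expect to be the trickiest bookkeeping, is justifying the analogous duality in the non-archimedean setting: there I would instead work on the algebraic torus $\tilde{D}$ and use that the weights generate a subgroup of $X(\tilde D)$ of finite index in $\{\chi\in X(\tilde D):\chi\restriction_{\ker\rho\cap \tilde D}=1\}$, with the analogous statement for roots relative to $Z(\tilde G)\cap \tilde D$, so that after tensoring with $\QQ$ the containment of kernels reverses to the desired containment of character lattices.
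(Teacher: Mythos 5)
Your approach to both parts is genuinely different from the paper's, so let me compare.

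\textbf{Part (2).} The paper proves this by a Weyl-group argument: the sum of the weights lies in the Weyl-invariant subspace $E_{in}$ of $E=X(D)\otimes\RR$, and one shows $E_{in}=0$ by observing that each root $\omega$ satisfies $s_\omega(\omega)=-\omega$ and hence lies in $E_{in}^\perp$, forcing $\Phi\subset E_{in}^\perp$ and therefore $E\subset E_{in}^\perp$. Your determinant argument is cleaner and more standard, but note that $\det\rho\equiv 1$ gives $\sum_\mu (\dim V_\mu)\,\mu = 0$, i.e.\ the sum of the weights \emph{counted with multiplicity}, whereas the paper's statement --- and the only place it is actually used, namely the inequality $\sum_{\psi\in\Phi_1}\psi(\ln a)=0$ in \ref{sec:17} with $\Phi_1$ the \emph{set} of weights --- is the unweighted sum. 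When the weight multiplicities are not all equal (e.g.\ the adjoint representation of $SL(3)$, where the zero weight has multiplicity $2$), these are formally different statements. Both are in fact true, but your argument as written only yields one of them; the Weyl-invariance argument, by contrast, applies verbatim to the unweighted sum because the Weyl group permutes the set of weights. So this is a small gap: either supply the Weyl-invariance argument, or note explicitly why the multiplicity-weighted version suffices for the estimate in \ref{sec:17}.

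\textbf{Part (1).} The paper's proof is by weight ladders: for each root $\omega$ it picks a nontrivial unipotent one-parameter subgroup $U_\omega\subset G_s$, uses $\ker\rho\cap G_s\subset Z(G)$ to see that $\rho(U_\omega)$ is nontrivial, and then the standard fact that $d\rho(\mathfrak{g}_\omega)V_\chi\subset V_{\chi+\omega}$ produces two weights $\chi$ and $\chi+j\omega$ with $j>0$, giving $\omega$ as a rational (indeed integral) combination of weights. Your duality argument --- $\bigcap_\mu\ker(d\mu)\subset\bigcap_\alpha\ker(d\alpha)$ on $\mathfrak{d}$, then annihilators reverse inclusions --- is a valid alternative, and you correctly flag the two things that must be checked: that $\mathfrak{d}\subset\mathfrak{g}_s$ (true, since $G_c$ is anisotropic and so contains no nontrivial split subtorus), and the passage from containment of kernels to containment of $\QQ$-spans, especially in the non-archimedean case. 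In fact you can simplify: since the paper's standing hypothesis makes $Z(G)$ finite, the inclusion $\ker\rho\cap D^0\subset Z(G)$ already forces $\ker\rho\cap D^0$ finite, so $\bigcap_\mu\ker(d\mu)=0$ and the weights span all of $\mathfrak{d}^*\otimes\QQ$; one then does not even need to identify $Z(G)\cap D$ with the common kernel of the roots. The trade-off is that the paper's weight-ladder argument yields the finer information that each root is a $\ZZ$-combination (a difference) of two weights, which the duality argument does not give directly.
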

\begin{proof} \eqref{for:35} Let $\Phi$ be the set of roots. By full reducibility of $G$, there is a decomposition of $V$ under $\rho$: $V=\sum_{\mu\in\Phi_1} V_\mu$ where $V_\mu$ is the weight space of $\mu$ and $\Phi_1$ is the set of weights.

For each $\omega\in\Phi$ choose $0\neq u\in\mathfrak{g}_\omega$ and let $U_\omega$ denote the one-parameter subgroup $\exp(tu)$, $t\in k$. Since $k$ is infinite \eqref{for:50} and \eqref{for:62} yield that the subgroup $U_\omega$ is infinite. The assumption $\ker(\rho)\bigcap G_s\subset Z(G)$   implies that  there exists $\chi\in\Phi_1$ such that $\rho(U_\omega)$ is nontrivial on $V_\chi$ and  maps  $V_\chi$ into $\Sigma_{j\geq 0} V_{\chi+j\omega}$ \cite{humph}. Then it follows that $$j\omega=(\chi+j\omega)-\chi\qquad\text{where } 0\neq j\in\NN.$$
Then we finish the proof for \eqref{for:35}.

\eqref{for:34} Let $E=X(D)\otimes\RR$ (resp. $E=X(\tilde{D})\otimes\RR$), then $\Phi$ is a root system of $E$ and
the factor group $N_G(D)/Z_G(D)$ (resp. $N_{\tilde{G}}(\tilde{D})/Z_{\tilde{G}}(\tilde{D})$ ) coincides  with the weyl group of the root system $\Phi$ for $k$ archimedean (resp. non-archimedean) (see \cite{Helgason} and \cite{Margulis}). Moreover, there exists $W\subset N_{\tilde{G}}(\tilde{D})(k)$ be a complete set of representatives for $k$ non-archimedean \cite[Chapter 0.27]{Margulis}.

It is clear that the sum of all weights in $E$ is invariant under the action of the Weyl group. To prove \eqref{for:34}, it is sufficient to prove the following statement:  the only element in $E$ invariant under the Weyl group is $0$. Let $E_{in}$ be the subset of $E$ containing all vectors invariant under the Weyl group. It is obvious that $E_{in}$ is a subspace. Suppose $E_{in}\neq 0$.  Note that there exists
a positive definite inner product on $E$ invariant under the Weyl group \cite[Chapter 0.26]{Margulis}. Denote by $E^\bot_{in}$ the orthogonal complement of $E_{in}$
under the inner product. Then $E^\bot_{in}$ is also invariant under the Weyl group. For any $\omega\in\Phi$, we have a unique decomposition $\omega=\omega_1+\omega_2$ where $\omega_1\in E_{in}$ and $\omega_2\in E^\bot_{in}$. There exists $w$ in the Weyl group such that $w(\omega)=-\omega$, then we have
\begin{align*}
-\omega_1-\omega_2=-\omega= w(\omega)=w(\omega_1)+w(\omega_2)=\omega_1+w(\omega_2)
\end{align*}
By uniqueness of the decomposition, it follows that $\omega_1=0$, that is, $\omega\in E^\bot_{in}$. Then immediately we find that $\Phi\subset E^\bot_{in}$, which implies that $E\subset E^\bot_{in}$. Then $E_{in}=0$ which contradicts the assumption. Hence our claim is proved.
\end{proof}
\begin{remark}
The condition $\ker(\rho)\bigcap G_s\subset Z(G)$ is weaker than the condition $\rho$ is excellent. If $\rho$ is irreducible, then the two conditions are equivalent.
\end{remark}

Next, we will give a detailed description of good maximal compact subgroups for different $k$.
\section{Good maximal compact subgroups in $G$}\label{sec:4}
\subsection{Maximal compact subgroups in $G$ when $k=\CC$}\label{sec:5}  For each $\omega\in \Phi$ there exists $X_\omega\in\mathfrak{g}_\omega$ such that the following $\RR$-subspace
\begin{align}\label{for:92}
    \mathcal{K}&=\sum_{\omega\in\Phi^+}\RR(\textrm{i}[X_\omega,X_{-\omega}])+\sum_{\omega\in\Phi^+}\RR(X_\omega+X_{-\omega})\notag\\
    &+\sum_{\omega\in\Phi^+}\RR(\textrm{i}(X_\omega-X_{-\omega}))
\end{align}
is a compact $\RR$-subalgebra and the $\RR$-Lie group $K$ in $G$ with Lie algebra $\mathcal{K}$ is a maximal compact subgroup in $G$ \cite[Chapter III]{Helgason}. Let
\begin{align}\label{for:91}
\mathfrak{k}&=\{X_\omega+X_{-\omega}:\omega\in\Phi^+\},\quad \mathfrak{u}^+=\{X_\omega:\omega\in\Phi^+\}\quad\text{ and }\notag\\
\mathfrak{u}^-&=\{X_\omega:-\omega\in\Phi^+\}.
\end{align}
\subsection{Maximal compact subgroups in $G$ when $k=\RR$}\label{sec:6} In this part, we follow the notations and quote the conclusions from \cite[Chapter VI]{Helgason} with minor modifications. If $\mathfrak{g}=\mathfrak{k}_0+\mathfrak{p}_0$ is a Cartan decomposition of $\mathfrak{g}$ and
$\mathfrak{k}_0$ is the set of fixed points of the corresponding Cartan involution, then the Lie subgroup $K$ in $G$ with Lie algebra $\mathfrak{k}_0$ is a maximal compact subgroup of $G$. Let $\mathfrak{g}_{\CC}$ be the complexification of $\mathfrak{g}$, put $\mathfrak{u}=\mathfrak{k}_0+i\mathfrak{p}_0$ and let $\tau_1$ and $\tau_2$
denote the conjugations of $\mathfrak{g}_{\CC}$ with respect to $\mathfrak{g}$ and $\mathfrak{u}$. The automorphism of $\tau_1\tau_2$ of $\mathfrak{g}_{\CC}$ will be denoted by $\vartheta$.

Let $\mathcal{D}_{\mathfrak{p}_0}$ denote any maximal abelian
subspace of $\mathfrak{p}_0$ and let $\mathcal{D}_0$ be any maximal abelian subalgebra of $\mathfrak{g}$ containing $\mathcal{D}_{\mathfrak{p}_0}$. Obviously
$\mathcal{D}_{\mathfrak{p}_0}=\mathcal{D}_0\bigcap \mathfrak{p}_0$. We put $\mathcal{D}_{\mathfrak{k}_0}=\mathcal{D}_0\bigcap \mathfrak{k}_0$.
Let $\mathcal{D}$ denote the subspace of $\mathfrak{g}_{\CC}$ generated by $\mathcal{D}_0$. Then $\mathcal{D}$ is a Cartan subalgebra of $\mathfrak{g}_{\CC}$. Let $\mathcal{D}^*=\mathcal{D}_{\mathfrak{p}_0}+i\mathcal{D}_{\mathfrak{k}_0}$.  We denote by $\Psi(\mathcal{D},\,\mathfrak{g}_{\CC})$ the set of nontrivial roots of $(\mathfrak{g}_{\CC},\,\mathcal{D})$ and denote by $\mathfrak{g}_{\CC}^\omega$ the corresponding root space of $\omega$ in $\mathfrak{g}_{\CC}$ for each $\omega\in\Psi(\mathcal{D},\,\mathfrak{g}_{\CC})$. Since each root $\omega\in\Psi(\mathcal{D},\,\mathfrak{g}_{\CC})$ is real valued on $\mathcal{D}^*$ we get in this way an ordering of $\Psi(\mathcal{D},\,\mathfrak{g}_{\CC})$.  Let $\Psi^+$ denote the set of positive roots. Now for each $\omega\in\Psi(\mathcal{D},\,\mathfrak{g}_{\CC})$ the linear function $\omega^{\tau_1}$, $\omega^{\tau_2}$, and $\omega^{\vartheta}$
defined by
\begin{align*}
&\omega^{\tau_1}(h)=\overline{\omega(\tau_1h)},\quad \omega^{\tau_2}(h)=\overline{\omega(\tau_2h)},\quad \omega^{\vartheta}(h)=\omega(\vartheta h)
\end{align*}
for any $h\in\mathcal{D}$ are again members of $\Psi(\mathcal{D},\,\mathfrak{g}_{\CC})$. The root $\omega$ is trivial on $\mathcal{D}_{\mathfrak{p}_0}$ if and only if $\omega=\omega^\vartheta$. We divide the positive
roots in two classes as follows:
\begin{align*}
    P_+=\{\omega:\omega\in\Psi^+,\omega\neq \omega^\vartheta\}\,\text{ and }\, P_-=\{\omega:\omega\in\Psi^+,\omega=\omega^\vartheta\}.
\end{align*}
Define
\begin{align*}
P_+^1=\{\omega\in P_+:\omega\neq \omega^{\tau_1}\}\quad\text{ and }\quad P_+^2=\{\omega\in P_+:\omega=\omega^{\tau_1}\}.
\end{align*}
For $\omega\in \Psi(\mathcal{D},\,\mathfrak{g}_{\CC})$ let $\mathfrak{g}^\omega_{\RR}$ be the real vector space spanned by
\begin{align*}
\{x_\omega+\tau_1x_\omega:x_\omega\in \mathfrak{g}^\omega_{\CC}\}.
\end{align*}
It is clear that $\mathfrak{g}^\omega_{\RR}=\mathfrak{g}^{\psi}_{\RR}$ if $\psi=\omega^{\tau_1}$. Furthermore, $\dim \mathfrak{g}^\omega_{\RR}=2$ if $\omega\in P_+^1$ and $\dim \mathfrak{g}^\omega_{\RR}=1$ if $\omega\in P_+^2$.

The following follows from (the proof) of Theorem $3.4$ in \cite[Chapter VI]{Helgason}:
\begin{proposition}\label{po:5}
Let $\mathfrak{n}=\sum_{\omega\in P_+}\mathfrak{g}_{\CC}^\omega$, $\mathfrak{n}_0=\mathfrak{n}\bigcap\mathfrak{g}$, then
\begin{enumerate}
  \item $\mathfrak{g}=\mathfrak{k}_0+\mathcal{D}_{\mathfrak{p}_0}+\mathfrak{n}_0,\quad\text{direct vector space sum.}$

  \item For any $X_\omega\in\mathfrak{g}_{\RR}^\omega$, $X_\omega\in \mathfrak{k}_0$ if $\pm\omega\in P_-$; and $X_\omega+\tau_2X_\omega\in\mathfrak{k}_0$ if $\pm\omega\in P_+$.

  \item $\omega^{\tau_1}=\omega$ on $\mathcal{D}_{\mathfrak{p}_0}$; and $\omega^{\tau_2}=-\omega$ for any $\omega\in\Psi(\mathcal{D},\,\mathfrak{g}_{\CC})$.
\end{enumerate}
\end{proposition}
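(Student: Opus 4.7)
The plan is to prove the three claims in reverse order: (3) pins down how $\tau_1,\tau_2$ act on the root data, (2) follows from (3) by checking $\tau_2$-invariance, and (1) is then obtained by intersecting the complex root-space decomposition of $\mathfrak{g}_{\CC}$ with $\mathfrak{g}$. At a high level the proposition is simply the Iwasawa decomposition of $\mathfrak{g}$ together with bookkeeping for the two conjugations, so the work is largely organizational.

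For (3) I would use that $\tau_1$ has fixed locus $\mathfrak{g}$ while $\tau_2$ has fixed locus $\mathfrak{u}=\mathfrak{k}_0+i\mathfrak{p}_0$. Since $\mathcal{D}_{\mathfrak{p}_0}\subset \mathfrak{g}$, $\tau_1$ fixes it pointwise, and every root is real-valued on $\mathcal{D}^*$ by construction, so $\omega^{\tau_1}(h)=\overline{\omega(h)}=\omega(h)$ for $h\in \mathcal{D}_{\mathfrak{p}_0}$. For the second identity, antilinearity of $\tau_2$ combined with $\tau_2|_{\mathfrak{k}_0}=\mathrm{id}$ and $\tau_2|_{\mathfrak{p}_0}=-\mathrm{id}$ yields $\tau_2 h=-h$ for every $h\in \mathcal{D}^*=\mathcal{D}_{\mathfrak{p}_0}+i\mathcal{D}_{\mathfrak{k}_0}$, and the real-valuedness of $\omega$ on $\mathcal{D}^*$ then gives $\omega^{\tau_2}(h)=\overline{-\omega(h)}=-\omega(h)$; both sides are complex linear on $\mathcal{D}$, so the identity extends.

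For (2), the key observation is that $\mathfrak{k}_0=\mathfrak{g}\cap \mathfrak{u}$ is precisely the $\tau_2$-fixed subspace of $\mathfrak{g}$, while each $X_\omega\in \mathfrak{g}_{\RR}^\omega$ already lies in $\mathfrak{g}$ by the $\tau_1$-invariance built into the definition of $\mathfrak{g}_{\RR}^\omega$. Thus it suffices to check $\tau_2$-invariance. From (3), $\tau_2$ sends $\mathfrak{g}_{\CC}^\omega$ to $\mathfrak{g}_{\CC}^{-\omega}$. When $\pm\omega\in P_-$, the relation $\omega=\omega^\vartheta=\omega^{\tau_1\tau_2}$ together with the $\tau_1$-invariance of $\mathfrak{g}_{\RR}^\omega$ forces $\tau_2$ to preserve $\mathfrak{g}_{\RR}^\omega$ and act as the identity there, so $X_\omega\in \mathfrak{k}_0$. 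When $\pm\omega\in P_+$, we have $\tau_2 X_\omega\in \mathfrak{g}_{\RR}^{-\omega}\neq \mathfrak{g}_{\RR}^\omega$, hence $X_\omega+\tau_2 X_\omega$ is the generic $\tau_2$-fixed element in the two-dimensional subspace $\mathfrak{g}_{\RR}^\omega+\mathfrak{g}_{\RR}^{-\omega}$ and therefore lies in $\mathfrak{k}_0$.

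For (1) I would start with the complex root-space decomposition $\mathfrak{g}_{\CC}=\mathcal{D}+\sum_{\omega\in \Psi(\mathcal{D},\mathfrak{g}_{\CC})}\mathfrak{g}_{\CC}^\omega$ and intersect with $\mathfrak{g}$: the Cartan piece contributes $\mathcal{D}_0=\mathcal{D}_{\mathfrak{k}_0}+\mathcal{D}_{\mathfrak{p}_0}$; the $\pm P_-$ root spaces contribute to $\mathfrak{k}_0$ by (2); and each $\pm P_+$ pair contributes one $\tau_2$-fixed direction (absorbed into $\mathfrak{k}_0$) plus a complementary direction whose $P_+$-part is exactly $\mathfrak{n}_0$. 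Absorbing $\mathcal{D}_{\mathfrak{k}_0}$ into $\mathfrak{k}_0$ yields the claimed sum, and directness follows from $\mathfrak{p}_0\cap \mathfrak{k}_0=\{0\}$ together with a root-space comparison giving $\mathfrak{n}_0\cap (\mathfrak{k}_0+\mathcal{D}_{\mathfrak{p}_0})=\{0\}$. The main obstacle throughout is keeping careful track of how $\tau_1$, $\tau_2$ and $\vartheta$ permute the complexified root spaces; once (3) is established, (2) and (1) become essentially a rearrangement of the complex root space decomposition.
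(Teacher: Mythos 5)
The paper does not actually supply a proof of this proposition; it cites Helgason, Chapter VI, Theorem 3.4. Your plan — establish (3), deduce (2) from the $\tau_2$-action on root spaces, then assemble (1) by intersecting the root space decomposition with $\mathfrak{g}$ — is the same route Helgason takes, and part (3) is argued correctly and cleanly. However, two steps of your write-up are genuinely incomplete.

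The real gap is in the $P_-$ case of (2). You correctly note that for $\pm\omega\in P_-$, the relation $\omega^\vartheta=\omega$ forces $\tau_2$ to preserve the one- or two-dimensional real space $\mathfrak{g}_\RR^\omega$. But an involution preserving a real line acts as $+1$ or as $-1$; your argument as written gives no reason to exclude $-1$, and if $\tau_2=-1$ there you would land in $\mathfrak{p}_0$, not $\mathfrak{k}_0$. The missing ingredient is the observation that $\omega\in P_-$ means $\omega$ vanishes on $\mathcal{D}_{\mathfrak{p}_0}$, so $\mathfrak{g}_\CC^\omega$ lies in the complexified centralizer of $\mathcal{D}_{\mathfrak{p}_0}$; being transversal to the Cartan $\mathcal{D}$ it lies in $\mathfrak{m}_\CC$ (with $\mathfrak{m}$ the centralizer of $\mathcal{D}_{\mathfrak{p}_0}$ inside $\mathfrak{k}_0$), where $\vartheta$ acts as $+1$. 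Equivalently, one shows $\vartheta|_{\mathfrak{g}_\CC^\omega}=+1$ for $\omega\in P_-$ and only then concludes $\tau_2|_{\mathfrak{g}_\RR^\omega}=\mathrm{id}$. The $P_+$ part of (2), by contrast, is correct but more roundabout than necessary: since $\tau_1\tau_2=\tau_2\tau_1$ one has $\tau_2 X_\omega\in\mathfrak{g}$, and $X_\omega+\tau_2 X_\omega$ is manifestly $\tau_2$-fixed, hence in $\mathfrak{g}\cap\mathfrak{u}=\mathfrak{k}_0$; no appeal to a "generic fixed element" in a two-dimensional space is needed (and for $\omega\in P_+^1$ the relevant space is four-dimensional anyway). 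Finally, in (1) the decisive step — that $\mathfrak{n}_0\cap(\mathfrak{k}_0+\mathcal{D}_{\mathfrak{p}_0})=\{0\}$ — is only named as a "root-space comparison" without being carried out. The argument one actually needs is that $\vartheta$ sends the $P_+$ root spaces to root spaces whose restrictions to $\mathcal{D}_{\mathfrak{p}_0}$ are the negatives of the original ones, so $\mathfrak{n}_0\cap\vartheta\mathfrak{n}_0=0$, and an element of $\mathfrak{n}_0$ that also lies in $\mathfrak{k}_0+\mathcal{D}_{\mathfrak{p}_0}$ is $\vartheta$-semi-invariant in a way that forces it into that intersection. Without these two points the proposal is a sketch rather than a proof.
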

Select a basis $\{X^1_\omega\,,X^2_\omega\}$ (resp. $\{X^1_\omega\}$) of $\mathfrak{g}_{\RR}^\omega$ for each $\omega\in P^1_+$ (resp. $\omega\in P^2_+$). Set $\delta(\omega)=\{1,2\}$ (resp. $\delta(\omega)=\{1\}$) if $\omega\in P^1_+$ (resp. $\omega\in P^2_+$). Let
\begin{align}\label{for:97}
\mathfrak{k}&=\{X^i_\omega+\tau_2X^i_{\omega}:\quad\omega\in P_+,\,i\in \delta(\omega)\},\notag\\
\mathfrak{u}^+&=\{X^i_\omega:\quad\omega\in P_+^1,\,i\in \delta(\omega)\}, \notag\\
\mathfrak{u}^-&=\{\tau_2X^i_\omega:\quad\omega\in P_+^1,\,i\in \delta(\omega)\}.
\end{align}
%\begin{align*}
%\mathfrak{k}_1&=\{X^i_\omega+\tau_2X^i_{\omega}:\quad\omega\in P_+^1\,,i=1,2\},\notag\\
%\mathfrak{k}_2&=\{X_\omega+\tau_2X_{\omega}:\quad\omega\in P_+^2\},\notag\\
%\mathfrak{u}_1^+&=\{X^i_\omega:\quad\omega\in P_+^1\,,i=1,2\}, \notag\\
%\mathfrak{u}_2^+&=\{X_\omega:\quad\omega\in P_+^2\}, \notag\\
%\mathfrak{u}_1^-&=\{\tau_2X^i_\omega:\quad\omega\in P_+^1\,,i=1,2\}, \notag\\
%\mathfrak{u}_2^-&=\{\tau_2X_\omega:\quad\omega\in P_+^2\}.
%\end{align*}
Here we use the unified notations compatible with \eqref{for:91}.

Let $D$ be the connected Lie group in $G$ with Lie algebra $\mathcal{D}_{\mathfrak{p}_0}$. Denote by $\mathcal{D}_{\mathfrak{p}_0}(\omega)$ the restriction of $\omega$ to $\mathcal{D}_{\mathfrak{p}_0}$ for $\pm\omega\in P_+$ and  let $\Phi$ be the set of exponentials of non-trivial restricted roots, i.e.,
\begin{align*}
\Phi=\{\exp(\mathcal{D}_{\mathfrak{p}_0}(\omega)):\pm\omega\in P_+\}
\end{align*}
where $\exp(\mathcal{D}_{\mathfrak{p}_0}(\omega))(d)=\exp(\omega(\ln d))$ for any $d\in D$. The ordering in $\Psi(\mathcal{D},\,\mathfrak{g}_{\CC})$ induces a compatible ordering in $\Phi$.

\begin{remark}\label{re:5}
By using \eqref{for:92} and Proposition \ref{po:5}, for $k$ archimedean the following hold
\begin{enumerate}
  \item for any $v\in \mathfrak{k}$, $\exp(tv)$ ($t\in \RR$) is a one-parameter subgroup in $K$,
  \item the Lie algebra generated by $\{u:u\in\mathfrak{u}^+\}$ is $\mathfrak{g}^+=\sum_{\omega\in\Phi^+} \mathfrak{g}_\omega$ and
the Lie algebra generated by $\{u:u\in\mathfrak{u}^-\}$ is $\mathfrak{g}^-=\sum_{-\omega\in\Phi^+} \mathfrak{g}_\omega$.\label{for:58}
\item \label{for:1}define a map $\tau_0:\mathfrak{u}^+\rightarrow \mathfrak{u}^-$ as follows: $\tau_0(X_\omega)\rightarrow X_{-\omega}$ for $\omega\in\Phi^+$ for $k=\CC$; and
$\tau_0(u)\rightarrow \tau_2u$ for $u\in\mathfrak{u}^+$ for $k=\RR$. Then $\exp\bigl(t(u+\tau_0 u)\bigl)$ ($t\in \RR$) is a one-parameter subgroup in $K$ for any $u\in\mathfrak{u}^+$.
  \end{enumerate}
\end{remark}
Set $D^+=\{d\in D\mid \chi(d)\geq 1\}$ for each $\chi\in \Phi^+$ and call
$D^+$ a positive Weyl chamber of $G$.
\begin{remark}
For $K$ defined in Section \ref{sec:5} or Section \ref{sec:6}, we have $G=KD^+K$ (see \cite{Helgason}).
\end{remark}
\subsection{Maximal compact subgroups in $G$ if $k$ is non-archimedean}\label{sec:7}
Let $K$ be a good maximal compact subgroup in $G$ and let
$d(\,,\,)$ be a metric on $G$ induced from an absolute value on $k$. Since $K$ is compact and open (see \cite[Appendix]{Serre}), from \eqref{for:50} and \eqref{for:62} we see that
\begin{lemma}\label{le:6}
There exists $1>\sigma'>0$ such that for any $t\in k$ and any $x\in \tilde{G}^u\bigcap G$ with $d(x,e)\leq 1$, if $\abs{t}\leq \sigma'$  then $\exp(t\ln x)\in K$.
\end{lemma}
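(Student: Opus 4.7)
The plan is to combine the continuity of $\exp$ and $\ln$ as $k$-biregular morphisms, the openness of $K$ in $G$, and a compactness argument for the set of unipotent elements within distance $1$ of the identity. Once we have a uniform bound on $\norm{\ln x}$ over this set, openness of $K$ at $e$ together with continuity of $\exp$ at $0$ will supply the required uniform $\sigma'$.

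First I would establish that the set
\[
A=\{x\in \tilde{G}^u\cap G:d(x,e)\leq 1\}
\]
is compact. Embedding $G$ into some matrix algebra $M_n(k)$, the intersection $\tilde{G}^u\cap G$ is closed in $G$ (and in $M_n(k)$) as a $k$-subvariety, and $A$ is bounded in $M_n(k)\cong k^{n^2}$. Since $k$ is a non-archimedean local field, every bounded closed subset of a finite-dimensional $k$-vector space is compact, so $A$ is compact.

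Next, because $\ln:\tilde{G}^u\rightarrow \mathfrak{g}^{(n)}$ is $k$-biregular (Section \ref{sec:3}), its restriction to $A\subset\tilde{G}^u\cap G$ is continuous. Hence $\ln(A)$ is compact in the finite-dimensional $k$-vector space $\mathfrak{g}_k$; fixing any norm $\norm{\cdot}$ on $\mathfrak{g}_k$, set $M=\sup_{x\in A}\norm{\ln x}<\infty$. Since $K$ is open and contains $e$, and $\exp$ is continuous at $0$ with $\exp(0)=e$, there exists $\delta'>0$ such that $\exp(y)\in K$ whenever $\norm{y}\leq \delta'$.

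Finally, take $\sigma'=\min\{1/2,\,\delta'/(M+1)\}$. For any $x\in A$ and any $t\in k$ with $\abs{t}\leq \sigma'$, the element $t\ln x$ is nilpotent (scalar multiples of nilpotent elements remain nilpotent) and satisfies $\norm{t\ln x}\leq \sigma' M<\delta'$, whence $\exp(t\ln x)\in K$. The only non-routine step is the clean verification that $A$ is compact; once that is in hand the remainder is a direct uniform-continuity argument exploiting the non-archimedean Heine--Borel property.
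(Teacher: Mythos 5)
Your argument is correct and fills in the details the paper leaves implicit: the paper presents the lemma as an immediate consequence of the explicit formulas \eqref{for:50}, \eqref{for:62} together with the fact that $K$ is compact and open. The substantive difference is how the uniform bound $M=\sup_{x\in A}\norm{\ln x}$ is obtained. You invoke the non-archimedean Heine--Borel theorem to make $A$ compact and push it through the continuous map $\ln$, whereas the bound is available directly from \eqref{for:62} with no compactness at all: since $x$ is unipotent and $G$ sits inside some fixed $GL_n(k)$, the element $1-x$ is nilpotent of order $\leq n$, the series for $\ln x$ is the finite sum $\sum_{i=1}^{n-1}(-i)^{-1}(1-x)^i$, and when $d(x,e)=\norm{1-x}\leq 1$ the ultrametric inequality gives $\norm{\ln x}\leq\max_{1\leq i<n}\abs{i}^{-1}$, a constant depending only on $n$ and $k$. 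The same finite-sum observation applied to \eqref{for:50} then gives $d(\exp(t\ln x),e)\to 0$ uniformly as $\abs{t}\to 0$, and openness of $K$ finishes. Your route is more machinery for the same estimate, but it is sound.

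Two small imprecisions worth tightening. First, $\exp$ is defined only on the nilpotent cone $\mathfrak{g}^{(n)}$, so the assertion \emph{``there exists $\delta'>0$ such that $\exp(y)\in K$ whenever $\norm{y}\leq\delta'$''} should be stated for nilpotent $y$; this is what you actually use since $t\ln x\in\mathfrak{g}^{(n)}$. Second, $\tilde{G}^u\cap G$ is not literally a $k$-subvariety of $M_n(k)$ — the equations defining $\tilde{G}$ live in $GL_n$, which is not closed in $M_n$. The closedness of $A$ in $M_n(k)$ needs the extra (easy) remark that the unipotency condition $(x-I)^n=0$ is a closed condition in $M_n(k)$ forcing $\det x=1$, so any limit point of $A$ lies in $GL_n(k)$, hence in $G$ by closedness of $G=\tilde{G}(k)$ in $GL_n(k)$, hence in $A$. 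With these clarifications your proof is complete.
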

For each $\omega\in\Phi$, choose a basis $\{X^1_\omega,\cdots, X^{n}_\omega\}$ of $\mathfrak{g}_{\omega}$ where $n=\dim_k\mathfrak{g}_{\omega}$. Let
\begin{align}\label{for:96}
\mathfrak{u}^+&=\{X_\omega^i:1\leq i\leq \dim_k\mathfrak{g}_{\omega},\,\omega\in \Phi^+\}\qquad\text{ and }\notag\\
\mathfrak{u}^-&=\{X_\omega^i:1\leq i\leq \dim_k\mathfrak{g}_{\omega},\,-\omega\in \Phi^+\}.
\end{align}
\begin{remark}\label{re:11}
As an immediate consequence of Lemma \ref{le:6} we have the following statement: there exists $0<\sigma\in\NN$ such that $\exp(tu)\in K$ for all $t\in k$ with $\abs{t}\leq \abs{q}^{\sigma}$
and $u\in \mathfrak{u}^+\bigcup \mathfrak{u}^-$.
\end{remark}

%Let $U^+$ be the (unipotent) algebraic subgroup of $\tilde{G}$ whose Lie algebra is $\sum_{\omega\in\Phi^+} \mathfrak{g}_{\omega}$.
%The following facts about $G$ are well known (see \cite[Chapter 1.2]{Margulis}). The multiplication map induces an isomorphism of
%$k$-variety of $U^-C_{\tilde{G}}(\tilde{D})U^+$ onto a Zariski dense and open subset $E^+$ of $\tilde{G}$, and
%$E^+\bigcap G=U^-(k)C_{G}(D)U^{+}(k)$. A similar result is true for the subset
%$E^-$ in $\tilde{G}$ which is defined as $E^-=U^+C_{\tilde{G}}(\tilde{D})U^-$. Also,
%$E^-\bigcap G=U^+(k)C_{G}(D)U^{-}(k)$.
%\begin{remark}\label{re:8}
%Since $G$ is Zariski dense in $\tilde{G}$ (see \cite[Chapter 0.24]{Margulis}), from above argument it follows that the sets
%$E^+\bigcap G$ and $E^-\bigcap G$ are both Zariski dense in $\tilde{G}$.
%\end{remark}

\section{Preliminaries on matrix coefficients }
In this section we list some notations and well-known properties about matrix coefficients which will be used in this paper.
\begin{definition}
For a locally compact group $S$, a (continuous) unitary representation $\pi$
of $S$ is said to be strongly $L^p$ if there is a dense subset $W$ in the Hilbert space $\mathcal{H}$ attached
to $\pi$ such that for any $x$ and $y$ in $W$, the matrix coefficient $g\rightarrow \langle \pi(g)v,w\rangle$ lies in $L^p(S)$.
We say $\pi$ is strongly $L^{p+\epsilon}$ if it is strongly $L^q$ for any $q > p$.
\end{definition}
Since the matrix coefficients of a unitary representation with respect to unit vectors
are bounded by $1$, a strongly $L^q$ representation is also strongly $L^p$ for any $p\geq q$.
\subsection{The Harish-Chandra function $\Xi_G$}\label{sec:12} We denote by $\delta_B$ the modular function of
$B$; in particular for $a\in D^0$,
\begin{align}\label{for:114}
\delta_B(a)=\prod_{\alpha\in \Phi^+}\abs{\alpha(a)}^{m_\alpha}
\end{align}
where $m_\alpha$ denotes the multiplicity of $\alpha$. The Harish-Chandra function $\Xi_G$ is defined by
\begin{align*}
\Xi_G(g)=\int_K\delta_B(gk)^{-1/2}dk
\end{align*}
 As is well known, $\Xi_G$ is the diagonal matrix coefficient $g\rightarrow\langle\text{Ind}_B^G(1)(g)f_0,f_0\rangle$ where
$\text{Ind}_B^G(1)$ is the representation which is unitarily induced from the trivial representation
$1$ and $f_0$ is its unique (up to scalar) $K$-invariant unit vector. In fact, $f_0$ is given by
\begin{align*}
    f_0(kb)=\delta^{1/2}_B(b)\qquad\text{for }k\in K, \,\,b\in B.
\end{align*}
We list some well-known properties of $\Xi_G$ (see \cite{Ha1}, \cite{Si} and \cite{warner}):
\begin{proposition}\label{po:2}
\begin{enumerate}
  \item $\Xi_G$ is a continuous bi-$K$-invariant function of $G$ with values in $(0, 1]$.
  \item \label{for:41}For any $\epsilon>0$, there exist constants $c_1$ and $c_2(\epsilon)$ such that
\begin{align*}
c_1\delta_B^{-\frac{1}{2}}(b)\leq\Xi_G(b)\leq c_2(\epsilon)\delta_B^{-\frac{1}{2}+\epsilon}(b)\qquad\text{ for all }b\in B.
\end{align*}
Then the formal sum $\xi(\Phi):=\frac{1}{2}\sum_{\alpha\in\Phi^+}m_\alpha\alpha$ determines the decay rate of $\Xi_G$.
\item $\Xi_G$ is $L^{2+\epsilon}(G)$-integrable for any $\epsilon>0$\label{for:42}.
\end{enumerate}
\end{proposition}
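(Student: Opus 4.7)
The plan is to verify the three items in order, each drawing on the integral representation $\Xi_G(g)=\int_K\delta_B(gk)^{-1/2}\,dk$ and on the interpretation of $\Xi_G$ as the diagonal matrix coefficient $g\mapsto\langle\mathrm{Ind}_B^G(1)(g)f_0,f_0\rangle$ of the unitarily induced representation against the normalized $K$-invariant vector $f_0$. Since the paper flags these as classical, my approach is to reduce each item to a standard Iwasawa/Cartan computation and to cite \cite{Ha1}, \cite{Si}, \cite{warner} for the deeper pieces.

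For item (1), continuity follows from strong continuity of $\mathrm{Ind}_B^G(1)$ and dominated convergence applied to the integral formula. Bi-$K$-invariance is immediate: since $f_0$ is $K$-fixed and the induced representation is unitary,
\begin{align*}
\Xi_G(k_1 g k_2)=\langle\mathrm{Ind}_B^G(1)(g)f_0,f_0\rangle=\Xi_G(g).
\end{align*}
The bound $\Xi_G(g)\le 1$ is Cauchy--Schwarz applied to the unit vectors $\mathrm{Ind}_B^G(1)(g)f_0$ and $f_0$, with equality at $g=e$; strict positivity is clear from the integrand $\delta_B(gk)^{-1/2}>0$.

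For item (2), the lower bound comes from restricting the integral defining $\Xi_G(b)$ to a fixed relatively compact open subset $U\subset K$ on which $\delta_B(bk)$ is comparable to $\delta_B(b)$ uniformly, via the Iwasawa decomposition $G=KAN$ and the explicit form $f_0(kb)=\delta_B^{1/2}(b)$; this yields $\Xi_G(b)\ge c_1\delta_B^{-1/2}(b)$. The upper bound is the more delicate Harish-Chandra estimate for spherical functions: one writes $bk=k'(bk)\,a(bk)\,n(bk)$ in Iwasawa coordinates and controls the measure of the set where $\delta_B(a(bk))$ is close to its maximum. The gain of the $\epsilon$-loss reflects the standard fact that the subleading contribution in the asymptotic expansion of $\Xi_G$ along $D^+$ is polynomial in $\log\delta_B$, which for any fixed $\epsilon$ is absorbed into $\delta_B^{\epsilon}$. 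The formal sum $\xi(\Phi)$ thus emerges as $\tfrac12\log\delta_B$ at the Lie-algebra level, i.e.\ half the sum of positive roots with multiplicity from \eqref{for:114}.

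For item (3), apply the Cartan decomposition $G=KD^+FK$ from Lemma \ref{le:5} with integration formula
\begin{align*}
\int_G f(g)\,dg=\sum_{\omega\in F}\int_K\!\int_{D^+}\!\int_K f(k_1 d\omega k_2)\,J(d)\,dk_1\,dd\,dk_2,
\end{align*}
where the Jacobian satisfies $J(d)\le C\,\delta_B(d)$ for $d\in D^+$. Combined with the bi-$K$-invariance from item (1) and the upper bound from item (2), for any small $\epsilon'>0$ one obtains
\begin{align*}
\int_G\Xi_G(g)^{2+\epsilon}\,dg\le C'\int_{D^+}\delta_B(d)^{-(1+\epsilon/2)+(2+\epsilon)\epsilon'}\delta_B(d)\,dd
=C'\int_{D^+}\delta_B(d)^{-\epsilon/2+O(\epsilon')}\,dd,
\end{align*}
which converges once $\epsilon'$ is chosen small compared to $\epsilon$, since $\delta_B$ grows strictly along any ray into the open positive chamber. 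The main obstacle is really the upper bound in item (2), which is the heart of Harish-Chandra's theory of spherical functions; I would quote it from the references rather than re-derive it.
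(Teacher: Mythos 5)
The paper offers no proof of this proposition at all: it presents the three items as ``well-known properties of $\Xi_G$'' and simply cites \cite{Ha1}, \cite{Si}, and \cite{warner}. Your sketch is therefore doing strictly more work, and what you write is correct as a high-level account, with the genuine core (the upper bound $\Xi_G(b)\le c_2(\epsilon)\delta_B^{-1/2+\epsilon}(b)$ and the full asymptotic expansion behind it) appropriately deferred to the same references. Item (1) and item (3) are routine given (2), and your item (3) argument is sound: once one notes that the coefficient of every simple root in $\sum_{\alpha\in\Phi^+}m_\alpha\alpha$ is strictly positive (since each simple root itself occurs in $\Phi^+$), the integrand $\delta_B(d)^{-\epsilon/2+O(\epsilon')}$ decays exponentially in every positive Weyl chamber coordinate, giving convergence. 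One small simplification worth noting on the lower bound: you need not restrict to a relatively compact $U\subset K$ on which $\delta_B(bk)$ is uniformly comparable to $\delta_B(b)$, since such uniformity over all $b\in D^+$ is precisely what needs justifying. The cleaner route is the Harish-Chandra convexity inequality $\rho\bigl(H(bk)\bigr)\le\rho(\log b)$ for $b\in D^+$ and $k\in K$, which gives $\delta_B(bk)\le\delta_B(b)$ pointwise and hence $\Xi_G(b)\ge\delta_B(b)^{-1/2}$ directly with $c_1=1$, no subset of $K$ required. With that substitution your outline matches the standard proof in the cited sources.
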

For $k$ archimedean we can write the Haar measure $dg$ of $G$ in terms of the Cartan decomposition $KD^+K$: $dg = \Delta(b)dk_1dbdk_2$
where $\Delta(b)$ is a positive function on $D^+$ satisfying
\begin{align*}
d_1(t)\delta_B(b)\leq \Delta(b)\leq d_2\delta_B(b)
\end{align*}
for all $b\in D^+_t= \{g\in D^+: \abs{\alpha(g)}\geq t\text{ for all } \alpha\in\Phi^+\}$ and for some constants $d_1(t)$ and $d_2$ if $t>1$ (see \cite{howe} and \cite[Proposition
5.2.8]{Kn}).

For $k$ non-archimedean and for any bi-$K$-invariant function $f$ of $G$, we have
\begin{align*}
\int_{G}\abs{f(g)}^pdg=\sum_{d\omega\in D^+F}\text{Vol}(Kd\omega K)\abs{f(d)}^p\qquad\text{ for any }p>0.
\end{align*}
Moreover, there exist positive constants $c_1$ and $c_2$ such that $c_1\delta_B(d)\leq\text{Vol}(Kd\omega K)\leq c_2\delta_B(d)$ for all $d\omega\in D^+F$ (see \cite[Lemma 4.1.1]{Si}). Then we have the following result (detailed proof can be found in \cite[Lemma 7.3]{oh}):
\begin{lemma}\label{le:2}
Let $f$ be a bi-$K$-invariant continuous function on $G$. If $\int_{D^+}\abs{f(a)}^p\delta_B(a)da<\infty$ for some $p>0$ then $f\in L^p(G)$.
\end{lemma}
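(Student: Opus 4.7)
The proof uses the two integration formulas displayed just before the lemma. The archimedean case is immediate from the Cartan formula $dg=\Delta(b)\,dk_1\,db\,dk_2$ and the comparison $\Delta(b)\leq d_2\delta_B(b)$ on $D^+_t$: on $D^+_t$ one bounds $\int_{D^+_t}\abs{f}^p\Delta\,db$ by $d_2\int_{D^+}\abs{f}^p\delta_B\,db$, and on the complement $D^+\setminus D^+_t$ the constraints $1\leq\abs{\alpha(b)}<t$ cut out a compact subset on which continuity of $f$ and $\Delta$ make the remaining integral finite. So I focus below on the non-archimedean case.

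Combining the identity
\begin{align*}
\int_G\abs{f(g)}^p\,dg=\sum_{d\omega\in D^+F}\text{Vol}(Kd\omega K)\abs{f(d\omega)}^p
\end{align*}
with the upper bound $\text{Vol}(Kd\omega K)\leq c_2\delta_B(d)$ yields $\int_G\abs{f(g)}^p\,dg\leq c_2\sum_{d\omega\in D^+F}\delta_B(d)\abs{f(d\omega)}^p$. Now $K\cap D$ is an open compact subgroup of $D$, and both $f$ (by bi-$K$-invariance) and $\delta_B$ (as a continuous $\RR^+$-valued character on the totally disconnected group $D$) are constant on each coset of $K\cap D$ in $D$. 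Consequently
\begin{align*}
\int_{D^+}\abs{f(a)}^p\delta_B(a)\,da=\text{Vol}(K\cap D)\sum_{d\in D^+/(K\cap D)}\abs{f(d)}^p\delta_B(d),
\end{align*}
so the hypothesis is exactly the finiteness of this discrete sum.

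To match up the two sums I would use that $F\subset C_G(D)$ is finite and that $\delta_B$ is a character, so right multiplication by a fixed $\omega\in F$ identifies the sub-sum $\sum_d\delta_B(d)\abs{f(d\omega)}^p$ with a $\delta_B(\omega)^{-1}$-multiple of an analogous sum over the translated set $D^+\omega/(K\cap D)$; this in turn is comparable to the untranslated sum over $D^+/(K\cap D)$ up to a bounded number of boundary cosets on $Z_+/Z_0$. Adding the $\abs{F}$ resulting contributions gives
\begin{align*}
\int_G\abs{f(g)}^p\,dg\leq C\int_{D^+}\abs{f(a)}^p\delta_B(a)\,da,
\end{align*}
with $C$ depending only on $c_2$, $\abs{F}$, $\text{Vol}(K\cap D)$ and the values $\delta_B(F)$.

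The main technical obstacle, and the reason the author defers to \cite[Lemma 7.3]{oh} for details, is precisely this $F$-bookkeeping in the non-archimedean case: since $F$ sits in $C_G(D)$ rather than in $D$, identifying a shifted sum over $D^+F/(K\cap D)$ with a constant multiple of the original sum over $D^+/(K\cap D)$ requires a careful change of variables on the larger abelian quotient $Z_+/Z_0$. Once this comparison is in place, the chain of inequalities above closes the argument.
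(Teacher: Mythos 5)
The paper does not itself prove Lemma \ref{le:2}; it records the two integration formulas and then refers the reader to \cite[Lemma 7.3]{oh} for the details, so there is no in-paper argument for you to match. Judged on its own terms, your proposal has a genuine gap in each case.

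In the archimedean case, the assertion that the complement $D^+\setminus D^+_t$ is ``cut out by $1\leq\abs{\alpha(b)}<t$'' and hence compact is false whenever $\mathrm{rank}_k(G)\geq 2$. That complement is the set of $b\in D^+$ with $\abs{\alpha(b)}<t$ for \emph{some} positive root $\alpha$, and this set is unbounded: in $SL(3,\RR)$, the element $b=\mathrm{diag}(a,a,a^{-2})$ with $a\to\infty$ stays in $D^+$, has $\abs{\alpha_1(b)}=1<t$ for all $a$, and escapes to infinity. The fix is simpler than the split you attempt: since the paper's constant $d_2$ does not depend on $t$, the inequality $\Delta\leq d_2\delta_B$ holds on $\bigcup_{t>1}D^+_t$ and therefore, by continuity of $\Delta$ and $\delta_B$, on its closure, which is all of $D^+$. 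Then $\int_{D^+}\abs{f}^p\Delta\,db\leq d_2\int_{D^+}\abs{f}^p\delta_B\,db<\infty$ directly, and no complement needs to be handled at all.

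In the non-archimedean case you correctly identify the crux --- relating a sum indexed by $D^+F$ to the hypothesis, which is a sum over $D^+$ --- but then you defer it, and the deferred step is the entire content of the lemma. Your claim that $\sum_{d\in D^+}\delta_B(d)\abs{f(d\omega)}^p$ is comparable to $\sum_{d\in D^+}\delta_B(d)\abs{f(d)}^p$ ``up to a bounded number of boundary cosets on $Z_+/Z_0$'' is unsupported and is not a consequence of bi-$K$-invariance. When $F\neq\{e\}$, an $\omega\in F$ lies in $C_G(D)$ but not in $K$; by uniqueness in the Cartan decomposition $G=KD^+FK$, the double cosets $Kd\omega K$ and $Kd'K$ for $d,d'\in D^+$ are pairwise disjoint, so the numbers $\abs{f(d\omega)}$ are a priori unconstrained by the values of $f$ on $D^+$. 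A complete argument must either give a genuine reason that $\sum_{d}\delta_B(d)\abs{f(d)}^p<\infty$ forces $\sum_{d}\delta_B(d)\abs{f(d\omega)}^p<\infty$ for each fixed $\omega\in F$, or else read the hypothesis as a sum over all of $D^+F$ (which is how the lemma is actually invoked in Corollary \ref{cor:4}, where the bound on the matrix coefficient along $Kd\omega K$ depends only on $d$). As written, the proof stops precisely where it had to start working.
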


\subsection{Useful results about $K$-matrix coefficients}The following follows from (the proof) of \cite[Corollary in pg. 108]{cow1}:
\begin{theorem} \label{th:10} For a connected semisimple almost $k$-algebraic group $G$ and its unitary representation $\pi$,
if $\pi$ is strongly $L^{2p+\epsilon}$ for some positive integer $p$, then $\pi$ is $\bigl(K,\,\Xi_G^{1/p}\bigl)$ bounded on $G$.
\end{theorem}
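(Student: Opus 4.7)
The plan is to use the tensor power trick in combination with the base case $p=1$, which is the classical bound of Cowling--Haagerup--Howe: if a unitary representation $\sigma$ of $G$ is strongly $L^{2+\epsilon}$, then for any $K$-finite unit vectors $v,w$ one has
\begin{align*}
\abs{\langle\sigma(g)v,w\rangle}\leq \dim\langle Kv\rangle^{1/2}\dim\langle Kw\rangle^{1/2}\,\Xi_G(g),
\end{align*}
which already furnishes the case $p=1$ of the theorem. The idea is to deduce the general case from this case by passing to $\pi^{\otimes p}$ and then extracting a $p$-th root.

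First I would form the Hilbert space tensor product $\sigma=\pi^{\otimes p}$, viewed as a unitary representation of $G$ on $\mathcal{H}^{\otimes p}$ via the diagonal action. If $W\subset\mathcal{H}$ is the dense subset witnessing the strong $L^{2p+\epsilon}$ property of $\pi$, then vectors of the form $v_1\otimes\cdots\otimes v_p$ with $v_i\in W$ span a dense subspace of $\mathcal{H}^{\otimes p}$, and the associated matrix coefficient of $\sigma$ factors as a product of $p$ matrix coefficients of $\pi$. By H\"older's inequality, a product of $p$ functions in $L^{2p+\epsilon}(G)$ lies in $L^{2+\epsilon/p}(G)$. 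Hence $\sigma$ is strongly $L^{2+\epsilon'}$ for $\epsilon'=\epsilon/p$, and the base case $p=1$ applies to $\sigma$.

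Next I would apply this to the specific vector $v^{\otimes p}\in\mathcal{H}^{\otimes p}$ for a $K$-finite unit vector $v\in\mathcal{H}$, and similarly $w^{\otimes p}$. Because $K$ acts diagonally, the orbit $K\cdot v^{\otimes p}$ lies inside $\langle Kv\rangle^{\otimes p}$, so
\begin{align*}
\dim\langle K v^{\otimes p}\rangle\leq (\dim\langle Kv\rangle)^p,
\end{align*}
and analogously for $w$. Since $\langle\sigma(g)v^{\otimes p},w^{\otimes p}\rangle=\langle\pi(g)v,w\rangle^p$, applying the base case to $\sigma$ yields
\begin{align*}
\abs{\langle\pi(g)v,w\rangle}^p\leq (\dim\langle Kv\rangle)^{p/2}(\dim\langle Kw\rangle)^{p/2}\,\Xi_G(g).
\end{align*}
Taking the positive $p$-th root gives exactly the $\bigl(K,\Xi_G^{1/p}\bigl)$-bound required.

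The main obstacle, and essentially the only nontrivial input, is the base case $p=1$: the statement that $K$-finite matrix coefficients of a strongly $L^{2+\epsilon}$ unitary representation are pointwise dominated by $\Xi_G$ up to the square roots of the $K$-orbit dimensions. This is precisely the content of the result of Cowling cited as \cite[Corollary, p.~108]{cow1}; its proof rests on the fact that $\Xi_G$ is (up to a constant) the extremal positive definite bi-$K$-invariant spherical function on $G$ together with convolution estimates against $\Xi_G$ encoded in Proposition~\ref{po:2}\eqref{for:41}--\eqref{for:42}. Granting that input, the tensor power trick above is a clean reduction, and the rest of the argument is essentially the H\"older computation sketched in the second paragraph. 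No archimedean versus non-archimedean distinction enters, since both the Cowling bound and the integrability properties of $\Xi_G$ were already stated uniformly in Section~\ref{sec:12}.
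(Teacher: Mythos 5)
Your proposal is correct and is essentially the same argument that the paper appeals to: the paper simply defers to the Corollary on p.~108 of Cowling--Haagerup--Howe \cite{cow1}, and that corollary is itself deduced from the $L^{2+\epsilon}$ case (their Theorem~2) by precisely the tensor-power reduction you describe, combined with the elementary observation that $\dim\langle K v^{\otimes p}\rangle\leq (\dim\langle Kv\rangle)^p$. The only small point you glide past, which the paper flags explicitly, is that one must check the CHH argument goes through for almost $k$-algebraic groups rather than just algebraic ones, but since you are granting the $p=1$ case as input this is not a defect of your reduction.
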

Even though it is assumed that $G$ is an semisimple algebraic group in \cite[Theorem 2]{cow1}, the proof works for any semisimple almost $k$-algebraic group case
as well without any change.
\begin{definition}\label{def:1}
Let $S$ be a locally compact group and let $\pi_1$ and $\pi_2$ be unitary representations of $S$. We say that $\pi_1$
is \emph{weakly contained} in $\pi_2$ if and only if
%each diagonal matrix coefficient $s\rightarrow \langle\pi_1(s)v,v\rangle$ of $\pi_1$ is
%the limit, uniformly on compacta, of convex combinations of diagonal matrix coefficients of $\pi_2$ \cite{Fe}. Or equivalently,
each matrix coefficient $s\rightarrow \langle\pi_1(s)v,w\rangle$ of $\pi_1$ is the limit, uniformly on compacta, of sums of matrix coefficients of $\pi_2$
\begin{align*}
s\rightarrow \sum_{i=1}^n\langle\pi_2(s)v_i,w_i\rangle,
\end{align*}
subject to the condition that $\sum_{i=1}^n\norm{v_i}\,\norm{w_i}\leq \norm{v}\,\norm{w}$ (see \cite{cow1} and \cite{Ey}) and an equivalent definition given by diagonal matrix coefficient is in \cite{Fe}. Loosely speaking, an irreducible representation is weakly contained in the regular representation if it appears in the Plancherel formula.
\end{definition}
In particular, to an arbitrary representation $\pi$ of $S$, we may assign a set supp$\,\pi$ in the unitary dual $\widehat{S}$ of $S$ consisting of all $\psi\in \widehat{S}$ which are weakly contained in $\pi$. If $S$ is of type I, as we will assume, then supp$\,\pi$ is exactly the support of the projection-valued measure on $\widehat{S}$ defining $\pi$ (up to unitary equivalence). The following establishes equivalent definitions of a $\bigl(K,\,r\Psi\bigl)$ bounded representation \cite[Lemma 6.2]{howe}.
\begin{proposition}\label{th:2}
Let $S$ be a locally compact group with compact subgroup $K$ and $H$ be a subgroup of $S$. Then a representation of $S$ is $\bigl(K,\,r\Psi\bigl)$ bounded on $H$ if and only if all $\psi\in \text{supp}\,\pi$ are $\bigl(K,\,r\Psi\bigl)$ bounded on $H$.
\end{proposition}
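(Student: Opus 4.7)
The proof splits into two directions.

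For ``every $\psi \in \text{supp}\,\pi$ is $\bigl(K, r\Psi\bigr)$-bounded on $H$ implies $\pi$ is $\bigl(K, r\Psi\bigr)$-bounded on $H$'': use the direct integral decomposition $\pi = \int^\oplus \psi\, d\mu(\psi)$ over $\text{supp}\,\pi$, available since $S$ is type I. A $K$-finite vector $v \in \mathcal{H}_\pi$ disintegrates as $v = \int^\oplus v_\psi\, d\mu(\psi)$ with each $v_\psi$ $K$-finite and $\dim\langle Kv_\psi\rangle \leq \dim\langle Kv\rangle$, because the orthogonal projection onto the finite-dimensional $K$-module $\langle Kv\rangle$ is decomposable and restricts fiberwise to projection onto $\langle Kv_\psi\rangle$. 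The identity
\begin{equation*}
\langle\pi(h)v,w\rangle = \int \langle\psi(h)v_\psi, w_\psi\rangle\, d\mu(\psi)
\end{equation*}
combined with the fiberwise hypothesis and Cauchy--Schwarz in $\mu$ yields the bound.

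For the converse, fix $\psi \in \text{supp}\,\pi$ and $K$-finite unit vectors $v, w \in \mathcal{H}_\psi$ generating $K$-modules $V_v, V_w$ of dimensions $n_v, n_w$. By Definition \ref{def:1}, the matrix coefficient $\langle\psi(\cdot)v,w\rangle$ is the uniform limit, on any compact neighborhood of a given $h \in H$, of sums $\sum_i \langle\pi(\cdot)v_i,w_i\rangle$ with $\sum_i \|v_i\|\,\|w_i\| \leq 1$. The vectors $v_i, w_i$ need not be $K$-finite, so to invoke the bound for $\pi$ we first reduce, via bilinearity and the triangle inequality applied to the isotypic decompositions of $v$ and $w$, to the case that $V_v, V_w$ are $K$-irreducible. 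We then convolve both sides with matrix-coefficient kernels $\varphi_v, \varphi_w \in L^1(K)$ attached to $V_v, V_w$: for an orthonormal basis of $V_v$ with $v$ as first vector, set $\varphi_v(k) := n_v \overline{\langle\psi(k)v,v\rangle}$. By the Schur orthogonality relations, $\psi(\varphi_v)v = v$; and in any unitary representation $\sigma$ of $S$, the operator $\sigma(\varphi_v)$ sends every vector into a $K$-submodule $K$-isomorphic to $V_v$, hence of dimension at most $n_v$; and similarly for $\varphi_w$. After averaging the entire approximation by $\int_{K \times K} \varphi_v(k_2)\overline{\varphi_w(k_1)}(\cdot)(k_1 s k_2)\, dk_1 dk_2$, the left side is preserved while $v_i, w_i$ are replaced by $K$-finite vectors $v_i' := \pi(\varphi_v)v_i$, $w_i' := \pi(\varphi_w)w_i$ satisfying $\dim\langle Kv_i'\rangle \leq n_v$, $\dim\langle Kw_i'\rangle \leq n_w$, $\|v_i'\| \leq \|v_i\|$, $\|w_i'\| \leq \|w_i\|$. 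Applying the $\bigl(K, r\Psi\bigr)$-bound for $\pi$ term-by-term, summing, and passing to the limit $\epsilon \to 0$ yields the desired bound for $\psi$.

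The main obstacle is the dimension control in the averaging step. A crude character-based projection onto the $\tau_v$-isotypic component of $\pi|_K$ could produce vectors generating $K$-modules of dimension as large as $n_v^2$, losing a factor of $n_v$ in the final bound. Using instead the finer matrix-coefficient kernel $\varphi_v$ attached to the specific basis direction $v$ of $V_v$ forces the output to lie on a single ``slice'' of the isotypic component whose $K$-orbit spans only one copy of $V_v$, thereby preserving the correct dimension bound $n_v$.
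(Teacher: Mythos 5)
The paper states this as a quoted result (Howe's Lemma~6.2 in \cite{howe}), so there is no in-text proof to compare against, but your two-direction strategy (direct integral for ``supp bounded $\Rightarrow$ $\pi$ bounded''; weak containment plus convolution against a Schur idempotent for the converse) is indeed the standard one for this lemma. The converse direction is essentially right: $\varphi_v$ is a self-adjoint idempotent in $L^1(K)$, so $\pi(\varphi_v)$ is an orthogonal projection (giving $\|v_i'\|\le\|v_i\|$), its range lies in $\mathbb{C}v\otimes M_{V_v}$ inside the $V_v$-isotypic component (giving $\dim\langle K v_i'\rangle\le n_v$), and averaging the approximating sums over $K\times K$ against $\varphi_v,\varphi_w$ fixes the $\psi$-side coefficient while replacing $v_i,w_i$ by $v_i',w_i'$. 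Two small points there: the ``triangle inequality'' step reducing to $K$-irreducible $V_v,V_w$ must actually be triangle inequality \emph{followed by} Cauchy--Schwarz on $\sum_j d_j^{1/2}\|v_j\|$, otherwise you lose the $(\sum d_j)^{1/2}$ scaling; and you should note that when a single isotypic block has multiplicity, you must further split it into irreducible copies before the kernel applies.

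There is, however, a genuine error in the forward direction. The orthogonal projection $P$ onto $\langle Kv\rangle$ is \emph{not} decomposable with respect to the direct integral: $P$ commutes with $\pi(K)$, but decomposability requires commuting with the diagonal algebra, which sits in the commutant of $\pi(S)$, not of $\pi(K)$. Concretely, if the $\sigma$-isotypic part of $\pi$ has the form $\sigma\otimes L^2(X,\mu)$ and $v=e_1\otimes f$, then $\langle Kv\rangle=\sigma\otimes\mathbb{C}f$ and $P=I_\sigma\otimes P_f$, which fails to commute with multiplication operators $I_\sigma\otimes M_g$ unless $g$ is essentially constant on $\operatorname{supp}f$; so $P$ is not of the form $\int P_\psi\,d\mu$. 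The dimension inequality $\dim\langle K v_\psi\rangle\le\dim\langle Kv\rangle$ is nevertheless true, but for a different reason: fix representatives of the sections $(e_i)_\psi$ for a basis $e_1,\dots,e_d$ of $W:=\langle Kv\rangle$, and define $\Phi_\psi:W\to\mathcal{H}_\psi$ by $\Phi_\psi(\sum a_ie_i)=\sum a_i(e_i)_\psi$. After discarding a single null set (obtained by intersecting countably many, via a countable dense subset of $K$ and continuity of $k\mapsto\psi(k)v_\psi$), one has $\psi(k)v_\psi=\Phi_\psi(\pi(k)v)\in\Phi_\psi(W)$ for all $k\in K$, so $\langle Kv_\psi\rangle\subseteq\Phi_\psi(W)$ and the bound follows since $\Phi_\psi$ is linear. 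You should replace the decomposability claim by this argument; with that substitution the proof is sound.
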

We shall also make use of the following lemma which is an obvious consequence of \cite{cow1}, Theorem \ref{th:10} and above proposition (since a connected semisimple almost $k$-algebraic group is known to be of Type I (see \cite{Be} and \cite{warner})).
\begin{lemma}\label{le:7}Let $p$ be a positive integer. Suppose $\pi$ of $G$ is a direct integral of unitary representations. Then it is strongly
$L^{2p+\epsilon}$ if and only if almost all the integrands are.
%If the unitary representation $\pi$ of $G$ is strongly $L^{2p+\epsilon}$ then every matrix coefficient of $\pi$ is in $L^{2k+\epsilon}$.
  \end{lemma}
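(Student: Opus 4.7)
The plan is to reduce the strongly $L^{2p+\epsilon}$ condition to a pointwise bound by $\Xi_G^{1/p}$ and then to transport that bound along the direct integral decomposition. Specifically, I claim that for any unitary representation $\sigma$ of $G$ and any positive integer $p$, $\sigma$ is strongly $L^{2p+\epsilon}$ if and only if $\sigma$ is $\bigl(K,\,\Xi_G^{1/p}\bigl)$ bounded on $G$. One direction is exactly Theorem \ref{th:10}. For the converse, if $\sigma$ is $\bigl(K,\,\Xi_G^{1/p}\bigl)$ bounded, I take $W$ to be the space of $K$-finite vectors, which is dense since $K$ is compact. Then for $v,w\in W$ the matrix coefficient $g\mapsto\langle\sigma(g)v,w\rangle$ is bounded by a constant multiple of $\Xi_G^{1/p}$, whose $(2p+\epsilon)$-th power is a constant multiple of $\Xi_G^{2+\epsilon/p}$; by part (3) of Proposition \ref{po:2}, the latter lies in $L^1(G)$, so the matrix coefficient lies in $L^{2p+\epsilon}(G)$.

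Granting this dictionary, I would conclude as follows. Write $\pi=\int^\oplus \pi_x\,d\mu(x)$. Since $G$ is of Type I, as noted just above the statement, the support of the projection-valued measure on $\widehat{G}$ defining $\pi$ coincides, modulo $\mu$-null sets, with the essential range of $x\mapsto [\pi_x]$; in particular, any weak-containment invariant property holds for every $\psi\in \text{supp}\,\pi$ if and only if it holds for $\mu$-almost every integrand $\pi_x$. Proposition \ref{th:2} (with $H=G$) is exactly the statement that being $\bigl(K,\,\Xi_G^{1/p}\bigl)$ bounded on $G$ is such a weak-containment invariant property: $\pi$ is bounded if and only if every $\psi\in \text{supp}\,\pi$ is. Chaining these equivalences together with the dictionary from the first paragraph yields the lemma.

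The main obstacle I anticipate is the measure-theoretic bookkeeping in the second paragraph, namely the identification of ``every $\psi\in \text{supp}\,\pi$ is $\bigl(K,\,\Xi_G^{1/p}\bigl)$ bounded'' with ``$\mu$-almost every integrand $\pi_x$ is''. Once the Type I hypothesis is invoked this is a standard consequence of the projection-valued-measure description of direct integrals, so no new analysis is required beyond chaining Theorem \ref{th:10}, Proposition \ref{po:2}(3), and Proposition \ref{th:2}.
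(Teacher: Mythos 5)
Your first paragraph is right and is exactly the dictionary the paper has in mind when it calls the lemma an ``obvious consequence'' of \cite{cow1}, Theorem~\ref{th:10} and Proposition~\ref{th:2}: one direction is Theorem~\ref{th:10}, and the converse (take $W$ to be the $K$-finite vectors and use Proposition~\ref{po:2}(3) to see $\Xi_G^{q/p}\in L^1(G)$ for $q>2p$) is also correct. The direction ``$\pi$ strongly $L^{2p+\epsilon}$ $\Rightarrow$ a.e.\ integrand is'' then goes through as you say, since for a Type~I group a.e.\ $\pi_x$ is weakly contained in $\pi$, hence $\bigl(K,\Xi_G^{1/p}\bigr)$-bounded by Proposition~\ref{th:2}.

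The weak point is the converse direction, and it hinges on a claim that is not true in the generality you assert. The statement ``any weak-containment invariant property holds for every $\psi\in\text{supp}\,\pi$ if and only if it holds for $\mu$-a.e.\ $\pi_x$'' is false for a general weak-containment invariant property: $\text{supp}\,\pi$ is the \emph{closure} of the essential range, so a $\psi\in\text{supp}\,\pi$ may be a Fell-limit of integrands without being (equivalent to) any of them, and downward-closure under weak containment alone does not force $\psi$ to inherit the property. What rescues the argument here is that $\bigl(K,\Xi_G^{1/p}\bigr)$-boundedness (with constant $r=1$) is in addition preserved under arbitrary direct sums, hence under Fell limits, so the set of bounded irreducibles is closed; that extra input is not ``bookkeeping from the projection-valued measure,'' it is an estimate. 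There is also a second small mismatch: the lemma allows the integrands $\pi_x$ to be non-irreducible, so speaking of the ``essential range of $x\mapsto[\pi_x]$'' as a subset of $\widehat{G}$ does not literally apply. Both issues are bypassed most cleanly by proving the converse directly on the direct integral, without Type~I: if $v=\int v_x\,d\mu$ and $w=\int w_x\,d\mu$ are $K$-finite, then $v_x,w_x$ are $K$-finite a.e.\ with $\dim\langle Kv_x\rangle\le\dim\langle Kv\rangle$ and $\dim\langle Kw_x\rangle\le\dim\langle Kw\rangle$, so
\[
\bigl|\langle\pi(g)v,w\rangle\bigr|\le\int\bigl|\langle\pi_x(g)v_x,w_x\rangle\bigr|\,d\mu(x)\le \dim\langle Kv\rangle^{1/2}\dim\langle Kw\rangle^{1/2}\,\Xi_G^{1/p}(g)\int\|v_x\|\,\|w_x\|\,d\mu(x),
\]
and Cauchy--Schwarz gives $\int\|v_x\|\,\|w_x\|\,d\mu\le\|v\|\,\|w\|$. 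This shows $\pi$ is $\bigl(K,\Xi_G^{1/p}\bigr)$-bounded whenever a.e.\ $\pi_x$ is, which together with your dictionary closes the gap.
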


\section{The Fourier transform and projection-valued measure}
\subsection{The Fourier transform}
Let $\mathcal{N}$ be a locally compact abelian group with a Haar measure $d\mathfrak{n}$ and denote by $\widehat{\mathcal{N}}$ its dual group.  The Fourier transform of $L^1(\mathcal{N})$ is obtained by restriction:
\begin{align*}
    \widehat{f}(\chi)=\int_{\mathcal{N}}f(\mathfrak{n})\overline{\chi(\mathfrak{n})}d\mathfrak{n},\qquad f\in L^1(\mathcal{N}),
\end{align*}
the bar denoting complex conjugation. In particular, $\widehat{f}$ belongs to $C_0(\widehat{\mathcal{N}})$ for all $f\in L^1(\mathcal{N})$, where $C_0(\mathcal{N})$ is the space of complex-valued continuous functions vanishing at infinity \cite[pg. 93]{Folland1}. The space of functions $\mathcal{S}(\mathcal{N})$,
known as the \emph{Schwartz-Bruhat space} of $\mathcal{N}$ (rapidly decreasing functions on $\mathcal{N}$), is defined such that it has the property: the Fourier transform induces a topological isomorphism
$\mathcal{S}(\mathcal{N})\cong \mathcal{S}(\widehat{\mathcal{N}})$
(see \cite{Bruhat} and \cite{Osborne}). The definition by Fran\c{c}ois Bruhat is a direct limit of spaces of
infinitely differentiable, rapidly decreasing functions on quotient spaces of Lie type
of $\mathcal{N}$ and it generalizes the familiar Schwartz space of $\RR^n$ (see \cite{Folland}).
In particular, if  $\mathcal{N}$ is  isomorphic to a finite dimensional vector space over a local field $k$, in the archimedean case, $\mathcal{S}(\mathcal{N})$ is the space of Schwartz functions; for non-archimedean case $\mathcal{S}(\mathcal{N})$
is the space of locally constant functions with compact
support.
\begin{theorem} \label{th:1}For a suitable normalization of the dual Haar measure $d\widehat{\mathfrak{n}}$ on $\widehat{\mathcal{N}}$, we have:
\begin{enumerate}
  \item  The Fourier transform $f\rightarrow \widehat{f}$ from $L^1(\mathcal{N})\bigcap L^2(\mathcal{N})$ to $L^2(\widehat{\mathcal{N}})$ extends to an isometry from $L^2(\mathcal{N})$ onto $L^2(\widehat{\mathcal{N}})$.
  \item  If $f\in L^1(\mathcal{N})$ and $\widehat{f}\in L^1(\widehat{\mathcal{N}})$, then
for almost every $\mathfrak{n}\in \mathcal{N}$,
\begin{align*}
    f(\mathfrak{n})=\int_{\widehat{\mathcal{N}}}\chi(\mathfrak{n})\widehat{f}(\chi)d\chi.
\end{align*}
\item  Every $\mathfrak{n}\in\mathcal{N}$ defines a unitary character $\eta(\mathfrak{n})$ on $\widehat{\mathcal{N}}$ by the formula
\begin{align*}
\eta(\mathfrak{n})(\chi)=\chi(\mathfrak{n}),\qquad \forall \chi\in \widehat{\mathcal{N}}.
\end{align*}
 The canonical group homomorphism
$\eta:\mathcal{N}\rightarrow \widehat{\widehat{\mathcal{N}}} $ is an isomorphism of topological groups.
\end{enumerate}
\end{theorem}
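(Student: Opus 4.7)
The plan is to treat the three claims in order, using convolution, Bochner's theorem on positive-definite functions, and the density of $L^1(\mathcal{N})\cap L^2(\mathcal{N})$ in $L^2(\mathcal{N})$. The workhorse is the observation that for $f\in L^1(\mathcal{N})\cap L^2(\mathcal{N})$ with $\tilde f(\mathfrak{n}):=\overline{f(-\mathfrak{n})}$, the convolution $g:=f*\tilde f$ is continuous, bounded, of positive type, with $g(0)=\norm{f}_2^2$ and formal Fourier transform $\abs{\widehat f}^2$.

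For part (1), I would apply Bochner's theorem to $g$ to obtain a finite positive Radon measure $\mu_g$ on $\widehat{\mathcal{N}}$ with $g(\mathfrak{n})=\int\chi(\mathfrak{n})\,d\mu_g(\chi)$. The Haar measure $d\widehat{\mathfrak{n}}$ on $\widehat{\mathcal{N}}$ is normalized so that $d\mu_g=\abs{\widehat f}^2\,d\widehat{\mathfrak{n}}$ for every such $f$; consistency between two different choices $f_1,f_2$ is checked by polarization applied to $f_1+f_2$. Evaluating $g$ at $0$ yields $\norm{f}_2=\norm{\widehat f}_2$, and extending by density from $L^1\cap L^2$ to $L^2$ produces the Plancherel isometry; surjectivity onto $L^2(\widehat{\mathcal{N}})$ follows because the image is closed (being an isometry) and contains the dense subset coming from the Schwartz--Bruhat space $\mathcal{S}(\mathcal{N})$. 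For part (2), given $f\in L^1$ with $\widehat f\in L^1$, I would convolve with an approximate identity $\{\phi_\alpha\}\subset\mathcal{S}(\mathcal{N})$ to form $f_\alpha=f*\phi_\alpha\in L^1\cap L^2$ with $\widehat{f_\alpha}=\widehat f\cdot\widehat{\phi_\alpha}\in L^1$, verify the inversion formula directly for $f_\alpha$ (where $\widehat{f_\alpha}$ is nice enough to justify Fubini on the defining double integral), and then pass to the limit using dominated convergence controlled by $\norm{\widehat f}_1$ on the spectral side and continuity of translation in $L^1$ on the group side.

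For part (3), the map $\eta$ is manifestly a continuous group homomorphism, and injectivity follows from Plancherel, since a point $\mathfrak{n}_0$ killed by every character would give a translation acting trivially on $L^2(\mathcal{N})$. Surjectivity and the homeomorphism property constitute the main obstacle: the cleanest abstract route is to apply Plancherel and inversion to $\widehat{\mathcal{N}}$ itself, identify the double Fourier transform with the reflection $f\mapsto f\circ(-\mathrm{id})$ pulled back through $\eta$, and conclude that $\eta(\mathcal{N})$ has dense image in $\widehat{\widehat{\mathcal{N}}}$; closedness of $\eta(\mathcal{N})$, proved by showing $\eta$ is an open embedding at the identity, then upgrades ``dense'' to ``onto'' and simultaneously produces the topological isomorphism. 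A fully rigorous proof in complete generality typically appeals either to the structure theorem for locally compact abelian groups or to a double-duality argument involving characters of the Banach algebra $L^1(\mathcal{N})$. For the applications envisaged in this paper, however, $\mathcal{N}$ will be a finite-dimensional vector space $V$ over a local field $k$ of characteristic $0$, and one may bypass the general proof entirely by appealing to the self-duality of $k$ (via any non-trivial additive character) and reducing duality to the elementary linear-algebra identification $\widehat{\widehat V}\cong V$.
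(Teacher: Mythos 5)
The paper does not actually prove this theorem: immediately after the statement it identifies items (1)--(3) as the classical Plancherel theorem, Fourier Inversion theorem, and Pontrjagin Duality theorem, and simply appeals to them as standard; the only explicit reference in the surrounding text is to Weil's \emph{Basic Number Theory} for the concrete self-duality $\widehat{V}\cong V$ used in Section 5.4. So there is no ``paper's proof'' to compare against, only a citation.

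Your sketch is the standard abstract route (Bochner plus convolution square root for Plancherel, approximate identity plus Fubini for inversion, and a double-transform argument for duality), and at the level of a sketch it is sound. Two remarks are worth making. First, in part (1) the surjectivity step ``the image is closed and contains the dense image of the Schwartz--Bruhat space'' runs a mild risk of circularity: the statement that $\mathcal{S}(\mathcal{N})\cong\mathcal{S}(\widehat{\mathcal{N}})$ under the Fourier transform is ordinarily established with Plancherel already in hand, so one should instead argue surjectivity directly, e.g.\ by showing that the closed image is a translation-invariant and multiplication-by-$A(\widehat{\mathcal N})$-invariant subspace of $L^2(\widehat{\mathcal{N}})$ and hence all of it. Second, you correctly flag that part (3) is the genuinely hard point and that a full proof requires either the structure theory of locally compact abelian groups or Gelfand theory for the commutative Banach algebra $L^1(\mathcal{N})$; for the present paper this is moot, since $\mathcal{N}$ is always a finite-dimensional $k$-vector space $V$ and duality is realized explicitly by a fixed nontrivial additive character $\zeta$ of $k$ via $v\mapsto\zeta_v$, which is exactly the identification the paper sets up in Section 5.4 before ever invoking the theorem. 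In short, your proposal is a reasonable proof of a result the paper merely cites, and your closing observation about self-duality of $V$ is precisely the shortcut the paper itself takes.
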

$(1)$, $(2)$ and $(3)$ in above theorem are called Plancherel's Theorem, Fourier Inversion Theorem and Pontrjagin's Duality Theorem respectively.
So, we can and will always identify $\widehat{\widehat{\mathcal{N}}}$ with $\mathcal{N}$ and will take the normalized dual Haar measure $d\widehat{\mathfrak{n}}$ on $\widehat{\mathcal{N}}$ (relative to $d\mathfrak{n}$ on $\mathcal{N}$).

\subsection{Group algebra of locally compact groups} Let
 $S$ be a locally compact group, with a left invariant Haar measure $ds$. The \emph{convolution } $f_1\ast f_2$ of two functions $f_1,\,f_2\in L^1(S)$ is defined by
 \begin{align*}
f_1\ast f_2(h)=\int_Sf_1(s)f_2(s^{-1}h)ds.
 \end{align*}
 The group convolution algebra $L^1(S)$, equipped with the involution $f\rightarrow f^\ast$, where
 \begin{align*}
    f^\ast(s)=\delta_S(s^{-1})\overline{f}^\vee(s),\qquad \forall s\in S,
 \end{align*}
$\delta_S$ denoting the modular function of group $S$ and $^\vee$ denoting reflection ($f^\vee(s)=f(s^{-1})$ for all $s\in S$), is a Banach $^\ast$-algebra.

 Let $\pi$ be a unitary representations  of $S$ on a Hilbert space $\mathcal{H}$ with inner product $\langle\,,\,\rangle$. For $v\in \mathcal{H}$, a diagonal matrix coefficient $s\rightarrow\langle \pi(s)v,v\rangle$ on $S$ is a positive definite function, in the sense of Bochner. Vice versa, any positive definite function can be realized as a diagonal matrix coefficient by the Gelfand-Naimark-Segal construction (see \cite{Valette} and \cite{Dixmier}). The representation of $\pi$ extends to a $^\ast$-representation of $L^1(S)$: for any $f_1,\,f_2\in L^1(S)$
 \begin{align}\label{for:48}
 \pi(f_1\ast f_2)=\pi(f_1)\pi(f_2)\quad\text{ and }\quad\pi(f^\ast)=\pi(f)^\ast
 \end{align}
where $\pi(f)^\ast$ denotes the adjoint operator of $\pi(f^\ast)$ and $\pi(f)$ is the operator on $\mathcal{H}$ for which
\begin{align*}
\langle \pi(f)v,w\rangle=\int_{S}f(s)\langle \pi(s)v,w\rangle ds, \qquad \forall v,\,w\in \mathcal{H}
\end{align*}
for any $f\in L^1(S)$.

In particular, for the left regular representation $\Lambda$, $\Lambda(f)$ is the operator of left convolution by $f$ on $L^2(S)$: $\Lambda(f)g=f\ast g$ for any $g\in L^2(S)$.

\subsection{Projection-valued measure}\label{sec:1} Let $S$ be a locally compact group and $\mathcal{N}$ be an abelian closed
normal subgroup of $S$.
%We also assume that $\mathcal{N}$ is isomorphic to $k^n$ for some $n\in\NN$.
Let $\pi$ be a unitary representation of $S$ on a Hilbert space $\mathcal{H}$. For $\xi,\,\eta\in \mathcal{H}$, consider
the corresponding matrix coefficient of $\pi\mid_\mathcal{N}$:
\begin{align*}
\phi_{\xi,\eta}(\mathfrak{n})=\langle \pi(\mathfrak{n})\xi,\,\eta\rangle,\quad\text{ for any }\mathfrak{n}\in \mathcal{N}.
\end{align*}
By Bochner's Theorem, there exists a finite
complex regular Borel measure $\mu_{\xi,\eta}$ on $\widehat{\mathcal{N}}$ such that
\begin{align}\label{for:77}
\phi_{\xi,\eta}(\mathfrak{n})=\int_{\widehat{\mathcal{N}}}\chi(\mathfrak{n})d\mu_{\xi,\eta}(\chi).
\end{align}
Clearly, $d\mu_{\xi,\eta}(\widehat{\mathcal{N}})=\langle \xi,\eta\rangle$.  The representation $\pi\mid_\mathcal{N}$ extends to a $^\ast$-representation on $\mathcal{S}(\widehat{\mathcal{N}})$: for any $f\in \mathcal{S}(\widehat{\mathcal{N}})$, $\widehat{\pi}(f)$ is the operator on $\mathcal{H}$ for which
\begin{align*}
\bigl\langle \widehat{\pi}(f)\xi,\eta\bigl\rangle=\int_{\mathcal{N}}\bigl\langle\widehat{f}(\mathfrak{n})\pi(\mathfrak{n})\xi,\eta\bigl\rangle d\mathfrak{n}, \qquad \forall \xi,\,\eta\in \mathcal{H}.
\end{align*}
Then we have
\begin{align}\label{for:79}
\bigl\langle \widehat{\pi}(f)\xi,\eta\bigl\rangle&=\int_{\mathcal{N}}\bigl\langle\widehat{f}(\mathfrak{n})\pi(\mathfrak{n})\xi,\eta\bigl\rangle d\mathfrak{n}
\overset{\text{(1)}}{=}\int_{\mathcal{N}}\int_{\widehat{\mathcal{N}}}\widehat{f}(\mathfrak{n})\chi(\mathfrak{n})d\mu_{\xi,\eta}(\chi) d\mathfrak{n}\notag\\
&=\int_{\widehat{\mathcal{N}}}\int_{\mathcal{N}}\widehat{f}(\mathfrak{n})\chi(\mathfrak{n}) d\mathfrak{n}d\mu_{\xi,\eta}(\chi)
\overset{\text{(2)}}{=}\int_{\widehat{\mathcal{N}}}f(\chi)d\mu_{\xi,\eta}(\chi).
\end{align}
$(1)$ follows from \eqref{for:77} and $(2)$ holds by using Fourier Inversion Theorem and Plancherel's Theorem.

Then it follows that
\begin{align}\label{for:43}
\norm{\widehat{\pi}(f)}\leq \norm{f}_\infty,\qquad \forall f\in \mathcal{S}(\widehat{\mathcal{N}}).
\end{align}
Since the Fourier transform converts multiplication to convolution, that is:
\begin{align*}
    \widehat{f_1 \cdot f_2}&=\widehat{f_1}\ast\widehat{f_2},\qquad \forall f_1,\,f_2\in \mathcal{S}(\widehat{\mathcal{N}}),
    \end{align*}
it follows from \eqref{for:48} that
\begin{align}\label{for:24}
\widehat{\pi}(f_1\cdot f_2)=\pi(\widehat{f_1\cdot f_2})=\pi(\widehat{f_1}\ast\widehat{f_2})=\widehat{\pi}(f_1)\widehat{\pi}(f_2), \quad \forall f_1,\,f_2\in \mathcal{S}(\widehat{\mathcal{N}});
\end{align}
and the relation $(\overline{\widehat{f}})^\vee=\widehat{\overline{f}}$ yields
\begin{align}\label{for:6}
\widehat{\pi}(f)^\ast=\widehat{\pi}(\bar{f}), \qquad \forall f\in \mathcal{S}(\widehat{\mathcal{N}}).
\end{align}
This inequality \eqref{for:43} allows us to extend $\widehat{\pi}$ from $\mathcal{S}(\widehat{\mathcal{N}})$ to $L^\infty(\widehat{\mathcal{N}})$ by taking strong limits of operators and pointwise monotone increasing limits of non-negative functions (see \cite{Lang} for a detailed treatment). Hence $\widehat{\pi}$ is a homomorphism of $L^\infty(\widehat{\mathcal{N}})$ to bounded operators on $\mathcal{H}$.  Also, \eqref{for:24} and \eqref{for:6} extend to $f_1,\,f_2,\,f\in L^\infty(\widehat{\mathcal{N}})$.

Let $\mathcal{B}(\widehat{\mathcal{N}})$ be the $\sigma$-algebra of the Borel subsets of $\widehat{\mathcal{N}}$. For every $X\in \mathcal{B}(\widehat{\mathcal{N}})$, let $ch_X$ denote characteristic function of $X$. \eqref{for:24} and \eqref{for:6} imply that $\widehat{\pi}(ch_X)$ is idempotent and self-adjoint, which implies that it is an  orthogonal projection of $\mathcal{H}$. Write $\widehat{\pi}(ch_X)=P_X$. From \eqref{for:79}, we see that
\begin{align}\label{for:89}
\langle P_X\xi,\eta\rangle=\mu_{\xi,\eta}(X).
\end{align}
It is readily verified that $X\rightarrow P_X$ is a regular projection-valued measure on $\widehat{\mathcal{N}}$ associated to the unitary representation
$\pi\mid_{\mathcal{N}}$ of the abelian group $\mathcal{N}$. Moreover (see \cite[Theorem D.3.1]{Valette} or \cite[Chapter 5.4]{warner}),
%have
%\begin{align*}
%\langle \pi(\widehat{ch}_X)\xi,\eta\rangle&=\int_{\mathcal{N}}\langle\widehat{ch}_X(\mathfrak{n})\pi(\mathfrak{n})\xi,\eta\rangle d\mathfrak{n}
%=\int_{\mathcal{N}}\int_{\widehat{\mathcal{N}}}\widehat{ch}_X(\mathfrak{n})\overline{\chi(\mathfrak{n})}\mu_{\xi,\eta}(\chi) d\mathfrak{n}\\
%&=\int_{\widehat{\mathcal{N}}}\int_{\mathcal{N}}\widehat{ch}_X(\mathfrak{n})\overline{\chi(\mathfrak{n})} d\mathfrak{n}d\mu_{\xi,\eta}(\chi)
%=\int_{\widehat{\mathcal{N}}}ch_X(\chi)d\mu_{\xi,\eta}(\chi)\\
%&=\mu_{\xi,\eta}(X).
%\end{align*}
\begin{align*}
\pi(x)=\int_{\widehat{\mathcal{N}}}\chi(x)dP(\chi),\qquad \forall x\in \mathcal{N}.
\end{align*}
Since $\mathcal{N}$ is normal in $S$, we have a representation $\rho: S\rightarrow GL(\mathcal{N})$ defined by $\rho(s)\mathfrak{n}=s\cdot\mathfrak{n}= s\mathfrak{n}s^{-1}$. This means we have the following relation
\begin{align}\label{for:51}
\pi(s)\pi(\mathfrak{n})\pi(s)^{-1}=\pi\bigl(s\cdot\mathfrak{n}\bigl),\qquad \forall s\in S,\,\forall\mathfrak{n}\in \mathcal{N}.
\end{align}
Since $S$ acts continuously by automorphisms on $\mathcal{N}$, then it acts continuously on $\widehat{\mathcal{N}}$ by
\begin{align*}
(s\cdot\chi)(\mathfrak{n})=\chi\bigl(s^{-1}\cdot\mathfrak{n}\bigl),\qquad s\in S,\,\chi\in\widehat{\mathcal{N}},\,\,\mathfrak{n}\in \mathcal{N};
\end{align*}
and let $\star $ denote the associated action of $S$ on functions on $\widehat{\mathcal{N}}$. Thus
\begin{align*}
(s\star f)(\chi)=f(s^{-1}\cdot\chi), \qquad \forall \chi\in \widehat{\mathcal{N}}.
\end{align*}
Straightforward calculations show that the conjugacy relation \eqref{for:51} implies a similar relation for the operator $\widehat{\pi}$ for any $s\in S$ and any $f\in \mathcal{S}(\widehat{\mathcal{N}})$:
\begin{align*}
\bigl\langle \pi(s)\widehat{\pi}(f)\pi(s^{-1})\xi,\eta\bigl\rangle&=\int_{\mathcal{N}}\bigl\langle\widehat{f}(\mathfrak{n})\pi(s)\pi(\mathfrak{n})\pi(s^{-1})\xi,\eta\bigl\rangle d\mathfrak{n}\\
&=\int_{\mathcal{N}}\bigl\langle\widehat{f}(\mathfrak{n})\pi(s\mathfrak{n}s^{-1})\xi,\eta\bigl\rangle d\mathfrak{n}\notag\\
&\overset{\text{(1)}}{=}\int_{\mathcal{N}}\int_{\widehat{\mathcal{N}}}\widehat{f}(\mathfrak{n})\chi(s\cdot\mathfrak{n})d\mu_{\xi,\eta}(\chi) d\mathfrak{n}\\
&=\int_{\widehat{\mathcal{N}}}\int_{\mathcal{N}}\widehat{f}(\mathfrak{n})(s^{-1}\cdot\chi)(\mathfrak{n}) d\mathfrak{n}d\mu_{\xi,\eta}(\chi)\notag\\
&\overset{\text{(2)}}{=}\int_{\widehat{\mathcal{N}}}f(s^{-1}\cdot\chi)d\mu_{\xi,\eta}(\chi)\\
&\overset{\text{(3)}}{=}\bigl\langle \widehat{\pi}(s \star f)\xi,\eta\bigl\rangle.
\end{align*}
$(1)$ and $(3)$ follow from \eqref{for:77} and \eqref{for:79} respectively and $(2)$ holds by using Fourier Inversion Theorem and Plancherel's Theorem. Then it follows that
\begin{align}\label{for:90}
\pi(s)\widehat{\pi}(f)\pi(s^{-1})=\widehat{\pi}(s\star f),\qquad s\in S,\,\,f\in \mathcal{S}(\widehat{\mathcal{N}}).
\end{align}
The relation \eqref{for:90} persists in the strong limit to hold for $f\in L^\infty(\widehat{\mathcal{N}})$. In particular, the relation
\begin{align*}
\pi(s)P_X\pi(s)^{-1}=P_{s(X)},\qquad\text{ for }X\in\mathcal{B}(\widehat{\mathcal{N}}),\quad s\in S
\end{align*}
 holds for the projection-valued measure associated with $\pi$.

 The following establishes approximation relations between projected matrix coefficients, which is useful in this paper:
\begin{lemma}\label{le:1}
let $\pi_0$ be another unitary representation of $S$. Suppose $\pi$ is weekly contained in $\pi_0$. Then for any
$f\in\mathcal{S}(\widehat{\mathcal{N}})$ and for any vectors $\xi,\,\eta$ of $\pi$,
 each projected matrix coefficient
 \begin{align*}
 \phi(f):(s,t)\rightarrow\bigl\langle \widehat{\pi}(f)\bigl(\pi(s)\xi\bigl),\pi(t)\eta\bigl\rangle
 \end{align*}
 is the limit, uniformly on compacta, of sums of the projected matrix coefficients
 \begin{align*}
\psi(f):(s,t)\rightarrow\sum_{i=1}^n\bigl\langle \widehat{\pi_0}(f)\bigl(\pi_0(s)\xi_i\bigl),\pi_0(t)\eta_i\bigl\rangle
 \end{align*}
 for vectors $\xi_i,\,\eta_i$ of $\pi_0$, subject to $\sum_{i=1}^n\norm{\xi_i}\,\norm{\eta_i}\leq \norm{\xi}\,\norm{\eta}$.
\end{lemma}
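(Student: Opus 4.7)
The plan is to reduce the weak-containment statement for projected matrix coefficients to the weak-containment statement for ordinary matrix coefficients, via the integral formula \eqref{for:79} for $\widehat{\pi}(f)$. Using that $\pi(\mathfrak{n})\pi(s)\xi = \pi(\mathfrak{n}s)\xi$ and $\langle \pi(\mathfrak{n}s)\xi,\pi(t)\eta\rangle = \langle\pi(t^{-1}\mathfrak{n}s)\xi,\eta\rangle$, formula \eqref{for:79} rewrites the projected matrix coefficient as
\begin{align*}
\phi(f)(s,t)=\int_{\mathcal{N}}\widehat{f}(\mathfrak{n})\,\langle \pi(t^{-1}\mathfrak{n}s)\xi,\eta\rangle\,d\mathfrak{n},
\end{align*}
and analogously $\psi(f)(s,t)=\sum_{i}\int_{\mathcal{N}}\widehat{f}(\mathfrak{n})\langle \pi_0(t^{-1}\mathfrak{n}s)\xi_i,\eta_i\rangle d\mathfrak{n}$. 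So it suffices to approximate the single ordinary matrix coefficient $g\mapsto \langle \pi(g)\xi,\eta\rangle$ uniformly on a compact subset of $S$ by sums of matrix coefficients of $\pi_0$.

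Fix compact subsets $C_1,C_2\subset S$ and $\varepsilon>0$. Because $f\in\mathcal{S}(\widehat{\mathcal{N}})$, its Fourier transform $\widehat{f}$ lies in $\mathcal{S}(\mathcal{N})\subset L^1(\mathcal{N})$, so I can choose a compact set $K_\varepsilon\subset\mathcal{N}$ with $\int_{\mathcal{N}\setminus K_\varepsilon}|\widehat{f}(\mathfrak{n})|\,d\mathfrak{n}<\varepsilon$ (in the non-archimedean case one may even take $K_\varepsilon=\mathrm{supp}\,\widehat{f}$, so the tail term vanishes). The set $L_\varepsilon:=C_2^{-1}K_\varepsilon C_1$ is compact in $S$. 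Since $\pi$ is weakly contained in $\pi_0$, Definition \ref{def:1} yields vectors $\xi_i,\eta_i$ of $\pi_0$ with $\sum_{i}\norm{\xi_i}\norm{\eta_i}\leq \norm{\xi}\norm{\eta}$ such that
\begin{align*}
\sup_{g\in L_\varepsilon}\Bigl|\langle \pi(g)\xi,\eta\rangle-\sum_{i}\langle \pi_0(g)\xi_i,\eta_i\rangle\Bigr|<\varepsilon.
\end{align*}

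Splitting the integral over $\mathcal{N}=K_\varepsilon\sqcup(\mathcal{N}\setminus K_\varepsilon)$ and using the trivial bound $|\langle \pi(g)\xi,\eta\rangle|\leq\norm{\xi}\norm{\eta}$ (and the analogous bound with $\sum_i\norm{\xi_i}\norm{\eta_i}\leq\norm{\xi}\norm{\eta}$ for $\pi_0$), one gets, uniformly for $(s,t)\in C_1\times C_2$,
\begin{align*}
|\phi(f)(s,t)-\psi(f)(s,t)|\leq \varepsilon\,\norm{\widehat{f}}_{L^1(\mathcal{N})}+2\varepsilon\,\norm{\xi}\norm{\eta}.
\end{align*}
Letting $\varepsilon\to 0$ produces the required uniform approximation on $C_1\times C_2$ by the sums $\psi(f)$, with the constraint $\sum_i\norm{\xi_i}\norm{\eta_i}\leq\norm{\xi}\norm{\eta}$ preserved throughout.

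The only mildly delicate point is the archimedean tail estimate: $\widehat{f}$ is not compactly supported there, so one cannot directly substitute the weak-containment approximation into the integral and must first truncate. The compactness of $K_\varepsilon\cdot C_1$ and $C_2^{-1}\cdot K_\varepsilon$ is what makes $L_\varepsilon$ compact so that the uniform-on-compacta clause of weak containment can be invoked; everything else is integration against the $L^1$-function $\widehat{f}$ and the scalar bookkeeping of the coefficient bounds.
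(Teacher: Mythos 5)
Your proof is correct and follows essentially the same approach as the paper's: truncate the $L^1$-mass of $\widehat{f}$ to a compact set, apply the weak-containment approximation uniformly on a corresponding compact subset of $S$, and estimate the tail trivially using the norm bounds on the vectors. The only cosmetic difference is that you fold everything into a one-variable matrix coefficient $g\mapsto\langle\pi(g)\xi,\eta\rangle$ on the compact set $C_2^{-1}K_\varepsilon C_1$, whereas the paper keeps the two-variable form and works on the set $\{\mathfrak{n}y_1:\abs{\mathfrak{n}}\leq M_1,\,y_1\in Y_1\}\times Y_2$.
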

\begin{proof}
%Since $d\mu^{\pi}_{\xi,\xi}$ is regular, there exist closed sets $\mathcal{F}_n$ and open sets $\mathcal{U}_n$ such that
%\begin{align*}
%\mathcal{F}_n \subset X\subset \mathcal{U}_n \quad\text{ and } \quad d\mu^{\pi}_{\xi,\xi}(\mathcal{U}_n\backslash \mathcal{F}_n)<\frac{1}{n}.
%\end{align*}
%There exist $0\leq f_n\leq 1$ inside $\mathcal{S}(\widehat{\mathcal{N}})$ such that $f_n=1$ on $\bigcup_{i\leq n}\mathcal{F}_i $ and $f_n=0$ on $\widehat{\mathcal{N}}\backslash(\bigcup_{i\leq n}\mathcal{U}_i)$. Using \eqref{for:76} and \eqref{for:79} for any $\eta\in\mathcal{H}_{\pi}$ we see that
%\begin{align}\label{for:82}
%&\big\lvert\bigl\langle \widehat{\pi}(ch_X-f_n)(\pi(s)\xi),\xi\bigl\rangle\big\rvert\notag\\
%&=\Big\lvert\int_{\widehat{\mathcal{N}}}\bigl(ch_X(\chi)-f_n(\chi)\bigl)d\mu^{\pi}_{\pi(s)\xi,\xi}(\chi)\Big\rvert\notag\\
%&\leq \frac{4}{n}\norm{\eta}^2.
%\end{align}
%It is clear that similar conclusion also holds for $\widehat{\pi_0}$.
For any $\epsilon>0$, there exists $M_1>0$ such that
\begin{align*}
\int_{\abs{\mathfrak{n}}\geq M_1}\abs{\widehat{f}(\mathfrak{n})}d\mathfrak{n}<\frac{\epsilon}{\norm{\xi}\,\norm{\eta}}.
\end{align*}
We assume neither of $\xi,\,\eta$ is $0$, otherwise the conclusion is obvious. Since $\pi$ is weekly contained in $\pi_0$, each matrix coefficient $(s,t)\rightarrow \bigl\langle\pi(s)\xi,\,\pi(t)\eta\bigl\rangle$ is the limit, uniformly on compacta, of sums of matrix coefficients
\begin{align*}
(s,t)\rightarrow \sum_{i=1}^n\bigl\langle\pi_0(s)\xi_i,\,\pi_0(t)\eta_i\bigl\rangle,
\end{align*}
subject to the condition that $\sum_{i=1}^n\norm{\xi_i}\,\norm{\eta_i}\leq \norm{\xi}\,\norm{\eta}$. Let $Y_1,\,Y_2$ be  compact sets in $S$. Then there exists $m(Y_1,Y_2,\epsilon)\in\NN$ such that for any $n\geq m$, any $\mathfrak{n}\in \mathcal{N}$ with $\abs{\mathfrak{n}}\leq M_1$ and any $y_1\in Y_1$, $y_2\in Y_2$ we have
\begin{align}\label{for:83}
E(n,\mathfrak{n},y_1,y_2)&=\Big\lvert\bigl\langle\pi(\mathfrak{n}\cdot y_1)\xi,\,\pi(y_2)\eta\bigl\rangle-\sum_{i=1}^n\bigl\langle \pi_0(\mathfrak{n}\cdot y)\xi_i,\,\pi_0(y_2)\eta_i\bigl\rangle\Big\rvert\notag\\
&\leq  \frac{\epsilon}{\int_{\mathcal{N}}\abs{\widehat{f}(\mathfrak{n})}d\mathfrak{n}}.
\end{align}
We also assume $\int_{\mathcal{N}}\abs{\widehat{f}(\mathfrak{n})}d\mathfrak{n}\neq 0$, otherwise the conclusion is obvious. Then it follows that for any $y_1\in Y_1$ and $y_2\in Y_2$
\begin{align*}
&\Big\lvert\bigl\langle \widehat{\pi}(f)(\pi(y_1)\xi),\,\pi(y_2)\eta\bigl\rangle-\sum_{i=1}^n\bigl\langle \widehat{\pi_0}(f)(\pi_0(y_1)\xi_i),\,\pi_0(y_2)\eta_i\bigl\rangle\Big\rvert\notag\\
&\leq
\int_{\mathcal{N}}\big\lvert\widehat{f}(\mathfrak{n})\big\rvert E(n,\mathfrak{n},y_1,y_2) d\mathfrak{n}\notag\\
&\leq \int_{\abs{\mathfrak{n}}\leq M_1}\big\lvert\widehat{f}(\mathfrak{n})\big\rvert E(n,\mathfrak{n},y_1,y_2) d\mathfrak{n}+\int_{\abs{\mathfrak{n}}\geq M_1}\big\lvert\widehat{f}(\mathfrak{n})\big\rvert \cdot\big\lvert\langle\pi(\mathfrak{n}\cdot y_1)\xi,\,\pi(y_2)\eta\rangle \big\rvert d\mathfrak{n}\\
&+\sum_{i=1}^n\int_{\abs{\mathfrak{n}}\geq M_1}\big\lvert\widehat{f}(\mathfrak{n})\big\rvert \cdot\big\lvert\langle\pi_0(\mathfrak{n}\cdot y_1)\xi_i,\,\pi_0(y_2)\eta_i\rangle \big\rvert d\mathfrak{n}\\
&\overset{\text{(1)}}{\leq}\frac{\epsilon}{\int_{\mathcal{N}}\abs{\widehat{f}(\mathfrak{n})}d\mathfrak{n}}\cdot\int_{\abs{\mathfrak{n}}\leq M_1}\abs{\widehat{f}(\mathfrak{n})}d\mathfrak{n}
+ \norm{\xi}\,\norm{\eta}\cdot\int_{\abs{\mathfrak{n}}\geq M_1}\abs{\widehat{f}(\mathfrak{n})}d\mathfrak{n}\\
&+ \sum_{i=1}^n\norm{\xi_i}\,\norm{\eta_i}\cdot\int_{\abs{\mathfrak{n}}\geq M_1}\abs{\widehat{f}(\mathfrak{n})}d\mathfrak{n}\\
&\leq\epsilon+\norm{\xi}\,\norm{\eta}\cdot\frac{\epsilon}{\norm{\xi}\,\norm{\eta}}+(\sum_{i=1}^n\norm{\xi_i}\,\norm{\eta_i})\cdot\frac{\epsilon}{\norm{\xi}\,\norm{\eta}}\notag\\
&=3\epsilon.
\end{align*}
$(1)$ holds by using Cauchy-Schwarz inequality. Hence the lemma is proved.
\end{proof}
\subsection{Identification between $V$ and $\widehat{V}$}\label{sec:22} In this paper, we consider the case $S=G\ltimes_\rho V$ and $\mathcal{N}=V$.
The dual group $\widehat{V}$ of $V$ can be identified with
$V$ as follows. Fix a unitary character $\zeta$ of the additive group of $k$ distinct from the unit character. The mapping
\begin{align*}
V\rightarrow \widehat{V},\qquad v\rightarrow \zeta_v
\end{align*}
is a topological group isomorphism, where $\zeta_v(x)$ is defined by $\zeta(\varsigma(v, x))$ and $\varsigma(v, x)=\sum v_ix_i$ for $v=\sum v_ie_i$ and
$x=\sum x_ie_i$ in $V$. Here $\{e_1,e_2,\cdots\}$ is a basis of $V$ (see. \cite[Ch II-5, Theorem 3]{weil}). Define the transpose $\tau$ of $\rho(g)$ by
\begin{align*}
\varsigma(v,\rho(g)^\tau w)=\varsigma(\rho(g)v,w),\qquad\forall v,w\in V,
\end{align*}
and denote $(\rho(g)^{-1})^\tau$ by $\rho^\ast(g)$.

Under this
identification, the dual action of $G$ on $\widehat{V}$ corresponds
to the $G$ action $\rho^\ast$ on $V$:
\begin{align}\label{for:36}
(g\cdot\zeta_v)(\mathfrak{n})=\zeta_v\bigl(\rho(g)^{-1}\mathfrak{n}\bigl)=\zeta_{\rho^\ast(g) v}(\mathfrak{n}),\qquad g\in G\text{ and }\,v,\,\mathfrak{n}\in\mathcal{N}.
\end{align}
Also, the space $L^\infty(V)$ can be identified with the space $L^\infty(\widehat{V})$ and the associated action of $G$ on $L^\infty(V)$ is
\begin{align}\label{for:12}
(g\star f)(v)&=f(g^{-1}\cdot \zeta_v)=f(\zeta_{\rho^\ast(g^{-1}) v}),\qquad \forall f\in L^\infty(V),\,\forall v\in\mathcal{N}\notag\\
&=f\bigl(\rho^\ast(g^{-1}) v\bigl).
\end{align}
We will use this identification hereafter.

%\begin{theorem}\label{th:13}
%Let $H$ be a connected reductive algebraic group over a non-archimedean field $k$ and $V$ a finite dimensional vector
%space over $k$. For a rational homomorphism $\rho: H\rightarrow GL(V)$
%suppose $\rho$ is irreducible and $\rho\mid_{H_i}$ is non-trivial for any $k$-isotropic factor $H_i$ of $H$.
%Let $\pi$ be a unitary representation of $H\ltimes_\rho V$ on a Hilbert space $\mathcal{H}$ which contains no fixed vectors for $V$.  Then for any $K$-finite unit
%vectors $v$ and $w$ in $\mathcal{H}$,
%\begin{align*}
%\abs{\langle \pi(g)v,w\rangle}\leq
%([K:K\cap \omega K\omega^{-1}]\dim \langle Kv\rangle \dim \langle Kv\rangle)^{\frac{1}{2}}\Xi_G^{\frac{1}{2p}}(g),
%\end{align*}
%for any $g=k_1d\omega k_2\in K(D^+F)K=H$. Here $p$ is defined in Corollary \ref{cor:4}.

%\end{theorem}

\section{Analysis of $K$-orbit}\label{sec:21}
\subsection{Useful notations}\label{sec:10}
We try as much as possible to develop a unified system of notations. We will use notations from this section throughout   subsequent sections. So the reader
should consult this section  if an unfamiliar symbol appears. In the whole Section \ref{sec:21}  we assume $\rho$ is an irreducible representation of $G$ on $V$.
\begin{sect}\label{sec:18}
Let $\Phi$ be the set of
non-zero roots of $G$ relative to $D$ and fix an ordering on $\Phi$ as described in Section \ref{sec:4} for different cases of $k$. Denote by $\Phi^+$ the set of positive roots and by $\Delta$ the set of simple roots in $\Phi^+$. Let $\Phi_1$ be the set of weights of $\rho$ relative to $D$ and let $\lambda$ and $\varrho$ be the highest and lowest weight respectively with respect to the given ordering on $\Phi$.
\end{sect}
\begin{sect} \label{for:84}Let $K$ be a good maximal compact subgroup in $G$. Fix a
$\rho(K)$-invariant norm $\abs{\,\cdot\,}$ on $V$. We introduce a norm on $End(V)$:
\begin{align*}
 \norm{x}\stackrel{\text{def}}{=}\sup\{\abs{x(\nu)}\cdot\abs{\nu}^{-1}, \,0\neq \nu\in V\},\quad\text{ for any }x\in End(V).
\end{align*}
In what follows, $C$ will denote any constant that depends only on $G$ and the given representation $\rho$. $\mathfrak{o}(x)$ ($x\in X$) will denote any map (defined on a set $X$) taking values in $V$ with a uniform bound which depends only on $G$ and the given representation $\rho$.
 \end{sect}

\begin{sect}\label{for:142} Let $\mathfrak{u}^+$ and $\mathfrak{u}^-$ be the sets defined in \eqref{for:91}, \eqref{for:97} or \eqref{for:96} under different cases of $k$. Write $V=\bigoplus V_\phi$, $\phi\in\Phi_1$ as the weight space decomposition of $V$. We use $\pi_{\phi}$ to denote the projection from $V$ to the weight space $V_\phi$. Denote by $d\rho$ the induced representation
of $\rho$ on $\mathfrak{g}_k$.

The following fact about $d\rho$ is well known: for any $0\neq v\in V_\phi$ and $\phi\neq \lambda$ (resp. $\phi\neq \varrho$),  there exist  $\mathfrak{c}\in \mathfrak{u}^+$ (resp. $\mathfrak{c}\in \mathfrak{u}^-$) such that $d\rho(\mathfrak{c})v\neq 0$. For a field of characteristic $0$, it is a direct consequence of
Birkhoff-Witt theorem and irreducibility of $d\rho$.
\end{sect}
\begin{sect}\label{for:11}
Define a map $r: \mathfrak{u}^+\bigcup\mathfrak{u}^-\rightarrow \Phi$ by taking $r(\mathfrak{c})$ to $\omega$ if $\mathfrak{c}\in\mathfrak{g}_\omega$.
For any finite set $A$ we use $\sharp A$ to denote the cardinality of $A$. For a finite dimensional vector space $M$ over $k$ and its $k$-subspace $M_1$, a $k$-subspace $M_2$ of $M$ is called \emph{a complement subspace} of $M_1$ in $M$ if $M=M_1\oplus_k M_2$.
\end{sect}

\begin{sect}\label{sec:17}
Suppose $\ker(\rho)\bigcap G_s\subset Z(G)$. We denote by $\psi(\ln d)=\ln\abs{\psi(d)}$ for any $d\in D$ and any $\psi\in \Phi_1$. Since $\sum_{\psi\in\Phi_1}\psi(\ln d)=0$
(see \eqref{for:34} of Lemma \ref{le:4}), for any $a\in D^+$ we have
\begin{align}\label{for:32}
 \lambda(\ln a)\geq \frac{\sum_{\psi\in\Phi_1}\abs{\psi(\ln a)}}{2(N_{\Phi_1})}\quad\text{ and }\quad\varrho(\ln a^{-1})\geq \frac{\sum_{\psi\in\Phi_1}\abs{\psi(\ln a)}}{2(N_{\Phi_1})}
\end{align}
where $N_{\Phi_1}$ is the number of non-zero weights of $\rho$ in $\Phi_1$.

\eqref{for:35} of Lemma \ref{le:4} and \eqref{for:32} imply that there exists $m>0$ such that for any $a\in D^+$ one has
\begin{align*}
m\lambda(\ln a)>\sum_{\psi\in \Delta}\abs{\psi(\ln a)}\quad\text{ and }\quad m\lambda(\ln a)>\sum_{\psi\in \Delta}\abs{\psi(\ln a)}.
\end{align*}
Suppose $\lambda=\sum_{\omega\in \Delta} m_\omega\omega$ where $m_\omega\in\QQ$.
Hence we see that all $m_\omega>0$. Similar result holds for $\varrho$, that is, $\varrho=-\sum_{\omega\in \Delta} m'_\omega\omega$ where $m'_\omega>0$ for each $\omega\in\Delta$.
\end{sect}
\begin{sect}
 \label{for:64} Any element in the root lattice has a unique expression $\sum c_\omega\omega$ where $c_\omega\in\ZZ$ and $\omega\in\Delta$. Any weight $\phi$ other than $\lambda$ has the form $\lambda-\sum c_\omega\omega$ ($\omega\in\Delta$ and $c_\omega\in\NN$ \cite{humph}.
 The express is unique and we call the number $\sum c_\omega$ the length of $\phi$ and denote it by $L(\phi)$.
  %and call all $\omega$'s showing up in the express the generators of $\phi$ and denote the set of generators by Gen$(\phi)$.\foot{do we need generator?}\\
  Define a function $l$ on simple roots: $l(\omega)=a_\omega$ where $a_\omega>1$ for each $\omega\in\Delta$. Naturally,  $l$ can be extended to a function $l_1$ on all weights: $l_1(\phi)=c_\omega l(\omega)$ if $\phi=\lambda-\sum c_\omega\omega$.

$l_1$ is well defined by uniqueness of the expression on root lattice.  We can choose $l$ sufficiently close to $1$ and rationally independent on $\Delta$ such that for any $\phi_1\,,\phi_2\in\Phi_1$:
\begin{itemize}
  \item if $\phi_1\neq \phi_2$ then $l_1(\phi_1)\neq l_1(\phi_2)$;
  \item if $L(\phi_1)<L(\phi_2)$ then $l_1(\phi_1)<l_1(\phi_2)$.
\end{itemize}
 $l_1$ also gives an ordering to all weights:
\begin{align*}
    \Phi_1=\{\lambda,\phi_1,\cdots,\phi_{(\sharp{\Phi_1}-1)}=\varrho:\text{ where }l_1(\phi_i)<l_1(\phi_j)\text{ if }i<j\}.
\end{align*}
\end{sect}
\begin{sect} \label{sec:16}
%Suppose $k$ is archimedean.
For any $\phi\in\Phi_1\backslash\lambda$, let $A_\phi=\{\mathfrak{c}\in
\mathfrak{u}^+:\ker(d\rho(\mathfrak{c}))\bigcap V_\phi\not=V_\phi\}$. Result in \ref{for:142} implies that $A_\phi\neq \emptyset$. Define
\begin{align*}
\mathcal{A}_\phi&=\bigl\{(\mathfrak{c}_1,\cdots,\mathfrak{c}_j):\mathfrak{c}_i\in
A_\phi, 1\leq i\leq j, \text{ and }\bigcap_{i=1}^j
\ker(d\rho(\mathfrak{c}_i))\bigcap V_\phi=\{0\}\bigl\}.
\end{align*}
Next, we will show that $\mathcal{A}_\phi\neq \emptyset$.
Choose an element from $A_\phi$ and denote it by $\mathfrak{c}_1$.
Let $V_{\phi}^1$ be a complement subspace of
$\ker(d\rho(\mathfrak{c}_1))\bigcap V_\phi$ in $V_\phi$. If
$V_{\phi}^1=V_\phi$, that is, $\ker(d\rho(\mathfrak{c}_1))\bigcap V_\phi=\{0\}$, then $(\mathfrak{c}_1)\in \mathcal{A}_\phi$. If
$\ker(d\rho(\mathfrak{c}_1))\bigcap V_\phi\neq\{0\}$, there exists $\mathfrak{c}_2\in
A_\phi$ such that $d\rho(\mathfrak{c}_2)$ acts non-trivially on $\ker(d\rho(\mathfrak{c}_1))\bigcap V_\phi$. Again result in \ref{for:142}
guarantees the existence of $\mathfrak{c}_2$. Then $\bigcap_{j=1}^2\ker(d\rho(\mathfrak{c}_i))\bigcap
V_\phi\neq \ker(d\rho(\mathfrak{c}_1))\bigcap V_\phi$.  Let
$V_{\phi}^2$ be a complement subspace of
$\bigcap_{j=1}^2\ker(d\rho(\mathfrak{c}_i))\bigcap V_\phi$ in
$\ker(d\rho(\mathfrak{c}_1))\bigcap V_\phi$. If
$V_{\phi}^2=\ker(d\rho(\mathfrak{c}_1))\bigcap V_\phi$, that is, $\bigcap_{j=1}^2\ker(d\rho(\mathfrak{c}_i))\bigcap V_\phi=\{0\}$, we have
$V_\phi=V_{\phi}^1\bigoplus V_{\phi}^2$ and $(\mathfrak{c}_1,\mathfrak{c}_2)\in \mathcal{A}_\phi$. If
$\bigcap_{j=1}^2\ker(d\rho(\mathfrak{c}_i))\bigcap V_\phi\neq\{0\}$ we get
\begin{align*}
V_\phi=V_{\phi}^1\bigoplus V_{\phi}^2
\bigoplus\bigl(\bigcap_{l=1}^2\ker(d\rho(\mathfrak{c}_l))\bigcap
V_\phi\bigl)
\end{align*}
and then we repeat this process on
$\bigcap_{j=1}^{2}\ker(d\rho(\mathfrak{c}_j))\bigcap V_\phi$. Then finally, we get an integer $j\leq \dim_k V_\phi$, subspaces $V_\phi^l$ of $V_\phi$ and $\mathfrak{c}_l\in A_\phi$, $1\leq l\leq j$ such that:
\begin{enumerate}
\item \label{for:52} $V_{\phi}^l\bigcap\ker(d\rho(\mathfrak{c}_l))=\{0\}$, $1\leq l\leq j$;\\
\item \label{for:2}$V_{\phi}^i\subset\ker(d\rho(\mathfrak{c}_l))$, $i\geq 2$ and $1\leq l\leq
i-1$;\\
  \item $\bigcap_{l=1}^j\ker(d\rho(\mathfrak{c}_l))\bigcap
V_\phi=\{0\}$, which implies that $V_\phi=\bigoplus_{i=1}^{j} V_{\phi}^i$.
\end{enumerate}
Hence the claim $\mathcal{A}_\phi\neq \emptyset$ is proved.

The subspaces $V_\phi^l$, $1\leq l\leq j$ are called the \emph{ordered associated subspaces} of $a=(\mathfrak{c}_1,\cdots,\mathfrak{c}_j)$. Set $\sharp a=j$. For each $\phi\in\Phi_1\backslash\lambda$, fix an element $b_\phi\in \mathcal{A}_\phi$ and set $\sharp \phi=\sharp b_\phi$. Let
\begin{align}\label{for:49}
\ell_0=\sum_{\phi\in\Phi_1\backslash\lambda}\sharp\phi.
\end{align}
Then we have a decomposition of $V$:
\begin{align}\label{for:70}
V=V_\lambda\oplus V_{\phi_1}^1\oplus\cdots V_{\phi_1}^{\sharp\phi_1}\oplus\cdots\oplus V_{\varrho}^1\oplus\cdots V_{\varrho}^{\sharp\varrho},
\end{align}
where $V_{\phi_i}^j$, $1\leq i\leq \sharp\Phi_1-1$ and $1\leq j\leq \sharp(\phi_i)$ are ordered associated subspaces of $b_{\phi_i}$. We call the above decomposition an \emph{effective decomposition} of $V$. Fix an effective decomposition of $V$. We call these corresponding elements $b_\phi=(\mathfrak{c}^1_\phi,\cdots,\mathfrak{c}^{\sharp\phi}_\phi)$ \emph{the associated  elements of $V$}.  Let
\begin{align}\label{for:88}
    C_0=\min_{\phi\in\Phi_1\backslash\lambda}\min_{1\leq i\leq \sharp\phi}\inf\{\abs{d\rho(\mathfrak{c}^i_\phi)v}\abs{v}^{-1},\,\,0\neq v\in V_\phi^i\}.
\end{align}
Property \eqref{for:52} above yields that $C_0>0$.

For any $v\in V$, denote by $v_\phi$ the $v$'s component in $V_\phi$ and if $\phi\neq\lambda$ denote by $v_\phi^i$ the $v$'s component in the $i$-th subspace of $V_\phi$. With respect to the effective decomposition of $V$, we can define a new norm $\abs{\cdot}'$ on $V$:
\begin{align*}
\abs{v}'=\max_{
            \phi\in\Phi_1\backslash\lambda}\max_{1\leq i\leq \sharp \phi}\{\abs{v_\phi^i},\,\abs{v_\lambda}\}.
\end{align*}
Since all norms on $V$ are equivalent, we have
\begin{align}\label{for:80}
C^{-1}\abs{\cdot}\leq \abs{\cdot}'\leq C\abs{\cdot}.
\end{align}
\end{sect}
\begin{sect} \label{sec:14}Let
\begin{align*}
    C'_0=\max_{\phi\in\Phi_1\backslash\lambda}\max_{1\leq i\leq \sharp\phi}\norm{d\rho(\mathfrak{c}^i_\phi)}.
\end{align*}
Let $M_1=\max_\phi\sharp b_\phi$ where $b_\phi$ are the associated  elements of $V$. Let $[\cdot]$ denote the floor function. Define $M_1$ integer numbers inductively: $n_1=1$ and $n_i=\biggl[\frac{\sum_{j=1}^{i-1}2n_{j} C'_0+2}{C_0}\biggl]+1$ if $2\leq i\leq M_1$,  where $C_0$ is defined in \eqref{for:88}. Let
$C_1=\max_{1\leq i\leq M_1}n_i$.
\end{sect}
\begin{sect}\label{for:105} Let $\mathcal{W}=\{v\in V:\pi_{\lambda}(v)=0\text{ and }\frac{3}{4}\leq\abs{v}\leq\frac{5}{4}\}$ and let
$\mathcal{W}_{c_0}=\{v+\mathcal{W}:v\in V_\lambda \text{ with }\abs{v}\leq c_0\}$.
%When $k$ is archimedean,
For any small enough $c_0>0$  we have a cover of $\mathcal{W}$: $\mathcal{W}=\bigcup_{j,i}\mathcal{W}^{j,i,c_0}$, $1\leq j\leq \sharp\Phi_1-1$ and $1\leq i\leq \sharp\phi_j$ where $\mathcal{W}^{j,i,c_0}$ are defined as follows:
\begin{align*}
\mathcal{W}^{j,i,c_0}=&\{v\in V:\pi_{\lambda}(v)=0,\,\frac{1}{2}<\abs{v}<\frac{3}{2}\text{ and }\abs{v^l_{\phi_m}}< 2\theta^l_{\phi_m}\text{ for all }m<j,\\
 &1\leq l\leq \sharp\phi_m; \,\abs{v^l_{\phi_j}}<2\theta^l_{\phi_j}\text{ if }l\leq i-1\text{ and }\abs{v^i_{\phi_j}}>\theta^i_{\phi_j}\}
\end{align*}
where $\theta^l_{\phi_m}=c_0^{1/3^m}n_l$. For simplicity, denote by $\theta_{\phi_n}^0=\theta_{\phi_{n-1}}^{\sharp\phi_{n-1}}$ and $\theta_{\phi_1}^0=0$.

 We call the above cover \emph{the effective cover of $\mathcal{W}$ determined by $c_0$}. By \eqref{for:80}, for each $v\in \mathcal{W}$, there exists $l,\,m\in\NN$ such that $\abs{v^l_{\phi_m}}\geq\frac{3}{4C}$, and then it is easy to check $v\in \mathcal{W}^{j,i,c_0}$ for some $(j,\,i)$ if $c_0$ is small enough. Hence $\bigcup_{j,\,i}\mathcal{W}^{j,i,c_0}$ covers $\mathcal{W}_{c_0}$. Set
\begin{align*}
\mathcal{W}_{c_0}^{j,i}=\{v+\mathcal{W}^{j,i,c_0}:v\in V_\lambda \text{ with }\abs{v}<2c_0\}.
\end{align*}
Then we have an open cover of $\mathcal{W}_{c_0}$: $\mathcal{W}_{c_0}\subset\bigcup_{j,i} \mathcal{W}_{c_0}^{j,i}$, which is called \emph{the effective open cover of $\mathcal{W}_{c_0}$ determined by $c_0$}.

%When $k$ is non-archimedean, for small enough $c_0>0$, we have a decomposition of $\mathcal{W}$: $\mathcal{W}=\bigcup_{i}\mathcal{W}^{i}$ where $\mathcal{W}^{i}$ are %defined as follows:
%\begin{align*}
%\mathcal{W}^{i}=&\{v\in \mathcal{W}:\abs{v_j}\leq \theta_j\text{ for all }j<i, \quad\abs{v_i}>\theta_i\}.
%\end{align*}
%where $\theta_j=c_0^{q_j}$.

%We call the above decomposition \emph{the effective decomposition of $\mathcal{W}$ determined by $c_0$}. Similar to earlier augments, it is easy to check
%that $\mathcal{W}^{i}$, $1\leq i\leq \Phi_1(n)$ is a decomposition of $\mathcal{W}$. Also, the corresponding decomposition of $\mathcal{W}_{c_0}$: %$\mathcal{W}_{c_0}=\bigcup_{i} \mathcal{W}_{c_0}^{i}$ where $\mathcal{W}_{c_0}^{i}=\{v+\mathcal{W}^{i}:v\in V_\lambda \text{ with }\abs{v}\leq c_0\}$ is called \emph{the %effective decomposition of $\mathcal{W}_{c_0}$ determined by $c_0$}.\\
\end{sect}
Denote by
\begin{align}\label{for:124}
\text{Cone}_1(c_0,s)&=\{v\in V:\abs{\pi_{\lambda}(v)}\leq c_0\text{ and }\abs{v}\geq s\}\text{ and }\notag\\
\text{Cone}_2(c_0,s)&=\{v\in V:\abs{\pi_{\varrho}(v)}\leq c_0\text{ and }\abs{v}\geq s\}.
\end{align}
An important step in proving Theorem \ref{th:11} is:
\begin{proposition}\label{po:7}
There exists $\ell_1\in\NN$ such that for any $s>0$, if $c_0$ is small enough, there
  is an open cover of $\text{Cone}_1(c_0,s)$:
 \begin{align*}
\text{Cone}_1(c_0,s)&\subset\bigcup_{1\leq i\leq \ell_1}\mathcal{E}_i
\end{align*}
such that for each $\mathcal{E}_i$, there are at least $[C^{-1}(c_0s^{-1})^{\mathfrak{q}}]$ different elements $\tau^i_j$ in $K$ such that
\begin{align*}
\rho(\tau^i_j)\mathcal{E}_i&\bigcap\rho(\tau^i_\ell)\mathcal{E}_i=\emptyset\qquad\text{ if }\ell\neq j
\end{align*}
where $\mathfrak{q}=-(\frac{1}{3})^{(\sharp\Phi_1-1)}$ and $[\cdot]$ denotes the floor function. Moreover, If $\dim V_\lambda=1$, $\mathfrak{q}=-(\frac{1}{3})^{(\sharp\Phi_1-2)}$.
\end{proposition}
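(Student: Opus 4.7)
The plan is to first rescale vectors in $\text{Cone}_1(c_0,s)$ to unit scale, reducing the problem to the model set $\mathcal{W}_{c_0/s}$, then apply the effective open cover of Section~\ref{for:105} to obtain the $\mathcal{E}_i$'s, and finally, for each piece indexed by a pair $(j,i)$ of the cover, deform using the one-parameter subgroup of $K$ generated by the associated element $\mathfrak{c}^i_{\phi_j}+\tau_0\mathfrak{c}^i_{\phi_j}$ (Remark~\ref{re:5}(3)) to produce the disjoint translates.

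For the reduction: a vector $v\in\text{Cone}_1(c_0,s)$ has $|v|\geq s$ and $|\pi_\lambda(v)|\leq c_0$, so $v/|v|$ has norm $1$ and $\lambda$-component of norm at most $c_0/s$. For small $c_0/s$ it lies in (a mild enlargement of) $\mathcal{W}_{c_0/s}$. Since $\rho(K)$ is an isometry of $|\cdot|$, it suffices to work at unit scale and pull back by homogeneity. Applying the effective open cover $\mathcal{W}_{c_0/s}\subset\bigcup_{j,i}\mathcal{W}_{c_0/s}^{j,i}$, I obtain a cover of $\text{Cone}_1(c_0,s)$ with $\ell_1\leq\ell_0=\sum_{\phi\in\Phi_1\backslash\{\lambda\}}\sharp\phi$ pieces.

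Fix such a piece $\mathcal{E}_i$ corresponding to $\mathcal{W}^{j,i,c_0/s}$, and set $k_t=\exp\bigl(t(\mathfrak{c}^i_{\phi_j}+\tau_0\mathfrak{c}^i_{\phi_j})\bigr)\in K$ (valid for all $t\in\RR$ in the archimedean case, and for $|t|\leq q^\sigma$ in the non-archimedean case by Remark~\ref{re:11}). Expanding $\rho(k_t)v$ to first order in $t$, only $V_{\phi_j}$-components of $v$ can map into $V_{\phi_{j'}}$ under $d\rho(\mathfrak{c}^i_{\phi_j})$, where $\phi_{j'}:=\phi_j+r(\mathfrak{c}^i_{\phi_j})$ is strictly higher than $\phi_j$; the $\tau_0$-part lowers weight and contributes to $V_{\phi_{j'}}$ only through the much smaller higher-weight components of $v$. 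By injectivity of $d\rho(\mathfrak{c}^i_{\phi_j})$ on $V^i_{\phi_j}$ and the constant $C_0$ of \eqref{for:88}, the main linear term in $V_{\phi_{j'}}$ has norm $\geq tC_0\theta^i_{\phi_j}$, while the correction from the $v^l_{\phi_j}$ with $l<i$ is bounded by $2tC'_0\sum_{l<i}\theta^l_{\phi_j}$. The inductive definition $n_iC_0>2C'_0\sum_{l<i}n_l+2$ in Section~\ref{sec:14} ensures the main term strictly dominates, and the $O(t^2)$ remainder is negligible for small $t$ (hence small $c_0$). Choosing $t_1,\ldots,t_N$ in a fixed compact range with spacing $\Delta t$ slightly larger than $\theta^?_{\phi_{j'}}/\theta^i_{\phi_j}$ (respectively $(c_0/s)/\theta^i_{\phi_j}$ when $\phi_{j'}=\lambda$) forces the $V_{\phi_{j'}}$-shifts between distinct translates to exceed the full diameter of $\mathcal{E}_i$ in that direction, so $\rho(k_{t_r})\mathcal{E}_i$ and $\rho(k_{t_s})\mathcal{E}_i$ are disjoint for $r\neq s$. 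The worst case over all $(j,i)$ yields $N\geq C^{-1}(c_0/s)^{-2/3^{\sharp\Phi_1-1}}$ (at $j=\sharp\Phi_1-1$, $j'=j-1$), which dominates $C^{-1}(c_0/s)^{\mathfrak{q}}$ since $c_0/s<1$. The sharper exponent when $\dim V_\lambda=1$ reflects collapsing the $V_\lambda$-level of the threshold hierarchy, which shaves one factor of $3$ from the denominator.

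The main technical obstacle is the simultaneous control of all secondary contributions to the Taylor expansion: from $v^l_{\phi_j}$ with $l<i$, from the $\tau_0$-part acting on higher-weight components of $v$, and from the $O(t^2)$ remainder. The delicate interplay between the threshold hierarchy $\theta^l_{\phi_m}=c_0^{1/3^m}n_l$ and the inductive growth of the $n_i$'s has been engineered precisely to suppress these corrections, and a separate bookkeeping is needed in the case $\phi_{j'}=\lambda$. In the non-archimedean setting, an additional step is required to exhibit $N$ elements $t_r\in k$ of bounded absolute value with the prescribed spacing, which follows from the discrete valuation of $k$ together with Remark~\ref{re:11}.
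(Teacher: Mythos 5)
Your proposal follows essentially the same route as the paper: rescale $\text{Cone}_1(c_0,s)$ to unit norm, place it in $\mathcal{W}_{c_0/s}$, apply the effective open cover, deform each piece by one‐parameter subgroups $\exp\bigl(t(\mathfrak{c}^i_{\phi_j}+\tau_0\mathfrak{c}^i_{\phi_j})\bigr)\in K$, compare the linear shift in $V_{\phi_j+\alpha_i}$ against the piece's diameter in that direction, and finally cone the cover back out by homogeneity. That is exactly the structure of Lemmas~\ref{le:13} and~\ref{le:10} plus the short scaling argument at the end of Section~\ref{sec:21}.

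There is, however, one quantitative gap. You assert that the $t_r$'s can be chosen ``in a fixed compact range'' and that the worst piece therefore yields $N\gtrsim(c_0/s)^{-2/3^{\sharp\Phi_1-1}}$. But for the generic pieces $\mathcal{W}_{c_0}^{\ell,i}$ the admissible range of $t$ is \emph{not} fixed; it shrinks with $c_0$. The leading linear gain $\lvert\pi_{\phi_\ell+\alpha_i}(X_t v)\rvert\gtrsim t\,c_0^{1/3^{\ell}}$ is proportional to $c_0^{1/3^{\ell}}$, while the remainder is $O(t^2)$ with a $c_0$-independent constant (see \eqref{for:85} and \eqref{for:145}); keeping the linear term dominant forces $\lvert t\rvert\lesssim c_0^{1/3^{\ell}}$. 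With spacing $\Delta t\approx c_0^{2/3^{\ell}}$ this gives exactly $N\approx c_0^{-1/3^{\ell}}$, i.e.\ precisely $\mathfrak{q}$ at the worst index $\ell=\sharp\Phi_1-1$, and not the stronger exponent you claim. The fixed-compact-range heuristic is valid only for the single distinguished piece $\mathcal{W}_{c_0}^{\sharp\Phi_1-1,\sharp\varrho}$, where $\lvert\nu^{\sharp\varrho}_\varrho\rvert\geq 1/4$ is bounded away from $0$ by a constant so the linear gain is $\gtrsim t\cdot C_0/4$; that is the source of the improved exponent when $\dim V_\varrho=1$ (so that this is the only piece at the bottom level), which is what the ``$\dim V_\lambda=1$'' clause is meant to capture after the lowest-weight/highest-weight exchange in the application to $\text{Cone}_1$ versus $\text{Cone}_2$. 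Your overall conclusion is correct because the true bound is already exactly $\mathfrak{q}$, but the step ``$N\geq C^{-1}(c_0/s)^{-2/3^{\sharp\Phi_1-1}}$, which dominates what we need'' is not achievable as stated and obscures the mechanism that dictates the precise value of $\mathfrak{q}$: the competition between the shrinking linear gain and the $O(t^2)$ error. Finally, you should make explicit the (short) argument that disjointness of the $\rho(\tau^i_j)\mathcal{Y}_i$ at unit scale propagates to the cones $\mathcal{E}_i$, since $\rho(K)$ preserves the norm and hence the scaling factor.
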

Denote by $B'$ the opposite
parabolic subgroup to $B$ with the common Levi subgroup $C_G(D)$. The ordering on $\Phi$ determined by $B'$ interchanges the sets of positive roots and negative roots determined by $B$, which means $\varrho$ is the highest weight of $\Phi_1$ relative to the new ordering. Hence the above proposition applies to $\text{Cone}_2(a_0,s)$. As an immediate consequence, we deduce the following corollary which is also essential in proving Theorem \ref{th:11}:
\begin{corollary}\label{cor:9}
There exists $\ell_2\in\NN$ such that for any $s>0$, if $c_0$ is small enough, there
  is an open cover of $\text{Cone}_2(c_0,s)$:
 \begin{align*}
\text{Cone}_2(c_0,s)&\subset\bigcup_{1\leq i\leq \ell_2}\mathcal{U}_i
\end{align*}
such that for each $\mathcal{U}_i$, there are at least $[C^{-1}(c_0s^{-1})^{\mathfrak{q}}]$ different elements $\upsilon^i_j$ in $K$ such that
\begin{align*}
\rho(\upsilon^i_j)\mathcal{U}_i&\bigcap\rho(\upsilon^i_\ell)\mathcal{U}_i=\emptyset\qquad\text{ if }\ell\neq j
\end{align*}
where $\mathfrak{q}=-(\frac{1}{3})^{(\sharp\Phi_1-1)}$.  Moreover, If $\dim V_\lambda=1$, $\mathfrak{q}=-(\frac{1}{3})^{(\sharp\Phi_1-2)}$.
\end{corollary}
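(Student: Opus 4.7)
The plan is to derive the corollary from Proposition \ref{po:7} simply by reversing the ordering on $\Phi$. Let $B'$ be the opposite parabolic subgroup to $B$ with common Levi $C_G(D)$; the ordering on $\Phi$ determined by $B'$ interchanges $\Phi^+$ with $-\Phi^+$, hence swaps the roles of $\lambda$ and $\varrho$ as highest and lowest weights of $\Phi_1$. Since $N_G(D)\subseteq KD$ by Lemma \ref{le:5}, a representative of the longest Weyl element lies in $K$, so the same good maximal compact subgroup $K$ is adapted to both orderings and the Cartan decomposition remains valid with respect to the new positive Weyl chamber.

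First, I would rerun the constructions of Section \ref{sec:21} with respect to the $B'$-ordering: the sets $\mathfrak{u}^\pm$ from \eqref{for:91}, \eqref{for:97} or \eqref{for:96} get interchanged, the associated elements $b_\phi$ from \ref{sec:16} are chosen with respect to $\varrho$ as the new highest weight, and the analogues of the effective decomposition \eqref{for:70} and of the effective open cover of $\mathcal{W}_{c_0}$ are formed accordingly. All of the auxiliary constants ($C$, $C_0$, $C_1$, $\ell_0$, $\ell_1$, the $n_i$, and so on) depend only on the data $(G,V,\rho,K)$ and not on the orientation of the Weyl chamber, so applying the construction in the $B'$-ordering yields a new integer $\ell_2$ playing the role that $\ell_1$ played before.

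Then I would invoke Proposition \ref{po:7} verbatim for the reordered data. With respect to the $B'$-ordering the ``new $\lambda$'' is the old $\varrho$, so the new $\text{Cone}_1(c_0,s)$ is precisely
\begin{align*}
\bigl\{v\in V:\abs{\pi_\varrho(v)}\leq c_0\text{ and }\abs{v}\geq s\bigr\}=\text{Cone}_2(c_0,s).
\end{align*}
The proposition then supplies an open cover $\{\mathcal{U}_i\}_{1\leq i\leq \ell_2}$ of $\text{Cone}_2(c_0,s)$ and, for each $\mathcal{U}_i$, at least $[C^{-1}(c_0 s^{-1})^{\mathfrak{q}}]$ distinct elements $\upsilon^i_j\in K$ with pairwise disjoint translates $\rho(\upsilon^i_j)\mathcal{U}_i$, which is exactly the statement of the corollary.

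The one subtlety to check is that the exponent $\mathfrak{q}$ transports correctly. Since $\sharp\Phi_1$ is an intrinsic invariant of $(G,V)$, the general formula $\mathfrak{q}=-(1/3)^{\sharp\Phi_1-1}$ is unaffected by the swap. For the refined exponent, the condition ``$\dim V_\lambda=1$'' appearing in Proposition \ref{po:7} translates, in its $B'$-application, into ``$\dim V_\varrho=1$''; but by irreducibility of $\rho$ the longest Weyl element $w_0$, realized inside $N_G(D)\subseteq KD$, conjugates $V_\lambda$ to $V_\varrho$, so the two conditions are equivalent. I expect no genuine obstacle beyond this bookkeeping; the only delicate point is to make sure that the cover produced by the proof of Proposition \ref{po:7} is constructed from ingredients that are manifestly symmetric under the Weyl-group swap rather than tacitly privileging the $B$-ordering.
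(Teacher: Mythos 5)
Your proposal is correct and follows essentially the same route the paper takes: the paper derives Corollary \ref{cor:9} immediately from Proposition \ref{po:7} by passing to the opposite parabolic $B'$, reversing the ordering so that $\varrho$ becomes the highest weight and $\text{Cone}_2$ becomes the new $\text{Cone}_1$. Your added observation that $\dim V_\lambda=\dim V_\varrho$ (via the longest Weyl element inside $N_G(D)\subseteq KD$) is the right bookkeeping to reconcile the "$\dim V_\lambda=1$" condition in the corollary with the "$\dim V_\varrho=1$" condition that arises upon reordering, and it matches the implicit reasoning in the paper.
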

Before proceeding further with the proof of Proposition \ref{po:7}, we prove certain facts about $\rho(K)$-orbits of $\mathcal{W}_{c_0}$, and these facts ultimately lead to the proof of Proposition \ref{po:7}.
\subsection{Analysis of $K$-orbits of $\mathcal{W}_{c_0}$} \label{sec:15}
Fix a weight $\phi_\ell\in \Phi_1\backslash \lambda$. Suppose the associated element of $V$ for the  subspace $V_{\phi_\ell}$ is $a=(\mathfrak{c}_1,\cdots,\mathfrak{c}_{\sharp\phi_\ell})\in \mathcal{A}_{\phi_\ell}$ (\ref{sec:16} of Section \ref{sec:10}). Let $\alpha_i=r(\mathfrak{c}_i)$ (\ref{for:11} of Section \ref{sec:10}) and let
\begin{enumerate}
  \item $k_i=\mathfrak{c}_i+\tau_0\mathfrak{c}_i$, $1\leq i\leq \sharp\phi_\ell$  and $t\in\RR$ with $\abs{t}\leq 1$ when $k$ is archimedean (see \eqref{for:1} of Remark \ref{re:5});
  \item $k_i=\mathfrak{c}_i$, $1\leq i\leq \sharp\phi_\ell$ and $t\in k$ with $\abs{t}\leq \abs{q}^{\sigma}$  when $k$ is non-archimedean (see Remark \ref{re:11}).
\end{enumerate}
Fix $1\leq i\leq \sharp\phi_\ell$. We consider
\begin{align}\label{for:68}
X_{t}=\exp(td\rho(k_i))\quad\text{ and }\quad X'_{t}=\exp(tk_i).
\end{align}
Clearly,  $X'_{t}\in K$. Since
\begin{align}\label{for:110}
\rho(X'_{t})&=\rho\bigl(\exp(tk_i)\bigl)=\exp(td\rho(k_i))=X_{t},
\end{align}
$X_{t}\in \rho(K)$. \eqref{for:110} is clear for $k$ archimedean; and see Proposition \ref{po:4} for $k$ non-archimedean.

As a first step towards the proof of Proposition \ref{po:7}, we prove:
\begin{lemma}\label{le:13}
For any $v\in \mathcal{W}_{c_0}^{\ell,i}$ (\ref{for:105} of Section \ref{sec:10}), we have
\begin{align*}
\bigl\lvert\pi_{\phi+\alpha_i}(X_{t}v)\bigl\rvert\geq&\left\{\begin{aligned} &\abs{t}c_0^{1/3^{\ell}}-C\abs{t}^{2}-2Cc_0^{1/3^{(\ell-1)}},\quad & \mathcal{W}_{c_0}^{\ell,i}&\neq \mathcal{W}_{c_0}^{\sharp\Phi_1-1,\sharp\varrho}\\
&\frac{1}{8}C_0\abs{t}-C\abs{t}^{2}-2Cc_0^{1/3^{(\sharp\Phi_1-2)}},\quad& \mathcal{W}_{c_0}^{\ell,i}&= \mathcal{W}_{c_0}^{\sharp\Phi_1-1,\sharp\varrho}.
 \end{aligned}
 \right.
\end{align*}
Here $\pi_{\phi+\alpha_i}$ is defined in \ref{for:142} of Section \ref{sec:10}.
\end{lemma}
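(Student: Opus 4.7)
The plan is to Taylor-expand the finite-dimensional operator $X_t=\exp(t\,d\rho(k_i))$ on $V$ and exploit the weight-space structure from \ref{sec:16}. Write
\begin{align*}
X_t v=v+t\,d\rho(k_i)v+R_2(t)v,\qquad \norm{R_2(t)v}\leq C\abs{t}^2,
\end{align*}
where the remainder bound follows because $\norm{d\rho(k_i)}$ is uniformly bounded (depending only on $G$ and $\rho$) and $\abs{v}$ is bounded on $\mathcal{W}_{c_0}$; in the non-archimedean case the exponential series terminates, so there is nothing to check. This contribution is absorbed into the $-C\abs{t}^2$ term of the claimed bound.

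Next I analyze the zeroth- and first-order contributions to $\pi_{\phi_\ell+\alpha_i}(X_t v)$. Since $\alpha_i\in\Phi^+$, the weight $\phi_\ell+\alpha_i$ has strictly smaller length $L$ than $\phi_\ell$, so by the ordering in \ref{for:64} it is either $\lambda$ or some $\phi_m$ with $m\leq\ell-1$. The definition of $\mathcal{W}_{c_0}^{\ell,i}$ in \ref{for:105} then bounds each component of $v$ in this weight space by $2n_l c_0^{1/3^m}\leq Cc_0^{1/3^{(\ell-1)}}$, so $\abs{\pi_{\phi_\ell+\alpha_i}(v)}\leq Cc_0^{1/3^{(\ell-1)}}$. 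For the linear term, in the archimedean case $k_i=\mathfrak{c}_i+\tau_0\mathfrak{c}_i$ with $\mathfrak{c}_i\in\mathfrak{g}_{\alpha_i}$ and $\tau_0\mathfrak{c}_i\in\mathfrak{g}_{-\alpha_i}$, whence
\begin{align*}
\pi_{\phi_\ell+\alpha_i}(d\rho(k_i)v)=d\rho(\mathfrak{c}_i)v_{\phi_\ell}+d\rho(\tau_0\mathfrak{c}_i)v_{\phi_\ell+2\alpha_i},
\end{align*}
while in the non-archimedean case only the first summand appears. The second summand is again controlled by $Cc_0^{1/3^{(\ell-1)}}$ since $\phi_\ell+2\alpha_i$ has length strictly below $L(\phi_\ell)$.

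The heart of the proof is the lower bound on $\abs{d\rho(\mathfrak{c}_i)v_{\phi_\ell}}$. Decomposing $v_{\phi_\ell}=\sum_{l=1}^{\sharp\phi_\ell}v^l_{\phi_\ell}$, property \ref{for:2} annihilates all summands with $l>i$, while property \ref{for:52} combined with \eqref{for:88} gives $\abs{d\rho(\mathfrak{c}_i)v^i_{\phi_\ell}}\geq C_0 n_i c_0^{1/3^\ell}$. The summands with $l<i$ are bounded by $2C'_0 n_l c_0^{1/3^\ell}$ each, so the triangle inequality and the inductive definition of $n_i$ in \ref{sec:14} yield
\begin{align*}
\abs{d\rho(\mathfrak{c}_i)v_{\phi_\ell}}\geq\Bigl(C_0 n_i-2C'_0\sum_{l<i}n_l\Bigr)c_0^{1/3^\ell}\geq 2c_0^{1/3^\ell}.
\end{align*}
Multiplying by $\abs{t}$ and combining with the zeroth-order and tail estimates (using $\abs{t}\leq 1$) gives $\abs{t}c_0^{1/3^\ell}-C\abs{t}^2-2Cc_0^{1/3^{(\ell-1)}}$, as claimed.

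For the exceptional case $\mathcal{W}_{c_0}^{\sharp\Phi_1-1,\sharp\varrho}$, every component of $v$ other than $v^{\sharp\varrho}_\varrho$ is controlled by a small $c_0$-power, while $\abs{v}>1/2$ combined with \eqref{for:80} forces $\abs{v^{\sharp\varrho}_\varrho}$ to be bounded below by an absolute constant once $c_0$ is small enough. Hence $\abs{d\rho(\mathfrak{c}_{\sharp\varrho})v^{\sharp\varrho}_\varrho}$ is bounded below by a constant multiple of $C_0$, and redistributing constants as above produces the improved linear coefficient $\tfrac{1}{8}C_0$ together with the error term $2Cc_0^{1/3^{(\sharp\Phi_1-2)}}$. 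The main obstacle will be the bookkeeping in the third paragraph: one must simultaneously use the kernel inclusion \ref{for:2}, the injectivity bound \ref{for:52}, and the inductive constants $n_i$ from \ref{sec:14} so that the main summand strictly dominates the preceding ones. Everything else reduces to length comparisons in the weight lattice and is essentially routine.
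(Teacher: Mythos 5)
Your proof is correct and follows essentially the same strategy as the paper's: Taylor-expand $X_t$, isolate the dominant linear term $t\,d\rho(\mathfrak{c}_i)v^i_{\phi_\ell}$ using properties \ref{for:52} and \ref{for:2} together with the inductive constants $n_i$ from \ref{sec:14}, and control everything else by the $\theta$-bounds in the definition of $\mathcal{W}_{c_0}^{\ell,i}$. The one presentational difference worth noting is that the paper avoids your length-comparison analysis of the zeroth-order term and of the $d\rho(\tau_0\mathfrak{c}_i)$ contribution by first splitting $\nu=\nu_1+(\nu-\nu_1)$ (with $\nu_1$ the sum of weight components indexed below $\phi_\ell$) and applying the $\rho(K)$-norm-invariance of $X_t$ to bound $\abs{\pi_{\phi_\ell+\alpha_i}(X_t\nu_1)}\leq\abs{\nu_1}$ and $\abs{\pi_{\phi_\ell+\alpha_i}(X_t\mu)}\leq\abs{\mu}$ in one stroke, whereas you estimate the zeroth-order and the $v_{\phi_\ell+2\alpha_i}$ linear contributions separately via the weight-lattice length argument. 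Both give the same $Cc_0^{1/3^{(\ell-1)}}$ bound, so the content is identical.
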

\begin{proof} The proof contains two parts: we will give detailed description of $X_{t}$-orbits in $\mathcal{W}^{\ell,i,c_0}$  and $\mathcal{W}_{c_0}^{\ell,i}$ (\ref{for:105} of Section \ref{sec:10}) respectively.
\begin{sect2}\noindent{\small{\bf Analysis of $X_{t}$-orbits in $\mathcal{W}^{\ell,i,c_0}$}}.\\
For any $\nu\in \mathcal{W}^{\ell,i,c_0}$ write
\begin{align}\label{for:81}
\nu=\overbrace{\sum_{\omega<\phi_\ell}\nu_\omega}^{\nu_1}+\overbrace{\sum_{j<i}\nu^j_{\phi_\ell}}^{\nu_4}+\nu^i_{\phi_\ell}+\overbrace{\sum_{i<j}\nu^j_{\phi_\ell}}^{\nu_2}+
\overbrace{\sum_{\phi_\ell<\omega}\nu_{\omega}}^{\nu_3}.
\end{align}
We will consider $X_{t}$ orbits for $\nu_2$, $\nu_3$, $\nu_4$ and $\nu^i_{\phi_\ell}$ respectively (see notations in \ref{sec:16} of Section \ref{sec:10}). Using \eqref{for:110} we see that $X_{t}$ has the form
\begin{align}
&X_{t}=\exp\bigl(td\rho(k_i)\bigl)=\sum_{j\geq 0}(j!)^{-1}t^{j}d\rho(k_i)^j.
\end{align}
Notice that for any $\chi\in\Phi_1$,  if $\pi_{\phi+\alpha_i}\bigl(d\rho (k_i)^{j}v\bigl)\neq 0$ where $v\in V_\chi$, then $\phi+\alpha_i-\chi= l\alpha_i$, $-j\leq l\leq j$. Hence we have
\begin{align}\label{for:4}
   \pi_{\phi+\alpha_i}\bigl( X_{t}\nu_3\bigl)&=t^{2}\mathfrak{o}(\nu_3,t),\qquad\text{(\ref{for:84} of Section \ref{sec:10}).}
   \end{align}
 For any $v\in V_\phi$ we find that
\begin{align}\label{for:78}
   \pi_{\phi+\alpha_i}\bigl( X_{t}v\bigl)&=\sum_{0\leq j\leq 1}(j!)^{-1}t^{j}\pi_{\phi+\alpha_i}\bigl(d\rho(k_i)^jv\bigl)\notag\\
   &+\sum_{j\geq 2}(j!)^{-1}t^{j}\pi_{\phi+\alpha_i}\bigl(d\rho(k_i)^jv\bigl)\notag\\
   &=t\pi_{\phi+\alpha_i}\bigl(d\rho(k_i)v\bigl)+t^2\abs{v}\mathfrak{o}(\abs{v}^{-1}v,t)\notag\\
   &=t\pi_{\phi+\alpha_i}\bigl(d\rho(\mathfrak{c}_i)v\bigl)+t^2\abs{v}\mathfrak{o}(\abs{v}^{-1}v,t).
\end{align}
Using \eqref{for:2} in \ref{sec:16} of Section \ref{sec:10} we see that $d\rho(\mathfrak{c}_i)(v_{\phi_\ell}^j)=0$ if $j>i$. In particular, $d\rho(\mathfrak{c}_i)(\nu_2)=0$. It follows from \eqref{for:4} and \eqref{for:78} that
\begin{align}\label{for:130}
&\pi_{\phi+\alpha_i}\bigl( X_{t}(\nu-\nu_1)\bigl)\notag\\
&=t\pi_{\phi+\alpha_i}\bigl(d\rho(\mathfrak{c}_i)\nu^i_{\phi_\ell}\bigl)+t\pi_{\phi+\alpha_i}\bigl(d\rho(\mathfrak{c}_i)\nu_4\bigl)+t^2\mathfrak{o}(\nu,t).
\end{align}
\end{sect2}
\begin{sect2}\noindent{\small{\bf Analysis of $X_{t}$-orbits in $\mathcal{W}_{c_0}^{\ell,i}$}}.\\
For any $\nu\in \mathcal{W}^{\ell,i,c_0}$,  any $\mu\in V_\lambda$ with $\abs{\mu}< 2c_0$, we have
\begin{align}\label{for:86}
 &\bigl\lvert\pi_{\phi+\alpha_i}(X_{t}(\nu+\mu))\bigl\rvert\notag\\
 &=\Bigl\lvert\pi_{\phi+\alpha_i}(X_{t}(\nu-\nu_1))+\pi_{\phi+\alpha_i}(X_{t}(\nu_1))+\pi_{\phi+\alpha_i}(X_{t}(\mu))\Bigl\rvert,
\end{align}
where $\nu_1$ is as defined in \eqref{for:81} for $\nu$.

Since $X_{t}\in \rho(K)$ and $\rho(K)$ preserves the norm $\abs{\cdot}$ on $V$ (see \ref{for:84} in Section \ref{sec:10}) we have
\begin{align}\label{for:5}
\bigl\lvert\pi_{\phi+\alpha_i}(X_{t}(\mu))\bigl\rvert\leq \bigl\lvert X_{t}(\mu)\bigl\rvert=\abs{\mu}<2c_0.
\end{align}
From definitions of $\theta^l_{\phi_m}$ (see \ref{for:105} in Section \ref{sec:10}) we see that
 \begin{align}\label{for:30}
\bigl\lvert\pi_{\phi+\alpha_i}(X_{t}(\nu_1))\bigl\rvert\leq \bigl\lvert X_{t}(\nu_1)\bigl\rvert=\abs{\nu_1}\leq Cc_0^{1/3^{(\ell-1)}}.
 \end{align}
 Then it follows from \eqref{for:130}, \eqref{for:86}, \eqref{for:5} and \eqref{for:30} that
\begin{align}\label{for:55}
 &\bigl\lvert\pi_{\phi+\alpha_i}(X_{t}(\nu+\mu))\bigl\lvert\notag\\
 &\geq\bigl\lvert\pi_{\phi+\alpha_i}(X_{t}(\nu-\nu_1))\bigl\lvert-\bigl\lvert\pi_{\phi+\alpha_i}(X_{t}(\nu_1))\bigl\lvert-
\bigl\lvert\pi_{\phi+\alpha_i}(X_{t}(\mu))\bigl\lvert\notag\\
 &\geq \Bigl\lvert t\pi_{\phi+\alpha_i}\Bigl(d\rho(\mathfrak{c}_i)\nu^i_{\phi_\ell}\bigl)\bigl\lvert-\Bigl\lvert t\pi_{\phi+\alpha_i}\bigl(d\rho(\mathfrak{c}_i)\nu_4\bigl)\Bigl\lvert-
 C\abs{t}^{2}-
 Cc_0^{1/3^{(\ell-1)}}-2c_0
 \notag\\
 &\geq \abs{t}\bigl(C_0\abs{\nu^i_{\phi_\ell}}-C_0'\abs{\nu_4}\bigl)-C\abs{t}^{2}-2Cc_0^{1/3^{(\ell-1)}}.
 \end{align}
($C_0$ is in \eqref{for:88} and $C_0'$ and $n_i$ are in \ref{sec:14} of Section \ref{sec:10}).

\begin{sect6}\label{sec:23}$\mathcal{W}_{c_0}^{\ell,i}\neq \mathcal{W}_{c_0}^{\sharp\Phi_1-1,\sharp\varrho}$\\
From \eqref{for:55} one has
\begin{align*}
 &\bigl\lvert\pi_{\phi+\alpha_i}(X_{t}(\nu+\mu))\bigl\lvert\\
 &\geq c_0^{1/3^{\ell}}\abs{t}\bigl(C_0n_i-\sum_{j=1}^{i-1}2C_0'n_j\bigl)-C\abs{t}^{2}-2Cc_0^{1/3^{(\ell-1)}}.
 \end{align*}
 By definition of $n_i$, it follows that
\begin{align}\label{for:85}
\bigl\lvert\pi_{\phi+\alpha_i}(X_{t}(\nu+\mu))\bigl\lvert\geq\abs{t}c_0^{1/3^{\ell}}-C\abs{t}^{2}-2Cc_0^{1/3^{(\ell-1)}}.
\end{align}
\end{sect6}
\begin{sect6}$\mathcal{W}_{c_0}^{\ell,i}= \mathcal{W}_{c_0}^{\sharp\Phi_1-1,\sharp\varrho}$\\
Note that now $\abs{\nu^{\sharp\Phi_1-1}_{\varrho}}\geq \frac{1}{4}$.  Also from \eqref{for:55}, one has
\begin{align}
 &\bigl\lvert\pi_{\phi+\alpha_i}(X_{t}(\nu+\mu))\bigl\lvert\notag\\
&\geq \abs{t}\bigl(\frac{1}{4}C_0-Cc_0^{1/3^{(\sharp\Phi_1-1)}}\bigl)-C\abs{t}^{2}-2Cc_0^{1/3^{(\sharp\Phi_1-2)}}\notag\\
&\geq\frac{1}{8}C_0\abs{t}-C\abs{t}^{2}-2Cc_0^{1/3^{(\sharp\Phi_1-2)}},
 \end{align}
here we use the fact that $\frac{1}{4}C_0-Cc_0^{1/3^{(\sharp\Phi_1-1)}}>\frac{1}{8}C_0$ if $c_0$ is small enough. Hence we finish the proof.
\end{sect6}

\end{sect2}
\end{proof}
Next, we will consider disjoint $\rho(K)$-orbits of $\mathcal{W}_{c_0}$.
\begin{lemma}\label{le:10}
Suppose $c_0$ is small enough. For each $\mathcal{W}_{c_0}^{\ell,i}$, there are at least $\Bigl[C^{-1}c_0^{-1/3^{\ell}}\Bigl]$ different elements $\tau_j$ in $K$ such that
\begin{align*}
\rho(\tau_j)\mathcal{W}_{c_0}^{\ell,i}\bigcap\rho(\tau_l)\mathcal{W}_{c_0}^{\ell,i}=\emptyset,\qquad\text{ if }l\neq j.
\end{align*}
If $\phi=\sharp\Phi_1-1$ and $i=\sharp\varrho$, we can improve this number to $\Bigl[C^{-1}c_0^{-1/3^{(\sharp\varrho-2)}}\Bigl]$.
\end{lemma}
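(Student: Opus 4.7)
The plan is to take $\tau_j = X'_{t_j} \in K$ (in the notation of \eqref{for:68}, with $k_i$ coming from the $i$-th element $\mathfrak{c}_i$ of the associated tuple $b_{\phi_\ell}$) for $\{t_j\}$ an appropriate arithmetic progression in $k$. Since $\rho(X'_t) = X_t$ is a one-parameter family by \eqref{for:110}, the disjointness $\rho(\tau_j)\mathcal{W}_{c_0}^{\ell,i} \cap \rho(\tau_l)\mathcal{W}_{c_0}^{\ell,i} = \emptyset$ for $j \neq l$ reduces to showing $X_s\mathcal{W}_{c_0}^{\ell,i} \cap \mathcal{W}_{c_0}^{\ell,i} = \emptyset$ for every nonzero $s$ in the difference set $\{t_j - t_l\}$.

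The key observation is that adding the positive root $\alpha_i$ to $\phi_\ell$ strictly decreases the length $L$, so $\phi_\ell + \alpha_i$ is either $\lambda$ or $\phi_{m_0}$ for some $m_0 \leq \ell - 1$. Consequently the constraints defining $\mathcal{W}_{c_0}^{\ell,i}$ (see \ref{for:105}) yield a uniform upper bound $\abs{\pi_{\phi_\ell+\alpha_i}(w)} \leq C c_0^{1/3^{(\ell-1)}}$ for every $w \in \mathcal{W}_{c_0}^{\ell,i}$: when $\phi_\ell + \alpha_i = \phi_{m_0}$, sum the bounds $\abs{w_{\phi_{m_0}}^l} < 2\theta_{\phi_{m_0}}^l = 2c_0^{1/3^{m_0}}n_l$ over $l$ and use $c_0^{1/3^{m_0}} \leq c_0^{1/3^{(\ell-1)}}$; when $\phi_\ell + \alpha_i = \lambda$, we have $\abs{\pi_\lambda(w)} < 2c_0 \leq 2c_0^{1/3^{(\ell-1)}}$. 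Combined with the lower bound from Lemma \ref{le:13}, disjointness of $X_s \mathcal{W}_{c_0}^{\ell,i}$ and $\mathcal{W}_{c_0}^{\ell,i}$ is implied by
\begin{align*}
\abs{s}\, c_0^{1/3^\ell} - C\abs{s}^2 - 2C c_0^{1/3^{(\ell-1)}} > C c_0^{1/3^{(\ell-1)}},
\end{align*}
i.e.\ $\abs{s}\, c_0^{1/3^\ell} - C\abs{s}^2 > 3C c_0^{1/3^{(\ell-1)}}$. For $c_0$ sufficiently small this holds on a nonempty interval $\abs{s} \in [r_-, r_+]$ with $r_-$ of order $c_0^{2/3^\ell}$ and $r_+$ of order $c_0^{1/3^\ell}$.

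Setting $\epsilon := r_-$ and $t_j := j\epsilon$ for $0 \leq j \leq [r_+/\epsilon]$, every nonzero difference $t_j - t_l$ has absolute value in $[r_-, r_+]$, giving the desired disjointness, and the count is at least $[C^{-1}c_0^{-1/3^\ell}]$. All chosen $t_j$ satisfy $\abs{t_j} \leq 1$, so $X'_{t_j} \in K$ by Remark \ref{re:5}(3). The special case $\ell = \sharp\Phi_1 - 1$, $i = \sharp\varrho$ is treated analogously via the strengthened bound from Lemma \ref{le:13}: the linear coefficient of $\abs{s}$ is the constant $C_0/8$ and the error term is $c_0^{1/3^{(\sharp\Phi_1-2)}}$, so the useful interval becomes $\abs{s} \in [O(c_0^{1/3^{(\sharp\Phi_1-2)}}), O(1)]$, yielding $[C^{-1} c_0^{-1/3^{(\sharp\Phi_1-2)}}]$ disjoint orbits.

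The main obstacle will be executing this construction in the non-archimedean setting: an arithmetic progression of the required density must be replaced by a carefully chosen finite subset of $k$ whose pairwise differences realize every absolute value in $[r_-, r_+]$, which can be arranged using the additive structure of the valuation ring together with Remark \ref{re:11} to keep every $X'_{t_j}$ inside $K$. Tracking the implicit constants through the quadratic analysis and verifying that every chosen $t_j$ lies in the valid range for $X'_{t_j} \in K$ are the routine remaining calculations.
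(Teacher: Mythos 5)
Your argument follows the paper's proof essentially line by line: it invokes Lemma \ref{le:13} for the lower bound on $\abs{\pi_{\phi_\ell+\alpha_i}(X_t v)}$, extracts an upper bound $\abs{\pi_{\phi_\ell+\alpha_i}(w)}\leq C c_0^{1/3^{(\ell-1)}}$ from the defining inequalities of $\mathcal{W}_{c_0}^{\ell,i}$ (the paper writes this as $2C_1 c_0^{1/3^{(\ell-1)}}$ with the same justification), derives the interval $[r_-,r_+]$ of good $\abs{t}$, and packs an arithmetic progression of step $r_-$ into $[0,r_+]$ exactly as the paper does for the archimedean case. The one piece you defer — the non-archimedean construction — is a genuine part of the proof, not merely "routine tracking of constants": an arithmetic progression does not work over a non-archimedean field, and the paper replaces it by the set $Q_{\ell,i}=\{\sum_{j=z_0+m}^{2z_0-m} b_j q^j\}$ of Teichmüller-digit truncations, using that the difference of two such elements has valuation equal to the smallest index at which their digits differ, hence automatically lies in the required annulus; your phrase "realize every absolute value in $[r_-,r_+]$" is not quite the property needed (you need every nonzero difference to land in the annulus, not surjectivity onto it). With that clarification the two proofs are the same.
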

\begin{proof}
\begin{sect3}$\mathcal{W}_{c_0}^{\ell,i}\neq \mathcal{W}_{c_0}^{\sharp\Phi_1-1,\sharp\varrho}$\\
For any $v\in \mathcal{W}_{c_0}^{\ell,i}$,  using Lemma \ref{le:13} and relation \eqref{for:110} we have
\begin{align}\label{for:7}
 &\Bigl\lvert\pi_{\phi+\alpha_i}\bigl(\rho(X'_{t}) v\bigl)\Bigl\rvert-3C_1c_0^{1/3^{(\ell-1)}}\notag\\
 &=\Bigl\lvert\pi_{\phi+\alpha_i}\bigl(X_{t}(v)\bigl)\Bigl\rvert-3C_1c_0^{1/3^{(\ell-1)}}\notag\\
 &> \abs{t}c_0^{1/3^{\ell}}-C\abs{t}^{2}-2Cc_0^{1/3^{(\ell-1)}}-3C_1c_0^{1/3^{(\ell-1)}}\notag\\
  &=\abs{t}(c_0^{1/3^{\ell}}-C\abs{t})-(2C+3C_1)c_0^{1/3^{(\ell-1)}}.
 \end{align}
 Here $C_1$ is in \ref{sec:14} of Section \ref{sec:10}.

It follows that $\Bigl\lvert\pi_{\phi+\alpha_i}\bigl(\rho(X'_{t}) v\bigl)\Bigl\rvert-3C_1c_0^{1/3^{(\ell-1)}}>0$ if
\begin{align}\label{for:145}
(4C+6C_1)c_0^{2/3^{\ell}}\leq\abs{t}\leq \frac{1}{2}C^{-1}c_0^{1/3^{\ell}}.
\end{align}
\end{sect3}
\begin{sect3} \label{for:29}$\mathcal{W}_{c_0}^{\ell,i}= \mathcal{W}_{c_0}^{\sharp\Phi_1-1,\sharp\varrho}$\\
For any $v\in \mathcal{W}_{c_0}^{\sharp\Phi_1-1,\sharp\varrho}$,  using Lemma \ref{le:13} we have
\begin{align*}
 &\Bigl\lvert\pi_{\phi+\alpha_i}\bigl(\rho(X'_{t}) v\bigl)\Bigl\rvert-3C_1c_0^{1/3^{(\sharp\phi-2)}}\notag\\
 &=\Bigl\lvert\pi_{\phi+\alpha_i}\bigl(X_{t}(v)\bigl)\Bigl\rvert-3C_1c_0^{1/3^{(\sharp\phi-2)}}\notag\\
 &> \frac{1}{8}C_0\abs{t}-C\abs{t}^{2}-2Cc_0^{(\frac{1}{3})^{(\sharp\phi-2)}}-3C_1c_0^{1/3^{(\sharp\phi-2)}}\\
 &=\abs{t}\bigl(\frac{1}{8}C_0-C\abs{t}\bigl)-(2C+3C_1)c_0^{1/3^{(\sharp\phi-2)}}.
 \end{align*}
 Then  for small enough $c_0$, $\Bigl\lvert\pi_{\phi+\alpha_i}\bigl(\rho(X'_{t}) v\bigl)\Bigl\rvert-3C_1c_0^{1/3^{(\sharp\phi-2)}}>0$ if
\begin{align}\label{for:9}
16(2C+3C_1)C_0^{-1}c_0^{1/3^{(\sharp\phi-2)}}\leq\abs{t}\leq \sigma_1,
\end{align}
here $\sigma_1$ is small enough that $t\rightarrow X_t'$ is injective if $\abs{t}\leq \sigma_1$.
\end{sect3}
Since $\bigl\lvert\pi_{\phi+\alpha_i}(w)\bigl\rvert\leq 2C_1c_0^{1/3^{(\ell-1)}}$ for any $w\in\mathcal{W}_{c_0}^{\ell,i}$, \eqref{for:145} and \eqref{for:9} imply that if $c_0$ is small enough, then for any $v\,,w\in\mathcal{W}_{c_0}^{\ell,i}$,
\begin{align*}
\Bigl\lvert\pi_{\phi+\alpha_i}\bigl(\rho(X'_{t}) v\bigl)\Bigl\rvert>\bigl\lvert\pi_{\phi+\alpha_i}(w)\bigl\rvert\qquad\text{ if }
\end{align*}
\begin{align}\label{for:136}
\left\{\begin{aligned} &(4C+6C_1)c_0^{2/3^{\ell}}\leq\abs{t}\leq \frac{1}{2}C^{-1}c_0^{1/3^{\ell}},\quad & \mathcal{W}_{c_0}^{\ell,i}\neq \mathcal{W}_{c_0}^{\sharp\Phi_1-1,\sharp\varrho}\\
&16(2C+3C_1)C_0^{-1}c_0^{1/3^{(\sharp\phi-2)}}\leq\abs{t}\leq \sigma_1,\quad& \mathcal{W}_{c_0}^{\ell,i}= \mathcal{W}_{c_0}^{\sharp\Phi_1-1,\sharp\varrho}.
 \end{aligned}
 \right.
\end{align}
 When $k$ is archimedean, let
\begin{enumerate}
  \item $t_0=(4C+6C_1)c_0^{2/3^{\ell}}$ and $n_0=\Bigl[\frac{1}{2}C^{-1}(4C+6C_1)^{-1}c_0^{-1/3^{\ell}}\Bigl]$ if $\mathcal{W}_{c_0}^{\ell,i}\neq \mathcal{W}_{c_0}^{\sharp\Phi_1-1,\sharp\varrho}$;
  \item $t_0=16(2C+3C_1)C_0^{-1}c_0^{1/3^{(\sharp\phi-2)}}$ and $n_0=\Bigl[(32C+48C_1)^{-1}C_0\sigma_1c_0^{-1/3^{(\sharp\phi-2)}}\Bigl]$ if $\mathcal{W}_{c_0}^{\ell,i}= \mathcal{W}_{c_0}^{\sharp\Phi_1-1,\sharp\varrho}$.
\end{enumerate}
 Then for any $0\leq \ell_1\neq\ell_2\leq n_0$, it follows from \eqref{for:136} that
\begin{align*}
&\rho\bigl(\exp(t_0(\ell_1-\ell_2)k_i)\bigl)\mathcal{W}_{c_0}^{\ell,i}\bigcap \mathcal{W}_{c_0}^{\ell,i}\\
&=\rho\bigl(X'_{t_0(\ell_1-\ell_2)}\bigl)\mathcal{W}_{c_0}^{\ell,i}\bigcap \mathcal{W}_{c_0}^{\ell,i}=\emptyset,
\end{align*}
which implies that
\begin{align*}
 &\rho\bigl(\exp(t_0\ell_1k_i)\bigl)\mathcal{W}_{c_0}^{\ell,i}\bigcap\rho\bigl(\exp(t_0\ell_2k_i)\bigl)\mathcal{W}_{c_0}^{\ell,i}=\emptyset.
 \end{align*}
Let $\tau_j=\exp(t_0 jk_i)=X'_{t_0j}$ where $0\leq j\leq n_0$, then smallness of $c_0$ guarantees that these $\tau_j$ are pairwise different. It is clear that these $\tau_j$ satisfy the requirement. We have thus proved the case
for $k$ archimedean.

Now suppose $k$ is non-archimedean and $\mathcal{W}_{c_0}^{\ell,i}\neq \mathcal{W}_{c_0}^{\sharp\Phi_1-1,\sharp\varrho}$. For any $0<c_0<1$ there exist
$\abs{q}< \eta_0\leq 1$ and $z_0\in\NN$ such that $c_0^{1/3^{\ell}}=\abs{q}^{z_0}\eta_0$. Choose the smallest $m\in\NN$ such that $\abs{q}^{-m}\geq\max\{4C+6C_1,\,2C\}$.

Any element in $\mathcal{O}$ can be written as a power series $b_0+b_1q+b_2q^2+\cdots$ where the $b's$ are Teichm\"{u}ller representatives
which are $0$ together with the group of $(\abs{q}^{-1}-1)$st roots of unity in $k$. Denote by $Q_{\ell, i}=\{\sum_{j=z_0+m}^{2z_0-m}b_jq^{j}\}$.
It is easy to see that for any $t_1\neq t_2\in Q_{\ell,i}$, $t=t_1-t_2$ satisfy the condition \eqref{for:145} and hence it follows from \eqref{for:136}  that
\begin{align*}
&\rho\bigl(\exp((t_1-t_2)k_i)\bigl)\mathcal{W}_{c_0}^{\ell,i}\bigcap \mathcal{W}_{c_0}^{\ell,i}\\
&=\rho\bigl(X'_{t_1-t_2}\bigl)\mathcal{W}_{c_0}^{\ell,i}\bigcap \mathcal{W}_{c_0}^{\ell,i}=\emptyset,
\end{align*}
hence we have
\begin{align*}
 &\rho\bigl(\exp(t_1k_i)\bigl)\mathcal{W}_{c_0}^{\ell,i}\bigcap\rho\bigl(\exp(t_2k_i)\bigl)\mathcal{W}_{c_0}^{\ell,i}=\emptyset.
 \end{align*}
Let $\tau_t=\exp(tk_i)=X'_{t}$ where $t\in Q_{\ell, i}$ then these $\tau_t$ satisfy the requirement and are pairwise different (see \eqref{for:50} and \eqref{for:62}). Moreover, the number of these $\tau_t$ is
\begin{align*}
\sharp Q_{\ell, i}=\abs{q}^{-(z_0-2m+1)}\geq \abs{q}^{2m}c_0^{-1/3^{\ell}}\geq\frac{\abs{q}^{2}c_0^{-1/3^{\ell}}}{\max\{(4C+6C_1)^2,\,4C^2\}}.
\end{align*}
If $\mathcal{W}_{c_0}^{\ell,i}=\mathcal{W}_{c_0}^{\sharp\Phi_1-1,\sharp\varrho}$. There exists $z_1,\,z_2\in\NN$ and $\abs{q}< \eta_1\leq 1$ such that $\abs{q}\sigma_1\leq\abs{q}^{z_1}\leq\sigma_1$ and $c_0^{1/3^{(\sharp\phi-2)}}=\abs{q}^{z_2}\eta_1$. Choose smallest $m_1\in\NN$ such that $\abs{q}^{-m_1}\geq 16(2C+3C_1)C_0^{-1}$. Then we consider
$Q_{\sharp\Phi_1-1, \sharp\varrho}=\{\sum_{j=z_1}^{z_2-m_1}b_jq^{j}\}$. Similar to earlier arguments, $\sharp Q_{\sharp\Phi_1-1, \sharp\varrho}$ gives the number of different $\tau_t$. Also we have
\begin{align*}
\sharp Q_{\sharp\Phi_1-1, \sharp\varrho}&=\abs{q}^{-(z_2-z_1-m_1+1)}\geq \abs{q}^{z_1+m_1}c_0^{-1/3^{\sharp\varrho-2}}\notag\\
&\geq\frac{\abs{q}^2\sigma_1C_0}{16(2C+3C_1)}c_0^{-1/3^{\sharp\varrho-2}}.
\end{align*}
Then we finished the proof for $k$ non-archimedean.

\end{proof}

We are now in a position to proceed with the proof of Proposition \ref{po:7}.
\subsection{Proof of Proposition \ref{po:7}}
If $k$ is archimedean, let
\begin{align*}
\text{Cone}_1(c_0,s)'=\Bigl\{\frac{\nu}{\abs{\nu}}:\nu\in \text{Cone}_1(c_0,s)\Bigl\};
\end{align*}
if $k$ is non-archimedean, let
\begin{align*}
\text{Cone}_1(c_0,s)'=\{\nu q^{-o(\nu)}:\nu\in \text{Cone}_1(c_0,s)\}.
\end{align*}
where $o$ is the normalized valuation of $k$ (for the uniformizer $q$). It is clear that elements in $\text{Cone}_1(c_0,s)'$ are of norm $1$.

For any $v\in \text{Cone}_1(c_0,s)'$, write $v=v_1+v_2$ where $v_1\in V_\lambda$ and $v_2\in\sum_{\psi\neq \lambda}V_\psi$ (see \ref{for:142} of Section \ref{sec:10}), then $\abs{v_1}\leq c_0s^{-1}$
and $1-c_0s^{-1}\leq\abs{v_2}\leq 1+c_0s^{-1} $. If $c_0$ is small enough (relative to $s$), then $c_0s^{-1}<\frac{1}{8}$, which implies $\frac{3}{4}\leq\abs{v_2}\leq\frac{5}{4}$. Then we see that
$\text{Cone}_1(c_0,s)'\subset \mathcal{W}_{c_0s^{-1}}.$

Lemma \ref{le:10} implies that there is an open cover of $\mathcal{W}_{c_0s^{-1}}\subset\bigcup_{i=1}^{\ell_0} \mathcal{Y}_{i}$ ($\ell_0$ is defined in \eqref{for:49})
and for each $\mathcal{Y}_{i}$, there are at least $[C^{-1}(c_0s^{-1})^{\mathfrak{q}}]$ different elements $\tau^i_j\in K$ such that if $\ell\neq j$
\begin{align}\label{for:65}
 \rho(\tau^i_j)\mathcal{Y}_{i}\bigcap\rho(\tau^i_\ell)\mathcal{Y}_{i}=\emptyset,
\end{align}
where $\mathfrak{q}=-(\frac{1}{3})^{(\sharp\Phi_1-1)}$; and $\mathfrak{q}=-(\frac{1}{3})^{(\sharp\Phi_1-2)}$ if $\dim V_\varrho=1$.

It is clear that $\bigcup_{i=1}^{\ell_0} \mathcal{Y}_{i}$ is also an open cover of $\text{Cone}_1(c_0,s)'$. When $k$ is archimedean, let
\begin{align*}
\mathcal{E}_i=\{t\nu:\nu\in \mathcal{Y}_{i}\text{ and }t\in\RR^+\};
\end{align*}
when $k$ is non-archimedean, let
\begin{align*}
\mathcal{E}_i=\{q^n\nu:\nu\in \mathcal{Y}_{i}\text{ and }n\in\ZZ\}.
\end{align*}
It is easy to check that the sets $\mathcal{E}_i$, $1\leq i\leq \ell_0$ are open and
\begin{align}\label{for:47}
\text{Cone}_1(c_0,s)\subset \bigcup_{i=1}^{\ell_0} \mathcal{E}_{i}.
\end{align}
It is now fairly clear what one has to do. We have to prove the following:
\begin{align}\label{for:143}
 \rho(\tau^i_j)\mathcal{E}_{i}\bigcap\rho(\tau^i_\ell)\mathcal{E}_{i}=\emptyset,\qquad \ell\neq j.
\end{align}
Indeed, if it is not true, then there exist $x_1\,,x_2\in\mathcal{Y}_{i}$ and $t_1\,,t_2\in \RR^+$ (resp. $t_1\,,t_2\in \{q^n:n\in\ZZ\}$) such that $\rho(\tau_j)(t_1x_1)=\rho(\tau_\ell)(t_2x_2) $. Since $\rho(K)$ preserves the norm on $V$ (see \ref{for:84} in Section \ref{sec:10}),  $\abs{t_1}=\abs{t_2}$, which gives $t_1=t_2$. Then it follows that $\rho(\tau_j)(x_1)=\rho(\tau_\ell)(x_2)$,  which is a contradiction to \eqref{for:65}.
Then we thus proved \eqref{for:143}. It is clear that \eqref{for:47} and \eqref{for:143} imply Proposition \ref{po:7}.
\section{matrix coefficients of $G\ltimes_\rho V$ restricted to $G$}\label{sec:11}
In this part, the basic ideas are as follows:
\begin{enumerate}
  \item \label{for:14}The crucial step in proving Theorem \ref{th:11} is Proposition \ref{po:6}. When $\rho$ is irreducible, we analyze unitary representations of $G\ltimes_\rho V$ without non-trivial $V$-fixed vectors, and show an upper bound of $K$-finite matrix coefficients on $G$ for a dense set of vectors. There are four steps in proof of Proposition \ref{po:6}: step $1$: reduction to consider projected matrix coefficients by using the deformation of $K$-orbits on $\widehat{V}$ under the action of maximal $k$-split torus; step $2$: once only the $K$-orbits are mattered,  we can consider the restriction of the representation on $K\ltimes_\rho V$, which is weekly contained in its regular representation, and then Lemma \ref{le:1} allows the reduction to  the consideration of the regular representation; step $3$: using central idempotents in $L^1(K)$ to reduce to the consideration of $K$-fixed vectors; step $4$: using disjoint $K$-orbits to get upper bounds of matrix coefficients on $G$ for $K$-fixed vectors.

  \item The upper bound in \eqref{for:14} shows that there exists $p>0$ such that the $G$ matrix coefficients are strongly $L^{p+\epsilon}$ for all $\epsilon>0$; and by the work of Cowling, Haggerup and Howe \cite{cow1} (see Theorem $2.5$), we have
a passage from a uniform $L^p$-bound to a uniform pointwise bound.

  \item \label{for:15}For general $\rho$, there are two steps in our proof: step $1$: reduction to the case of $\rho$ irreducible if the unitary representation is irreducible; step $2$: reduction to the consideration of irreducible representations by using a Howe's trick (see Proposition \ref{th:2}).
\end{enumerate}

\subsection{Projected matrix coefficients for $K$-finite vectors} Recall notations in \ref{sec:18} of Section \ref{sec:10}. Suppose $\rho$ is irreducible and excellent on $V$. Then we have:
\begin{proposition}\label{po:6}
Suppose $K$ is a good maximal compact subgroup of $G$. Let $\Pi$ be a unitary representation of $G\ltimes_\rho V$ on a Hilbert space $\mathcal{H}$ which contains no non-trivial $V$-fixed vectors. Then there is a dense set of vectors $\xi$, $\eta$ in $\mathcal{H}$ such that if $a\in D^+$ and $g=k_1a\omega k_2\in KD^+FK=G$ (recall
$F=\{e\}$  for $k$ archimedean), then
\begin{align}\label{for:8}
\bigl\lvert\phi_{\eta,\xi}(g)\bigl\rvert=\bigl\lvert\langle \Pi(g)\eta,\xi\rangle\bigl\rvert\leq C_{\eta,\xi}\bigl\lvert\lambda(a)\varrho(a^{-1})\bigl\rvert^{\frac{\mathfrak{q}}{2}},
\end{align}
where $C_{\eta,\xi}$ is a constant only depending on $\xi$ and $\eta$; and $\mathfrak{q}=-(\frac{1}{3})^{(\sharp\Phi_1-1)}$. Moreover, If $\dim V_\lambda=1$, $\mathfrak{q}=-(\frac{1}{3})^{(\sharp\Phi_1-2)}$.
\end{proposition}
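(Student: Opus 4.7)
Plan of proof. I follow the four-step program laid out at the beginning of Section \ref{sec:11}. Let $P$ denote the projection-valued measure on $V\cong\widehat V$ attached to $\Pi|_V$; since $\Pi$ has no non-trivial $V$-fixed vectors, $P(\{0\})=0$. The dense family of test vectors will consist of pairs $\eta=P(E)\eta_0$, $\xi=P(F)\xi_0$ with $E,F$ compact subsets of $V\setminus\{0\}$ chosen so that their $V_\varrho$-components are bounded away from the origin. Using $K$-invariance of $|\cdot|$ on $V$ together with the covariance $\Pi(g)P(X)\Pi(g)^{-1}=P(\rho^*(g)X)$, it suffices to bound $|\langle\Pi(a)\eta,\xi\rangle|$ for $a\in D^+$.

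For Step 1, $\rho^*(a)$ acts on each weight space $V_\phi$ by multiplication by $|\phi(a)|^{-1}$, so on $D^+$ it maximally contracts $V_\lambda$ and maximally expands $V_\varrho$. By the choice of $E$, any $v\in E$ satisfies $|\rho^*(a)v|\geq c|\varrho(a^{-1})|$ while $|\pi_\lambda(\rho^*(a)v)|\leq C|\lambda(a)|^{-1}$. Setting $s=c|\varrho(a^{-1})|$ and $c_0=C|\lambda(a)|^{-1}$, this gives $\rho^*(a)E\subset\text{Cone}_1(c_0,s)$, so Proposition \ref{po:7} produces an open cover $\rho^*(a)E\subset\bigcup_{i=1}^{\ell_1}\mathcal{E}_i$ together with at least $N=[C^{-1}(c_0/s)^{\mathfrak{q}}]$ pairwise disjoint $K$-translates $\rho(\tau^i_j)\mathcal{E}_i$ of each cover piece.

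For Steps 2 and 3, the group $K\ltimes_\rho V$ is amenable, so $\Pi|_{K\ltimes V}$ is weakly contained in its left regular representation. Lemma \ref{le:1} then lets me approximate the projected matrix coefficients $\langle\widehat\Pi(\chi_{\mathcal{E}_i})\Pi(s)\eta,\Pi(t)\xi\rangle$ (for $s,t\in K$) uniformly on compacta by analogous expressions for the regular representation of $K\ltimes V$; inserting central idempotents of $L^1(K)$ further reduces the analysis to $K$-fixed test vectors, with $\dim\langle K\eta\rangle$ and $\dim\langle K\xi\rangle$ absorbed into the constant $C_{\eta,\xi}$. For Step 4, the vectors $\Pi(\tau^i_j)\widehat\Pi(\chi_{\mathcal{E}_i})\Pi(a)\eta$, $1\leq j\leq N$, lie in the ranges of the pairwise orthogonal projections $P(\rho(\tau^i_j)\mathcal{E}_i)$ and so are mutually orthogonal with norm at most $\|\eta\|$. $K$-invariance of $\xi$ in the reduced model makes $|\langle\Pi(\tau^i_ja)\widehat\Pi(\chi_{\mathcal{E}_i})\eta,\xi\rangle|=|\langle\Pi(a)\widehat\Pi(\chi_{\mathcal{E}_i})\eta,\xi\rangle|$ for every $j$, and Bessel's inequality gives
\begin{align*}
N\,|\langle\Pi(a)\widehat\Pi(\chi_{\mathcal{E}_i})\eta,\xi\rangle|^2\leq\|\eta\|^2\|\xi\|^2.
\end{align*}
Summing over $i=1,\dots,\ell_1$ and substituting the value of $N$ gives $|\langle\Pi(a)\eta,\xi\rangle|\leq C_{\eta,\xi}|\lambda(a)\varrho(a^{-1})|^{\mathfrak{q}/2}$; the sharper exponent when $\dim V_\lambda=1$ is inherited directly from Proposition \ref{po:7}.

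The main obstacle lies in Steps 2--3: one must execute the weak-containment approximation and the central-idempotent reduction while preserving the PV-support localization to $\rho^*(a)E$ used in Step 1, so that the orthogonality-plus-Bessel step remains applicable in the reduced model. Tracking the $K$-isotype decomposition through the approximation, and cleanly absorbing its impact on $\dim\langle K\eta\rangle$ and $\dim\langle K\xi\rangle$ into the constant $C_{\eta,\xi}$, is the main bookkeeping challenge.
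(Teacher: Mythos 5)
Your four-step skeleton (cone inclusion $\to$ Proposition~\ref{po:7} $\to$ pairwise disjoint $K$-translates $\to$ orthogonality/Bessel count) is correct, and Step 4 matches the paper's final argument. But the proposal has two concrete gaps, and both are closed in the paper by a single Cauchy--Schwarz step that you never introduce.

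The first gap is density. You take $\eta=P(E)\eta_0$ with $E$ compact and the expanding weight component bounded away from $0$, so that $\rho^*(a)E$ sits in a cone with ratio $c_0/s\sim|\lambda(a)\varrho(a^{-1})|^{-1}$. Without that restriction the minimum norm of $\rho^*(a)E$ can tend to $0$ and the ratio does not improve, so the restriction is essential; but for the restricted family to be dense you would need $P$ to give zero mass to the hyperplane where that weight component vanishes, and this does not follow from the hypothesis (you only know $P(\{0\})=0$). The paper never constrains the spectral support of the test vectors beyond a fixed annulus. It passes to $\widehat\Pi(f_s)\eta$, $\widehat\Pi(f_s)\xi$ for arbitrary $K$-finite $\eta,\xi$, and then bounds $|\phi_{\widehat\Pi(f_s)\eta,\widehat\Pi(f_s)\xi}(k_1a\omega k_2)|$ by the Cauchy--Schwarz product
\begin{align*}
\Bigl\lVert\widehat\Pi\bigl((a\omega)^{-1}\star h_s^{1/2}\cdot g_s^{1/2}\bigl)\eta'\Bigr\rVert\cdot\Bigl\lVert\widehat\Pi\bigl((a\omega)\star g_s^{1/2}\cdot h_s^{1/2}\bigl)\xi'\Bigr\rVert,
\end{align*}
in which the \emph{symbols} of the two operators are automatically supported in $\rho^*((a\omega)^{\mp1})(X_s)\bigcap X_s$, hence in $\text{Cone}_1(CC_2s|\lambda_1(a^{-1})|,s^{-1})$ and $\text{Cone}_2(CC_2s|\varrho_1(a)|,s^{-1})$. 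Each norm supplies one of the two decay factors $|\lambda(a)|^{\mathfrak{q}/2}$, $|\varrho(a^{-1})|^{\mathfrak{q}/2}$, with no condition imposed on $\eta,\xi$ at all.

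The second gap is the reduction to $K$-fixed vectors. The central idempotents $e_{\eta'}$ of Proposition~\ref{po:9} project onto the finite-dimensional $K$-isotype of $\eta'$, not onto $K$-invariant vectors, and there need be no $K$-invariant vector in that isotype; so "inserting central idempotents reduces to $K$-fixed test vectors" is not a valid step. The actual mechanism in the paper is a $\sup$-domination in the regular representation of $K\ltimes_\rho V$: one replaces $\widehat{\eta_i}$ by the left-$K$-invariant function $\underline{\eta_i}(\kappa\widehat{\mathfrak n})=\sup_{u\in K}|\widehat{\eta_i}(u\kappa\widehat{\mathfrak n})|$, proves $\underline{\eta_i}\in L^2$ by combining the identity $\overline{e_{\eta'}}\ast\widehat{\eta_i}=\widehat{\eta_i}$ with Cauchy--Schwarz (which is exactly where the factor $d_{\eta'}$ enters), and sets $\widetilde{\eta_i}=\mathcal F(\underline{\eta_i})$. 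Crucially, the transfer $\langle\widehat\Lambda(F_s)\eta_i,\eta_i\rangle_2\leq\langle\widehat\Lambda(F_s)\widetilde{\eta_i},\widetilde{\eta_i}\rangle_2$ uses that after the Cauchy--Schwarz factorization the quantity to estimate is a nonnegative diagonal form $\int_K\int_{\widehat V}F_s(-\kappa\cdot\widehat{\mathfrak n})|\widehat{\eta_i}(\kappa\widehat{\mathfrak n})|^2$, monotone in $|\widehat{\eta_i}|$. Your off-diagonal Bessel pairing with a separate $\xi$ admits no such pointwise domination. So the same Cauchy--Schwarz factorization you omit is also what makes the $K$-invariance reduction work; your plan as written cannot proceed past Step 3.
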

\begin{proof} Recall notations in Section \ref{sec:22}. We first claim that $\rho^\ast$ is also an irreducible representation of $G$ on $V$. Indeed, if not, we find a $\rho^\ast(G)$-invariant proper subspace $0\neq W\subseteq V$. Let
\begin{align*}
W^\bot=\{v\in V:\varsigma(v,w)=0,\text{ for }\forall w\in W\}.
\end{align*}
It is clear that $W^\bot\neq 0$ and is a proper subspace of $V$. For any $w\in W$, $v\in W^\bot$ and $g\in G$, we find that
\begin{align*}
\varsigma(\rho(g)v,w)=\varsigma\bigl(v,(\rho(g^{-1})^{-1})^\tau w\bigl)=\varsigma\bigl(v,\rho^\ast(g^{-1}) w\bigl)=0.
\end{align*}
Then $W^\bot$ is $\rho(G)$-invariant, which contradicts the assumption that $\rho$ is irreducible.

Denote by $G_i$ the non-compact almost $k$-simple factors of $G$. We also claim that $\rho^\ast$ is excellent.  If not, let $v\neq 0$ be a $\rho^\ast(G_i)$-fixed vector for some $i$. By earlier arguments, $v^\bot$ is also a $\rho(G_i)$-invariant
hyperplane in $V$. By complete reducibility of $G_i$, we find a $1$-dimensional $\rho(G_i)$-invariant complement $\{tw:t\in k\}$ of $v^\bot$. Then for any $h\in G_i$ there exists a homomorphism  $\iota:G_i\rightarrow k$ such that $\rho(h)w=\iota(h)w$, and thus we have
\begin{align*}
\iota(h)\varsigma(w,v)=\varsigma(\rho(h)w,v)=\varsigma\bigl(w,\rho^\ast(h^{-1}) v\bigl)=\varsigma(w,v),\quad \forall h\in G_i.
\end{align*}
Since $\varsigma(w,v)\neq 0$, $\iota$ is trivial on $G_i$; and then we see that $\{tw:t\in k\}$ is fixed by $\rho(G_i)$, which contradicts the assumption that $\rho$ is excellent.
 In the expression of $\varsigma$, let $\{e_1,e_2,\cdots\}$ (see Section \ref{sec:1}) be composed of vectors from weight spaces, then we see $\lambda_1=-\varrho$ and $\varrho_1=-\lambda$ where $\lambda_1$ and $\varrho_1$ are highest and lowest weights of $\rho^\ast$ respectively.

\noindent\emph{$^\ast$ Reduction to consider projected matrix coefficents}. We assume notations and constructions in Section \ref{sec:10} with respect to the irreducible representation $\rho^\ast$. For a number $s>1$, set
\begin{align}\label{for:17}
X_s=\bigl\{\nu\in V: \frac{1}{s}\leq\abs{\nu}\leq s\bigl\}.
\end{align}
Let $\{P_X\}$ be the projection-valued measure associated to $\Pi$ on $\widehat{V}$. The assumption that there are no non-trivial $V$-fixed vectors implies that $P_0=0$. For any $s>2$ choose $f_s\in \mathcal{S}(V)$ such that
\begin{align}\label{for:76}
0\leq f_s\leq 1;\text{ and }f_s=1\text{ on }X_{s-1}\text{ and }f_s=0 \text{ on }V\backslash X_{s}.
\end{align}
$f_s$ can also be viewed as functions in $\mathcal{S}(\widehat{V})$ by the identification between $V$ and $\widehat{V}$ (see Section \ref{sec:22}).  Since $V\backslash \{0\}=\bigcup_{s>2} X_s$, from \eqref{for:79} and \eqref{for:89} we see that
\begin{align}\label{for:20}
\widehat{\Pi}(f_s)v\rightarrow v \qquad\text{as } s\rightarrow \infty,\qquad \forall v\in \mathcal{H}.
\end{align}
Since the $K$-finite vectors of $\Pi$ are dense by
Peter-Weyl theorem and a matrix coefficient $\phi_{\eta,\xi}$ depends bilinearly  on $\xi$ and $\eta$, it will be enough to prove \eqref{for:8} when $\xi$ and $\eta$ are $K$-finite vectors of length $1$. Moreover, from \eqref{for:20} it will suffice to prove \eqref{for:8} for vectors $\widehat{\Pi}(f_s)\eta$ and $\widehat{\Pi}(f_s)\xi$ for $s$ arbitrary.

Now consider the matrix coefficient $\phi_{\widehat{\Pi}(f_s)\eta,\widehat{\Pi}(f_s)\xi}(k_1a\omega k_2)$ where $a\in D^+$, $\omega\in F$ and $k_1,\,k_2\in K$.
By using \eqref{for:90} we have
\begin{align*}
&\phi_{\widehat{\Pi}(f_s)\eta,\widehat{\Pi}(f_s)\xi}(k_1a\omega k_2)\\
&=\Bigl\langle \Pi(k_1a\omega k_2)\widehat{\Pi}(f_s)\eta,\,\,\widehat{\Pi}(f_s)\xi\Bigl\rangle\notag\\
&=\Bigl\langle \Pi(a\omega)\widehat{\Pi}(k_2\star f_s)\Pi(k_2)\eta,\,\,\Pi(k_1^{-1})\widehat{\Pi}(f_s)\xi\Bigl\rangle\notag\\
&=\Bigl\langle \Pi(a\omega)\widehat{\Pi}(k_2\star f_s)\Pi(k_2)\eta,\,\,\widehat{\Pi}(k_1^{-1}\star f_s)\Pi(k_1^{-1})\xi\Bigl\rangle.
\end{align*}
Let
\begin{align}\label{for:82}
g_s=k_2\star f_s,\quad h_s=k_1^{-1}\star f_s, \quad\eta'=\Pi(k_2)\eta\quad\text{and}\quad\xi'=\Pi(k_1^{-1})\xi.
\end{align}
Notice that the norm $\abs{\cdot}$ on $V$ is $\rho^\ast(K)$ invariant (\ref{for:84} in Section \ref{sec:10}). Then for any $\kappa\in K$ and $s>2$, from \eqref{for:12} we see that $\kappa\star f_s$ also satisfies \eqref{for:76}. Especially, $g_s$ and $h_s$ satisfy \eqref{for:76}. Also, $\eta'$ and $\xi'$ are $K$-finite vectors of length $1$.

Further computations by using \eqref{for:24}, \eqref{for:6} and \eqref{for:90} show that
\begin{align*}
&\phi_{\widehat{\Pi}(f_s)\eta,\widehat{\Pi}(f_s)\xi}(k_1a\omega k_2)\\
&=\bigl\langle \Pi(a\omega)\widehat{\Pi}(g_s^{\frac{1}{2}})\widehat{\Pi}(g_s^{\frac{1}{2}})\eta',\,\,\widehat{\Pi}(h_s)\xi'\bigl\rangle\\
&=\Bigl\langle \widehat{\Pi}\bigl((a\omega)\star g_s^{\frac{1}{2}}\bigl) \Pi(a\omega)\widehat{\Pi}(g_s^{\frac{1}{2}})\eta',\,\,\widehat{\Pi}(h_s)\xi'\Bigl\rangle\\
&=\Bigl\langle \Pi(a\omega)\widehat{\Pi}(g_s^{\frac{1}{2}})\eta',\,\,\widehat{\Pi}\bigl((a\omega)\star g_s^{\frac{1}{2}}\bigl) \widehat{\Pi}(h_s)\xi'\Bigl\rangle\\
&=\Bigl\langle \Pi(a\omega)\widehat{\Pi}(g_s^{\frac{1}{2}})\eta',\,\,\widehat{\Pi}\bigl((a\omega)\star g_s^{\frac{1}{2}}\cdot h_s\bigl)\xi'\Bigl\rangle\\
&=\Bigl\langle \Pi(a\omega)\widehat{\Pi}(g_s^{\frac{1}{2}})\eta',\,\,\widehat{\Pi}(h_s^{\frac{1}{2}})\widehat{\Pi}\bigl((a\omega)\star g_s^{\frac{1}{2}}\cdot h_s^{\frac{1}{2}}\bigl)\xi_1\Bigl\rangle\\
&=\Bigl\langle \widehat{\Pi}(h_s^{\frac{1}{2}})\Pi(a\omega)\widehat{\Pi}(g_s^{\frac{1}{2}})\eta',\,\,\widehat{\Pi}\bigl((a\omega)\star g_s^{\frac{1}{2}}\cdot h_s^{\frac{1}{2}}\bigl)\xi'\Bigl\rangle\\
&=\Bigl\langle \Pi(a\omega)\widehat{\Pi}\bigl((a\omega)^{-1}\star h_s^{\frac{1}{2}}\bigl)\widehat{\Pi}(g_s^{\frac{1}{2}})\eta',\,\,\widehat{\Pi}\bigl((a\omega)\star g_s^{\frac{1}{2}}\cdot h_s^{\frac{1}{2}}\bigl)\xi'\Bigl\rangle\\
&=\Bigl\langle \Pi(a\omega)\widehat{\Pi}\bigl((a\omega)^{-1}\star h_s^{\frac{1}{2}}\cdot g_s^{\frac{1}{2}}\bigl)\eta',\,\,\widehat{\Pi}\bigl((a\omega)\star g_s^{\frac{1}{2}}\cdot h_s^{\frac{1}{2}}\bigl)\xi'\Bigl\rangle.
\end{align*}
Then it follows from the Cauchy-Schwarz inequality and the unitarity of $\Pi$ that
\begin{align}\label{for:19}
&\bigl\lvert \phi_{\widehat{\Pi}(f_s)\eta,\widehat{\Pi}(f_s)\xi}(k_1a\omega k_2)\bigl\rvert \notag\\
&\leq \Bigl\lVert \langle \Pi(a\omega)\widehat{\Pi}\bigl((a\omega)^{-1}\star h_s^{\frac{1}{2}}\cdot g_s^{\frac{1}{2}}\bigl)\eta'\Bigl\rVert\cdot\Bigl\lVert\widehat{\Pi}\bigl((a\omega)\star g_s^{\frac{1}{2}}\cdot h_s^{\frac{1}{2}}\bigl)\xi'\Bigl\rVert\notag\\
&= \Bigl\lVert \widehat{\Pi}\bigl((a\omega)^{-1}\star h_s^{\frac{1}{2}}\cdot g_s^{\frac{1}{2}}\bigl)\eta'\Bigl\rVert\cdot\Bigl\lVert \widehat{\Pi}\bigl((a\omega)\star g_s^{\frac{1}{2}}\cdot h_s^{\frac{1}{2}}\bigl)\xi'\Bigl\rVert.
\end{align}
Next, we will get estimates of $\Bigl\lVert \widehat{\Pi}\bigl((a\omega)^{-1}\star h_s^{\frac{1}{2}}\cdot g_s^{\frac{1}{2}}\bigl)\eta'\Bigl\rVert$ and $\Bigl\lVert \widehat{\Pi}\bigl((a\omega)\star g_s^{\frac{1}{2}}\cdot h_s^{\frac{1}{2}}\bigl)\xi'\Bigl\rVert$ respectively.
\smallskip

\noindent\emph{$^{\ast\ast}$ Reduction to consider action of the regular representation}. Note that $K\ltimes_\rho V$ is an amenable group. Recall the well known fact that any unitary representation of an amenable group is weakly contained in its regular representation (cf. \cite[Ch1, 5.5.3]{Margulis}) or \cite[Proposition 7.3.6]{Zimmer}. Hence $\Pi$ viewed as a unitary representation of $K\ltimes_\rho V$ is weakly contained in the  regular representation $\Lambda$ of $K\ltimes_\rho V$ on $L^2(K\ltimes_\rho V)$, with norm and inner product denoted by $\norm{\cdot}_2$ and $\langle\,,\,\rangle_2$ respectively. Let $d\kappa$, $d\mathfrak{n}$ and $d\mathfrak{\widehat{n}}$ be the normalized Haar measures of $K$, $V$ and $\widehat{V}$ respectively so that $\int_K dk=1$ and $d\mathfrak{n}$ and $d\mathfrak{\widehat{n}}$ are as stated in Theorem \ref{th:1}. Since $\rho(K)\subset SL(V)$, $d\mathfrak{n} d\kappa$ is a bi-invariant measure on $K\ltimes_\rho V$. As $K\ltimes_\rho V$ is unimodular, the bi-invariant Haar measure $ds$ of $K\ltimes_\rho V$ can be normalized so that
\begin{align*}
    \int_{K\ltimes_\rho V}f(s)ds=\int_K\int_{V}f(\kappa\mathfrak{n})d\mathfrak{n}d\kappa,\qquad \forall f\in C_c(K\ltimes_\rho V),
\end{align*}
where $C_c(K\ltimes_\rho V)$ is the space of continuous functions on $K\ltimes_\rho V$ with compact support. For any $f(\kappa\mathfrak{n})\in L^2(K\ltimes_\rho V)$ (resp. $f(\kappa\widehat{\mathfrak{n}})\in L^2(K\ltimes_\rho \widehat{V})$), we use $\widehat{f}(\kappa\,\cdot)$ and $\mathcal{F}(f)(\kappa\,\cdot)$ to denote the Fourier transform and inverse Fourier transform on factor $V$ (resp. $\widehat{V}$) respectively.

We now make a slight digression to state an important proposition (see \cite{cow1}), which allows us to consider matrix coefficients of $\Pi$ (resp. $\Lambda$) restricted to $\mathcal{H}_\mu$ (resp $L^2(K\ltimes_\rho V)_\mu$)-isotropic subspaces for any $K$-finite vector $\mu$.
\begin{proposition}\label{po:9}
Let $\Upsilon$ be a unitary representation of a compact group $K$ and let $\mu$ be a $K$-finite vector of $\Upsilon$. Denote by $\mathcal{H}_\mu$ the span of $\Upsilon(K)\mu$, which is finite dimensional. Then there exists a unique function $e_\mu$ in $C(K)$ so that
\begin{align*}
e_\mu\ast e_\mu=e_\mu, \quad\Upsilon(\overline{e_\mu})\mu=\mu,\quad\text{and }d_\mu=\dim\bigl(\emph{span}\Upsilon(K)\mu\bigl)=\norm{e_\mu}_2^2.
\end{align*}
\end{proposition}
We now consider the projected matrix coefficient
\begin{align*}
\phi^{\Pi}:(x,y)&\rightarrow \Bigl\langle \widehat{\Pi}\bigl((a\omega)^{-1}\star h_s^{\frac{1}{2}}\cdot g_s^{\frac{1}{2}}\bigl)\Pi(x)\eta',\,\widehat{\Pi}\bigl((a\omega)^{-1}\star h_s^{\frac{1}{2}}\cdot g_s^{\frac{1}{2}}\bigl)\Pi(y)\eta'\Bigl\rangle\\
&=\Bigl\langle \widehat{\Pi}\bigl((a\omega)^{-1}\star h_s\cdot g_s\bigl)\Pi(x)\eta',\,\Pi(y)\eta'\Bigl\rangle.
\end{align*}
Then from the proposition, there is a self-adjoint projection $e_{\eta'}\in C(K)$ so that
\begin{align*}
e_{\eta'}\ast\phi^{\Pi}\ast e_{\eta'}=\Bigl\langle \widehat{\Pi}\bigl((a\omega)^{-1}\star h_s\cdot g_s\bigl)\Pi(x)\Pi(\overline{e_{\eta'}})\eta',\,\Pi(y)\Pi(\overline{e_{\eta'}})\eta'\Bigl\rangle=\phi^{\Pi}.
\end{align*}
Since $\Pi$ is weekly contained in $\Lambda$, using Lemma \ref{le:1} we can approximate $\phi^{\Pi}$, uniformly on compacta,  by sums of projected matrix coefficients
\begin{align*}
\phi^{\Lambda}_i: (x,y)\rightarrow \Bigl\langle \widehat{\Lambda}\bigl((a\omega)^{-1}\star h_s\cdot g_s\bigl)\Lambda(x)\eta_i,\,\Lambda(y)\eta_i\Bigl\rangle_2,
\end{align*}
satisfying the condition that
$\sum_{i=1}^n\norm{\eta_i}_2^2\leq \norm{\eta'}^2$.

Since $e_{\eta'}$ has compact support, and further $\Lambda(\overline{e_{\eta'}})$ is a projection, we can approximate $\phi^{\Pi}$ by sums of projected matrix coefficients $\sum_{i=1}^n e_{\eta'}\ast\phi^{\Lambda}_i\ast e_{\eta'}$, where
\begin{align*}
e_{\eta'}\ast\phi^{\Lambda}_i\ast e_{\eta'}=\Bigl\langle \widehat{\Lambda}\bigl((a\omega)^{-1}\star h_s\cdot g_s\bigl)\Lambda(x)\Lambda(\overline{e_{\eta'}})\eta_i,\,\Lambda(y)\Lambda(\overline{e_{\eta'}})\eta_i\Bigl\rangle_2
\end{align*}
and
\begin{align*}
\sum_{i=1}^n \bigl\lVert\Lambda(\overline{e_{\eta'}})\eta_i\bigl\lVert_2^2\leq \norm{\eta'}^2.
\end{align*}
Consequently, to get the estimate of $\phi^{\Pi}(e,e)$, where $e$ is the identity in $K\ltimes_\rho V$, it will suffice to get the estimate of $\sum_{i=1}^n \phi^{\Lambda}_i(e,e)$, for $\overline{e_{\eta'}}\ast\eta_i=\eta_i $ for each $i$, subject to $\sum_{i=1}^n \norm{\eta_i}_2^2\leq \norm{\eta'}^2$.

We end this part by proving the following fact which is useful in the next step:
\begin{align}\label{for:94}
\widehat{\eta_i}(\kappa\widehat{\mathfrak{n}})=\overline{e_{\eta'}}\ast\widehat{\eta_i}(\kappa\widehat{\mathfrak{n}})\qquad\text{ for each }i.
\end{align}
For any $f(\kappa\widehat{\mathfrak{n}})\in C_c(K\ltimes_\rho \widehat{V})$, we have
\begin{align*}
\bigl\langle \overline{e_{\eta'}}\ast\widehat{\eta_i},\,f\bigl\rangle_2&=\bigl\langle \Lambda(\overline{e_{\eta'}})\widehat{\eta_i},\,f\bigl\rangle_2\overset{\text{(1)}}{=}\bigl\langle
\widehat{\eta_i},\,\Lambda(e_{\eta'}^\vee)f\bigl\rangle_2\\
&=\bigl\langle
\widehat{\eta_i},\,e_{\eta'}^\vee\ast f\bigl\rangle_2\overset{\text{(2)}}{=}\bigl\langle
\eta_i,\,\mathcal{F}(e_{\eta'}^\vee\ast f)\bigl\rangle_2\\
&\overset{\text{(3)}}{=}\bigl\langle
\eta_i,\,e_{\eta'}^\vee\ast \mathcal{F}(f)\bigl\rangle_2=\bigl\langle
\eta_i,\,\Lambda(e_{\eta'}^\vee)\mathcal{F}(f)\bigl\rangle_2\\
&=\bigl\langle
\Lambda(\overline{e_{\eta'}})\eta_i,\,\mathcal{F}(f)\bigl\rangle_2=\bigl\langle
\overline{e_{\eta'}}\ast\eta_i,\,\mathcal{F}(f)\bigl\rangle_2\\
&\overset{\text{(4)}}{=}\bigl\langle
\eta_i,\,\mathcal{F}(f)\bigl\rangle_2\overset{\text{(5)}}{=}\bigl\langle \widehat{\eta_i},\,f\bigl\rangle_2,
\end{align*}
which proves \eqref{for:94} since the set of such $f$ is dense in $L^2(K\ltimes_\rho \widehat{V})$.

$(1)$ follows from \eqref{for:48}; $(2)$ and $(5)$ use Plancherel's theorem; $(4)$ just uses the fact that $\overline{e_{\eta'}}\ast\eta_i=\eta_i $ for each $i$. To show $(3)$ it suffices to show that $\mathcal{F}(e_{\eta'}^\vee\ast f)=e_{\eta'}^\vee\ast \mathcal{F}(f)$,  which follows from a simple computation by using Fubini's theorem and by noting that $e_{\eta'}$ is a function on $K$ while $\mathcal{F}$ is on factor $\widehat{V}$.

\smallskip

\noindent\emph{$^{\ast\ast}_\ast$ Reduction to the consideration of $K$-fixed vectors}. Now we define left $K$-invariant functions $\underline{\eta_i}$, $1\leq i\leq n$ by the formula
\begin{align*}
\underline{\eta_i}(\kappa\widehat{\mathfrak{n}})&=\sup_{u\in K} \bigl\lvert\widehat{\eta_i}(u\kappa\widehat{\mathfrak{n}})\bigl\rvert,&\quad &\forall \, (\kappa,\widehat{\mathfrak{n}})\in K\times \widehat{V}.
\end{align*}
Next we will show $\underline{\eta_i}\in L^2(K\ltimes_\rho \widehat{V})$, $1\leq i\leq n$. It follows from \eqref{for:94} that
\begin{align*}
\widehat{\eta_i}(u\kappa\widehat{\mathfrak{n}})=\overline{e_{\eta'}}\ast\widehat{\eta_i}(u\kappa\widehat{\mathfrak{n}}),\qquad \forall u\in K.
\end{align*}
Then by Proposition \ref{po:9}, one has
\begin{align*}
\Big\lvert\widehat{\eta_i}(u\kappa\widehat{\mathfrak{n}})\Big\rvert&\leq\norm{e_{\eta'}}_2\Big(\int_K\big\lvert\widehat{\eta_i}(\nu^{-1} u\kappa\widehat{\mathfrak{n}})\big\rvert^2 d\nu\Bigl)^{\frac{1}{2}}\\
&\leq d_{\eta'}^{\frac{1}{2}}\Big(\int_K\big\lvert\widehat{\eta_i}(\nu\kappa\widehat{\mathfrak{n}})\big\rvert^2 d\nu\Bigl)^{\frac{1}{2}},
\end{align*}
whence
\begin{align*}
\underline{\eta_i}(\kappa\widehat{\mathfrak{n}})&\leq d_{\eta'}^{\frac{1}{2}}\Big(\int_K\big\lvert\widehat{\eta_i}(\nu\kappa\widehat{\mathfrak{n}})\big\rvert^2 d\nu\Bigl)^{\frac{1}{2}}.
\end{align*}
Also, we see that
\begin{align}\label{for:45}
\bigg(\int_K\int_{\widehat{V}}\big\lvert\underline{\eta_i}(\kappa\widehat{\mathfrak{n}})
\big\rvert^2 d\widehat{\mathfrak{n}}d\kappa\bigg)^{\frac{1}{2}}&\leq d_{\eta'}^{\frac{1}{2}}\Big(\int_K\int_{\widehat{V}}\int_K\big\lvert\widehat{\eta_i}(\nu\kappa\widehat{\mathfrak{n}})\big\rvert^2 d\nu d\widehat{\mathfrak{n}}d\kappa\Bigl)^{\frac{1}{2}}\notag\\
&= d_{\eta'}^{\frac{1}{2}}\Big(\int_K\int_{\widehat{V}}\big\lvert\widehat{\eta_i}(\nu\widehat{\mathfrak{n}})\big\rvert^2 d\widehat{\mathfrak{n}}d\nu \Bigl)^{\frac{1}{2}}\notag\\
&\overset{\text{(1)}}{=}d_{\eta'}^{\frac{1}{2}}\Big(\int_K\int_{V}\big\lvert\eta_i(\nu\mathfrak{n})\big\rvert^2 d\mathfrak{n}d\nu \Bigl)^{\frac{1}{2}}\notag\\
&=d_{\eta'}^{\frac{1}{2}}\norm{\eta_i}_2.
\end{align}
Here $(1)$ follows from Plancherel's Theorem.

Set $\widetilde{\eta_i}=\mathcal{F}(\underline{\eta_i})$, $1\leq i\leq n$. It is clear that $\widetilde{\eta_i}$ are also left $K$-invariant functions. Also, using Plancherel's Theorem and \eqref{for:45}, we see that
\begin{align}\label{for:73}
\norm{\widetilde{\eta_i}}_2&=\norm{\mathcal{F}(\underline{\eta_i})}_2=\bigg(\int_K\int_{V}\big\lvert\mathcal{F}(\underline{\eta_i})
(\kappa\mathfrak{n})\big\rvert^2d\mathfrak{n}d\kappa\bigg)^{\frac{1}{2}}\notag\\
&=
\bigg(\int_K\int_{\widehat{V}}\big\lvert\underline{\eta_i}(\kappa\widehat{\mathfrak{n}})
\big\rvert^2 d\widehat{\mathfrak{n}}d\kappa\bigg)^{\frac{1}{2}}\leq d_{\eta'}^{\frac{1}{2}}\norm{\eta_i}_2.
\end{align}
Denote by $F_s=(a\omega)^{-1}\star h_s\cdot g_s$. For any $\theta,\,\vartheta\in L^2(K\ltimes_\rho V)$ we have
\begin{align*}
\bigl\langle \widehat{\Lambda}(F_s)\theta,\vartheta\bigl\rangle_2&=\int_{V}\widehat{F_s}(\mathfrak{n})\int_{K}\int_{V}\theta(\mathfrak{n}^{-1}\kappa v)\overline{\vartheta(\kappa v)}dvd\kappa d\mathfrak{n}.
\end{align*}
%For simplicity's sake, we use $\cdot$ and $\bullet$ to denote the $K$ actions on $V$ and $\widehat{V}$ respectively.
%Using the identification between $V$ and $\widehat{V}$ in Section \ref{sec:1}, we see that
%\begin{align*}
%\kappa\cdot \mathfrak{n}=\rho(\kappa)\mathfrak{n}\quad\text{ and }\quad\kappa\bullet \widehat{\mathfrak{n}}=\widehat{\rho(\kappa^{-1})^\tau(\mathfrak{n})}.
%\end{align*}
Recall notations in Section \ref{sec:1} one has
\begin{align*}
\theta(\mathfrak{n}^{-1}\kappa v)=\theta\bigl(\kappa,(\kappa^{-1}\cdot\mathfrak{n}^{-1})v\bigl).
\end{align*}
Here we add a comma between two variables to avoid confusion.   Denote by $f_{\kappa v}(\mathfrak{n})=\theta(\mathfrak{n}^{-1}\kappa v)$, then
\begin{align*}
\widehat{f}_{\kappa, v}(\widehat{\mathfrak{n}})=\widehat{\theta}\bigl(\kappa,(-\kappa^{-1}\cdot\widehat{\mathfrak{n}})\bigl)\cdot\overline{(\kappa^{-1}\cdot\widehat{\mathfrak{n}})(v)}.
\end{align*}
Let $\vartheta_m(\kappa v)=\vartheta(\kappa v)$ if $\abs{v}\leq m$, otherwise set $\vartheta_m(\kappa v)=0$, then
\begin{align}\label{for:87}
\lim_m \lVert \vartheta_m-\vartheta\lVert_2\rightarrow 0\qquad \text{ as }m\rightarrow\infty.
\end{align}
Using these notations and noting that  $F_s\in \mathcal{S}(\widehat{V})$ we have
\begin{align*}
\bigl\langle \widehat{\Lambda}(F_s)\theta,\vartheta\bigl\rangle_2&=\lim_m\bigl\langle \widehat{\Lambda}(F_s)\theta,\vartheta_m\bigl\rangle_2\\
&\overset{\text{(1)}}{=}\lim_m\int_{K}\int_{V}\overline{\vartheta_m(\kappa v)}\int_{V}\widehat{F_s}(\mathfrak{n})f_{\kappa v}(\mathfrak{n})d\mathfrak{n}dvd\kappa \notag\\
& \overset{\text{(2)}}{=}\lim_m\int_{K}\int_{V}\overline{\vartheta_m(\kappa v)}\int_{\widehat{V}}F_s(\widehat{\mathfrak{n}})\widehat{f}_{\kappa, v}(\widehat{\mathfrak{n}})d\widehat{\mathfrak{n}}dvd\kappa\notag\\
&=\lim_m\int_{K}\int_{V}\overline{\vartheta_m(\kappa v)}\int_{\widehat{V}}F_s(\widehat{\mathfrak{n}})
\widehat{\theta}\bigl(\kappa,-\kappa^{-1}\cdot\widehat{\mathfrak{n}}\bigl)\cdot\overline{(\kappa^{-1}\cdot\widehat{\mathfrak{n}})(v)}d\widehat{\mathfrak{n}} dvd\kappa\notag\\
&=\lim_m\int_{K}\int_{V}\overline{\vartheta_m(\kappa v)}\int_{\widehat{V}}F_s(\kappa\cdot\widehat{\mathfrak{n}})
\widehat{\theta}(\kappa,-\widehat{\mathfrak{n}})\cdot\overline{\widehat{\mathfrak{n}}(v)}d\widehat{\mathfrak{n}} dvd\kappa\notag\\
&\overset{\text{(3)}}{=}\lim_m\int_{\widehat{V}}\int_{K}F_s(\kappa\cdot\widehat{\mathfrak{n}})\widehat{\theta}(\kappa,-\widehat{\mathfrak{n}})\int_{V}
\overline{\vartheta_m(\kappa v)\cdot\widehat{\mathfrak{n}}(v)}dvd\kappa d\widehat{\mathfrak{n}}\notag\\
&=\lim_m\int_{K}\int_{\widehat{V}
}F_s(\kappa\cdot\widehat{\mathfrak{n}})\widehat{\theta}(\kappa,-\widehat{\mathfrak{n}})\overline{\widehat{\vartheta_m}
(\kappa,-\widehat{\mathfrak{n}})}d\widehat{\mathfrak{n}}d\kappa\\
&=\lim_m\int_{K}\int_{\widehat{V}
}F_s(-\kappa\cdot\widehat{\mathfrak{n}})\widehat{\theta}(\kappa\widehat{\mathfrak{n}})\overline{\widehat{\vartheta_m}
(\kappa\widehat{\mathfrak{n}})}d\widehat{\mathfrak{n}}d\kappa\\
&\overset{\text{(4)}}{=}\int_{K}\int_{\widehat{V}
}F_s(-\kappa\cdot\widehat{\mathfrak{n}})\widehat{\theta}(\kappa\widehat{\mathfrak{n}})\overline{\widehat{\vartheta}
(\kappa\widehat{\mathfrak{n}})}d\widehat{\mathfrak{n}}d\kappa.
%&=\biggl\langle ch_{\mathcal{C}}\int_{K
%}\widehat{\theta}\bigl(\kappa,-(\kappa^{-1}\bullet\widehat{\mathfrak{n}})\bigl)d\kappa, \int_{K
%}\widehat{\vartheta}\bigl(\kappa,-(\kappa^{-1}\bullet\widehat{\mathfrak{n}})\bigl)d\kappa\biggl\rangle_{\widehat{V}}.
\end{align*}
Since $\widehat{F_s},\,\vartheta_m\in L^1(V)\bigcap L^2(V) $ and $\vartheta_m$ has compact support, $(1)$ and $(3)$ hold by using Fubini's theorem; $(2)$ follows from the polarized form of Plancherel's theorem; from \eqref{for:87} and  Plancherel's theorem one has
\begin{align*}
\lim_m \lVert \widehat{\vartheta_m}-\widehat{\vartheta}\lVert_2\rightarrow 0\qquad \text{ as }m\rightarrow\infty,
\end{align*}
which implies $(4)$.

Consequently, for $\eta_i$, $1\leq i\leq n$ we also have
\begin{align}\label{for:46}
\bigl\langle \widehat{\Lambda}(F_s)\eta_i,\eta_i\bigl\rangle_2&=\int_{K
}\int_{\widehat{V}}\Big\lvert F^{\frac{1}{2}}_s(-\kappa\cdot\widehat{\mathfrak{n}})\widehat{\eta_i}(\kappa\widehat{\mathfrak{n}})\Big\rvert^2 d\widehat{\mathfrak{n}}d\kappa \notag\\
&\leq\int_{K
}\int_{\widehat{V}}\Big\lvert F^{\frac{1}{2}}_s(-\kappa\cdot\widehat{\mathfrak{n}})\underline{\eta_i}(\kappa\widehat{\mathfrak{n}})\Big\rvert^2  d\widehat{\mathfrak{n}}d\kappa\notag\\
&=\int_{K
}\int_{\widehat{V}}\Big\lvert F^{\frac{1}{2}}_s(-\kappa\cdot\widehat{\mathfrak{n}})\widehat{\mathcal{F}(\underline{\eta_i})}(\kappa\widehat{\mathfrak{n}})\Big\rvert^2 d\widehat{\mathfrak{n}}d\kappa\notag\\
&=\bigl\langle \widehat{\Lambda}(F_s)\mathcal{F}(\underline{\eta_i}),\mathcal{F}(\underline{\eta_i})\bigl\rangle_2\notag\\
&=\bigl\langle \widehat{\Lambda}(F_s)\widetilde{\eta_i},\widetilde{\eta_i}\bigl\rangle_2.
\end{align}
\noindent\emph{$^{\ast\ast}_{\ast\ast}$ The $K$-fixed vector case}. In this part, we will get estimates of $\bigl\lVert\widehat{\Lambda}(F_s^{\frac{1}{2}})\widetilde{\eta_i}\bigl\rVert_2$ for each $i$.
Note that
\begin{align*}
    \text{supp}\bigl(F_s^{\frac{1}{2}}\bigl)&\subset \text{supp}\bigl((a\omega)^{-1}\star h_s\bigl)\bigcap \text{supp}(g_s)\qquad\text{ and }\\
    \text{supp}\bigl((a\omega)\star g_s^{\frac{1}{2}}\cdot h_s^{\frac{1}{2}}\bigl)&\subset \text{supp}\bigl(a\omega\star g_s\bigl)\bigcap \text{supp}(h_s),
\end{align*}
then by using \eqref{for:12} and condition \eqref{for:76} we see that
\begin{align*}
    \text{supp}\bigl(F_s^{\frac{1}{2}}\bigl)&\subset \rho^\ast\bigl((a\omega)^{-1}\bigl)(X_s)\bigcap X_s\qquad\text{ and }\\
    \text{supp}\bigl((a\omega)\star g_s^{\frac{1}{2}}\cdot h_s^{\frac{1}{2}}\bigl)&\subset \rho^\ast(a\omega)(X_s)\bigcap X_s.
\end{align*}
Recall
notations in \ref{for:142} of Section \ref{sec:10}. Since $F\subset C_G(D)$ (see Lemma \ref{le:5}), we have
\begin{align*}
\rho^\ast(a\omega)(X_s)\bigcap X_s&=\Bigl\{\nu\in X_s:s^{-1}\leq \Bigl\lvert\rho^\ast(\omega^{-1})\bigl(\sum_{\psi\in\Phi_1}\psi(a^{-1})\pi_{\psi}(\nu)\bigl)\Bigl\rvert\leq s\Bigl\}
\end{align*}
and
\begin{align*}
\rho^\ast\bigl((a\omega)^{-1}\bigl)(X_s)\bigcap X_s&=\Bigl\{\nu\in X_s:s^{-1}\leq\Bigl\lvert\rho^\ast(\omega)\bigl(\sum_{\psi\in\Phi_1}\psi(a)\pi_{\psi}(\nu)\bigl)\Bigl\rvert\leq s\Bigl\}.
\end{align*}
Using the equivalent relations between norms (see \eqref{for:80}) and finiteness of $F$, we see that the set $\rho^\ast(a\omega)(X_s)\bigcap X_s$ and
$\rho^\ast\bigl((a\omega)^{-1}\bigl)(X_s)\bigcap X_s$ are contained inside
\begin{align*}
E_1&=\bigl\{\nu\in X_s:\max_{\psi\in\Phi_1}\bigl\lvert\psi(a^{-1})\pi_{\psi}(\nu)\bigl\rvert\leq CC_2s\bigl\}\qquad\text{and}\\
E_2&=\bigl\{\nu\in X_s:\max_{\psi\in\Phi_1}\bigl\lvert\psi(a)\pi_{\psi}(\nu)\bigl\rvert\leq CC_2s\bigl\}
\end{align*}
respectively. Here $C_2=\max_{x,\,x^{-1}\in F}\norm{\rho^\ast(x)}$ (\ref{for:84} of Section \ref{sec:10}).

Furthermore, we see that $E_1$ and $E_2$ are contained in  the strips
\begin{align*}
S_1&=\big\{\nu\in X_s:\bigl\lvert\pi_{\varrho_1}(\nu)\bigl\rvert\leq CC_2s\bigl\lvert\varrho_1(a)\bigl\rvert\big\}\quad\text{ and }\\
S_2&=\bigl\{\nu\in X_s:\bigl\lvert\pi_{\lambda_1}(\nu)\bigl\rvert\leq CC_2s\bigl\lvert\lambda_1(a^{-1})\bigl\rvert\bigl\}
\end{align*}
respectively. Since $\rho^\ast$ is excellent, $\ker(\rho^\ast)\bigcap G_s\subset Z(G)$. Then \ref{sec:17} of Section \ref{sec:10} implies that for $a\in D^+$ with large $\abs{a}$, $\abs{\varrho_1(a)}$ and $\abs{\lambda_1(a^{-1})}$ are small. Hence it follows that $\rho^\ast(a\omega)(X_s)\bigcap X_s$ and $\rho^\ast\bigl((a\omega)^{-1}\bigl)(X_s)\bigcap X_s$ will be contained in the cones (see \eqref{for:124})
\begin{align}\label{for:18}
\text{Cone}_2\bigl(CC_2s\abs{\varrho_1(a)},s^{-1}\bigl)\quad\text{ and }\quad \text{Cone}_1\bigl(CC_2s\abs{\lambda_1(a^{-1})},s^{-1}\bigl)
\end{align}
respectively.

By Proposition \ref{po:7}, if $\abs{a}$ is large enough, there
  is an open cover of $\text{Cone}_1\bigl(CC_2s\abs{\lambda_1(a^{-1})},s^{-1}\bigl)$:
  \begin{align}\label{for:72}
  \text{Cone}_1\bigl(CC_2s\abs{\lambda_1(a^{-1})},s^{-1}\bigl)\subset\bigcup_{1\leq j\leq \ell_1}\mathcal{E}_j
  \end{align}
  such that for each $\mathcal{E}_j$, there are at least $\bigl[C^{-1}\bigl(\abs{\lambda_1(a^{-1})}CC_2s^2\bigl)^{\mathfrak{q}}\bigl]$ different elements $\tau^j_l$ in $K$ such that
  \begin{align}\label{for:16}
 \rho^\ast(\tau^j_l)\mathcal{E}_j\bigcap\rho^\ast(\tau^j_\ell)\mathcal{E}_j=\emptyset,\qquad \text{ if }\ell\neq l.
  \end{align}
Choose a partition of unity subordinate to the open cover $\bigcup_{1\leq j\leq \ell_1}\mathcal{E}_j$ of supp$\bigl(F_s^{\frac{1}{2}}\bigl)$, then we can write
 \begin{align}\label{for:27}
    F_s^{\frac{1}{2}}=\sum_{1\leq j\leq \ell_1}w_j,\qquad\text{ where }w_j\in \mathcal{S}(V)
 \end{align}
with $\norm{w_j}_\infty\leq \norm{F_s^{\frac{1}{2}}}_\infty$ and  supp$(w_j)\subset \mathcal{E}_j$ for each $j$.

For any $\kappa\in K$ and $\widetilde{\eta_i}$, $1\leq i\leq n$ using \eqref{for:12} and the left $K$-invariant property of $\widetilde{\xi_i}$ we find that
\begin{align*}
\Lambda(\kappa)\widehat{\Lambda}(w_j)(\widetilde{\eta_i})&=\widehat{\Lambda}(\kappa\star w_j)\Lambda(\kappa)(\widetilde{\eta_i})
=\widehat{\Lambda}(\kappa\star w_j)\widetilde{\eta_i},
\end{align*}
and so
\begin{align}\label{for:71}
   \bigl\lVert\widehat{\Lambda}(w_j)(\widetilde{\eta_i})\bigl\rVert_2=\bigl\lVert\Lambda(\kappa)\widehat{\Lambda}(w_j)(\widetilde{\eta_i})\bigl\rVert_2
   =\bigl\lVert\widehat{\Lambda}(\kappa\star w_j)\widetilde{\eta_i}\bigl\rVert_2.
\end{align}
On the other hand, from \eqref{for:12} and \eqref{for:27} one has
\begin{align*}
    \text{supp}(\kappa\star w_j)\subset\rho(\kappa)^\ast \bigl(\text{supp}(w_j)\bigl)\subset \rho(\kappa)^\ast (\mathcal{E}_j),\qquad \forall \kappa\in K.
\end{align*}
In particular, for $\tau^j_l$ it follows from \eqref{for:16} that
\begin{align}\label{for:28}
    \text{supp}(\tau^j_l\star w_j)\bigcap\text{supp}(\tau^j_\ell\star w_j)=\emptyset,\qquad \text{ if }\ell\neq l,
\end{align}
which means for any $\theta,\,\vartheta\in L^2(K\ltimes_\rho V)$, if $\ell\neq l$ then
\begin{align}\label{for:22}
&\bigl\langle \widehat{\Lambda}(\tau^j_l\star w_j)\theta,\,\widehat{\Lambda}(\tau^j_\ell\star w_j)\vartheta\bigl\rangle_2\overset{\text{(1)}}{=}\bigl\langle \widehat{\Lambda}\bigl(\tau^j_l\star w_j\cdot \tau^j_\ell\star w_j\bigl)\theta,\,\vartheta\bigl\rangle_2=0.
\end{align}
Here $(1)$ follows from \eqref{for:24} and \eqref{for:6}. Set
\begin{align*}
y_j=\sum_{l} \tau^j_l\star w_j,\qquad \text{for any }1\leq j\leq\ell_1,
\end{align*}
then \eqref{for:27} and \eqref{for:28} imply that $\norm{y_j}_\infty\leq \norm{F_s^{\frac{1}{2}}}_\infty\leq 1$, for any $1\leq j\leq\ell_1$. Then  it follows from \eqref{for:43} that
\begin{align}\label{for:44}
\norm{\widehat{\Lambda}(y_j)}\leq 1,\qquad \text{for any }1\leq j\leq\ell_1.
\end{align}
Therefore, \eqref{for:71} and \eqref{for:22} mean that
\begin{align*}
\widehat{\Lambda}(y_j)(\widetilde{\eta_i})=\sum_{l} \widehat{\Lambda}(\tau^j_l\star w_j)(\widetilde{\eta_i})
\end{align*}
is an orthogonal decomposition of $\widehat{\Lambda}(y_j)(\widetilde{\eta_i})$ into vectors of equal length; combined with \eqref{for:44} it follows that
\begin{align}\label{for:25}
\bigl\lVert\widehat{\Lambda}( w_j)(\widetilde{\eta_i})\bigl\rVert_2^2&=\frac{\bigl\lVert\widehat{\Lambda}(y_j)(\widetilde{\eta_i})\bigl\lVert_2^2}{\sharp\{\tau^j_l\}}\leq
\frac{\norm{\widetilde{\eta_i}}_2^2}{\bigl[C^{-1}\bigl(\abs{\lambda_1(a^{-1})}CC_2s^2\bigl)^{\mathfrak{q}}\bigl]}
\end{align}
for any $1\leq j\leq\ell_1$.

\eqref{for:27} and \eqref{for:25} also imply that for any $1\leq i\leq n$,
\begin{align}\label{for:26}
\bigl\lVert\widehat{\Lambda}(F_s^{\frac{1}{2}})\widetilde{\eta_i}\bigl\rVert^2_2&\leq\sum_{1\leq j\leq \ell_1}l_1\bigl\lVert\widehat{\Lambda}( w_j)(\widetilde{\eta_i})\bigl\rVert^2_2\notag\\
&\leq C\ell_1^2\bigl(CC_2s^2\bigl)^{-\mathfrak{q}}\abs{\lambda_1(a)}^{\mathfrak{q}}\,\norm{\widetilde{\eta_i}}^2_2.
\end{align}
Using \eqref{for:73}, \eqref{for:46} and \eqref{for:26} we get
\begin{align*}
\bigl\langle \widehat{\Lambda}(F_s)\eta_i,\eta_i\bigl\rangle_2&\leq\bigl\langle \widehat{\Lambda}(F_s)\widetilde{\eta_i},\widetilde{\eta_i}\bigl\rangle_2\\
&=\bigl\langle \widehat{\Lambda}(F_s^{\frac{1}{2}})\widetilde{\eta_i},\,\widehat{\Lambda}(F_s^{\frac{1}{2}})\widetilde{\eta_i}\bigl\rangle_2
=\bigl\lVert\widehat{\Lambda}(F_s^{\frac{1}{2}})\widetilde{\eta_i}\bigl\rVert^2_2\notag\\
&\leq C\ell_1^2\bigl(CC_2s^2\bigl)^{-\mathfrak{q}}d_{\eta'}\abs{\lambda_1(a)}^{\mathfrak{q}}\,\norm{\eta_i}^2_2.
\end{align*}
Then one has
\begin{align*}
\sum_{i=1}^n \phi^{\Lambda}_i(e,e)&=\sum_{i=1}^n\bigl\langle \widehat{\Lambda}(F_s)\eta_i,\eta_i\bigl\rangle_2\\
&\leq C\ell_1^2\bigl(CC_2s^2\bigl)^{-\mathfrak{q}}d_{\eta'}\abs{\lambda_1(a)}^{\mathfrak{q}}\,
\bigl(\sum_{i=1}^n\norm{\eta_i}^2_2\bigl)\notag\\
&\leq C\ell_1^2(CC_2s^2)^{-\mathfrak{q}}d_{\eta'}\abs{\lambda_1(a)}^{\mathfrak{q}}\,
\norm{\eta'}^2.
\end{align*}
\noindent\emph{$^{\ast\ast}_{\ast\ast\ast}$ Upper bound of $\bigl\lvert \phi_{\widehat{\Pi}(f_s)\eta,\widehat{\Pi}(f_s)\xi}(k_1a\omega k_2)\bigl\rvert$}. In $^{\ast\ast}$ we have showed that $\Bigl\lVert \widehat{\Pi}\bigl((a\omega)^{-1}\star h_s^{\frac{1}{2}}\cdot g_s^{\frac{1}{2}}\bigl)\eta'\Bigl\rVert^2$ is the limit of sums of projected matrix coefficients $\sum_{i=1}^n \phi^{\Lambda}_i(e,e)$, then we finally get
\begin{align}\label{for:75}
\Bigl\lVert \widehat{\Pi}\bigl((a\omega)^{-1}\star h_s^{\frac{1}{2}}\cdot g_s^{\frac{1}{2}}\bigl)\eta\Bigl\rVert\leq C\ell_1(CC_2s^2)^{-\frac{\mathfrak{q}}{2}}d_{\eta'}^{\frac{1}{2}}\norm{\eta'}\,\bigl\lvert\lambda_1(a)\bigl\rvert^{\frac{\mathfrak{q}}{2}}.
\end{align}
On the other hand, for  the projected matrix coefficient
\begin{align*}
\psi^{\Pi}:(x,y)&\rightarrow \Bigl\langle \widehat{\Pi}\bigl((a\omega)\star g_s^{\frac{1}{2}}\cdot h_s^{\frac{1}{2}}\bigl)\Pi(x)\xi',\,\widehat{\Pi}\bigl((a\omega)\star g_s^{\frac{1}{2}}\cdot h_s^{\frac{1}{2}}\bigl)\Pi(y)\xi'\Bigl\rangle\\
&=\Bigl\langle \widehat{\Pi}\bigl((a\omega)\star g_s\cdot h_s\bigl)\Pi(x)\xi',\,\Pi(y)\xi'\Bigl\rangle
\end{align*}
and its approximation,  uniformly on compacta,  by sums of projected matrix coefficients
\begin{align*}
\psi^{\Lambda}_i: (x,y)\rightarrow \Bigl\langle \widehat{\Lambda}\bigl((a\omega)\star g_s\cdot h_s\bigl)\Lambda(x)\xi_i,\,\Lambda(y)\xi_i\Bigl\rangle_2,
\end{align*}
satisfying the condition that
$\sum_{i=1}^n\norm{\xi_i}_2^2\leq \norm{\xi'}^2$, since $\xi'$ ia also $K$-finite, analogous results in $^{\ast\ast}$ and $^{\ast\ast}_\ast$ are true for
$\psi^{\Pi}$, $\psi^{\Lambda}_i$, $\underline{\xi_i}$ and $\widetilde{\xi_i}$, where $\underline{\xi_i}$ and $\widetilde{\xi_i}$ are defined in a way analogous to
how  $\underline{\eta_i}$ and $\widetilde{\eta_i}$ are defined respectively. Let $H_s=(a\omega)\star g_s^{\frac{1}{2}}\cdot h_s^{\frac{1}{2}}$. Then arguments in $^{\ast\ast}_{\ast\ast}$ show that
\begin{align*}
\text{supp}\bigl(H_s^{\frac{1}{2}}\bigl)\subset \text{Cone}_2\bigl(CC_2s\abs{\varrho_1(a)},s^{-1}\bigl)\qquad\text{ if $\abs{a}$ is large enough};
\end{align*}
and Corollary \ref{cor:9} gives an open cover of $\text{Cone}_2\bigl(CC_2s\abs{\varrho_1(a)},s^{-1}\bigl)$:
  \begin{align*}
  \text{Cone}_2\bigl(CC_2s\abs{\varrho_1(a)},s^{-1}\bigl)\subset\bigcup_{1\leq j\leq \ell_2}\mathcal{U}_j
  \end{align*}
  such that for each $\mathcal{U}_j$, there are at least $\bigl[C^{-1}\bigl(\abs{\varrho_1(a)}CC_2s^2\bigl)^{\mathfrak{q}}\bigl]$ different elements $\upsilon^j_l$ in $K$ such that
  \begin{align*}
 \rho^\ast(\upsilon^j_l)\mathcal{U}_j\bigcap\rho^\ast(\upsilon^j_\ell)\mathcal{U}_j=\emptyset,\qquad \text{ if }\ell\neq l.
  \end{align*}
Hence we see that  analogous results in $^{\ast\ast}_{\ast\ast}$ are true for $\bigl\langle \widehat{\Lambda}(H_s)\xi_i,\xi_i\bigl\rangle_2$ and $\bigl\langle \widehat{\Lambda}(H_s)\widetilde{\xi_i},\widetilde{\xi_i}\bigl\rangle_2$. Then it follows that
\begin{align*}
\sum_{i=1}^n \psi^{\Lambda}_i(e,e)&=\sum_{i=1}^n\bigl\langle \widehat{\Lambda}(H_s)\xi_i,\xi_i\bigl\rangle_2\leq C\ell_2^2(CC_2s^2)^{-\mathfrak{q}}d_{\xi'}\abs{\varrho_1(a^{-1})}^{\mathfrak{q}}\,
\norm{\xi'}^2
\end{align*}
for any $n$ as well. Since $\psi^{\Pi}(e,e)=\Bigl\lVert \widehat{\Pi}\bigl((a\omega)\star g_s^{\frac{1}{2}}\cdot h_s^{\frac{1}{2}}\bigl)\xi\Bigl\rVert^2$ is approximated by $\sum_{i=1}^n \psi^{\Lambda}_i(e,e)$, then we finally get
\begin{align}\label{for:66}
&\Bigl\lVert \widehat{\Pi}\bigl((a\omega)\star g_s^{\frac{1}{2}}\cdot h_s^{\frac{1}{2}}\bigl)\xi'\Bigl\rVert\leq C\ell_2(CC_2s^2)^{-\frac{\mathfrak{q}}{2}}d_{\xi'}^{\frac{1}{2}}\norm{\xi'}\,\bigl\lvert\varrho_1(a^{-1})\bigl\rvert^{\frac{\mathfrak{q}}{2}}.
\end{align}
It follows from \eqref{for:19}, \eqref{for:75} and \eqref{for:66} that
\begin{align*}
&\Bigl\lvert \phi_{\widehat{\Pi}(f_s)\eta,\widehat{\Pi}(f_s)\xi}(k_1a\omega k_2)\Bigl\rvert\leq C\ell_1\ell_2(CC_2s^2)^{-\mathfrak{q}}d_{\eta'}^{\frac{1}{2}}d_{\xi'}^{\frac{1}{2}}\norm{\xi'}\,\norm{\eta'}\,
\bigl\lvert\lambda_1(a)\varrho_1(a^{-1})\bigl\rvert^{\frac{\mathfrak{q}}{2}}.
\end{align*}
From \eqref{for:82} we  see that $d_{\xi'}=d_{\xi}$, $d_{\eta'}=d_{\eta}$ and $\norm{\xi'}=\norm{\xi}$, $\norm{\eta'}=\norm{\eta}$; and use $\lambda_1=-\varrho$, $\varrho_1=-\lambda$,   then
\begin{align*}
&\Bigl\lvert \phi_{\widehat{\Pi}(f_s)\eta,\widehat{\Pi}(f_s)\xi}(k_1a\omega k_2)\Bigl\rvert\leq C\ell_1\ell_2(CC_2s^2)^{-\mathfrak{q}}d_{\eta}^{\frac{1}{2}}d_{\xi}^{\frac{1}{2}}\norm{\xi}\,\norm{\eta}\,
\bigl\lvert\lambda(a)\varrho(a^{-1})\bigl\rvert^{\frac{\mathfrak{q}}{2}}.
\end{align*}
Hence we finished the proof.
\end{proof}
In \eqref{for:8}, the upper bounds of the matrix coefficients depend on the choice of $K$-finite vectors. Our next task is to show the uniformness of the upper bounds.  \subsection{Uniform bound for $K$-finite matrix coefficients}
\begin{corollary}\label{cor:4}
We assume notations in Proposition \ref{po:6}. Then $\Pi$ is $\bigl(K,\,\Xi_G^{\frac{1}{m}}\bigl)$ bounded on $G$
where $m\geq\frac{p_{(G,V,\Phi_1)}}{2}$.
\end{corollary}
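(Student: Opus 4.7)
My plan is to convert the pointwise bound for a dense set of $K$-finite matrix coefficients produced by Proposition \ref{po:6} into an $L^{2m+\epsilon}$-integrability statement for $\Pi|_G$, and then invoke Theorem \ref{th:10} of Cowling--Haggerup--Howe to upgrade this uniform integrability bound to the desired uniform pointwise bound $(K,\Xi_G^{1/m})$.

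First I would re-express the exponent in \eqref{for:8} in terms of $\delta_B$ and the quantity $p_{(G,V,\Phi_1)}$. Writing $\lambda-\varrho=\sum_j e_j\omega_j$ in the simple-root basis, we have $|\lambda(a)\varrho(a^{-1})|=\prod_j|\omega_j(a)|^{e_j}$. Since $\mathfrak{q}=-\mathfrak{q}_1$ with $\mathfrak{q}_1$ the factor appearing in $\Lambda(\Phi_1)=\tfrac{\mathfrak{q}_1}{2}(\lambda-\varrho)=\sum_j b_j\omega_j$, the bound of Proposition \ref{po:6} becomes
\begin{align*}
|\langle\Pi(g)\eta,\xi\rangle|\leq C_{\eta,\xi}\prod_j|\omega_j(a)|^{-b_j}.
\end{align*}
For $a\in D^+$ every $|\omega_j(a)|\geq 1$, and the definition of $p_{(G,V,\Phi_1)}$ gives $b_j\geq c_j^{\delta_B}/p_{(G,V,\Phi_1)}\geq c_j^{\delta_B}/(2m)$ where $c_j^{\delta_B}$ is the coefficient of $\omega_j$ in $\delta_B$. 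Hence $\prod_j|\omega_j(a)|^{-b_j}\leq \delta_B(a)^{-1/(2m)}$, and Proposition \ref{po:2}\eqref{for:41} yields $\delta_B(a)^{-1/(2m)}\leq c_1^{-1/m}\Xi_G(a)^{1/m}$. Since $\Xi_G$ is bi-$K$-invariant, $\omega\in F$ centralizes $D$ and $F$ is finite, this upgrades to $|\langle\Pi(g)\eta,\xi\rangle|\leq C'_{\eta,\xi}\,\Xi_G(g)^{1/m}$ for all $g=k_1a\omega k_2\in G$ and for all $\eta,\xi$ in the dense set produced in the proof of Proposition \ref{po:6}.

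Next, I would observe that $\Xi_G\in L^{2+\epsilon}(G)$ by Proposition \ref{po:2}\eqref{for:42}, so $\Xi_G^{1/m}\in L^{2m+\epsilon'}(G)$ for any $\epsilon'>0$. Consequently, every matrix coefficient of $\Pi|_G$ attached to vectors from this dense set lies in $L^{2m+\epsilon}(G)$, which by definition means that the unitary representation $\Pi|_G$ of the connected semisimple almost $k$-algebraic group $G$ is strongly $L^{2m+\epsilon}$.

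Finally, since $G$ is of type I and $\Pi|_G$ is strongly $L^{2m+\epsilon}$, Theorem \ref{th:10} applies and yields that $\Pi$ is $(K,\Xi_G^{1/m})$ bounded on $G$, which is the statement of the corollary. There is no real obstacle here beyond a careful bookkeeping of the exponents and the elementary verification that the dense subset produced by Proposition \ref{po:6} is large enough to characterize strong $L^{2m+\epsilon}$-integrability; the essential analytic work has already been done in Proposition \ref{po:6} and in Theorem \ref{th:10}.
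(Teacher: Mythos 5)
Your argument is correct and follows essentially the same route as the paper: both derive strong $L^{2m+\epsilon}$-integrability of $\Pi|_G$ from the pointwise bound of Proposition \ref{po:6} via an exponent comparison with the modular function $\delta_B$, and then invoke Theorem \ref{th:10} to pass from uniform $L^p$-integrability to the uniform pointwise bound. The only cosmetic difference is that you first convert to a pointwise $\Xi_G^{1/m}$ bound on the dense set using Proposition \ref{po:2}(2) and then appeal to $\Xi_G\in L^{2+\epsilon}(G)$, whereas the paper applies Lemma \ref{le:2} directly to obtain strong $L^{p_{(G,V,\Phi_1)}+\epsilon}$; the bookkeeping of the coefficients $b_j$ against those of $\delta_B$ is the same in both.
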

\begin{proof}
We assume notations in the proof of Proposition \ref{po:6}. At first, we will show the following: there exists $p>0$ such that $\Pi$ is strongly $L^{p+\epsilon}$. Let $\Delta$ denote the set of simple roots of $G$. Then the modular function can be written as $\delta_B(a)=\prod_{\omega\in \Delta}\abs{\omega(a)}^{m_\omega}$ (see \eqref{for:114}) for any $a\in D^0$. Proposition \ref{po:6} and Lemma \ref{le:2} show that getting an upper
bound for $p$ such that $\phi_{\eta,\xi}\in L^{q}(G)$ on a dense set of vectors $\xi$ and $\eta$ in $\mathcal{H}$ for any $q>p$ boils down to a matter of comparing the coefficients of each simple
root in $(\lambda\varrho^{-1})^{-\frac{\mathfrak{q}}{2}}$ with those in the modular function $\delta_B$. That is, set
$\Lambda(\Phi_1)=-\frac{\mathfrak{q}}{2}(\lambda-\varrho)$, then
\begin{align}\label{for:119}
p\geq p_{(G,V,\Phi_1)}=\max_{1\leq j\leq n}\bigg\{\frac{\text{the coefficient of $\omega_j$ in $\delta_B$}}{\text{the coefficient of $\omega_j$ in $\Lambda(\Phi_1)$}}\bigg\}
\end{align}
where $\{\omega_1,\cdots,\omega_n\}$ is the set of simple roots of $G$. From \ref{sec:17} of Section \ref{sec:10} we see that $p_{(G,V,\Phi_1)}<\infty$, which means the existence of such $p$.

By Theorem \ref{th:10}, we have a passage from a uniform $L^p$-bound to a uniform pointwise bound, that is, let $m$ be
any integer such that $2m\geq p_{(G,V,\Phi_1)}$.
\end{proof}
Now we are ready to prove our main result.
\subsection{Proof of Theorem \ref{th:11}}\label{sec:20}
For $\rho$ irreducible Theorem \ref{th:11} follows directly from Corollary \ref{cor:4}. Next, we consider general cases, i.e. $\rho$ is not irreducible. Since char $k=0$, then by full reducibility of semisimple groups there is a natural decomposition of $V$ under representation $\rho$: $V=\oplus_{i=1}^N V_i$
such that for each $i$, $V_i$ is an irreducible representation of $G$. Denote the sets of weights on each $V_i$ by $\Phi_i$ and denote by $\lambda_i$
and $\varrho_i$ the highest and lowest weight of $\rho$ respectively for each $\Phi_i$, compatible with ordering of the root system of $G$. As showed in the proof of Proposition \ref{po:6}, $\rho^\ast$ is also irreducible and excellent on each $V_i$, $1\leq i\leq N$; and the highest weight $\lambda'_i$ and lowest weight $\varrho'_i$ of $\rho^\ast$ on each $V_i$ are equal to $-\varrho_i$ and $-\lambda_i$ respectively by a good choice of basis of $V_i$.

Now suppose $\Pi$ is irreducible. Denote by $\mathcal{H}$ be the attached Hilbert space. For each $1\leq i\leq N$, let $\mathcal{H}_i=\{\nu\in\mathcal{H}: \nu\text{ is fixed by }V_i\}$. For any $v\in V_i$ and any $s\in G\ltimes_\rho V$, note that $s^{-1}vs\in V_i$, then
\begin{align}\label{for:33}
\Pi(v)(\Pi(s)\nu)&=\Pi(s)\Pi(s^{-1}vs)\nu=\Pi(s)\nu,\qquad \forall \nu\in\mathcal{H}_i,
\end{align}
which implies that $\mathcal{H}_i$ is $G\ltimes_\rho V$-invariant. Hence $\mathcal{H}_i=0$ or $\mathcal{H}_i=\mathcal{H}$. Since there is no non-trivial $V$-fixed vectors, there exists a nonempty subset $E$ of $\{1,\cdots, N\}$ such that $\mathcal{H}_i=0$ if $i\in E$. For any $i\in E$, $\Pi$ can be viewed as a representation of  $G\ltimes_\rho V_i$ on $\mathcal{H}$ without non-trivial $V_i$-fixed vectors. Let $m$ be any integer such that $m\geq\frac{p_{(G,V,\Phi)}}{2}$ (see Definition \ref{def:3}). Also, \ref{sec:17} of Section \ref{sec:10} implies that such $m$ exists. Then by conclusions for the case of $\rho$ irreducible, $\Pi$ is $\bigl(K,\,\Xi_G^{\frac{1}{m}}\bigl)$ bounded on $G$.

For $\Pi$ not irreducible, $\Pi$ is decomposed into a direct integral $\int_X\Pi_xd\mu(x)$ of irreducible unitary representations of $G\ltimes_\rho V$ for some measure space $(X,\mu)$ (we refer to
\cite[Chapter 2.3]{Zimmer} or \cite{Margulis} for more detailed account for the direct integral theory). If $\Pi$ has no non-trivial
$V$-fixed vectors then for almost all $x\in X$, $\Pi_x$ has no non-trivial $V$-fixed vectors. Then by conclusions for the case of $\Pi$ irreducible, we  see that  for almost all $x\in X$ $\Pi_x$ is $\bigl(K,\,\Xi_G^{\frac{1}{m}}\bigl)$ bounded on $G$. Since $G\ltimes_\rho V$ is
known to be of Type I (see \cite{Be} and \cite{warner}), we may assume that $\Pi_x$
is weakly contained in $\Pi$ (up to equivalence) for each $x\in X$. Hence Proposition \ref{th:2} implies that $\Pi$ is $\bigl(K,\,\Xi_G^{\frac{1}{m}}\bigl)$ bounded on $G$.

\subsection{Results for $G$ if each almost $k$-simple factor has Kazhdan's property $(T)$}\label{sec:25} In this section we will show that if each almost $k$-simple factor of $G$ has Kazhdan's property $(T)$, then property $(T)$ implies uniform upper bounds for $K$-finite matrix coefficients of $G\ltimes_\rho V$ on $G$.

The following lemma is an immediate consequence of Theorem \ref{th:11}. But we prefer a proof independent of Theorem \ref{th:11} since the lemma implies that some property of $G$ itself gives upper bounds of $K$-finite matrix coefficients on $G$.
\begin{lemma}\label{le:3}
Suppose $\rho$ is excellent. Then for any unitary representation $\pi$ of $G\ltimes_\rho V$ without non-trivial $V$-fixed vectors, there are no non-trivial $G_i$-fixed vectors either for each $i$, where $G_i$ are non-compact almost $k$-simple factors of $G$.
\end{lemma}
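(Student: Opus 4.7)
The plan is to argue by contradiction: I will assume $v\in \mathcal{H}$ is a nonzero vector with $\pi(g)v=v$ for every $g\in G_i$ and derive that $v$ must be $V$-fixed, contradicting the hypothesis on $\pi$.

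First I will apply the projection-valued measure machinery of Section \ref{sec:1} to $\pi|_V$. Writing $\{P_X\}$ for the associated projection-valued measure on $\widehat V$, the scalar measure $\mu(X):=\langle P_Xv,v\rangle$ is a finite positive Borel measure of total mass $\|v\|^2$. Using the conjugacy relation $\pi(g)P_X\pi(g)^{-1}=P_{g(X)}$ together with $\pi(g)v=v$ for $g\in G_i$ gives $\mu(g(X))=\mu(X)$ for all $g\in G_i$, so $\mu$ is $G_i$-invariant. Via the identification $\widehat V\cong V$ of Section \ref{sec:22}, the dual action corresponds to $\rho^\ast$; and as established in the opening paragraphs of the proof of Proposition \ref{po:6}, excellence of $\rho$ forces excellence of $\rho^\ast$, so $\rho^\ast(G_i)$ has no nonzero fixed vector in $V$. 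Thus $\mu$ becomes a finite $\rho^\ast(G_i)$-invariant Borel measure on $V$ for an action without nonzero fixed vectors.

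The crucial claim is that any such measure must be concentrated at $\{0\}$. To establish it I will use that the $\rho^\ast(G_i)$-action on $V$ is algebraic: by Chevalley's theorem the orbits are locally closed, so the orbit space is countably separated, and $\mu|_{V\setminus\{0\}}$ admits an ergodic decomposition into measures supported on single orbits $\mathcal{O}=G_i\cdot w$ with $w\neq 0$. Each such orbit is homeomorphic to $G_i/H$ where $H=\mathrm{Stab}_{G_i}(w)$ is a Zariski-closed proper subgroup of $G_i$ (proper because $V^{\rho^\ast(G_i)}=\{0\}$), so $\dim H<\dim G_i$. I will then appeal to the structural fact that no proper algebraic subgroup of a noncompact almost $k$-simple group $G_i$ has finite covolume: if $H$ is not unimodular, $G_i/H$ admits no nonzero $G_i$-invariant Borel measure at all; if $H$ is unimodular, the essentially unique $G_i$-invariant measure on $G_i/H$ is infinite because $G_i/H$ is noncompact of positive dimension. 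In either case the ergodic component supported on $\mathcal{O}$ vanishes, so $\mu|_{V\setminus\{0\}}=0$ and $\mu=\|v\|^2\delta_0$.

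The spectral representation $\pi(x)=\int_{\widehat V}\chi(x)\,dP(\chi)$ will then force $\pi(x)v=v$ for every $x\in V$, so $v$ is $V$-fixed; since $\pi$ has no nontrivial $V$-fixed vectors, $v=0$, yielding the desired contradiction. The main obstacle will be the structural input in the third paragraph: the spectral setup and the final conclusion are formal consequences of Section \ref{sec:1}, but the vanishing of finite invariant measures on nontrivial orbits genuinely uses the algebraic structure of $G_i$, since the zero-weight space of a maximal $k$-split torus acting on $V$ may be nonzero and so a naive Mautner-type argument for a single torus element does not suffice.
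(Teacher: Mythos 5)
Your proposal takes a genuinely different route from the paper's. The paper invokes Mackey's theory: it realizes an irreducible component of $\pi|_{G_i\ltimes V}$ as induced from the stabilizer $S_\chi$ of some non-trivial $\chi$, observes that a non-zero $G_i$-invariant vector in an induced representation forces $G_i/S_\chi$ to carry a finite invariant measure, and then applies the Borel density theorem to contradict properness of $S_\chi$. You instead stay with the projection-valued measure $\{P_X\}$ from Section \ref{sec:1}, extract from a putative $G_i$-fixed vector a finite $G_i$-invariant measure $\mu$ on $\widehat V\cong V$, and reduce via ergodic decomposition to showing that a single non-trivial orbit $G_i/H$ cannot carry a finite invariant measure. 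This is a cleaner route that avoids Mackey theory and uses only the spectral framework the paper has already developed; both approaches funnel into the same end statement about $G_i/H$.

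There is, however, a real gap at precisely that end statement. You justify the vanishing in the unimodular case by saying the invariant measure on $G_i/H$ must be infinite ``because $G_i/H$ is noncompact of positive dimension.'' That reasoning is not valid as stated: for a non-uniform lattice $\Gamma<G_i$, the quotient $G_i/\Gamma$ is noncompact and positive-dimensional yet carries a finite invariant measure, so noncompactness together with positive dimension does not imply infinite volume. What actually makes the conclusion true for your $H$ is that $H$ is a \emph{proper Zariski-closed} subgroup: by the Borel density theorem (the same \cite{wang}, \cite[Theorem~3.2.5]{Zimmer} cited in the paper), a closed subgroup of a noncompact almost $k$-simple group with a finite invariant measure on the quotient must be Zariski dense (over $\CC$ or non-archimedean $k$), or for $k=\RR$ a lattice -- and a lattice is discrete and infinite, hence cannot have finitely many connected components, contradicting the structure of the stabilizer. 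Replacing your dimensional heuristic with this density argument closes the gap and recovers the paper's conclusion; without it, the crucial claim that $\mu=\|v\|^2\delta_0$ is unsupported.
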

\begin{proof}
Through this identification  of $\widehat{V}$ with $V$ in Section \ref{sec:22}, the action of
$G$ on $\widehat{V}$ is equivalent to the action $\rho^\ast$ of $G$ in $V$ (see \eqref{for:36}). Since $\rho$ is normal, the actions $\rho^\ast$ of $G$ in $V$ are algebraic and hence the $G$-orbits on $V$ are locally
closed (see \cite{Zimmer}).  Since $\rho$ is excellent, $\rho^\ast$ is also excellent. Hence the stabilizer in $G_i$ of any non-zero element in $\widehat{V}$ is the $k$-rational points of a proper closed (in the Hausdorff topology) $k$-algebraic group of $\tilde{G_i}$ for $k$ non-archimedean or isomorphic to $\CC$; or a proper closed subgroup of $G_i$ with finitely many connected components for $k$ isomorphic to $\RR$.

Now we consider the restriction of $\pi$ on $G_i\ltimes_\rho V$. Assume that $\pi$ is irreducible on $G_i\ltimes_\rho V$. Applying Mackey's theory, we conclude that $\pi$ is induced from an irreducible unitary representation $\pi_\sigma$ of the stabilizer in $G_i\ltimes_\rho V$ of an
element, say $\chi$, of $\widehat{V}$ and if $\chi$ is trivial, then $\pi\mid_V$ contains the trivial representation (see
\cite{Mac} and \cite[Theorem 7.3.1]{Zimmer}). It then follows from the assumption that $\chi$ must be non-trivial. Then $\pi\mid_{G_i}$ is induced from $\pi_\sigma\mid_{S_\chi}$, where $S_\chi$ is the stabilizer of $\chi$ in $G_i$. If $\pi\mid_{G_i}$ contains the trivial representation, then the space $G_i/S_\chi$ has a finite invariant regular Borel measure \cite[Theorem E.3.1]{Valette}. Hence by Borel density theorem, $S_\chi$ is Zariski dense in $\tilde{G_i}$ for $k$ non-archimedean or isomorphic to $\CC$ \cite{wang}; or $S_\chi$ is a lattice in $G_i$ for $k$ isomorphic to $\RR$ \cite[Theorem 3.2.5]{Zimmer}. It then follows from the assumption about $S_\chi$ that $\pi\mid_{G_i}$ has no non-zero invariant vector. In general, in the direct integral decomposition $\pi=\int_X\pi_xd\mu(x)$ where $\pi_x$ is irreducible, for almost
all $x\in X$, $\pi_x\mid_{V}$ has no non-zero invariant vector. Hence $\pi_x\mid_{G_i}$ has no non-zero invariant vector for almost all $x\in X$, by the above argument,
which means that $\pi\mid_{G_i}$ has no non-zero invariant vector.
\end{proof}
If $G_i$ has Kazhdan's property $(T)$, Lemma \ref{le:3} shows that $\pi\mid_{G_i}$ has spectral gap, that is $\pi\mid_{G_i}$ is outside a fixed neighborhood of the trivial
representation of $G_i$. If each $G_i$ has Kazhdan's property $(T)$, using the argument from \cite[$\S$3.1]{cow}, one can show
that $\pi$ is strongly $L^{p}(G_s')$ for some $p$ where $G_s'$ is the almost direct product of all $G_i$. Since $G_s'$ has finite index in $G_s$ (see \cite[Ch I, Proposition 2.3.4]{Margulis}) and $G_c$ is compact,
$\pi$ is also strongly $L^{p}(G)$.

By Theorem \ref{th:10}, we have
a passage from a uniform $L^p$-bound to a uniform pointwise bound. Then we have the following:
\begin{proposition}\label{po:3}
Suppose each non-compact almost $k$-simple factor $G_i$ of G has Kazhdan's property $(T)$ and $\rho$ is excellent. Then any unitary representation $\pi$ of $G\ltimes_\rho V$ without non-trivial $V$-fixed vectors is $\bigl(K,\,\Xi_G^{1/m}\bigl)$ bounded on $G$. Here $m$ is the integer such that $2m\geq p(G_0)$ (see Section \ref{sec:19}).
\end{proposition}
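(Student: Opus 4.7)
\medskip

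\noindent\textbf{Proof proposal.} The plan is to reduce the problem to a uniform $L^{p}$-integrability statement on $G$ and then invoke Theorem \ref{th:10} to upgrade this to the desired uniform pointwise bound. Concretely, I would proceed as follows.

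First, I would apply Lemma \ref{le:3} to the given unitary representation $\pi$ of $G\ltimes_\rho V$. Since $\rho$ is excellent and $\pi$ has no non-trivial $V$-fixed vectors, the lemma guarantees that for every non-compact almost $k$-simple factor $G_i$ of $G$, the restriction $\pi\mid_{G_i}$ has no non-trivial $G_i$-fixed vectors. By the assumption that each such $G_i$ has Kazhdan's property $(T)$, this absence of invariant vectors is equivalent to $\pi\mid_{G_i}$ being bounded away from the trivial representation in the Fell topology, i.e.\ it has a spectral gap that is uniform in $\pi$.

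Next, with a spectral gap in hand for each factor, I would invoke the argument in \cite[\S3.1]{cow}: combining the individual spectral gaps of the factors $G_i$ via the higher-rank trick (tensor-product/decay-of-matrix-coefficients argument) yields a single exponent $p>0$, depending only on the property $(T)$ constants of the $G_i$, such that $\pi$ restricted to the almost direct product $G_s'=\prod_i G_i$ is strongly $L^{p}(G_s')$. Because $G_s'$ has finite index in $G_s$ (by \cite[Ch I, Proposition 2.3.4]{Margulis}) and $G_c$ is compact, integrability on $G_s'$ transfers to integrability on all of $G=G_cG_s$; thus $\pi$ is strongly $L^{p+\epsilon}(G)$ for every $\epsilon>0$.

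Finally, I would apply Theorem \ref{th:10} to pass from the uniform $L^{p+\epsilon}$ bound on $G$ to the uniform pointwise bound: for any integer $m$ with $2m\geq p$, $\pi$ is $\bigl(K,\,\Xi_G^{1/m}\bigl)$ bounded on $G$. Taking $m$ to be the smallest integer with $2m\geq p(G_0)$ (the quantity tracking the best $L^p$-exponent coming from the property $(T)$ estimates on the factors, as in Section \ref{sec:19}) gives the stated bound.

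The main obstacle in the argument is quantitative rather than structural: extracting the precise exponent $p(G_0)$ from the higher-rank trick in \cite[\S3.1]{cow} requires carefully bookkeeping how the individual property $(T)$ decay rates on the factors $G_i$ combine into a single $L^p$-exponent on $G_s'$. The structural steps (Lemma \ref{le:3}, finite index of $G_s'$ in $G_s$, and the $L^p$-to-pointwise upgrade via Theorem \ref{th:10}) are essentially routine once this exponent is pinned down.
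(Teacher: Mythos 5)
Your proposal follows exactly the same route as the paper: Lemma \ref{le:3} to eliminate $G_i$-fixed vectors, property $(T)$ on each factor to get a spectral gap, the argument from \cite[\S 3.1]{cow} to convert this into strong $L^{p}$-integrability on the almost direct product $G_s'$, finite index of $G_s'$ in $G_s$ together with compactness of $G_c$ to transfer this to all of $G$, and finally Theorem \ref{th:10} to upgrade the uniform $L^{p}$-bound to the uniform pointwise bound $\Xi_G^{1/m}$ with $2m\geq p(G_0)$. The reasoning and the cited results coincide step for step with the paper's.
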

\begin{remark} For $G$ semisimple, a strategy of Howe (see \cite[Proposition 6.3]{howe}) shows that the upper bounds of matrix coefficients on each almost $k$-simple factor of $G$ provide a uniform upper bound of matrix coefficients on $G$. Hence one can without loss of generality assume
that $G$ is almost $k$-simple. The exact number $p(G_0)$ is obtained for classical simple Lie groups by combining the known cases of a classification of the
unitary dual by Vogan and Barbasch, and the results of Li (see \cite{Li} for references). The
precise values of $p(G_0)$ are not known in general but upper bounds have been given in
many cases (see \cite{oh}, \cite{howe} and \cite{Li}). If rank$_k(G)\geq 2$ we remark that even in the case when the number $p(G_0)$ is precisely known, higher rank trick (or Howe's trick) provides a much sharper pointwise bound  in general (see \cite[Theorem 5.7]{oh}).
\end{remark}

\section{improvement of uniform pointwise bound for matrix coefficients}\label{sec:19}
As before, let $K$ be a good maximal compact subgroup of $G$ and let $G_i$ be the non-compact almost $k$-simple factors of $G$. Denote by $\widehat{(G\ltimes_\rho V)}_0$ (resp. $\widehat{G_0}$) the set of equivalence classes of irreducible unitary representations of $G\ltimes_\rho V$ (resp. $G$) without $V$-fixed vectors (resp. without $G_i$-fixed vectors for each $i$). We denote by $p\bigl((G\ltimes_\rho V)_0\bigl)$ (resp. $p(G_0)$) the
smallest real number such that for any non-trivial $\pi\in \widehat{(G\ltimes_\rho V)}_0$ (resp. $\pi\in \widehat{G_0}$), $\pi$ is strongly $L^q$ on $G$ for any
$q>p\bigl((G\ltimes_\rho V)_0\bigl)$ (resp. $q>p(G_0)$).
%Similarly we denote by $p_K\bigl((G\ltimes_\rho V)_0\bigl)$ (resp. $p_K(G_0)$) the smallest real number such
%that for any non-trivial $\pi\in \widehat{(G\ltimes_\rho V)}_0$ (resp. $\pi\in \widehat{G_0}$), the $K$-finite matrix coefficients of $\pi$ are $L^q$-integrable
%for any $q>p_K\bigl((G\ltimes_\rho V)_0\bigl)$ (resp. $q>p_K(G_0)$). By Peter-Weyl theorem, we have $p\bigl((G\ltimes_\rho V)_0\bigl)\leq p_K\bigl((G\ltimes_\rho V)_0\bigl)$ (resp. $p(G_0)\leq p_K(G_0)$).
 Note that for $\pi\in \widehat{(G\ltimes_\rho V)}_0$, even though $\pi$ is irreducible on $G\ltimes_\rho V$, it is not necessarily irreducible on $G$ in general. From Lemma \ref{le:7} and \ref{le:3} we see that $p\bigl((G\ltimes_\rho V)_0\bigl)\leq 2m$ where $m$ is the smallest integer such that $m\geq \frac{1}{2}\cdot p(G_0)$.

Cowling showed that $p(G_0)<\infty$ if and only if each $G_i$ has Kazhdan's property $(T)$ \cite{cow}. Indeed, the explicit determination of $p(G_0)$ may be viewed as a
quantitative version of Kazhdan's property $(T)$. The method used in proving Theorem \ref{th:11} yields an upper bound of $p\bigl((G\ltimes_\rho V)_0\bigl)$ even though $p(G_0)=\infty$.

In this section we will show how to make an improvement of the estimates of $p\bigl((G\ltimes_\rho V)_0\bigl)$ obtained from Theorem \ref{th:11} for different examples. From the proofs in Section \ref{sec:11} we see that the bounds  are determined in the following way (without loss of much generality we assume that $\rho$ is irreducible):
\begin{enumerate}
  \item we consider  the  deformations of the set $X_s$ (see \eqref{for:17}) under $\rho^\ast(a)$ and $\rho^\ast(a^{-1})$ actions where $a\in D^+$, which are measured by $f_1(a)$ or $f_2(a)$ (see \eqref{for:18}) respectively where $0< f_1,\,f_2< 1$ are functions on $D^+$;
  \item for $c_0=f_1(a)$ or $f_2(a)$, choose an open cover of the set $\mathcal{W}_{c_0}$: $\mathcal{W}_{c_0}\subset\bigcup_{i} \mathcal{W}_{c_0}(i)$ (see \ref{for:105} of Section \ref{sec:10}); and find as many as possible $\tau^i_j\in K$ such that the $\rho^\ast(\tau^i_j)$-orbits of $\mathcal{W}_{c_0}(i)$ are pairwise disjoint. The number of these $\tau^i_j$, say $c_0^{\gamma}$ with $\gamma<0$, measures the matrix coefficients decay of a dense set of  $K$-finite vectors (see \eqref{for:25});
  \item comparing the increasing rate of each simple root in
  $(f_1f_2)^{-\gamma/2}$ with those in the modular function $\delta_B$ gives an upper bound $p$ of $p\bigl((G\ltimes_\rho V)_0\bigl)$ (see \eqref{for:119}); and then any unitary representation of $G\ltimes_\rho V$ without non-trivial $V$-fixed vectors is $\bigl(K,\,\Xi_G^{1/m}\bigl)$ bounded on $G$ where $m$ is an integer such that $2m\geq p$ (see Section \ref{sec:20}).
\end{enumerate}
\begin{remark}
It is clear that the above arguments apply to the case of a local field $k$ with char$\,k\neq 0$.
\end{remark}
Next, we will show how to increase the number of
disjoint $\rho(K)$ orbits (it is not difficult to see  $\rho(K)$ and  $\rho^\ast(K)$ have the same number of disjoint orbits) for some examples, which results better estimates for $p\bigl((G\ltimes_\rho V)_0\bigl)$.

For a subset $V'\subset V$, $V'$ is said to have $c$-disjoint property if there is an open cover of $V'$: $V'\subset\bigcup_{i=1}^l V_i'$ ($l$ is not dependent on $c$) such that for each $V_i'$ there are at least $C^{-1}c^{-1}$ elements in $\tau_j\in K$ such that the $\rho(\tau_j)$-orbits of $V_i'$ are pairwise disjoint.
\begin{sect15}\label{for:21}
Let $\rho$ be the standard representation of $G=SL(3,L)$ on $L^{3} $ where $L$ denotes a local field $k$ or $\HH$. For $L=\HH$, it is a $\RR$-rank $2$ group. We can realize $D$ as $D=\{a=\text{diag}(a_1,a_2,a_3):a_i\in k^\ast\}$ and the simple roots are
$\alpha_i(a)=a_ia^{-1}_{i+1}$, $i=1,2$; the weights are $w_i(a)=a_i$, $1\leq i\leq 3$ and $\lambda=w_1=\frac{1}{3}(2\alpha_1+\alpha_2)$ and $\varrho=w_3=-\frac{1}{3}(\alpha_1+2\alpha_2)$.
By Theorem \ref{th:11},
\begin{enumerate}
\item for $L=\CC$, $\gamma=-\frac{1}{3}\alpha_1$ and $-\frac{\gamma}{2}(\lambda-\varrho)=\frac{1}{6}(\alpha_1+\alpha_2)$. Using  $\delta_B(a)=\abs{\alpha_1(a)\alpha_2(a)}^4$ we have $p\bigl((G\ltimes_\rho V)_0\bigl)\leq 24$;
  \item for $L=\HH$, $\gamma=-(\frac{1}{3})^2\alpha_1$ and $-\frac{\gamma}{2}(\lambda-\varrho)=\frac{1}{18}(\alpha_1+\alpha_2)$. Using  $\delta_B(a)=\abs{\alpha_1(a)\alpha_2(a)}^8$ we have $p\bigl((G\ltimes_\rho V)_0\bigl)\leq 144$;
      \item for $L=\RR$ or non-archimedean, $\gamma=-\frac{1}{3}\alpha_1$ and $-\frac{\gamma}{2}(\lambda-\varrho)=\frac{1}{6}(\alpha_1+\alpha_2)$. Using  $\delta_B(a)=\abs{\alpha_1(a)\alpha_2(a)}^2$ we have $p\bigl((G\ltimes_\rho V)_0\bigl)\leq 12$.
\end{enumerate}
\textbf{Improvement}: let $K$ be $SO(3)$, $SU(3)$, $Sp(3)$ or $SL_3(\mathcal{O})$ for $L=\RR$, $\CC$, $\HH$ or non-archimedean correspondingly. Here $\mathcal{O}$ is the ring of integers of $k$. Set
\begin{align*}
\mathcal{W}_{c_0}=\bigl\{v=(v_1,v_2,v_3)\in L^3:\abs{v_1}\leq c_0\text{ and }\frac{3}{4}\leq\abs{v}\leq \frac{5}{4}\bigl\}.
\end{align*}
Then $\mathcal{W}_{c_0}\subset\bigcup_{i=2}^3\mathcal{W}_{c_0}^{i}$, where
\begin{align*}
\mathcal{W}_{c_0}^{i}=\bigl\{v\in L^3:\abs{v_1}<\frac{3}{2}c_0,\,\frac{1}{2}<\abs{v}<\frac{3}{2}\text{ and }\abs{v_i}>\frac{1}{4}\bigl\}.
\end{align*}
Ar first, we consider the case of $L$ archimedean or isomorphic to $\HH$. For $\mathcal{W}_{c_0}^{2}$ consider $h(a,z)=\left( \begin{array}{ccc}
a & z & 0\\
-\bar{z} & a &0 \\
0 & 0 &1 \\
\end{array} \right)\in K$, where $a\in\RR$, $z\in L$ with $a^2+\abs{z}^2=1$. For any $a_1,\,a_2\in\RR$ and $z_1,\,z_2\in L$ we have
\begin{align}\label{for:60}
h(a_1,z_1)^{-1}h(a_2,z_2)=\left( \begin{array}{ccc}
a_1a_2+z_1\overline{z_2} & a_1z_2-a_2z_1 & 0\\
-a_1\overline{z_2}+a_2\overline{z_1} & a_1a_2+\overline{z_1}z_2 &0 \\
0 & 0 &1 \\
\end{array} \right).
\end{align}
Let $A=\{(a,z)\in\RR\times L:\frac{1}{4}\leq a,\abs{z}\leq\frac{\sqrt{15}}{4}\text{ and }a^2+\abs{z}^2=1\}$. Next, we will show:
\begin{enumerate}
  \item [(*)]if $(a_1,z_1),\,(a_2,z_2)\in A$ satisfying $\abs{z_1-z_2}\geq 15^2\cdot 64\cdot 8c_0$, then $\abs{a_1z_2-a_2z_1\big}\geq 16c_0$.
\end{enumerate}
If $\bigl\lvert\lvert z_1\rvert -\lvert z_2\rvert\bigr\lvert\geq 256c_0$, let $f(x)=x(1-x^2)^{-1/2}$, then there exists some $z_0$ between $\abs{z_1}$ and
$\abs{z_2}$ such that
\begin{align*}
f(\abs{z_1})-f(\abs{z_2})=f'(z_0)(\abs{z_1}-\abs{z_2})=(\abs{z_1}-\abs{z_2})(1-z_0^2)^{-3/2},
\end{align*}
the it follows that
\begin{align*}
\abs{a_1z_2-a_2z_1}&\geq a_1a_2\bigl\lvert\lvert z_1a_1^{-1}\rvert -\lvert z_2a_2^{-1}\rvert \bigr\rvert\\
&= a_1a_2\bigl\lvert f(\abs{z_1})-f(\abs{z_2})\bigr\rvert\\
&=a_1a_2\bigl\lvert\lvert z_1\rvert -\lvert z_2\rvert\bigr\lvert\cdot(1-z_0^2)^{-3/2}\\
&\geq \frac{1}{4}\cdot\frac{1}{4}\cdot(\frac{16}{15})^{3/2}\bigl\lvert\lvert z_1\rvert -\lvert z_2\rvert\bigr\lvert\\
&\geq \frac{1}{4}\cdot\frac{1}{4}\cdot 256c_0=16c_0.
\end{align*}
If $\bigl\lvert\lvert z_1\rvert -\lvert z_2\rvert\bigr\lvert< 256c_0$, let $g(x)=(1-x^2)^{-1/2}$ then there exists some $u$ between $\abs{z_1}$ and
$\abs{z_2}$ such that
\begin{align*}
\bigl\lvert g(\abs{z_1})-g(\abs{z_2})\bigr\lvert&=\bigl\lvert g'(u)(\abs{z_1}-\abs{z_2})\bigr\lvert\\
&=\bigl(\bigl\lvert\lvert z_1\rvert -\lvert z_2\rvert\bigr\lvert\cdot u\bigl)(1-u^2)^{-3/2}\\
&\leq (16\sqrt{15})256c_0.
\end{align*}
By the
above discussion we get:
\begin{align*}
\abs{a_1z_2-a_2z_1}&=a_1a_2\bigl\lvert z_1a_1^{-1}-z_2a_2^{-1} \bigr\rvert\\
&=a_1a_2\bigl\lvert (z_1-z_2)a_1^{-1}+z_2(a_1^{-1}-a_2^{-1}) \bigr\rvert\\
&\geq a_2\abs{z_1-z_2}-a_1a_2\abs{z_2}\cdot\bigl\lvert g(\abs{z_1})-g(\abs{z_2})\bigr\rvert\\
&\geq \frac{1}{4}\abs{z_1-z_2}-\frac{15}{16}\cdot\frac{\sqrt{15}}{4}\cdot(16\sqrt{15})256c_0\\
&\geq \frac{1}{4}\cdot 15^2\cdot 64\cdot 8c_0-15^2\cdot 64c_0> 16c_0.
\end{align*}
Hence we proved (*).

For any $(a_1,z_1),\,(a_2,z_2)\in A$ satisfying the condition in (*), and any $v\in \mathcal{W}_{c_0}^{2}$ using \eqref{for:60} and (*),  we have
\begin{align*}
\bigl\lvert\pi_1\big(h(a_1,z_1)^{-1}h(a_2,z_2)v\big)\bigl\rvert&=\bigl\lvert(a_1z_2-a_2z_1)v_2+(a_1a_2+z_1\overline{z_2})v_1\bigl\rvert\\
&\geq \bigl\lvert(a_1z_2-a_2z_1)v_2\bigl\rvert-\bigl\lvert(a_1a_2+z_1\overline{z_2})v_1\bigl\rvert\\
&\overset{\text{(1)}}{>} \frac{1}{4}\cdot 16c_0-\frac{3}{2}c_0=\frac{5}{2}c_0
\end{align*}
where $\pi_1$ projects $v=(v_1,v_2,v_3)\in L^3$ to $v_1$. $(1)$ follows from Cauchy-Schwarz inequality
$\bigl\lvert a_1a_2+z_1\overline{z_2}\bigl\rvert^2\leq (a_1^2+\abs{z_1}^2)\cdot(a_2^2+\abs{z_2}^2)=1$. Then we deduce that
\begin{align}\label{for:99}
h(a_1,z_1)(\mathcal{W}_{c_0}^{2})\bigcap h(a_2,z_2)(\mathcal{W}_{c_0}^{2})=\emptyset.
\end{align}
Let
\begin{align*}
z_{m}&=\frac{1}{4}+15^2\cdot 64\cdot 8mc_0\quad\text{ and }\quad a_{m}=(\sqrt{1-\abs{z_{m}}^2},z_{m})
\end{align*}
for $L=\mathbb{R}$; or let
\begin{align*}
z_{m,n}&=\frac{1}{4\sqrt{2}}+15^2\cdot 64\cdot 8mc_0+\Bigl(\frac{1}{4\sqrt{2}}+15^2\cdot 64\cdot 8nc_0\Bigl)\textrm{i}\qquad\text{and}\\
a_{m,n}&=(\sqrt{1-\abs{z_{m,n}}^2},z_{m,n})
\end{align*}
for $L=\mathbb{C}$; or let
\begin{align*}
z_{m,n,\ell,r}&=\frac{1}{8}+15^2\cdot 64\cdot 8mc_0+\Bigl(\frac{1}{8}+15^2\cdot 64\cdot 8nc_0\Bigl)\textrm{i}\\
&+\Bigl(\frac{1}{8}+15^2\cdot 64\cdot 8\ell c_0\Bigl)\textrm{j}+\Bigl(\frac{1}{8}+15^2\cdot 64\cdot 8rc_0\Bigl)\textrm{k}\qquad\text{and}\\
a_{m,n,\ell,r}&=(\sqrt{1-\abs{z_{m,n}}^2},z_{m,n,\ell,r})
\end{align*}
for $L=\mathbb{H}$, where $m$, $n$, $\ell$, $r$ are integers satisfying $0\leq m,n,\ell,r\leq \bigl[\frac{\sqrt{15}-1}{8\cdot 15^2\cdot 64\cdot 8c_0}\bigl]$.

It is clear that $a_{m}\in A$ (resp. $a_{m,n}\in A$ or $a_{m,n,\ell,r}\in A$) and the number of such $a_{m}$ (resp. $a_{m,n}$ or $a_{m,n,\ell,r}$) is
$\bigl[\frac{\sqrt{15}-1}{8\cdot 15^2\cdot 16\cdot 8c_0}\bigl]+1$ (resp. $(\bigl[\frac{\sqrt{15}-1}{8\cdot 15^2\cdot 16\cdot 8c_0}\bigl]+1)^2$ or $(\bigl[\frac{\sqrt{15}-1}{8\cdot 15^2\cdot 16\cdot 8c_0}\bigl]+1)^4$) for $L=\RR$ (resp. $L=\CC$ or $L=\HH$). Furthermore, for any $m\neq m_1$ (resp. $(m,n)\neq (m_1,n_1)$ or $(m,n,\ell,r)\neq (m_1,n_1,\ell_1,r_1)$) $h(a_{m})$ and $h(a_{m_1})$ (resp. $h(a_{m,n})$ and $h(a_{m_1,n_1})$ or $h(a_{m,n,\ell,r})$ and $h(a_{m_1,n_1,\ell_1,r_1})$) satisfy the condition \eqref{for:99}.

Hence $\mathcal{W}_{c_0}^{2}$ has $c_0$ (resp. $c_0^2$ or $c_0^4$)-disjoint property for $L=\mathbb{R}$ (resp. $L=\mathbb{C}$ or $L=\mathbb{H}$). For $\mathcal{W}_{c_0}^{3}$, we consider $h(a,z)=\left( \begin{array}{ccc}
a & 0 &z \\
0 & 1 & 0 \\
-\bar{z} & 0 & a \\
\end{array} \right)\in K$, where $a\in\RR$, $z\in L$ with $a^2+\abs{z}^2=1$. An argument analogous to the case of $\mathcal{W}_{c_0}^{2}$ shows that  $\mathcal{W}_{c_0}^{3}$ also has $c_0$ (resp. $c_0^2$ or $c_0^4$)-disjoint property, which implies that $\mathcal{W}_{c_0}$
has $c_0$ (resp. $c_0^2$ or $c_0^4$)-disjoint property for $L=\mathbb{R}$ (resp. $L=\mathbb{C}$ or $L=\mathbb{H}$).

Now we consider the case of $L$ non-archimedean. For $\mathcal{W}_{c_0}^{2}$, we consider $h(z)=\left( \begin{array}{ccc}
1 & z &0 \\
0 & 1 & 0 \\
0 & 0 & 1 \\
\end{array} \right)\in K$, where $a\in \mathcal{O}$. If $\abs{z_1-z_2}\geq 12c_0$ then
\begin{align*}
\bigl\lvert\pi_1\big(h(z_1)^{-1}h(z_2)v\big)\bigl\rvert&=\bigl\lvert v_1+(z_2-z_1)v_2\bigl\rvert= \bigl\lvert (z_2-z_1)v_2\bigl\rvert\geq 2c_0.
\end{align*}
Using the analogous construction to the one in proof of Lemma \ref{le:10} for non-archimedean case, we see that $\mathcal{W}_{c_0}^{2}$ has $c_0$-disjoint property. For $\mathcal{W}_{c_0}^{2}$, we consider $h(z)=\left( \begin{array}{ccc}
1 & 0 &z \\
0 & 1 & 0 \\
0 & 0 & 1 \\
\end{array} \right)\in K$ instead. It is clear that  $\mathcal{W}_{c_0}^{3}$ also has $c_0$-disjoint property, which implies that $\mathcal{W}_{c_0}$
has $c_0$-disjoint property for $L$ non-archimedean. Hence,
\begin{itemize}
\item for $G=SL(3,L)$ where $L=\RR$ or non-archimedean, $\gamma=-1$ and
$-\frac{\gamma}{2}(\lambda-\varrho)=\frac{1}{2}(\alpha_1+\alpha_2)$. Then $p\bigl((G\ltimes_\rho V)_0\bigl)\leq 4$;
  \item for $G=SL(3,\CC)$ $\gamma=-2$ and
$-\frac{\gamma}{2}(\lambda-\varrho)=\alpha_1+\alpha_2$.
Then $p\bigl((G\ltimes_\rho V)_0\bigl)\leq 4$;
  \item for $G=SL(3,\mathbb{H})$ $\gamma=-4$ and
$-\frac{\gamma}{2}(\lambda-\varrho)=2(\alpha_1+\alpha_2)$.
Then $p\bigl((G\ltimes_\rho V)_0\bigl)\leq4$.
\end{itemize}
\end{sect15}
\begin{sect15}\label{for:67}
Let $\rho$ be the adjoint representation of $G=SL(2,k)$ and let $h=\left( \begin{array}{ccc}
1 & 0 \\
0 & -1\\
\end{array} \right)$, $U=\left( \begin{array}{ccc}
0 & 1\\
0 & 0\\
\end{array} \right)$ and $V=\left( \begin{array}{ccc}
0 & 0 \\
1 & 0\\
\end{array} \right)$ in the Lie algebra $\mathfrak{sl}(2,k)$.  We can realize $D$ as $D=\{a=\text{diag}(a_1,a_1^{-1}):a_1\in k^\ast\}$ and simple root is
$\alpha_1(a)=a_1^2$; the weights are $\lambda(a)=\alpha_1$ and $\varrho=-\alpha_1$. By Theorem \ref{th:11}, $\gamma=-\frac{1}{3}$ and $-\frac{\gamma}{2}(\lambda-\varrho)=\frac{1}{3}\alpha_1$. Using  $\delta_B(a)=\abs{\alpha_1(a)}$ for $k\neq \CC$ and $\delta_B(a)=\abs{\alpha_1(a)}^2$ for $k=\CC$, we have $p\bigl((G\ltimes_\rho V)_0\bigl)\leq 3$ for $k\neq \CC$ and $p\bigl((G\ltimes_\rho V)_0\bigl)\leq 6$ for $k= \CC$.\\
\textbf{Improvement}: set
\begin{align*}
\mathcal{W}_{c_0}=\{(v_{U},v_h,v_{V}):\abs{v_{U}}\leq c_0\text{ and
}\frac{3}{4}\leq\abs{v}\leq\frac{5}{4}\}.
\end{align*}
We have a decomposition of $\mathcal{W}_{c_0}$: $\mathcal{W}_{c_0}=\bigcup_{i=1}^2\mathcal{W}_{c_0}^{i}$ where
\begin{align*}
\mathcal{W}_{c_0}^{1}&=\{v\in \mathfrak{sl}(2,k):\abs{v_{U}}<\frac{4}{3}c_0,\,\,\frac{1}{2}<\abs{v}<\frac{3}{2}\text{ and
}\frac{4}{3}\abs{v_{h}}>\abs{v_{V}}\}\qquad\text{ and }\\
\mathcal{W}_{c_0}^{2}&=\{v\in
\mathfrak{sl}(2,k):\abs{v_{U}}< \frac{4}{3}c_0,\,\,\frac{1}{2}<\abs{v}<\frac{3}{2}\text{ and
}\abs{v_{h}}< \abs{v_{V}}\}.
\end{align*}
Let $X=\{\theta\in \RR:0\leq \theta\leq \frac{\pi}{4}\}$. For $k$ archimedean consider $z(\theta)=\left( \begin{array}{ccc}
\cos\theta & -\sin\theta \\
\sin\theta & \cos\theta\\
\end{array} \right)\in K$ with $\theta\in X$. For any $\theta_1,\,\theta_2\in\RR$
$z(\theta_1)^{-1}z(\theta_2)=z(\theta_2-\theta_1)$. We also have
\begin{align}\label{for:61}
\pi_{U}\bigl(\text{Ad}_{z(\theta)}(v_{U}+v_h+v_{V})\bigl)=\cos^2\theta \cdot v_{U}+\sin 2\theta \cdot v_h-\sin^2\theta \cdot v_{V}.
\end{align}
For any $v\in \mathcal{W}_{c_0}^{1}$ and $\theta_1,\,\theta_2\in X$ using \eqref{for:61} we have
\begin{align}\label{for:23}
&\abs{\pi_{U}(\text{Ad}_{z(\theta_1)^{-1}z(\theta_2)}\cdot v)}\notag\\
&\geq
\abs{\sin(2\theta_2-2\theta_1) v_h-\sin^2(\theta_2-\theta_1)  v_{V}}-\abs{v_{U}}\notag\\
&\overset{\text{(1)}}{\geq}  \frac{1}{2}\cdot \frac{3}{5}\delta\abs{\theta_2-\theta_1}-2\abs{\theta_2-\theta_1}^2-c_0.
\end{align}
$(1)$ holds since there exists $\delta>0$ such that $\delta\abs{\theta}\leq\abs{\sin\theta}\leq \abs{\theta}$ for any $-\frac{\pi}{4}\leq\theta\leq \frac{\pi}{4}$. Hence $\mathcal{W}_{c_0}^{1}$  has $c_0$-disjoint property (see Case \eqref{for:29} and
\eqref{for:9} in the proof of Lemma \ref{le:10}).

We have a decomposition of $\mathcal{W}_{c_0}^{2}$:
$\mathcal{W}_{c_0}^{2}=A_1\bigcup A_2$ where
\begin{align*}
A_1=\{v\in\mathcal{W}_{c_0}^{2}: \text{Re}\Bigl(\frac{v_h}{v_{V}}\Bigl)>-c_0\}\quad\text{ and }\quad A_2=\{v\in\mathcal{W}_{c_0}^{2}: \text{Re}\Bigl(\frac{v_h}{v_{V}}\Bigl)<0\}.
\end{align*}
For any $v\in A_1$ and $\theta_1,\,\theta_2\in X$ with $\theta_1<\theta_2$,  using \eqref{for:61} we have:

\begin{align}\label{for:38}
&\bigl\lvert\pi_{U}(\text{Ad}_{z(\theta_2)^{-1}z(\theta_1)}\cdot v)\bigl\rvert\notag\\
&\geq\abs{v_{V}}\cdot\bigl\lvert\sin(2\theta_1-2\theta_2)\cdot\frac{v_h}{v_{V}}-\sin^2(\theta_1-\theta_2)\bigl\rvert-\abs{v_{U}}\notag\\
&\geq\abs{v_{V}}\cdot\bigl\lvert\sin(2\theta_2-2\theta_1)\text{Re}\Bigl(\frac{v_h}{v_{V}}\Bigl)+\sin^2(\theta_1-\theta_2)\bigl\rvert-\abs{v_{U}}\notag\\
&\geq \frac{1}{4}\bigl(\sin^2(\theta_2-\theta_1)-c_0\bigl)-\frac{4}{3}c_0\notag\\
&\geq \frac{1}{4}\delta^{2}\abs{\theta_2-\theta_1}^2-\frac{5}{3}c_0,
\end{align}
which implies that $A_1$ has $c_0^{1/2}$ disjoint property. Similar arguments hold for $A_2$, therefore $A_2$ also has $c_0^{1/2}$ disjoint property. Hence $\mathcal{W}_{c_0}$ has $c_0^{1/2}$ disjoint property.

For $k$ non-archimedean and $g=\left( \begin{array}{ccc}
1 & b \\
 0& 1\\
\end{array} \right)\in K$ with $b\in \mathcal{O}$, we have
\begin{align}\label{for:39}
\pi_{U}(\text{Ad}_g(v_{U}+v_h+v_{V}))=-2bv_h-b^2v_{V}+v_{U}.
\end{align}
For any $v\in \mathcal{W}_{c_0}^{1}$ we have
\begin{align*}
\abs{\pi_{U}(\text{Ad}_g(v))}&\geq
\abs{2bv_h+b^2v_{V}}-\abs{v_{U}}\geq \frac{\abs{2}}{2}\cdot\frac{3}{4}\abs{b}-2b^2-\frac{4}{3}c_0.
\end{align*}
Similar to \eqref{for:23}, $\mathcal{W}_{c_0}^{1}$  has $c_0$-disjoint property. We just need to consider $\mathcal{W}_{c_0}^{2}$. Using \eqref{for:39}, similar to \eqref{for:38} we see that for any $v\in\mathcal{W}_{c_0}^{2}$
\begin{align}\label{for:40}
&\abs{\pi_{U}(\text{Ad}_g(v))}\geq \frac{1}{2}\abs{2bv_hv_{V}^{-1}+b^2}-\frac{4}{3}c_0.
\end{align}
For any $0<c_0<1$ there exist
$\abs{q}^2< \eta_0\leq 1$ and $z_0\in \NN$ such that $c_0=\abs{q}^{z_0}\eta_0$ and $z_0$ is even. Suppose $i_0,\,i_1\in\ZZ$ such that $\abs{2}=\abs{q}^{i_0}$ and $\abs{q}^{i_1}>3$. We consider the decomposition of $\mathcal{W}_{c_0}^{2}$: $\mathcal{W}_{c_0}^{2}=\bigcup_{i=0}^{z_0+1}A_i$ where
\begin{align*}
A_{z_0+1}&=\{v\in\mathcal{W}_{c_0}^{2}: \abs{v_hv_{V}^{-1}}<2\abs{q}^{z_0+1}\}&\qquad&\text{ and }\\
A_{i}&=\{v\in\mathcal{W}_{c_0}^{2}: \abs{q}^{i+1}<\abs{v_hv_{V}^{-1}}<\abs{q}^{i-1}\},& \qquad &0\leq i\leq z_0.
\end{align*}
For any $v\in A_{z_0+1}$, it follows from \eqref{for:40} that
\begin{align*}
&\abs{\pi_{U}(\text{Ad}_g(v))}\geq \frac{1}{2}\abs{2bv_hv_{V}^{-1}+b^2}-\frac{4}{3}c_0\geq\frac{1}{2}b^2-Cc_0,
\end{align*}
which implies that $A_{z_0+1}$ has $c_0^{1/2}$ disjoint property.

For any $0\leq i\leq z_0$ and any $v\in A_{i}$,
\begin{align*}
\abs{2bv_hv_{V}^{-1}+b^2}=&\left\{\begin{aligned} &\abs{b}^2,\quad &\abs{b}>\abs{2}\cdot\abs{q}^{i-1}\\
&\abs{2bv_hv_{V}^{-1}},\quad &\abs{2}\cdot\abs{q}^{i+1}>\abs{b}.
 \end{aligned}
 \right.
\end{align*}
For  $A_i$ with $i\geq \frac{z_0}{2}$, if
\begin{align*}
\abs{b}\geq\abs{q}^{\frac{z_0}{2}-\abs{i_0}-1+i_1}\geq \abs{q}^{-\abs{i_0}-1+i_1}\cdot c_0^{1/2},
\end{align*}
 then $\abs{b}>\abs{2}\cdot\abs{q}^{i-1}$, and thus by \eqref{for:40} we have
\begin{align*}
\abs{\pi_{U}(\text{Ad}_g(v))}&\geq \frac{1}{2}\abs{b^2}-\frac{4}{3}c_0> 3\abs{q}^{z_0}-\frac{4}{3}c_0> 3c_0-\frac{4}{3}c_0=\frac{5}{3}c_0.
\end{align*}
Then we see that  $A_i$ has $c_0^{1/2}$ disjoint property if $i\geq\frac{z_0}{2}$.

Now suppose $0\leq i<\frac{z_0}{2}$.  If $\abs{b}>\abs{q}^{i-\abs{i_0}-1+i_1}$
then $\abs{b}>\abs{2}\cdot\abs{q}^{i-1}$, hence  we have
\begin{align*}
\abs{\pi_{U}(\text{Ad}_g(v))}&\geq \frac{1}{2}\abs{b^2}-\frac{4}{3}c_0> 3\abs{q}^{z_0}-\frac{4}{3}c_0> 3c_0-\frac{4}{3}c_0=\frac{5}{3}c_0;
\end{align*}
also if $\abs{q}^{z_0-i-\abs{i_0}+i_1-1}<\abs{b}<\abs{2}\cdot\abs{q}^{i+1}=\abs{q}^{i+i_0+1}$, then
\begin{align*}
\abs{\pi_{U}(\text{Ad}_g(v))}&\geq \frac{1}{2}\abs{2b}\cdot\abs{q}^{i+1}-\frac{4}{3}c_0\geq 3\abs{q}^{z_0}-\frac{4}{3}c_0\geq \frac{5}{3}c_0.
\end{align*}
Then we see that for $0\leq i<\frac{z_0}{2}$, if
\begin{align*}
\abs{q}^{z_0-i-\abs{i_0}+i_1-1}<\abs{b}<\abs{q}^{i+i_0+1}\quad\text{ or }\quad\abs{q}^{i-\abs{i_0}-1+i_1}<\abs{b}\leq\abs{q}^\sigma
\end{align*}
then $\abs{\pi_{U}(\text{Ad}_g(v))}>\frac{4}{3}c_0$. Furthermore,
since
\begin{align*}
\abs{q}^{z_0-i-\abs{i_0}+i_1-1}&<\abs{q}^{z_0-\frac{z_0}{2}-\abs{i_0}+i_1-1}= \abs{q}^{-\abs{i_0}+i_1-1}\eta_0^{-1/2}c_0^{1/2}\\
&\leq\abs{q}^{-\abs{i_0}+i_1-2}\cdot c_0^{1/2},
\end{align*}
it follows that $\abs{\pi_{U}(\text{Ad}_g(v))}>\frac{4}{3}c_0$, if
\begin{align*}
\abs{q}^{-\abs{i_0}+i_1-2}\cdot c_0^{1/2}\leq\abs{b}<\abs{q}^{i+i_0+1}\quad\text{ or }\quad\abs{q}^{i-\abs{i_0}-1+i_1}<\abs{b}\leq\abs{q}^\sigma.
\end{align*}
Hence $A_i$ also has $c_0^{1/2}$ disjoint property if $0\leq i<\frac{z_0}{2}$. Note that $\sum_{i=0}^{z_0+1}c_0^{1/2}\leq c_0^{1/2}(\log_{\abs{q}}c_0)$, then $\mathcal{W}_{c_0}^{2}$ has $c_0^{1/2}(\log_{\abs{q}}c_0)$ disjoint property. So,
\begin{itemize}
  \item for $k$ archimedean, $\gamma=-\frac{1}{2}$ and $-\frac{\gamma}{2}(\lambda-\varrho)=\frac{1}{2}\alpha_1$.
Then one has $p\bigl((G\ltimes_\rho V)_0\bigl)\leq 2$ for $k=\RR$ and $p\bigl((G\ltimes_\rho V)_0\bigl)\leq 4$ for $k=\CC$;
  \item for $k$ non-archimedean, from proof of Proposition \ref{po:6}, $\mathcal{W}_{c_0}$ has $c_0^{1/2}(\log_{\abs{q}}c_0)$ disjoint property means that for any $\pi\in \widehat{(G\ltimes_\rho V)}_0$ and for any $g=k_1a\omega k_2$
  \begin{align*}
\bigl\lvert\langle \pi(g)\eta,\xi\rangle\bigl\rvert&\leq C_{\eta,\xi}\abs{\alpha_1(a)}^{-1/2}\cdot\log_{\abs{q}}(\abs{\alpha_1(a)}^{-1})\\
&\overset{\text{(1)}}{\leq} c(\epsilon)C_{\eta,\xi}\abs{\alpha_1(a)}^{-\frac{1}{2}+\epsilon},
\end{align*}
for a dense set of vectors $\eta,\,\xi$ of $\pi$ and for any $\epsilon>0$. Here $c(\epsilon)>0$ is a constant only dependent on $\epsilon$. $(1)$ holds since $\log_{\abs{q}}c_0 \leq c(\epsilon)c_0^{-\epsilon}$  if $c_0$ is small enough. Hence $p\bigl((G\ltimes_\rho V)_0\bigl)\leq 2$.
\end{itemize}
\end{sect15}
\begin{sect15}\label{for:69} Let $L=\RR$, $\CC$, or $\HH$ and let $G$ be the group defined by the symmetric or Hermitian form $Q$ on $L^3$: $\left( \begin{array}{ccc}
0 & 1 & 0\\
1 & 0 &0 \\
0 & 0 &-1 \\
\end{array} \right)$. Then $G$ is $SO_0(1,2)$ for $L=\RR$, $SU(1,2)$ for $L=\CC$, or $Sp(1,2)$ for $L=\HH$.  Then $G$ is a real rank one group. Let $\rho$ be the standard representation of $G$ on $L^{3}$. We can realize $D$ as $D=\{a=\text{diag}(a_1,a_1^{-1},1):a_i\in \RR^\ast\}$ and the simple root is $\alpha_1(a)=a_1$; the weights are  $\lambda(a)=\alpha_1$, $\varrho=-\alpha_1$ and $0$.  By Theorem \ref{th:11},
\begin{enumerate}
\item for $G=SO_0(1,2)$, $\gamma=-\frac{1}{3}$ and $-\frac{\gamma}{2}(\lambda-\varrho)=\frac{1}{3}\alpha_1$. Using  $\delta_B(a)=\abs{\alpha_1(a)}$ we have $p\bigl((G\ltimes_\rho V)_0\bigl)\leq 3$;
  \item for $G=SU(1,2)$, $\gamma=-(\frac{1}{3})^2$ and $-\frac{\gamma}{2}(\lambda-\varrho)=\frac{1}{9}\alpha_1$. Using  $\delta_B(a)=\abs{\alpha_1(a)}^4$ we have $p\bigl((G\ltimes_\rho V)_0\bigl)\leq 36$;
  \item for $G=Sp(1,2)$, $\gamma=-(\frac{1}{3})^2$ and $-\frac{\gamma}{2}(\lambda-\varrho)=\frac{1}{9}\alpha_1$. Using  $\delta_B(a)=\abs{\alpha_1(a)}^{10}$ we see that $p\bigl((G\ltimes_\rho V)_0\bigl)\leq 90$.
\end{enumerate}
\textbf{Improvement}: Let $E=\left( \begin{array}{ccc}
0 & 1 & 0\\
1 & 0 &0 \\
0 & 0 &-1 \\
\end{array} \right)$, $M=\left( \begin{array}{ccc}
1 & 0 & 0\\
0 & -1 &0 \\
0 & 0 &-1 \\
\end{array} \right)$ and $T=\left( \begin{array}{ccc}
-1 & -\frac{1}{2} & 0\\
-1 & \frac{1}{2} &0 \\
0 & 0 & 1 \\
\end{array} \right)$ then we see that
\begin{align*}
Q(x,y)=\bar{x}^\tau\cdot E\cdot y=\bar{x}^\tau\cdot T^\tau\cdot M\cdot T\cdot y\quad\text{ for any }x,\,y\in L^{3}.
\end{align*}
Let $G'$ be the group defined by the standard symmetric or Hermitian form $M$ and let $K'$ be the subgroup in $G'$ with the form: $K'=\left( \begin{array}{ccc}
x & 0 & 0\\
0 & a & b\\
0 & c & d\\
\end{array} \right)$ where $x\in Sp(1)$ and $\left( \begin{array}{cc}
a & b\\
c & d\\
\end{array} \right)\in Sp(2)$ for $L=\HH$; and $x\in U(1)$ and $\left( \begin{array}{cc}
a & b\\
c & d\\
\end{array} \right)\in U(2)$ with $x\cdot \text{det}\left( \begin{array}{cc}
a & b\\
c & d\\
\end{array} \right)=1$ for $L=\CC$. Then a maximal compact subgroup $K$ in $G$ has the form: $K=T^{-1}K'T=\left( \begin{array}{ccc}
\frac{1}{2}(x+a)& \frac{1}{4}(x-a) & -\frac{1}{2}b\\
x-a & \frac{1}{2}(x+a) & b\\
-c & \frac{1}{2}c & d\\
\end{array} \right)$.  Denote by $\mathcal{P}$ the map from $K'\rightarrow K$: $\mathcal{P}(a)=T^{-1}aT$ for any $a\in K'$. Set
\begin{align*}
\mathcal{W}_{c_0}=\{v=(v_1,v_2,v_3)\in L^{3}:\abs{v_1}\leq c_0\text{ and }\frac{3}{4}\leq\abs{v}\leq\frac{5}{4}\}.
\end{align*}
Then $\mathcal{W}_{c_0}\subset\mathcal{W}_{c_0}^{1}\bigcup \mathcal{W}_{c_0}^{2}$, where
\begin{align*}
\mathcal{W}_{c_0}^{1}&=\{v\in L^{3}:\abs{v_1}< \frac{4}{3}c_0,\,\,\frac{1}{2}<\abs{v}<\frac{3}{2} \text{ and }\abs{v_2}> \abs{v_3}\}\qquad\text{ and}\\
\mathcal{W}_{c_0}^{2}&=\{v\in L^{3}:\abs{v_1}< \frac{4}{3}c_0,\,\,\frac{1}{2}<\abs{v}<\frac{3}{2} \text{ and }\abs{v_2}<\frac{4}{3}\abs{v_3}\}.
\end{align*}
For $\mathcal{W}_{c_0}^{2}$ consider $f(a,z)=\left( \begin{array}{ccc}
a+z & 0 & 0\\
0 & a & z \\
0 & -\bar{z} & a \\
\end{array} \right)\in K'$, where $a\in\RR$, $z\in L$ with $a^2+\abs{z}^2=1$ and $z=-\bar{z}$. Then $f(a_1,z_1)^{-1}f(a_2,z_2)$ has the form
\begin{align*}
\left( \begin{array}{ccc}
a_1a_2+a_1z_2-a_2z_1-z_1z_2 & 0 & 0\\
0 & a_1a_2-z_1z_2 & a_1z_2-a_2z_1 \\
0 & a_1z_2-a_2z_1 & a_1a_2-z_1z_2 \\
\end{array} \right),
\end{align*}
and then $\mathcal{P}\bigl(f(a_1,z_1)^{-1}f(a_2,z_2)\bigl)\in K$ has the form
\begin{align*}
\left( \begin{array}{ccc}
z & \frac{1}{4}(a_1z_2-a_2z_1) & -\frac{1}{2}(a_1z_2-a_2z_1)\\
a_1z_2-a_2z_1 & z & a_1z_2-a_2z_1 \\
-a_1z_2+a_2z_1 & \frac{1}{2}(a_1z_2-a_2z_1) & a_1a_2-z_1z_2 \\
\end{array} \right)
\end{align*}
where $z=\frac{1}{2}(2a_1a_2+a_1z_2-a_2z_1-2z_1z_2)$. Set $X=\{x\in L:x=-\bar{x}\}$ and
\begin{align*}
A=\{(a,z)\in\RR\times X:\frac{1}{4}\leq a,\abs{z}\leq\frac{\sqrt{15}}{4}\text{ and }a^2+\abs{z}^2=1\}.
\end{align*}
Using (*) in Example \ref{for:21}, we also have: \begin{enumerate}
\item [(**)] if $(a_1,z_1),\,(a_2,z_2)\in A$ satisfying $\abs{z_1-z_2}\geq 6\cdot 15^2\cdot 64\cdot 8c_0$, then $\abs{a_1z_2-a_2z_1\big}\geq 6\cdot 16c_0$.
\end{enumerate}
For any $v\in \mathcal{W}_{c_0}^{2}$ and $(a_1,z_1),\,(a_2,z_2)\in A$ satisfying $\abs{z_1-z_2}\geq 6\cdot 15^2\cdot 64\cdot 8c_0$, let $z=\frac{1}{2}(2a_1a_2+a_1z_2-a_2z_1-2z_1z_2)$, then
\begin{align*}
&\bigl\lvert\pi_1\bigl(\mathcal{P}\bigl(f(a_1,z_1)^{-1}f(a_2,z_2)\bigl)v\bigl)\bigr\lvert\\
&=\Bigl\lvert zv_1+\frac{1}{4}(a_1z_2-a_2z_1)v_2 -\frac{1}{2}(a_1z_2-a_2z_1)v_3\Bigr\lvert\\
&\geq \frac{1}{4}\abs{a_1z_2-a_2z_1}\cdot\bigl\lvert v_2 -2v_3\bigr\lvert-\abs{v_1}\\
&\geq \frac{1}{4}\cdot\frac{2}{3}\abs{a_1z_2-a_2z_1}\cdot\abs{v_3}-\frac{4}{3}c_0\\
&>\frac{1}{6}\cdot 6\cdot 16c_0\cdot\frac{1}{4}-\frac{4}{3}c_0\geq \frac{5}{3}c_0
\end{align*}
where $\pi_1$ projects $v=(v_1,v_2,v_3)\in L^3$ to $v_1$. Hence arguments similar to Example \ref{for:21} show that $\mathcal{W}_{c_0}^{2}$ has $c_0^3$-disjoint property for $L=\HH$; and $\mathcal{W}_{c_0}^{2}$ has $c_0$-disjoint property for $L=\CC$.

Set  $L_1=\{x\in L:\abs{x}=1\}$. For $\mathcal{W}_{c_0}^{1}$ consider $h(a)=\left( \begin{array}{ccc}
1 & 0 & 0\\
0 & a & 0 \\
0 & 0 & \bar{a} \\
\end{array} \right)\in K'$, where $a\in L_1$. For any $a_1,\,a_2\in L_1$, $\mathcal{P}\bigl(h(a_1)^{-1}h(a_2)\bigl)\in K$ has the form
\begin{align*}
\left( \begin{array}{ccc}
\frac{1}{2}(1+a_1^{-1}a_2) & \frac{1}{4}(1-a_1^{-1}a_2) & 0\\
1-a_1^{-1}a_2& \frac{1}{2}(1+a_1^{-1}a_2) & 0 \\
0 & 0 & a_1a_2^{-1} \\
\end{array} \right).
\end{align*}
Notice that there exists $\delta>0$ such that for any $a_1,\,a_2\in L_1$ $\abs{1-a_1^{-1}a_2}\geq\delta\abs{a_1-a_2}$.

For any $v\in \mathcal{W}_{c_0}^{1}$ and $a_1,\,a_2\in L_1$ satisfying $\abs{a_1-a_2}\geq 48\delta^{-1}c_0$, we have
\begin{align*}
\bigl\lvert\pi_1\bigl(\mathcal{P}\bigl(h(a_1)^{-1}h(a_2)\bigl)v\bigl)\bigr\lvert&=\Bigl\lvert\frac{1}{4}\bigl(1-a_1^{-1}a_2\bigl)v_2+\frac{1}{2}\bigl
(1+a_1^{-1}a_2\bigl)v_1\Bigr\lvert\\
&\geq \frac{\delta}{4}\abs{a_1-a_2}\cdot\abs{v_2}-\frac{4}{3}c_0\\
&>\frac{\delta}{4}\cdot 48\delta^{-1}c_0\cdot\frac{1}{4}-\frac{4}{3}c_0\geq \frac{5}{3}c_0
\end{align*}
Hence an argument analogous to one for Example \ref{for:21} shows that $\mathcal{W}_{c_0}^{1}$ has $c_0^3$-disjoint property for $L=\HH$; and $\mathcal{W}_{c_0}^{1}$ has $c_0$-disjoint property for $L=\CC$. Then we see that $\mathcal{W}_{c_0}$ has $c_0^3$-disjoint property for $L=\HH$; and $\mathcal{W}_{c_0}$ has $c_0$-disjoint property for $L=\CC$. So,
\begin{itemize}
  \item for $G=SU(1,2)$ $\gamma=-1$ and
$-\frac{\gamma}{2}(\lambda-\varrho)=\alpha_1$.
Then $p\bigl((G\ltimes_\rho V)_0\bigl)\leq4$.
  \item for $G=Sp(1,2)$,
$\gamma=-3$ and
$-\frac{\gamma}{2}(\lambda-\varrho)=3\alpha_1$.
Then $p\bigl((G\ltimes_\rho V)_0\bigl)\leq\frac{10}{3}$.
\end{itemize}
For $G=SO_0(1,2)$, it is isomorphic to the projective group $PSL(2,\RR)$ and $\rho$ is isomorphic to the adjoint representation of $PSL(2,\RR)$ on $\mathfrak{g}=\mathfrak{sl}(2,\RR)$. From the relation
\begin{align*}
SL(2,\RR)\overset{j_1}{\longrightarrow} PSL(2,\RR)\overset{\rho}\longrightarrow \mathfrak{sl}(2,\RR),
\end{align*}
where $j_1$ is the natural projection. We see that $\rho\circ j_1$ is isomorphic to the adjoint representation of $SL(2,\RR)$. It is clear that arguments in Example \ref{for:67} also hold for $SO_0(1,2)$, which implies that
\begin{itemize}
  \item for $G=SO_0(1,2)$, $p\bigl((G\ltimes_\rho V)_0\bigl)\leq 2$.
 \end{itemize}

\end{sect15}

\begin{sect15}\label{ex:1} Let $L=\RR$, $\CC$, or $\HH$ and let $G_n$, $n\geq 2$ be the real Lie group defined by the symmetric or Hermitian form form $Q$ on $L^{n+1}$: $\left( \begin{array}{ccc}
0 & 1 & 0\\
1 & 0 &0 \\
0 & 0 &-I_{n-1} \\
\end{array} \right)$  where $I_{n-1}$ denotes the $(n-1)\times (n-1)$-identity matrix. Then $G_n$ is $SO_0(1,n)$ for $L=\RR$, $SU(1,n)$ for $L=\CC$, or $Sp(1,n)$ for $L=\HH$.  Let $\rho$ be the standard representation of $G$ on $L^{n+1}$. We can realize $D$ as $D=\{a=\text{diag}(a_1,a_1^{-1},1,\cdots,1):a_i\in \RR^\ast\}$ and the simple root is $\alpha_1(a)=a_1$; the weights are  $\lambda(a)=\alpha_1$, $\varrho=-\alpha_1$ and $0$. By Theorem \ref{th:11},
\begin{enumerate}
\item for $G=SO_0(1,n)$, $\gamma=-\frac{1}{3}$ and $-\frac{\gamma}{2}(\lambda-\varrho)=\frac{1}{3}\alpha_1$. Using  $\delta_B(a)=\abs{\alpha_1(a)}^{n-1}$ we have $p\bigl((G\ltimes_\rho V)_0\bigl)\leq 3(n-1)$ ;
  \item for $G=SU(1,n)$, $\gamma=-(\frac{1}{3})^2$ and $-\frac{\gamma}{2}(\lambda-\varrho)=\frac{1}{9}\alpha_1$. Using  $\delta_B(a)=\abs{\alpha_1(a)}^{2n}$ we have $p\bigl((G\ltimes_\rho V)_0\bigl)\leq 18n$;
  \item for $G=Sp(1,n)$, $\gamma=-(\frac{1}{3})^2$ and $-\frac{\gamma}{2}(\lambda-\varrho)=\frac{1}{9}\alpha_1$. Using  $\delta_B(a)=\abs{\alpha_1(a)}^{4n+2}$ we see that $p\bigl((G\ltimes_\rho V)_0\bigl)\leq 9(2+4n)$.
\end{enumerate}
\textbf{Improvement}: Let $\mathcal{P}_{i_1,i_2,i_3}:G_2\hookrightarrow G_n$, $1\leq i_1,i_2,i_3\leq n$ be the natural embedding which maps $A=(a_{i,j})\in G_2$ to $(b_{i,j})\in G$ where $b_{i_j,i_l}=a_{j,l}$  and the other diagonal elements are $1$ and off-diagonal elements are $0$.  Set
\begin{align*}
\mathcal{W}_{c_0}=\{v=(v_1,\cdots,v_{n+1})\in L^{n+1}:\abs{v_1}\leq c_0\text{ and }3/4\leq\abs{v}\leq5/4\}.
\end{align*}
Then $\mathcal{W}_{c_0}\subset\bigcup_{i=2}^{n+1} \mathcal{W}_{c_0}^{i}$, where
\begin{align*}
\mathcal{W}_{c_0}^{i}=\{v\in L^{n+1}:\abs{v_1}< \frac{4}{3}c_0,\,\frac{1}{2}<\abs{v}<\frac{3}{2}\text{ and }\abs{v_i}> \frac{1}{4\sqrt{n}}\}.
\end{align*}
For $i=2,\,3$, consider the embedding $\mathcal{P}_{1,2,3}$; or consider embedding $\mathcal{P}_{1,2,i}$ for $4\leq i\leq n+1$. Since for any $v=(v_1,\cdots,v_{n+1})\in L^{n+1}$,  $\pi_j\bigl(\mathcal{P}_i(g\cdot v)\bigl)=v_j$ for any $g\in A$,  any $j\neq 1,\,2,\,i$, where $\pi_j$ denotes the $j$-th coordinate projection,
an argument analogous to the one in Example \ref{for:69} shows that
\begin{itemize}
\item for $G=SO_0(1,n)$, each $\mathcal{W}_{c_0}^{i}$ has $c_0^{1/2}$-disjoint property, then $\gamma=-\frac{1}{2}$ and
$-\frac{\gamma}{2}(\lambda-\varrho)=\frac{1}{2}\alpha_1$.
Then $p\bigl((G\ltimes_\rho V)_0\bigl)\leq 2(n-1)$.
  \item for $G=SU(1,n)$, each $\mathcal{W}_{c_0}^{i}$ has $c_0$-disjoint property, then $\gamma=-1$ and
$-\frac{\gamma}{2}(\lambda-\varrho)=\alpha_1$.
Then $p\bigl((G\ltimes_\rho V)_0\bigl)\leq 2n$.
  \item for $G=Sp(1,n)$,
$\gamma=-3$ and
$-\frac{\gamma}{2}(\lambda-\varrho)=3\alpha_1$.
Then $p\bigl((G\ltimes_\rho V)_0\bigl)\leq\frac{2+4n}{3}$.
\end{itemize}
\end{sect15}
Recall notations in Section \ref{sec:2}. We shall make use of a general strategy of Howe \cite{howe} in the following example. Let $H$ be an almost $k$-algebraic subgroup of $G$, such that $B(H) = B\bigcap H$, $D(H) = D\bigcap H$ and $K(H) = K\bigcap H$ are a minimal parabolic subgroup, a maximal split torus, and a good maximal compact subgroup of $H$ respectively,  and the corresponding positive Weyl chamber $D^+(H)$ in $D(H)$ contains $D^+$. Suppose $H=H_1\times \cdots \times H_m$, a product of reductive subgroups. Then in obvious
notations one has $D(H)=D(H_1)\times \cdots \times D(H_m)$ and $K(H)=K(H_1)\times \cdots \times K(H_m)$.
\begin{lemma}\label{le:8}Let $\pi$ be a unitary representation of $G$.
Suppose that $\pi_{H_i}$ is $(K(H_i),\Psi_i)$ bounded on $H_i$, $1\leq i\leq m$. Then $\pi$ itself is $(K, r\prod_{i=1}^m \Psi_i)$ bounded on $G$ where $r=\max_{\omega\in F}[K:K\bigcap \omega K\omega^{-1}]$ (note that $F=\{e\}$ for $k$ archimedean).
\end{lemma}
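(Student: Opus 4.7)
My plan is to combine the Cartan decomposition on $G$ with the factorization of $D^+$ through $H=H_1\times\cdots\times H_m$, and to transfer the individual $H_i$-bounds to a product bound on $H$ via the tensor product structure of $H$-irreducibles. This is the strategy of Howe in \cite[Proposition 6.3]{howe}.

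First, by Proposition \ref{th:2} (with $S=G$) it suffices to verify the conclusion for every $\psi\in\mathrm{supp}\,\pi$. Weak containment passes under restriction to each $H_i$, and a second application of Proposition \ref{th:2} (with $S=H_i$) shows that the hypothesis that $\pi|_{H_i}$ is $(K(H_i),\Psi_i)$-bounded passes to each such $\psi|_{H_i}$. So I may assume $\pi$ is irreducible on $G$. Next, for any $g\in G$ write $g=k_1 a\omega k_2$ with $k_1,k_2\in K$, $a\in D^+$, $\omega\in F$ via Lemma \ref{le:5}, and set $v''=\pi(\omega k_2)v$, $w'=\pi(k_1^{-1})w$ for $K$-finite vectors $v,w$, so that $\langle \pi(g)v,w\rangle=\langle \pi(a)v'',w'\rangle$. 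A coset decomposition $\omega^{-1}K\omega=\bigsqcup_j k_j(\omega^{-1}K\omega\cap K)$ of index $r_\omega=[K:K\cap\omega K\omega^{-1}]$ shows that $\dim\langle Kv''\rangle\leq r_\omega\dim\langle Kv\rangle\leq r\dim\langle Kv\rangle$ while $\dim\langle Kw'\rangle=\dim\langle Kw\rangle$; this is where the factor $r$ in the conclusion arises.

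The main step is to establish, for $K$-finite $u_1,u_2$ and for $a=a_1\cdots a_m$ with $a_i\in D^+(H_i)$ (using $D^+\subseteq D^+(H)=D^+(H_1)\cdots D^+(H_m)$ and commutativity of the factors), the estimate
\[
|\langle \pi(a)u_1,u_2\rangle|\leq \prod_{i=1}^m\Psi_i(a_i)\cdot (\dim\langle Ku_1\rangle)^{1/2}(\dim\langle Ku_2\rangle)^{1/2}.
\]
The idea is to view $\pi|_H$ as a unitary representation of the type-$\mathrm{I}$ product group $H$ and to decompose it as a direct integral $\int^{\oplus} \sigma\,d\mu(\sigma)$ of $H$-irreducibles. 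Each such $\sigma$ factorizes as a tensor product $\sigma=\sigma_1\otimes\cdots\otimes\sigma_m$ with $\sigma_i\in\widehat{H_i}$. Since $\sigma|_{H_i}$ is a multiple of $\sigma_i$ and is weakly contained in $\pi|_{H_i}$, Proposition \ref{th:2} yields that $\sigma_i$ is $(K(H_i),\Psi_i)$-bounded. On pure tensors, matrix coefficients factor as
\[
|\langle \sigma(a)(v_1\otimes\cdots\otimes v_m),\,w_1\otimes\cdots\otimes w_m\rangle|=\prod_i|\langle \sigma_i(a_i)v_i,w_i\rangle|,
\]
and applying the $(K(H_i),\Psi_i)$-bound factor by factor gives the required product estimate, using $\dim\langle K(H)(v_1\otimes\cdots\otimes v_m)\rangle=\prod_i\dim\langle K(H_i)v_i\rangle$.

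To extend this to general $K$-finite vectors I will exploit that $K(H)=K(H_1)\times\cdots\times K(H_m)\subseteq K$, so every $K$-finite vector $u$ is $K(H)$-finite with $\dim\langle K(H)u\rangle\leq\dim\langle Ku\rangle$. Decomposing $u_1,u_2$ along their $K(H)$-isotypic components, which match the tensor-product types appearing in the direct integral decomposition of $\pi|_H$, reduces the general estimate to the pure-tensor case just handled, inside each isotypic summand. Combining this with the first paragraph's reduction and absorbing the factor $r^{1/2}\leq r$ into the constant yields the claimed bound. The hard part will be this final pasting step: one must realize $\pi|_H$ concretely as a measurable field of tensor-product representations, identify the $K(H_i)$-finite data of $v,w$ with the corresponding tensor factors, and verify that the individual dimension contributions combine into an overall bound controlled by $(\dim\langle Kv\rangle\,\dim\langle Kw\rangle)^{1/2}$ with no multiplicative loss in the direct integral.
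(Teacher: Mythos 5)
Your overall architecture is the right one and does follow Howe's Proposition~6.3: reduce to irreducible $\pi$ via Proposition~\ref{th:2}, split off the $K$-part of the Cartan decomposition and isolate the $\omega\in F$ contribution (which correctly yields the index $r$, even better than needed since a factor of $r_\omega^{1/2}$ suffices), and then decompose $\pi|_H$ into a direct integral of tensor-product irreducibles $\sigma_1\otimes\cdots\otimes\sigma_m$ with each $\sigma_i$ weakly contained in $\pi|_{H_i}$. The paper does not give its own proof, only the citation to Howe and to Oh, so your task is to reconstruct that argument.

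However, the step you flag as ``the hard part'' is a genuine gap, and the route you propose for it will not close. You aim to handle $K$-finite $u_1,u_2$ by ``decomposing along $K(H)$-isotypic components'' and then ``reducing to the pure-tensor case''; but a vector in a $\tau_1\otimes\cdots\otimes\tau_m$-isotypic subspace of $\mathcal{H}_{\sigma_1}\otimes\cdots\otimes\mathcal{H}_{\sigma_m}$ is in general \emph{not} a pure tensor. Writing it as a sum of pure tensors $\sum_j \alpha_j\, v^j_1\otimes\cdots\otimes v^j_m$ and summing the pure-tensor bounds introduces an extra factor of the Schmidt rank (roughly $\sum_j\lvert\alpha_j\rvert$, which for a unit vector can be as large as the square root of the multiplicity of the relevant $K(H)$-type), and this factor is not controlled by $\dim\langle Ku_1\rangle^{1/2}\dim\langle Ku_2\rangle^{1/2}$. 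So ``no multiplicative loss'' cannot be obtained this way.

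The clean route, which is what Howe's Proposition~6.3 together with Theorem~\ref{th:10} (Cowling--Haagerup--Howe) actually encode, is to first establish the bound for $K(H)$-\emph{fixed} vectors, where the tensor structure is transparent. If $u_1,u_2$ are $K(H)$-fixed, then $u_1,u_2\in\bigotimes_i\mathcal{H}_{\sigma_i}^{K(H_i)}$ and
\begin{align*}
\langle\sigma(a)u_1,u_2\rangle=\bigl\langle\,(T_1\otimes\cdots\otimes T_m)\,u_1,\,u_2\,\bigr\rangle,
\qquad
T_i=P_{K(H_i)}\,\sigma_i(a_i)\,P_{K(H_i)},
\end{align*}
where $P_{K(H_i)}$ is the projection onto $K(H_i)$-fixed vectors; each $T_i$ has operator norm at most $\Psi_i(a_i)$ by the $(K(H_i),\Psi_i)$-bound applied to fixed vectors, and operator norms multiply under tensor products, so $\lvert\langle\sigma(a)u_1,u_2\rangle\rvert\le\prod_i\Psi_i(a_i)$ with no dimension factor and no Schmidt-rank loss. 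Since the essential supremum of operator norms bounds the direct integral, the same holds for $\pi|_H$ on $K$-fixed vectors (as $K\supset K(H)$). One then invokes the Cowling--Haagerup--Howe ``dimension-boosting'' argument to pass from the $K$-fixed bound to the $K$-finite bound with the factor $\dim\langle Kv\rangle^{1/2}\dim\langle Kw\rangle^{1/2}$; that machinery — not a concrete measurable-field-and-pure-tensor identification — is what supplies the missing step. Your write-up needs to replace the isotypic/pure-tensor pasting with this $K$-fixed reduction plus the CHH boosting lemma.
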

For $k$ archimedean this result follows directly from Theorem \ref{th:10}, Proposition $6.3$ of \cite{howe} and the considerations of \cite{howe}, $\S 8$; for $k$ non-archimedean a detailed proof is given in \cite{oh}.
\begin{sect15}\label{for:37}Let $G=Sp(2n,k)$ over a local field $k$, which is defined by the
bi-linear form: $\left( \begin{array}{ccc}
0 & I_n \\
-I_n & 0 \\
\end{array} \right)$. Let $\rho$ be the standard representation of $G$ on $k^{2n}$. Take the maximal compact subgroup $K$ to be the intersections of those of $SL_{2n}(k)$ with $G$. We can realize $D$ as
 \begin{align*}
 D=\{a=\text{diag}(a_1,\cdots, a_n,a_1^{-1},\cdots, a_n^{-1}):a_i\in k^\ast\}.
 \end{align*}
The simple roots are $\alpha_i(a)=a_ia_{i+1}^{-1}$, $1\leq i\leq n-1$ and $\alpha_n(a)=a_n^{2}$; the weights are $\pm\lambda_i$ where $\lambda_i(a)=a_i$, $1\leq i\leq n$. Then $\lambda(a)=\lambda_1(a)=a_1$ and $\varrho=-\lambda_1(a)=a_1^{-1}$. Then $\gamma=-(\frac{1}{3})^{2n-2}$ and $-\frac{\gamma}{2}(\lambda-\varrho)=\frac{1}{3^{2n-2}}\lambda_1$. Using 
\begin{align*}
\delta_B(a)=\prod_{i=1}^{n}\abs{a_i}^{4(n+1-i)}\text{ for }k=\CC\quad\text{ or }\quad\prod_{i=1}^{n}\abs{a_i}^{2(n+1-i)}\text{ for }k\neq\CC, 
\end{align*}
by Theorem \ref{th:11} $p\bigl((G\ltimes_\rho V)_0\bigl)\leq 2\cdot 3^{2n-2}n(n+1)$ for $k=\CC$ and $p\bigl((G\ltimes_\rho V)_0\bigl)\leq 3^{2n-2}n(n+1)$ for $k\neq\CC$. \\ \textbf{Improvement}: for any $a\in D^+$, denote by $c_i=\abs{\lambda_i(a^{-1})}$, $1\leq i\leq n$. Set
\begin{align*}
\mathcal{W}_{c_1,\cdots,c_n}=\bigl\{v=(v_1,\cdots,v_{2n})\in k^{2n}:\abs{v_i}\leq c_i,\,1\leq i\leq n,\text{ and }\frac{3}{4}\leq\abs{v}\leq\frac{5}{4}\bigl\}.
\end{align*}
Then $\mathcal{W}_{c_1,\cdots,c_n}\subset\bigcup_{i=1}^{n}\mathcal{W}_{c_1,\cdots,c_n}^{i}$, where
\begin{align*}
\mathcal{W}_{c_1,\cdots,c_n}^{i}=\bigl\{v\in k^{2n}:\abs{v_l}<\frac{4}{3} c_l,\,1\leq l\leq n,\text{ and }\frac{1}{2}<\abs{v}<\frac{3}{2},\,\abs{v_{n+i}}> \frac{1}{4\sqrt{n}}\bigl\}.
\end{align*}
Fix $i$, $1\leq i\leq n$. Suppose $k$ is archimedean. Set $X=\{(a,z)\in \RR\times k:a^2+\abs{z}^2=1\}$. For $1\leq j\leq n$ define $h_j(a,z)=(a_{l,m})\in K$ and $f_i(a,z)=(b_{l,m})\in K$ as follows: $a_{i,j+n}=a_{j,i+n}=z$, $a_{j+n,i}=a_{i+n,j}=-\overline{z}$, $a_{i,i}=a_{j+n,j+n}=a_{j,j}=a_{i+n,i+n}=a$; and the other diagonal elements are $1$ and off-diagonal elements are $0$; $b_{i,i+n}=z$, $b_{i+n,i}=-\overline{z}$, $b_{i,i}=b_{i+n,i+n}=a$; and the other diagonal elements are $1$ and off-diagonal elements are $0$.

For $\mathcal{W}_{c_1,\cdots,c_n}^{j}$, $j\neq i$ consider $h_j$ and for $\mathcal{W}_{c_1,\cdots,c_n}^{i}$, consider $f_i$. Then for any $v\in \mathcal{W}_{c_1,\cdots,c_n}^{j}$, $j\neq i$ and any $(a_m,z_m)\in X$, $m=1,\,2$ we have
\begin{align}\label{for:95}
\bigl\lvert\pi_i\big(h_j(a_1,z_1)^{-1}h_j(a_2,z_2)v\big)\bigl\rvert&=\bigl\lvert(a_1z_2-a_2z_1)v_j+(a_1a_2+z_1\overline{z_2})v_i\bigl\rvert;
\end{align}
where $\pi_i$ projects $v=(v_1,\cdots,v_{2n})\in k^{2n}$ to $v_i$.

Also, for any $v\in \mathcal{W}_{c_1,\cdots,c_n}^{i}$ and any $(a_m,z_m)\in X$, $m=1,\,2$ we have
\begin{align}\label{for:98}
\bigl\lvert\pi_i\big(f_i(a_1,z_1)^{-1}f_i(a_2,z_2)v\big)\bigl\rvert&=\bigl\lvert(a_1z_2-a_2z_1)v_j+(a_1a_2+z_1\overline{z_2})v_i\bigl\rvert.
\end{align}
Now suppose $k$ is non-archimedean. For $\mathcal{W}_{c_1,\cdots,c_n}^{j}$, $1\leq j\leq n$ consider $h_j(z)=(a_{l,m})$ where $z\in \mathcal{O}$: $a_{i,j+n}=a_{j,i+n}=z$ and the diagonal elements are $1$ and the other off-diagonal elements are $0$. Then for any $v\in \mathcal{W}_{c_1,\cdots,c_n}^{i}$ and any $z_1,\,z_2\in \mathcal{O}$ we have
\begin{align*}
\bigl\lvert\pi_i\big(h(z_1)^{-1}h(z_2)v\big)\bigl\rvert&=\bigl\lvert v_i+(z_2-z_1)v_j\bigl\rvert.
\end{align*}
An argument similar to the one in Example \ref{for:21} shows that $\mathcal{W}_{c_1,\cdots,c_n}^{j}$ has $c_i^\delta$-disjoint property where $\delta=1$ if $k\neq\CC$ and $\delta=2$ if $k=\CC$. Also, $\mathcal{W}_{c_1,\cdots,c_n}$ has $c_i^\delta$-disjoint property, $1\leq i\leq n$. Then follow the proof line in Proposition \ref{po:6}, for any $\pi\in \widehat{(G\ltimes_\rho V)}_0$ and for any  $g=k_1a\omega k_2$, one has
  \begin{align}\label{for:100}
\bigl\lvert\langle \pi(g)\eta,\xi\rangle\bigl\rvert\leq C_{\eta,\xi}\abs{\lambda_i(a)\cdot(-\lambda_i)(a^{-1})}^{- \delta/2}=C_{\eta,\xi}\abs{\lambda_i(a)}^{-\delta}
\end{align}
for a dense set of vectors $\eta,\,\xi$ of $\pi$. Let $H_{2\lambda_i}$ be the subgroup of $G$ with Lie algebra generated by $\{\mathfrak{g}_{2\lambda_i},\mathfrak{g}_{-2\lambda_i}\}$, $1\leq i\leq n$. Then $H_{2\lambda_i}$ is isomorphic to $SL(2,k)$ and $\delta_{B(H_{2\lambda_i})}(a)=\abs{\lambda_i(a)}^{2\delta}$.
Therefore $\pi$ is strongly $L^{2+\epsilon}$ on $H_{2\lambda_i}$ and Theorem \ref{th:10} implies that $\pi$ is $(K(H_{2\lambda_i}),\Xi_{H_{2\lambda_i}})$ bounded on $H_{2\lambda_i}$, $1\leq i\leq n$. Then it follows from Lemma \ref{le:8} that any $\pi\in \widehat{(G\ltimes_\rho V)}_0$ is $(K,\,r\prod_{i=1}^n\Xi_{H_{2\lambda_i}})$ bounded on $G$ and from Proposition \ref{po:2}, it is clear that for any $\epsilon>0$, there exist constants $c_1$ and $c_2(\epsilon)$ such that for all $g=k_1a\omega k_2$
\begin{align*}
c_1\prod_{i=1}^n\abs{\lambda_i(a)}^{-\delta}\leq\prod_{i=1}^n\Xi_{H_{2\lambda_i}}(g)\leq c_2(\epsilon)\prod_{i=1}^n\abs{\lambda_i(a)}^{-\delta+\epsilon}.
\end{align*}
Hence we also get $p\bigl((G\ltimes_\rho V)_0\bigl)\leq 2n$.
\end{sect15}

\begin{sect15} Let $L=\CC$ or $\HH$ and let $G$ be the real Lie group defined by the symmetric or Hermitian form form $Q$ on $L^{n+m}$: $\left( \begin{array}{ccc}
0 & I_n & 0\\
I_n & 0 &0 \\
0 & 0 &-I_{m-n} \\
\end{array} \right)$ where $m\geq n\geq 2$. Then $G$ is $SU(n,m)$ for $L=\CC$ or $Sp(n,m)$ for $L=\HH$.  Let $\rho$ be the standard representation of $G$ on $L^{n+m}$. We assume notations in Example \ref{for:37} if there is no confusion. We can realize $D$ as
\begin{align*}
D=\{a=\text{diag}(a_1,\cdots, a_n,a_1^{-1},\cdots, a_n^{-1},1,\cdots,1):a_i\in \RR^\ast\}
\end{align*}
and simple roots are $\alpha_i(a)=a_ia_{i+1}^{-1}, 1\leq i\leq n-1$; and $\alpha_n(a)=a_n$  if $m>n$ or $\alpha_n(a)=\frac{1}{2}a_n$ if $m=n$. The weights are $\pm\lambda_i$ where $\lambda_i(a)=a_i$, $1\leq i\leq n$. By the definition of $\delta_B$, we have
\begin{align*}
\delta_B(a)=\prod_{i=1}^{n}\abs{a_i}^{2(n+m-2i)+2}\text{ for }k=\CC\quad\text{or}\quad\prod_{i=1}^{n}\abs{a_i}^{4n+4m+6-8i}\text{ for }k=\HH.
\end{align*}
%\begin{align*}
%\delta_B(a)=\left\{\begin{aligned} &\bigl(\prod_{i=1}^{n-1}\abs{\alpha_i(a)}^{(4n-2i)i}\bigl)\abs{\alpha_n(a)}^{(n-1)(n+1)}&\quad & \text{ for }L=\CC\\
%&\bigl(\prod_{i=1}^{n-1}\abs{\alpha_i(a)}^{(8n-4i+2)i}\bigl)\abs{\alpha_n(a)}^{(2n-1)(n+1)}&\quad & \text{ for }L=\HH.
% \end{aligned}
 %\right.
%\end{align*}
We use the same notations $\mathcal{W}_{c_1,\cdots,c_n}$ and $\mathcal{W}_{c_1,\cdots,c_n}^j$ as in Example \ref{for:37} by changing $k$ to $L$ and extend domain of $j$ to $1\leq j\leq m$. 

Fix $i$, $1\leq i\leq n$. For $\mathcal{W}_{c_1,\cdots,c_n}^{j}$ with $i\neq j\leq n$, consider $h_j(a,z)=(a_{l,m})$ where $(a,z)\in X$: $a_{i,j+n}=a_{i+n,j}=z$, $a_{j+n,i}=a_{j+n,i}=-\overline{z}$, $a_{i,i}=a_{j+n,j+n}=a_{j,j}=a_{i+n,i+n}=a$; and the other diagonal elements are $1$ and off-diagonal elements are $0$.

For $\mathcal{W}_{c_1,\cdots,c_n}^{i}$, consider $f_i(a,z)=(a_{l,m})$ where $(a,z)\in X$ and $\overline{z}=-z$:  $a_{i,i+n}=a_{i+n,i}=z$ and $a_{i,i}=a_{i+n,i+n}=a$; and the other diagonal elements are $1$ and off-diagonal elements are $0$. Then \eqref{for:95} and \eqref{for:98}
also hold. Similarly, for $1\leq j\leq n$ $\mathcal{W}_{c_1,\cdots,c_n}^j$ has $c_i^\delta$-disjoint property where $\delta=1$ for $k=\CC$ and $\delta=3$ for $k=\HH$, which implies that  \eqref{for:100} holds. 

For $\mathcal{W}_{c_1,\cdots,c_n}^{j}$ with $n<j\leq m$, consider the embedding $\mathcal{P}_{i,i+n,2n+j}:G_1\hookrightarrow G$ where $G_1$ is isomorphic to $SU(1,2)$ for $L=\CC$ or isomorphic to $Sp(1,2)$ for $L=\HH$ (we use the same notation $\mathcal{P}_{i,i+n,2n+j}$ as in Example \ref{ex:1}). An argument analogous to the one in Example \ref{ex:1} shows that $\mathcal{W}_{c_1,\cdots,c_n}^{j}$ with $n<j\leq m$ has $c_i^\delta$-disjoint property. Hence we deduce that $\mathcal{W}_{c_1,\cdots,c_n}$ has $c_i^\delta$-disjoint property for $1\leq i\leq m$.

Note that $\delta_{B(H_{2\lambda_i})}(a)=\abs{\lambda_i(a)}^{2}$ for $L=\CC$ and $\delta_{B(H_{2\lambda_i})}(a)=\abs{\lambda_i(a)}^{6}$ for $L=\HH$.
Therefore $\pi$ is strongly $L^{2+\epsilon}$ on $H_{2\lambda_i}$ and Theorem \ref{th:10} implies that $\pi$ is $(K(H_{2\lambda_i}),\Xi_{H_{2\lambda_i}})$ bounded on $H_{2\lambda_i}$, $1\leq i\leq n$. Then for $g=k_1ak_2$
\begin{align*}
c_1\prod_{i=1}^n\abs{\lambda_i(a)}^{-\delta}\leq\prod_{i=1}^n\Xi_{H_{2\lambda_i}}(g)&\leq c_2(\epsilon)\prod_{i=1}^n\abs{\lambda_i(a)}^{-\delta+\epsilon} .
\end{align*}
Hence we get $p\bigl((G\ltimes_\rho V)_0\bigl)\leq 2(n+m-1)$ for $L=\CC$ and $p\bigl((G\ltimes_\rho V)_0\bigl)\leq \frac{4m+4n-2}{3}$ for $L=\HH$.
\end{sect15}

\begin{sect15}Suppose rank$_kG=1$ and $\rho$ is irreducible and excellent on $V$. By Theorem \ref{th:11}, $\gamma=-1/3^{(\sharp\Phi_1-1)}$;  if $\dim V_\lambda=1$, $\gamma=-1/3^{(\sharp\Phi_1-2)}$. Set $\Lambda(\Phi_1)=-\frac{\gamma}{2}(\lambda-\varrho)$. We deduce from \eqref{for:119} that
\begin{align*}
p\bigl((G\ltimes_\rho V)_0\bigl)\leq p=\frac{\text{the coefficient of $\alpha$ in $\delta_B$}}{\text{the coefficient of $\alpha$ in $\Lambda(\Phi_1)$}}
\end{align*}
where $\alpha$ is the simple root of $G$.\\
\textbf{Improvement}: Choose an ordering of the weights, then we have a decomposition of $V$:
\begin{align*}
V=V_{\lambda=\phi_1}\oplus V_{\phi_2}\oplus\cdots\oplus V_{\varrho=\phi_n},
\end{align*}
and the wights are ordered in the way $\phi_1<\phi_2<\cdots<\phi_n$.

Let $\beta=\phi_{(\frac{\sharp\Phi_1}{2})}$ if $\sharp\Phi_1$ is even and $\beta=\phi_{\frac{\sharp\Phi_1-1}{2}}$ if $\sharp\Phi_1$ is odd. Then $\beta$ is the biggest positive weight and $-\beta$ is the smallest negative weight. Denote by $V_+$ the subspace of $V$ spanned by the positive weights and by $\pi_+$ the projection from $V$ to $V_+$. Let $\mathcal{W}_{c_0}'=\{v\in V:\abs{\pi_+v}\leq c_0\text{ and }\frac{3}{4}\leq\abs{v}\leq\frac{5}{4}\}$.

Then follow line by line the proof in Section \ref{sec:11} and note that we begin from $\beta$ instead of $\lambda$,  we see that $\mathcal{W}_{c_0}'$ has $c_0^{\gamma'}$ disjoint property, where $\gamma'=-(\frac{1}{3})^{(\frac{\sharp\Phi_1}{2})}$ if $\sharp\Phi_1$ is even and $\gamma'=-(\frac{1}{3})^{(\frac{\sharp\Phi_1+1}{2})}$ if $\sharp\Phi_1$ is odd. Moreover, if $\dim V_{\lambda}=1$, $\gamma'=-(\frac{1}{3})^{(\frac{\sharp\Phi_1-2}{2})}$ if $\sharp\Phi_1$ is even and $\gamma'=-(\frac{1}{3})^{(\frac{\sharp\Phi_1-1}{2})}$ if $\sharp\Phi_1$ is odd. Set $\Lambda'(\Phi_1)=-\frac{\gamma'}{2}(\beta-(-\beta))$. By \eqref{for:119} of Corollary \ref{cor:4}, using $\beta$ and $-\beta$ instead of $\lambda$ and $\varrho$ we have
\begin{align}\label{for:53}
p\bigl((G\ltimes_\rho V)_0\bigl)\leq p'=\frac{\text{the coefficient of $\alpha$ in $\delta_B$}}{\text{the coefficient of $\alpha$ in $\Lambda'(\Phi_1)$}}
\end{align}
Note that $\lambda=(\frac{\sharp\Phi_1+1}{2})\alpha$, $\varrho=-(\frac{\sharp\Phi_1+1}{2})\alpha$ and $\beta=\frac{1}{2}\alpha$ if $\sharp\Phi_1$ is even; or $\lambda=(\frac{\sharp\Phi_1-1}{2})\alpha$, $\varrho=-(\frac{\sharp\Phi_1-1}{2})\alpha$ and $\beta=\alpha$ if $\sharp\Phi_1$ is odd, then $\frac{p'}{p}=\frac{\sharp\Phi_1+1}{3^{(\frac{\sharp\Phi_1-2}{2})}}$ if $\sharp\Phi_1$ is even and  $\frac{p'}{p}=\frac{\sharp\Phi_1-1}{2\cdot 3^{(\frac{\sharp\Phi_1-3}{2})}}$ if $\sharp\Phi_1$ is odd. Then we see that $p'< p$ if $\sharp\Phi_1\neq 2,\,3,\,4$. Then \eqref{for:53} is a better estimate than $p$ given from Theorem \ref{th:11}.
\end{sect15}
\begin{remark}\label{re:3}
Vogan's classification of unitary duals for $GL_n(L)$, yields that for $G=SL_n(L)$, $L=\RR$, $\CC$ or $\HH$, $p(G_0)$ is $2(n-1)$, $2(n-1)$ and $2(n-1)$ respectively and for $L$ a non-archimedean local field, $p(G_0)$ is $2(n-1)$ (see \cite{oh}). In Example \ref{for:21}, we get the same result for $SL(3,L)$. For $Sp(2n,\CC)$, it follows from Howe's result in \cite{howe} that $p(Sp(2n,\CC)_0)=4n$. For $G=Sp(m,n)$, $m\geq n\geq 2$ or $G=SU(m,n)$, $m\geq n\geq 2$ the
exact number $p(G_0)$ is $2(m+n)-1$ or $2(m+n-1)$ respectively obtained by Li \cite{Li}. For $G=Sp(n,1)$, $n\geq 2$ the desired exponent $p(G_0)=2n+1$ follows from classification
\cite{Ba} and \cite{Ko}. For real Lie groups $SO_0(n,1)$ and $SU(n,1)$ $n\geq 2$, and for $SL(2,k)$ over a local field $k$, $p(G_0)$ are $\infty$ since they don't have property $(T)$. We list an upper bound $p$ of the number $p\bigl((G\ltimes_\rho V)_0\bigl)$ for some examples in Section \ref{sec:19} in the following table.\\
\begin{tabular}{cc|c|c|c|c|c|l}
  % after \\: \hline or \cline{col1-col2} \cline{col3-col4} ...
  $G$:& $SL(2,k)$ & $SU(1,n)$ & $SO_0(n,1)$ & $Sp(1,n)$ & $Sp(2n,\CC)$ & $Sp(n,m)$\\
   $p$: & $2\delta(k)$ & $2n$ & $2(n-1)$ & $\frac{2+4n}{3}$ & 2n & $\frac{4n+4m-2}{3}$\\
   \end{tabular}\\
where $m$ is the smallest integer such that $m\geq \frac{p}{2}$ and $\delta(k)=2$ for $k=\CC$ or $\delta(k)=1$ otherwise.

Let $m(G_0)$ be the smallest integer such that $m(G_0)\geq \frac{1}{2}\cdot p\bigl((G\ltimes_\rho V)_0\bigl)$. From \eqref{for:41} of Proposition \ref{po:2} we see that $\frac{\xi(\Phi)}{m(G_0)}$ determines the decay rate of $\Xi_G^{1/m(G_0)}$. For above examples the condition $m(G_0)<\frac{1}{2}\cdot p(G_0)$ is satisfied. Then as an obvious consequence of \eqref{for:42} of Proposition \ref{po:2}, Theorem \ref{th:10} and Lemma \ref{le:7}, any $\pi\in \widehat{(G\ltimes_\rho V)}_0$ is $\bigl(K,\,\Xi_G^{1/m(G_0)}\bigl)$ bounded on $G$; while there exists $\pi_1\in \widehat{G_0}$ such that $\pi_1$ is not $\bigl(K,\,\Xi_G^{1/m}\bigl)$ bounded on $G$ for any $m<\frac{1}{2}\cdot p(G_0)$. Furthermore, if any $\pi\in \widehat{G_0}$ is $\bigl(K,\,\Psi\bigl)$ bounded on $G$ then Proposition \ref{th:2} means that any $\pi\in \widehat{(G\ltimes_\rho V)}_0$ is $\bigl(K,\,\min\{\Psi,\,\Xi_G^{1/m(G_0)}\}\bigl)$ bounded on $G$, which is a much sharper pointwise bound than $\Psi$. Hence we conclude that the minimal rate of decay of $K$-matrix coefficients on $G$ (from the consideration of $\rho(K)$-orbits) is much sharper than the one obtained from the semisimple part itself by using Kazhdan's property $(T)$.

Oh showed in \cite{oh} that for $G=Sp(2n,\CC)$ the best possible decay rate
of $K$-finite matrix coefficients is determined by $\sum_{i=1}^n \lambda_i$ by using Howe's trick and the one obtained by using $p\bigl((G\ltimes_\rho V)_0\bigl)\leq 2n$ is $\frac{1}{m}\cdot\xi(\Phi)=\sum_{i=1}^n\frac{2(n+1-i)}{n}\lambda_i$. Then by above arguments a better decay rate is
\begin{align*}
\max\bigl\{\sum_{i=1}^n \lambda_i,\sum_{i=1}^n\frac{2(n+1-i)}{n}\lambda_i\bigl\}=\sum_{i=1}^{[\frac{n}{2}]+1}\frac{2(n+1-i)}{n}\lambda_i+\sum_{i=[\frac{n}{2}]+2}^{n}\lambda_i.
\end{align*}
Hence we see that $\sum_{i=1}^n 2\lambda_i$, the one provided by the combination of $\rho(K)$-orbit-deformation method  and Howe's trick is the best compared to the above results  (see Example \ref{for:37}).
\end{remark}

%Consider the $n$-fold tensor product
%$\mathcal{H}\otimes\mathcal{H}\cdots\otimes\mathcal{H}=\mathcal{H}^{\otimes m}$ where $m\geq \frac{p}{2}$.
%let $\pi^{\otimes m}$ denote the action of $H\ltimes_\rho V$ on $\mathcal{H}^{\otimes m}$ via the $m$-fold tensor product
%of $\pi$. Let $\xi_n'=\underbrace{\xi_n\otimes\xi_n\cdots\otimes\xi_n}_{m \text{ copies}}$ then
%\begin{align*}
   %\phi_{\xi_n',\xi_n'}'(g)=\langle\pi^{\otimes n}(g)\xi_n',\xi_n'\rangle'=\langle\pi(g)\xi_n,\xi_n\rangle^m\qquad\text{for }\forall g\in H,
%\end{align*}
%where $\langle\cdot\rangle'$ indicates the inner product on $\mathcal{H}^{\otimes m}$. It follows from H\"{o}lder's inequality that
%$\phi_{\xi_n',\xi_n'}'\in L^{2+\epsilon}(H)$ for any $\epsilon>0$. Denote $\mathcal{H}'$ \foot{cyclic representation not irreducible }the closed subspaces spanned by
%$\{\pi^{\otimes m}(g)\xi_n':g\in H\}$.
%Then by Theorem \ref{th:10} $\pi^{\otimes m}$ is strongly $L^{2+\epsilon}(H)$ restricted to $\mathcal{H}'$ and furthermore
%\begin{align*}
%\abs{\langle\pi(g)\xi_n,\xi_n\rangle}^m&=\abs{\langle\pi^{\otimes n}(g)\xi_n',\xi_n'\rangle'}\\
%&\leq
%(\norm{\xi_n'}')^2\Xi_H(g)=\norm{\xi_n}^{2m}\Xi_H(g)\qquad\text{for any }g\in H
%\end{align*}
%where $\norm{\cdot}'$ indicates the norm on $\mathcal{H}^{\otimes m}$.

%Taking $m$-th roots yields
 %\begin{align*}
%\abs{\phi_{\xi_n,\xi_n}(g)}\leq \norm{\xi_n}^{2}\Xi^{\frac{1}{m}}_H(g)\qquad\text{for any }g\in H.
%\end{align*}
\section{Kazhdan constants}\label{sec:13} In this section, we discuss some applications of the above results in terms of a quantitative
estimate of Kazhdan's property $(T)$ of the pair $(G\ltimes V,V)$, namely, Kazhdan constant.
\begin{definition}
For a locally compact group $S$, we say that a unitary representation $\pi$ of $S$ almost
has an invariant vector if for any $\epsilon>0$ and any compact subset $Q$ of $S$, there exists
a unit vector $v$ which is $(Q,\epsilon)$-invariant, that is,
\begin{align*}
    \sup_{g\in Q}\norm{\pi(g)v-v}\leq\epsilon.
\end{align*}
A pair $(S,S')$ is said to have Kazhdan's property $(T)$ where $S'$ is a closed subgroup of $S$ if any unitary representation of $S$ which
almost has an invariant vector actually has a non-zero $S'$-invariant vector.
\end{definition}
\begin{remark}
$S$ is a Kazhdan group if and only if the pair $(S,S)$ has
property $(T)$. An immediate consequence of the above definition is that for any $S'_2\subset S'_1\subset S_1\subset S_2$
if $(S_1,S'_1)$ has Kazhdan's property $(T)$ then $(S_2,S'_2)$ has Kazhdan's property $(T)$
too. In particular if $S$ or $S'$ is a Kazhdan group then the pair $(S,S')$ has Kazhdan's property
$(T)$.
\end{remark}
\begin{definition}
For a locally compact group $S$ with a compact subset
$Q$, a positive number $\epsilon$ is said to be a Kazhdan constant for $((S,S'),Q)$ if
\begin{align*}
\tau((S,S'),Q):=\inf_{\pi\in\mathcal{R}_{S'}}\inf_{v\in \mathcal{H}_{\pi}^1}\max_{g\in Q}\norm{\pi(g)v-v}\geq\epsilon
\end{align*}
where $\mathcal{R}_{S'}$ is the set of unitary representations of $S$ without non-trivial $S'$-invariant vectors and $\mathcal{H}_{\pi}^1$ are the set of unit vectors
in the attached Hilbert space for $\pi$. If there exists such an $\epsilon$, we call $Q$ a Kazhdan set for $S$.
\end{definition}
In other words, if $\epsilon$ is a Kazhdan constant for $(S,Q)$, then any unitary representation
of $S$ which has a $(Q, \epsilon)$-invariant vector actually has a non-zero invariant vector.
\begin{proposition}\label{po:8}
If $\rho$ is good, then the pair ($G\ltimes_\rho V$, $V$) has Kazhdan's property $(T)$. Furthermore, a Kazhdan constant for $((G\ltimes_\rho V,V),Q_{(h_1,\cdots,h_N)})$ is $\frac{1}{\sqrt{N}}\cdot\min_{1\leq i\leq N}\kappa_{l_i}(h_i)$, where $Q_{(h_1,\cdots,h_N)}$ is defined in \eqref{for:150} and $\kappa_{l_i}$ is defined in  \eqref{for:54}.
\end{proposition}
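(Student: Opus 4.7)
The plan is to derive the explicit Kazhdan constant by combining property $(T)$ of the pair $(G\ltimes_\rho V,V)$, already established in \cite[Proposition 2.3]{Valette1} for good $\rho$, with the pointwise bounds on matrix coefficients from Theorem \ref{th:11}. What is new is the explicit numerical value of the constant, not the existence of such a constant.

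Let $\pi$ be a unitary representation of $G\ltimes_\rho V$ without non-trivial $V$-fixed vectors, and let $v$ be a unit vector in its Hilbert space. The central identity is
\begin{align*}
\norm{\pi(h)v-v}^2 = 2\bigl(1-\text{Re}\langle\pi(h)v,v\rangle\bigr),
\end{align*}
so any upper bound on $\abs{\langle\pi(h)v,v\rangle}$ yields a lower bound on $\norm{\pi(h)v-v}$. Since $\rho$ is good, after decomposing $V$ into $G$-irreducible components $V=\oplus_i V_i$ and restricting $\pi$ to each $G\ltimes_\rho V_i$ on which $\pi$ has no $V_i$-fixed vector, we are reduced to the excellent case on the relevant non-compact almost $k$-simple factor $G_{l_i}$. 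Theorem \ref{th:11} then supplies a $\bigl(K_{l_i},\,\Xi_{G_{l_i}}^{1/m_i}\bigr)$-type bound for the $K_{l_i}$-finite matrix coefficients of this restriction, and Proposition \ref{th:2} together with Lemma \ref{le:1} propagates the bound to the full support of $\pi$. For a $v$ that is essentially $K_{l_i}$-invariant this yields $\norm{\pi(h_i)v-v}\geq\kappa_{l_i}(h_i)$; the small denominator factor (the $3$ appearing in the preceding rank-one proposition) accounts for the deficit between exact and approximate $K_{l_i}$-invariance and is absorbed into the definition of $\kappa_{l_i}$ at \eqref{for:54}.

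The last step combines the $N$ individual lower bounds into the final constant $\frac{1}{\sqrt{N}}\min_i\kappa_{l_i}(h_i)$ by the elementary inequality
\begin{align*}
\max_{h\in Q_{(h_1,\ldots,h_N)}}\norm{\pi(h)v-v}\;\geq\;\Bigl(\frac{1}{N}\sum_{i=1}^N\norm{\pi(h_i)v-v}^2\Bigr)^{1/2}\;\geq\;\frac{1}{\sqrt{N}}\min_{1\leq i\leq N}\kappa_{l_i}(h_i),
\end{align*}
the first step being the standard bound $\max_i x_i\geq N^{-1/2}(\sum_i x_i^2)^{1/2}$ applied to the $N$ nonnegative numbers $\norm{\pi(h_i)v-v}$.

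The principal technical obstacle is the passage from $K$-finite matrix coefficient bounds (the natural output of Theorem \ref{th:11}, which carries an unwanted factor of $\dim\langle Kv\rangle^{1/2}$) to bounds valid for a general unit vector $v$; this is what forces $Q_{(h_1,\ldots,h_N)}$ to contain the compact sets $K_{l_i}$ alongside the points $h_i$, and what makes the constant hidden inside $\kappa_{l_i}$ unavoidable. A secondary point is verifying that after the reduction to each irreducible summand $V_i$, the representation still lacks non-trivial $V_i$-fixed vectors; this is exactly where the goodness of $\rho$ (rather than mere normality) is essential, entering through Lemma \ref{le:3}.
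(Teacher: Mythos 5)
Your overall architecture matches the paper's: decompose $V$ into irreducibles, pick out a non-compact factor $G_{l_i}$ on which each $V_i$ is excellent, apply Theorem~\ref{th:11} to bound matrix coefficients, use $K_{l_i}$-averaging to convert this into a lower bound $\kappa_{l_i}(h_i)$, and then aggregate to get the $1/\sqrt{N}$ factor. But your final aggregation step has a genuine gap. You write
\begin{align*}
\max_{h\in Q_{(h_1,\ldots,h_N)}}\norm{\pi(h)v-v}\;\geq\;\Bigl(\tfrac{1}{N}\sum_{i=1}^N\norm{\pi(h_i)v-v}^2\Bigr)^{1/2}\;\geq\;\tfrac{1}{\sqrt{N}}\min_{1\leq i\leq N}\kappa_{l_i}(h_i),
\end{align*}
and the second inequality is unjustified: it requires $\sum_i\norm{\pi(h_i)v-v}^2\geq\min_j\kappa_{l_j}(h_j)^2$, but the bound $\kappa_{l_i}(h_i)$ that you have at this point only controls $\max_{s\in\{h_i\}\cup K_{l_i}}\norm{\pi(s)\nu-\nu}$ for a unit $\nu$ lying in the subspace with no non-trivial $V_i$-fixed vector, and even for such $\nu$ the max may be attained on $K_{l_i}$, not at $h_i$. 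In particular, for a general unit $v$, nothing prevents each $\norm{\pi(h_i)v-v}$ from being individually small, so the sum $\sum_i\norm{\pi(h_i)v-v}^2$ (which omits the $K_{l_i}$ parts of $Q$ entirely) has no a priori lower bound.

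The paper avoids this by first proving (claim (*) in its proof) that the Hilbert space decomposes orthogonally as $\mathcal{H}=\bigoplus_{i\in X}E_i$ into $G\ltimes_\rho V$-invariant subspaces, with no non-trivial $V_i$-fixed vectors in $E_i$. Given a unit $\nu=\sum_{i\in X}\nu_i$, one picks an index $i_0$ with $\norm{\nu_{i_0}}\geq 1/\sqrt{N}$ by pigeonhole. Because $E_{i_0}$ is $\pi$-invariant, one has $\norm{\pi(s)\nu-\nu}\geq\norm{\pi(s)\nu_{i_0}-\nu_{i_0}}$ for every $s$, and applying \eqref{for:54} to the normalization of $\nu_{i_0}$ gives $\max_{s\in\{h_{i_0}\}\cup K_{l_{i_0}}}\norm{\pi(s)\nu-\nu}\geq\norm{\nu_{i_0}}\kappa_{l_{i_0}}(h_{i_0})\geq\tfrac{1}{\sqrt{N}}\min_i\kappa_{l_i}(h_i)$, and this 2-element set sits inside $Q$. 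If you want to salvage your RMS step, replace each summand by $m_i^2$ with $m_i=\max_{s\in\{h_i\}\cup K_{l_i}}\norm{\pi(s)\nu-\nu}$ — using all of $Q$, not just the $h_i$ — and then $\sum_i m_i^2\geq\sum_i\kappa_{l_i}(h_i)^2\norm{\nu_i}^2\geq\min_j\kappa_{l_j}(h_j)^2$ does close the argument; but this repair is substantively the same as the paper's choice of the dominant $i_0$, and both depend on first having the orthogonal invariant decomposition of $\mathcal{H}$ (not merely the decomposition of $V$), which your sketch does not address. A minor note: the paper's proof of this proposition does not actually invoke Lemma~\ref{le:3}; the relevant structural input is the decomposition (*).
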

\begin{proof}
Let $\pi$ be any representation of $G\ltimes_\rho V$ without non-trivial $V$-fixed vectors. Denote by $\mathcal{H}$ the attached Hilbert space of $\pi$. Since char $k=0$, then by full reducibility of semisimple groups there is a natural decomposition of $V$ under representation $\rho$: $V=\oplus_{i=1}^N V_i$
such that for each $i$, $V_i$ is an irreducible representation of $G$. Next, we will show:
\begin{enumerate}
  \item [(*)] there exists a subset $X\subset \{1,\cdots, N\}$ such that there is an orthogonal decomposition of $\mathcal{H}:\mathcal{H}=\bigoplus_{i\in X} E_i$ such that each $E_i$ is invariant under $G\ltimes_\rho V$ and there is no non-trivial $V_i$-fixed vectors on $E_i$.
\end{enumerate}
For each $1\leq i\leq N$, let $\mathcal{H}_i=\{v\in\mathcal{H}: v\text{ is fixed by }V_i\}$.
Similar to \eqref{for:33} we see that $\mathcal{H}_i$ is $G\ltimes_\rho V$-invariant. Hence $\mathcal{H}^{\bot}_i$, the orthogonal complement of $\mathcal{H}_i$ is also $G\ltimes_\rho V$-invariant. Since there is no non-trivial $V$-fixed vectors, there exists $ 1\leq i_1\leq N$ such that $\mathcal{H}^{\bot}_{i_1}\neq 0$. Let $E_{i_1}=\mathcal{H}^{\bot}_{i_1}$. If $\mathcal{H}^{\bot}_{i_1}= \mathcal{H}$, then we get (*). Otherwise, on $\mathcal{H}_{i_1}$, analogous to above arguments, there is $ 1\leq i_2\neq i_1\leq N$ such that there is a $G\ltimes_\rho V$-invariant orthogonal decomposition of $\mathcal{H}_{i_1}:\mathcal{H}_{i_1}=E_{i_2}\bigoplus E_{i_2}'$ such that $E_{i_2}\neq 0$ and there is no non-trivial $V_{i_2}$-fixed vectors on $E_{i_2}$ and $E_{i_2}'$ is fixed by $V_{i_2}$. If $E_{i_2}=\mathcal{H}_{i_1}$, then we also get (*).  Otherwise, we repeat the above procedure on $E_{i_2}'$. Note that $E_{i_2}'$ is fixed by $V_{i_1}$ and $V_{i_2}$. Since there is no non-trivial $V$-fixed vectors, this procedure stops after at most $N-1$ steps. So (*) is proved.

Recall notations in Definition \ref{def:2}. Denote by $G_i$, $1\leq i\leq j$ the non-compact almost $k$-simple factors of $G$. Let $w^l_i=\{v\in V_i: v\text{ is fixed by }G_l\}$. Notice that each $G_l$, $1\leq l\leq j$ is a normal subgroup in $G$, then
\begin{align*}
\Pi(g_l)(\Pi(g)\nu)&=\Pi(g)\Pi(g^{-1}g_lg)\nu=\Pi(g)\nu,\qquad \forall \nu\in w^l_i,\,\forall g\in G,\,\forall g_l\in G_l,
\end{align*}
for any $\Pi\in \mathcal{R}_{V}$ which implies that $w^l_i$, $1\leq l\leq j$ is $G$-invariant. By irreducibility of $V_i$, $w^l_i=0$ or $w^l_i=V_i$ for each $l$. Since the product $\prod_{i=1}^jG_i$ is either equal to $G_s$ or is Zariski dense in $\tilde{G}_s$ (see \cite{Margulis}), $\rho$ is also good on $\prod_{i=1}^jG_i$. Then  for any $1\leq i\leq N$ there exists a non-empty subset $A_i$ of $\{1,\cdots,j\}$ such that $w^l_i=0$ for any $l\in A_i$.
Since $G_l$ is excellent on $V_i$, $p_{(G_l,V_i,\Phi_{l,i})}<\infty$ where $\Phi_{l,i}$ is the set of weights of $G_l$ on $V_i$. Let $p_{(l,i)}$ be the smallest integer such that $2p_{(l,i)}\geq p_{(G_l,V_i,\Phi_{l,i})}$.
Denote by
\begin{align}\label{for:56}
l_i\in A_i \text{ such that }p_{l_i}=\min_{l\in A_i}p_{(l,i)}.
\end{align}
Next, we consider the restriction of $\pi\mid_{G_{l_i}\ltimes V_i}$ for $i\in X$. Then by Theorem \ref{th:11}, for any $K_{l_i}$-invariant unit vectors $v$ and $w$ in $E_i$ where $K_{l_i}$ is a good maximal compact subgroup in $G_{l_i}$, one has
\begin{align}\label{for:147}
\abs{\langle \pi(g)v,w\rangle}\leq
\Xi_{G_{l_i}}^{1/p_{l_i}}(g)\qquad\text{for any }g\in G_{l_i}.
\end{align}
For any unit vector $\nu\in E_i$ and  any $h\in G_{l_i}$ such that $h\notin K_{l_i}$,
%and $\Xi_{G_{l_i}}(h)\neq 1$,
we will show that
\begin{align}\label{for:54}
 \max_{s\in \{h,K_{l_i}\}}\norm{\pi(s)\nu-\nu}\geq \kappa_{l_i}(h)= \frac{\sqrt{2\bigl(1-\Xi^{1/p_{l_i}}_{G_{l_i}}(h)\bigl)}}{\sqrt{2\bigl(1-\Xi^{1/p_{l_i}}_{G_{l_i}}(h)\bigl)}+3}.
\end{align}
Suppose for all $\tau\in K_{l_i}$, we have $\norm{\pi(\tau)\nu-\nu}\leq\kappa_{l_i}(h)$. We will show that
$\norm{\pi(h)\nu-\nu}\geq \kappa_{l_i}(h)$. Let $\nu_1$ be the average of the $K_{l_i}$-transform of $\nu$:
\begin{align*}
\nu_1=\int_{K_{l_i}}\pi(\tau)\nu d\tau,
\end{align*}
where $d\tau$ is the normalized Haar measure on $K_{l_i}$. We compute
\begin{align*}
    \norm{\nu-\nu_1}\leq\kappa_{l_i}(h),\quad\text{ so that }\quad\norm{\nu_1}\geq 1-\kappa_{l_i}(h).
\end{align*}
Since $\kappa_{l_i}(h)<1$, the inequality implies that $\nu_1$ is non-zero. Note that for any unit
vector $w$,
\begin{align*}
    \norm{\pi(g)w-w}^2=2-2\text{Re}\langle\pi(g)w,w\rangle,\qquad \forall g\in G.
\end{align*}
Hence by using \eqref{for:147} we have
\begin{align*}
\Big\|\pi(h)\bigl(\frac{\nu_1}{\norm{\nu_1}}\bigl)-\frac{\nu_1}{\norm{\nu_1}}\Bigl\|\geq \sqrt{2\bigl(1-\Xi_{G_{l_i}}^{1/p_{l_i}}(h)\bigl)}
\end{align*}
and then
\begin{align*}
\norm{\pi(h)\nu_1-\nu_1}&\geq \sqrt{2\bigl(1-\Xi_{G_{l_i}}^{1/p_{l_i}}(h)\bigl)}\cdot\norm{\nu_1}\\
&\geq\sqrt{2\bigl(1-\Xi_{G_{l_i}}^{1/p_{l_i}}(h)\bigl)}\cdot(1-\kappa_{l_i}(h)).
\end{align*}
Therefore
\begin{align*}
\norm{\pi(h)\nu-\nu}&=\norm{\pi(h)\nu_1-\nu_1+(\pi(h)\nu-\pi(h)\nu_1)+(\nu_1-\nu)}\\
&\geq\norm{\pi(h)\nu_1-\nu_1}-2\norm{\nu_1-\nu}\\
&\geq\sqrt{2\bigl(1-\Xi_{G_{l_i}}^{1/p_{l_i}}(h)\bigl)}\cdot(1-\kappa_{l_i}(h))-2\kappa_{l_i}(h)\\
&=\kappa_{l_i}(h),
\end{align*}
which implies \eqref{for:54}.

For any $h_i\in G_{l_i}$ such that $h_i\notin K_{l_i}$
%and $\Xi_{G_{l_i}}(h_i)\neq 1$
set
\begin{align}\label{for:150}
Q_{(h_1,\cdots,h_N)}=\bigcup_{i=1}^N\{K_{l_i},h_i\}.
\end{align}
Fix a unit vector $\nu$ of $\pi$. Using (*) there is an orthogonal decomposition of $\nu:\nu=\sum_{i\in X}\nu_i$. There exists $i_0\in X$ such that
$\norm{\nu_{i_0}}=\max_{i}\norm{\nu_i}\geq\frac{1}{\sqrt{N}}$. Then it follows from \eqref{for:54} that
\begin{align}\label{for:59}
 \inf_{\pi\in\mathcal{R}_{V}}\max_{s\in Q_{(h_1,\cdots,h_N)}}\norm{\pi(s)\nu-\nu}\geq \frac{1}{\sqrt{N}}\cdot\min_{1\leq i\leq N}\kappa_{l_i}(h_i), 
\end{align}
which proves that the pair ($G\ltimes_\rho V$, $V$) has Kazhdan's property $(T)$ and a Kazhdan constant for $((G\ltimes_\rho V,V),Q_{(h_1,\cdots,h_N)})$ is $\frac{1}{\sqrt{N}}\cdot\min_{1\leq i\leq N}\kappa_{l_i}(h_i)$.
\end{proof}
\begin{remark}
Any compact
generating subset of $G\ltimes_\rho V$ is a Kazhdan set if the pair ($G\ltimes_\rho V$, $V$) has Kazhdan's property $(T)$ (see \cite[Ch
1, Proposition 15]{Valette2}). The above proposition yields
examples of Kazhdan sets which are contained in a proper closed semisimple subgroup of
$G$ (see Example \ref{for:57}).
\end{remark}
\begin{sect15}\label{for:57}
Suppose $\rho$ is excellent on $V$ and denote by $G_i$, $1\leq i\leq j$ the non-compact almost $k$-simple factors of $G$. Let $p_{i}$ be the smallest integer such that $2p_{i}\geq \min_lp_{(G_i,V_l,\Phi_{i,l})}$ where $\Phi_{i,l}$ is the set of weights of $G_i$ on $V_l$. Then from the proof of Proposition \ref{po:8}, we see that for any $h\in G_{i}$ such that $h\notin K_{i}$, a Kazhdan constant for the set $((G\ltimes_\rho V,V),\{K_i,h\})$ is $\frac{\sqrt{2\bigl(1-\Xi^{1/p_i}_{G_{i}}(h)\bigl)}}{\sqrt{2\bigl(1-\Xi^{1/p_i}_{G_{i}}(h)\bigl)}+3}$.
\end{sect15}

\begin{sect15}
Suppose rank$_kG=1$ and $\rho$ is irreducible and good on $V$. Denote by $G_1$ is the non-compact almost $k$-simple factor of $G$ and by $K_1$ a good maximal compact subgroup in $G_1$. Let $m$ be the smallest integer such that $2m\geq p'$ where $p'$ is defined in \eqref{for:53}. For any $h\in G_{1}$ such that $h\notin K_{1}$, $\frac{\sqrt{2\bigl(1-\Xi^{1/m}_{G_{1}}(h)\bigl)}}{\sqrt{2\bigl(1-\Xi^{1/m}_{G_{1}}(h)\bigl)}+3}$ is a Kazhdan constant for $((G\ltimes_\rho V,V),\{K_1,h\})$.

\end{sect15}

\end{document}